\newcommand{\bsym}{\billiards_{\mathrm{sym}}}
\newcommand{\threevec}[3]{
  \left(\begin{array}{c}
    #1\\#2\\#3
  \end{array}\right)}
\def\mls{{\mathcal{MLS}}}
\def\I{{\mathrm I}}
\def\II{{\mathrm{II}}}
\def\III{{\mathrm{III}}}
\def\beq{\begin{equation}}
\def\eeq{\end{equation}}
\def\det{\mathrm{det}\ }
\def\ba{\bar a}
\def\bg{\bar{\gamma}}
\def\bz{\bar{\zeta}}
\def\bgg{\bar{g}}
\newcommand{\MLS}{\mathcal{MLS}}
\newcommand{\mlsinv}{$\MLS$-invariant}%
\newcommand{\ie}{i.e.\ }
\newcommand{\eg}{e.g.\ }
\newcommand{\resp}{resp.\ }
\newcommand{\billiards}{\mathbf{B}}
\newcommand{\Z}{{\mathbb Z}}
\newcommand{\R}{{\mathbb R}}
\newcommand{\C}{{\mathbb C}}
\newcommand{\T}{{\mathbb T}}
\newcommand{\N}{{\mathbb N}}
\newtheorem{theorem}{Theorem}[section]
\newtheorem{remark}[theorem]{Remark}
\newtheorem{lemma}[theorem]{Lemma}
\newtheorem{defi}[theorem]{Definition}
\newtheorem{prop}[theorem]{Proposition}
\newtheorem{corollary}[theorem]{Corollary}
\newtheorem{claim}[theorem]{Lemma}
\newtheorem*{main thm}{Main Theorem}
\newtheorem*{main thm bis}{Main Theorem (alternate version)}
\newtheorem*{thmrig}{Theorem}
\newtheorem{question}{Question}
\newcommand{\dom}{\mathcal D}
\newcommand{\obs}{\mathcal O}
\begin{document}
\numberwithin{equation}{section}

\title[Marked Length Spectral determination of analytic chaotic
	billiards]{Marked Length Spectral determination of analytic chaotic
  billiards with axial symmetries}

\author[Jacopo De Simoi]{Jacopo De Simoi$^*$}
\thanks{$^*$ J.D.S. and M.L. have been partially supported by the NSERC Discovery grant, reference number 502617-2017}
\address{$^{*,\ddagger}$ Department of Mathematics, University of Toronto, 40 St George St., Toronto, ON, Canada M5S 2E4}
\email{jacopods@math.utoronto.ca}

\author[Vadim Kaloshin]{Vadim Kaloshin$^\dagger$}
\thanks{$^\dagger$ V.K. acknowledges partial support of the
	NSF grant DMS-1402164 and ERC Grant \# 885707}
\address{$^\dagger$ Institute of Science and Technology Austria (ISTA)}
\email{vadim.kaloshin@gmail.com}

\author[Martin Leguil]{Martin Leguil$^\ddagger$}
\address{$^\ddagger$ Laboratoire Ami\'enois de Math\'ematiques Fondamentales et Appliqu\'ees (LAMFA, UMR 7352)\\
	Université de Picardie Jules Verne, 33 rue Saint Leu, 80039 Amiens, France}
\email{martin.leguil@u-picardie.fr}

\begin{abstract}
  We consider billiards obtained by removing from the plane finitely
  many strictly convex analytic obstacles satisfying the non-eclipse
  condition.  The restriction of the dynamics to the set of
  non-escaping orbits is conjugated to a subshift, which provides a
  natural labeling of periodic orbits.  We show that under suitable
  symmetry and genericity assumptions, the Marked Length Spectrum
  determines the geometry of the billiard table.
\end{abstract}

\maketitle
\tableofcontents

\section*{Introduction}
Billiard dynamics studies the behavior of a point particle which moves
freely on some domain and undergoes elastic reflections upon collision
with the domain boundary.  In this article we consider the class of
domains obtained by removing from the plane a number $m \geq 3$ of
strongly convex\footnote{ A set is strongly convex if its boundary has
  strictly positive curvature.} compact sets with analytic boundary.
Systems of this type were first considered in~\cite{GR}, where the
authors studied the classical scattering of a point particle from
three circular disks on the plane.  Strong convexity of the obstacles
allows to construct stable and unstable cones for the billiard map,
hence implies that the corresponding billiard dynamics enjoys strong
hyperbolicity properties, see \eg~\cite[Section 4.4]{CM},
\cite{Sin,W1,W2}. Hyperbolicity, together with a
standard non-eclipse condition (see Section~\ref{sec prelim}), allows
to encode the dynamics of non-escaping trajectories as a subshift of
finite type on $m$ symbols (see \eg~\cite{Mor} or~\cite[Section
2.2]{PS2}).  This observation provides, in particular, a natural
marking of each periodic orbit with the associated encoding.  The
\textit{Marked Length Spectrum} is then defined as the set of all
lengths of periodic orbits together with their marking (see
Definition~\ref{def:mls}).

Two domains that have the same Marked Length Spectrum are said to be
\emph{marked-length-isospectral}.  For instance, two isometric domains
are necessarily isospectral.  On the other hand, it is a fascinating
problem to characterize marked-length-isospectral billiards modulo
isometries.  We refer to this problem as the \emph{dynamical inverse
  spectral problem}.

The main result of this article is that, under suitable symmetry and
genericity assumptions, the Marked Length Spectrum of a domain is
sufficient to reconstruct the domain (up to isometries).

In order to describe our results in some context, let us first present
some related classical problems and corresponding results.\medskip

\textbf{The Laplace inverse spectral problem.}  The dynamical inverse
spectral problem introduced above is deeply related to the question
that M. Kac (see~\cite{K}) famously phrased as: \textit{``Can one hear
  the shape of a drum?''}, \ie is the shape of a planar domain
determined by its Laplace Spectrum (with either Dirichlet or Neumann
boundary conditions)?  The relation between the dynamical and the
Laplace problem is apparent, for instance, in the thesis of Colin de
Verdi\`ere~\cite{CdV1,CdV2}, where it is shown that the Laplace
Spectrum determines the Length Spectrum of a generic manifold, and in
the trace formula proved by Andersson--Melrose~\cite{AM}: generalizing
previous results by Chazarain~\cite{Ch},
Duistermaat--Guillemin~\cite{DG}, they showed that, for strictly
convex\footnote{ The trace formula (and its consequences) also holds
  without the convexity assumption for planar domains, and for convex
  higher dimensional domains (see \eg~\cite{PS2}).}  $C^\infty$
domains, the singular support of the wave trace is contained in the
Length Spectrum (in fact, generically, the two sets coincide) .  In
particular, in this setting, the Laplace Spectrum generically
determines the Length Spectrum.  Similarly, there is a connection
between Laplace Spectrum and Length Spectrum in hyperbolic situations:
indeed, the Selberg trace formula \cite{Se} shows that the Laplace
Spectrum determines the Length Spectrum on hyperbolic manifolds, or
for generic Riemannian metrics. In particular, in the case of closed
hyperbolic surfaces, the stronger statement that the Laplace Spectrum
determines the Length Spectrum and vice-versa holds
(see~\cite{H1,H2}).\medskip

\textbf{Spectral determination and spectral rigidity for convex
  domains with symmetries.}  It has been famously proven by Zelditch
in a series of papers (see~\cite{Z1,Z2,Z3}) that the Laplace Spectrum
completely determines (modulo isometries) the domain in a generic
class of analytic $\Z_2$-symmetric (i.e., symmetric with respect to
some axis of reflection) planar convex domains.
Hezari--Zelditch~\cite{HZ2} have obtained a higher dimensional
analogue of this result: bounded analytic domains in $\R^n$ with
reflection symmetries across all coordinate axes, and with one axis
height fixed (satisfying some generic non-degeneracy conditions) are
spectrally determined among other such domains.  Notably, the same
authors have shown in a very recent preprint (see~\cite{HZ3}) that
ellipses of small eccentricity are identified by their Laplace
spectrum among smooth domains with the symmetries of the ellipse.

Spectral determination results are, currently, far beyond reach for
smooth (non-elliptic) domains.  In the last decade, however,
interesting results have appeared for a deformation version of
spectral determination known as \emph{spectral rigidity}\footnote{
  Recall that a domain is said to be \emph{spectrally rigid} if any
  Laplace isospectral \emph{continuous deformation} is necessarily
  isometric.}.  In~\cite{HZ1}, Hezari--Zelditch have shown the
following result: given a domain bounded by an ellipse (of arbitrary
eccentricity), any one-parameter isospectral $C^\infty$ deformation
which additionally preserves the $\Z_2\times \Z_2$ symmetry group of
the ellipse is necessarily flat (i.e., all derivatives have to vanish
at the initial parameter).

The problem of dynamical spectral determination was studied by Colin
de Verdi\`ere; in~\cite{CdV3}, he has shown that, in the class of
convex analytic billiards with the symmetries of the ellipse, the
Marked Length Spectrum determines the domain geometry.  In the smooth
category, in~\cite{DKW}, the authors proved that any sufficiently
smooth $\Z_2$-symmetric strictly convex domain sufficiently close to a
circle is dynamically spectrally rigid, i.e., all deformations among
domains in the same class which preserve the length of all periodic
orbits of the associated billiard flow must necessarily be isometries;
see also the numerical exploration \cite{AD} by Ayub-De Simoi for
ellipses of eccentricity smaller than $0.30$. Moreover, recently, for
smoothly conjugate billiard maps of Birkhoff billiards,
Kaloshin-Koudjinan \cite{KK} studied rigidity in the form of
Marvizi-Melrose invariants.

\medskip \textbf{The Inverse Problem for flat billiards.}
Spectral rigidity questions have also been explored in the context of flat billiards. In \cite{DELS}, the authors study the relationship between the shape of a Euclidean polygon and the symbolic dynamics of its billiard flow. They introduce a Bounce Spectrum for polygons, and show that two simply connected Euclidean polygons with the same Bounce Spectrum are either similar or right-angled and affinely equivalent.

\medskip\textbf{Determination of the geometry of a negatively curved
  surface by the Marked Length Spectrum.} Another natural setting
where similar questions were investigated is for geodesic flows on
negatively curved surfaces. As in the present work, the dynamics of
such flows is hyperbolic.\footnote{Yet, in that case, the geodesic
  flow is a genuine \emph{Anosov} flow, while for open dispersing
  billiards, the interesting dynamics occurs only on a Cantor set.}  A
famous result due independently to Otal \cite{O} and Croke \cite{Cr}
show that if $g_0$ and $g_1$ are negatively curved metrics on a closed
surface with the same Marked Length Spectrum, then $g_1$ is isometric
to $g_0$. In higher dimension, Guillarmou--Lefeuvre in \cite{GL}
recently proved that the Marked Length Spectrum of a Riemannian
manifold $(M, g)$ with Anosov geodesic flow and non-positive curvature
locally determines the metric.

\medskip \textbf{The Laplace Inverse Resonance Problem.} %
A dual (or exterior) formulation of the inverse spectral problem is
the inverse resonance problem, in which one attempts to reconstruct an
unbounded domain (\eg~the complement of a finite number of convex
scatterers) by the resonances (i.e., the poles) of the resolvent
$(\Delta-z^2)^{-1}$ (see \eg~\cite{PS2,Zw2,Z4}).  In particular,
in~\cite{Z4}, Zelditch showed that a $\Z_2$-symmetric configuration
of two convex analytic obstacles in the plane $\R^2$ is determined by
its Dirichlet or Neumann resonance poles.  This result is the direct
analogue, for exterior domains, of the proof that a $\Z_2$-symmetric
bounded simply connected analytic plane domain is determined by the
Laplace eigenvalue spectrum. The proof is based on the fact that wave
invariants of an exterior domain are resonance invariants and on the
method of~\cite{Z2,Z3} for calculating the wave invariants explicitly
in terms of the boundary defining function. In~\cite{ISZ}, the authors
give another proof of the inverse result with two symmetries using
Birkhoff Normal Forms of the billiard map and quantum monodromy
operator rather than the Laplacian, applying some results on
semi-classical trace formulae and on quantum Birkhoff normal forms for
semi-classical Fourier integral operators to inverse
problems. Generalizing results of~\cite{G}, they showed that the
classical Birkhoff Normal Form can be recovered from semi-classical
spectral invariants, and in fact, that the full quantum Birkhoff
Normal Form of the quantum Hamiltonian near a closed
orbit, and infinitesimally with respect to the energy can be recovered.\\\

\textbf{Dynamical inverse spectral problems for billiards with
  hyperbolic dynamics.}  We finally come to the setting that will be
explored in this article.  In~\cite{BDSKL}, the authors (together with
P. Bálint) study billiards obtained by removing $m \geq 3$ strictly
convex finitely smooth obstacles from $\R^2$; it is stated (without a
complete proof) that the Marked Length Spectrum determines the
Lyapunov exponent of each periodic orbit; in
Appendix~\ref{sec:bdkl-symmetric-case} we give a proof of a slightly
weaker result that suffices for our current purposes.  The proof is
similar to the one sketched for~\cite[Corollary E]{BDSKL}.

It is a crucial observation that, unlike billiards inside convex
domains, billiards inside the domains considered in this article are
\emph{open systems}, \ie there exist initial conditions (in fact, a
full-measure set) for which the billiard ball escapes to infinity
after finitely many bounces.  As a consequence, it will be the case
that no periodic trajectory visits certain regions of the
configuration space.  This implies that periodic spectral data will
never suffice to recover the geometry of the unexplored region.
Hence, one can either attempt to recover the full geometry under
additional rigidity assumptions (\eg~analyticity of the scatterers),
or consider the question of determination restricted to the explored
region.

In this paper, we pursue the first strategy and assume that all
scatterers have real analytic curves as boundary.  Our goal is
achieved once we recover the full jet of the curvature at some point
on each scatterer.  Due to analyticity, this entirely determines the
geometry of the scatterers.  The result stated below as Main Theorem,
asserts that this is indeed possible, provided that two scatterers
have some symmetries (similar to the ``bi-symmetric'' setting
of~\cite{Z1}).

In our proof, we reconstruct from spectral data the classical
(hyperbolic) Birkhoff Normal Form of the two-periodic orbit associated
to the symmetric scatterers. This can be done, provided that a
genericity condition is satisfied, by analyzing some asymptotics in
the Marked Length Spectrum relative to periodic orbits that
approximate homoclinic orbits of the two-periodic orbit.

Once the normal form has been obtained, exploiting the symmetries of
our system, and some extra information that can be obtained by the
Marked Length Spectrum, it is possible to reconstruct the geometry of
the billiard.  A more detailed explanation of the proof will be given
in Section~\ref{sec:idea-proof}, after introducing some necessary
preliminaries.

Our results are an analogue of those presented in~\cite{CdV3} for the
class of chaotic billiards under consideration, or an analogue in
terms of the Marked Length Spectrum of~\cite{Z1} (see
also~\cite{ISZ}).

Note that due to the convexity of the obstacles, the presence of more
than two scatterers in our case is crucial to guarantee the existence
of a large set of periodic orbits.  On the other hand, billiard
trajectories in the exterior of only two strictly convex domains in
the plane were considered by Stoyanov in~\cite{Sto}.

\section{Definitions and statement of our main results}\label{sec prelim}

In the present paper, we consider billiard tables $\dom\subset \R^2$
given by $\dom = \R^{2}\setminus \bigcup^{m}_{i = 1}\obs_{i}$, for
some integer $m \geq 3$, where each $\obs_{i}$ is a convex domain with
analytic boundary $\partial\obs_{i}$.
We
refer to each of the $\obs_{i}$'s as \emph{obstacle} or
\emph{scatterer}. We let $\ell_i:=|\partial\obs_{i}|$ be the
corresponding lengths,  set $\T_i:=\R/\ell_i\Z$, and parametrize
each $\partial\obs_{i}$ in arc-length, for some analytic map
$\Upsilon_i \in C^\omega( \T_i, \R^2)$, $s\mapsto \Upsilon_i(s)$. We
assume that the following condition holds:\\

\noindent
\textsc{Non-eclipse condition:} The convex hull  of any two scatterers
is disjoint from the other $m -2$ scatterers.\\

The set of all billiard tables obtained by removing from the plane
$m $ strictly convex analytic obstacles satisfying the non-eclipse
condition will be denoted by $\billiards(m)$.

Fix
$\dom = \R^{2}\setminus \bigcup^{m}_{i = 1}\obs_{i} \in
\billiards(m)$. We denote the collision space by
\begin{align*}
  \mathcal{M} &= \bigcup_i \mathcal{M}_i, &
  \mathcal{M}_i &=\{(q,v),\ q \in \partial\obs_{i},\ v\in \R^2,\ \|v\|=1,\ \langle v,n\rangle\geq 0\},
\end{align*}
where $n$ is the unit normal vector to $\partial\obs_{i}$ pointing
inside $\mathcal{D}$. For each $x=(q,v) \in \mathcal{M}$, $q$ is
associated with the arclength parameter $s \in [0,\ell_i]$ for some
$i\in \{1,\cdots,m\}$, i.e., $q=\Upsilon_i(s)$. We let
$\varphi\in [-\frac{\pi}{2},\frac{\pi}{2}]$ be the oriented angle
between $n$ and $v$ and set $r:=\sin(\varphi)$. In other words, each
$\mathcal{M}_i$ can be seen as a cylinder $\T_i \times [-1,1]$ endowed
with coordinates $(s,r)$. \label{sec:sr-coordinates} In the following,
given a point $x=(q,v) \in \mathcal{M}$ associated with the pair
$(s,r)$, we also denote by $\Upsilon(s):=q$ the point of the table
defined as the projection of $x$ onto the $q$-coordinate. Moreover,
for each pair $(s,r), (s',r') \in \mathcal{M}$, we denote by
\begin{equation}\label{def hsspremie}
h(s,s'):=\|\Upsilon(s')-\Upsilon(s)\|
\end{equation}
the Euclidean length of the segment connecting the associated points
of the table.

Set $\Omega :=\{(q,v) \in \mathcal{D} \times \mathbb{S}^1\}$. Denote by $\Phi^t\colon \Omega\to \Omega$ the flow of the billiard
and let
\begin{align*}
  \mathcal{F}=\mathcal{F}(\mathcal{D}) \colon \mathcal{M} \to \mathcal{M},\quad x \mapsto \Phi^{\tau(x)+0}(x)
\end{align*}
be the associated billiard map, where
$\tau\colon\mathcal{M} \to \R_+\cup \{+\infty\}$ is the first return
time, and $\Phi^{\tau(x)+0}(x)$ is the image of the point $x$ right
after the collision at time $\tau(x)$.  For any point
$x=(s,r)\in \mathcal{M}$ such that $(s',r'):=\mathcal{F}(s,r)$ is
well-defined, we denote by $\mathscr{L}:=h(s,s')$ the distance between
the two points of collision, we let $\mathcal{K}:=\mathcal{K}(s)$,
$\mathcal{K}':=\mathcal{K}(s')$ be the respective curvatures, and set
$\nu:=\sqrt{1-r^2}$, $\nu':=\sqrt{1-(r')^2}$.  It follows
from~\cite[(2.26)]{CM} that
\begin{equation}\label{matrice sl deux}
D\mathcal{F}_{(s,r)}=-\begin{pmatrix}
\frac{1}{ \nu'}(\mathscr{L} \mathcal{K}+\nu) & \frac{\mathscr{L}}{\nu \nu'}\\
\mathscr{L} \mathcal{K}\mathcal{K}'+\mathcal{K} \nu'+\mathcal{K}'\nu & \frac{1}{\nu}(\mathscr{L} \mathcal{K}'+\nu')
\end{pmatrix}\in \mathrm{SL}(2,\R),
\end{equation}
and  the map $\mathcal{F}$ is symplectic for the form $ds \wedge dr$.

Due to the convexity of the obstacles, for each
$i,j \in\{1,\cdots,m\}$, with $i\ne j$, there exist
$0\leq a_i^j\leq b_i^j\leq \ell_i$, and for each parameter
$s\in [a_i^j,b_i^j]$, there exists a non-empty closed interval
$I_i^j(s) 
\subset [-1,1]$ such that $\tau(x)<+\infty$, if
$x=(s,r) \in \widetilde{\mathcal{M}}_i:=\bigcup_{k\neq
  i}\widetilde{\mathcal{M}}_i^k$, and $\tau(x)=+\infty$, if
$x \in \mathcal{M}_i \backslash \widetilde{\mathcal{M}}_i$,
where
\begin{align*}%
\widetilde{\mathcal{M}}_i^j
:=\{(s,r)\in \mathcal{M}_i: s\in [a_i^j,b_i^j],\ r \in I_i^j(s)\}=\mathcal{M}_i \cap \mathcal{F}^{-1} (\mathcal{M}_j).
\end{align*}%
In other words, 
$[a_i^j,b_i^j]$ is the interval of parameters $s$ of points sitting at
$\partial \obs_i$ which can be joined to some point on
$\partial \obs_j$ for some suitable angle, namely
$r\in I_i^j(s)$.

In particular, the set of trajectories that do not escape to infinity
is given by
\begin{align*}%
  \bigcap_{k \in \Z} \mathcal{F}^{-k}(\widetilde{\mathcal{M}}),\quad \widetilde{\mathcal{M}}:=\bigcup_{j\neq i}\widetilde{\mathcal{M}}_j,
\end{align*}%
and is homeomorphic to a Cantor set.  Due to the non-eclipse
condition, the restriction of the dynamics to this set is conjugated
to a subshift of finite type associated with the transition matrix
\begin{align*}
  \begin{pmatrix}
     0 & 1 & \cdots & 1\\
     1 & \ddots & \ddots & \vdots\\
     \vdots & \ddots & \ddots  & 1\\
     1 & \cdots & 1  & 0
  \end{pmatrix}.
\end{align*}
In other words, any word $(\varsigma_j)_{j}\in \{1,\cdots,m\}^\Z$ such that
$\varsigma_{j+1}\neq \varsigma_j$ for all $j \in \Z$ can be realized
by an orbit, and by hyperbolicity of the dynamics, this orbit is
unique. Such a word is called \textit{admissible}. Besides, this
marking is unique provided that we fix a starting point in the orbit
and an orientation.

In particular, any periodic orbit of period $p$ (observe that
necessarily $p\ge 2$) can be labeled by a finite admissible word
$\sigma=(\sigma_1\sigma_2\dots\sigma_p)\in \{1,\cdots,m\}^p$, such that the
infinite word $\sigma^\infty:=\dots\sigma\sigma\sigma\dots$ is
admissible (or equivalently, such that
$\sigma_{j}\neq \sigma_{j+1\mod p}$, for all $j\in \{1,\cdots,p\}$).
We denote by $\mathrm{Adm}$ the set of finite admissible words
$\sigma\in \cup_{p \geq 2} \{1,\cdots,m\}^p$.

Given any word
$\sigma = (\sigma_1\sigma_2\dots\sigma_p) \in \mathrm{Adm}$, we denote
by $\overline{\sigma}$ be the transposed word
\begin{align*}
  \overline{\sigma}:=(\sigma_p \sigma_{p-1}\dots\sigma_1).
\end{align*}
The word $\overline\sigma$ encodes the same periodic trajectory as
$\sigma$, but with opposite orientation.

As explained above, for any $j \in \{1,\cdots,p\}$, the
$j^{\text{th}}$ symbol $\sigma_j$ of $\sigma$ corresponds to a point
$x(j)$ in the trajectory, where
$x(j)=(s(j),r(j))\in\mathcal{M}_{\sigma_{j}}$ is represented by
position and angle coordinates.  For all $k \in \Z$, we also extend
the previous notation by setting $\sigma_k:=\sigma_{k \mod p}$, and similarly for $x(k)$,
$s(k)$ and $r(k)$.

\begin{defi}\label{def:mls}
  The \textit{Marked Length Spectrum} $\mathcal{MLS}(\mathcal{D})$ of
  $\mathcal{D}$ is defined as the function
  \begin{equation}\label{marked spectrum}
    \mathcal{L}\colon \mathrm{Adm} \to \R_+,\quad \sigma \mapsto \mathcal{L}(\sigma),
  \end{equation}
  where $\mathcal{L}(\sigma)$ is the length of the periodic orbit
  identified by $\sigma$, obtained as the sum of the lengths of all
  the line segments that compose it.

  An object is said to be a \emph{\mlsinv} if it can be obtained by
  the sole knowledge of the Marked Length Spectrum.
\end{defi}

For any periodic orbit $(x_1,\cdots, x_p)$ encoded by a word $\sigma$
of length $p \geq 2$, we have
$D_{x_j}\mathcal{F}^p\in \mathrm{SL}(2,\R)$, for
$j \in \{1,\cdots,p\}$.  Due to the strong convexity of the obstacles,
each of the matrices $D_{x_j}\mathcal{F}^p$ is hyperbolic, and we
denote by $\lambda(\sigma)<1<\lambda(\sigma)^{-1}$ their (common)
eigenvalues. We define the \textit{Lyapunov exponent} of this orbit as
\begin{align}\label{eq:le-definition}
\mathrm{LE}(\sigma):=-\frac 1p \log
  \lambda(\sigma) >0.
\end{align}
\begin{defi}\label{def:MLES-definition}
  The \textit{Marked Lyapunov Spectrum} of the billiard table
  $\mathcal{D}$ is defined as the function
  \begin{align}\label{maked lepnov}
    \mathrm{LE}\colon \mathrm{Adm} \to \R_+,\quad  \sigma \mapsto \mathrm{LE}(\sigma).
  \end{align}
\end{defi}
We conclude this section by recalling an important symmetry of the
billiard dynamics, which will be crucial in the following.  Let us
denote by $\mathcal{I}$ the involution
$\mathcal{I}\colon (s,r) \mapsto (s,-r)$. It conjugates the billiard
map $\mathcal{F}$ with its inverse $\mathcal{F}^{-1}$, according to
the time reversal property of the billiard dynamics:
\begin{align*}
  \mathcal{I} \circ \mathcal{F} \circ \mathcal{I}=\mathcal{F}^{-1}.
\end{align*}
A periodic orbit of period $p \geq 2$ is called \textit{palindromic}
if it can be labeled by an admissible word
$\sigma \in \{1,\cdots,m\}^p$ such that
$\sigma=(\sigma_1\dots\sigma_{q-1}\sigma_q\sigma_{q-1}\dots\sigma_1\sigma_0)$
for certain symbols
$(\sigma_0,\sigma_1,\cdots,\sigma_q)\in \{1,\cdots,m\}^{q+1}$.
Observe that $p = 2q$, hence we gather that palindromic orbit always
have even period.  As we shall see later, there is a connection
between the palindromic symmetry and the time reversal property
recalled above.  In particular, by the palindromic symmetry and by
expansiveness of the dynamics, the associated trajectory hits the
billiard table perpendicularly at the points associated to the symbols
$\sigma_0$ and $\sigma_q$.

For more details about chaotic billiards and inverse spectral
problems, we refer the reader to the books of
Chernov--Markarian~\cite{CM} and Petkov--Stoyanov~\cite{PS2}.

\subsection{Spectral determination}\label{subs spectrllllll}

Recall that $\billiards(m)$ is the set of all billiard tables
$\mathcal{D}$ formed by $m\geq 3$ convex analytic obstacles satisfying
the non-eclipse condition, that $\mathcal{F}(\mathcal{D})$ denotes the
associated billiard map, and that $\mathcal{K}$ is the curvature
function. We introduce a class of tables with two additional
symmetries, which, without loss of generality, are assumed to be
associated with the obstacles $\obs_1,\obs_2$.
\begin{defi}\label{defi sym}
We let  $\billiards_{\mathrm{sym}}(m)\subset \billiards(m)$ be the subset of all  billiard tables $\mathcal{D}= \R^{2}\setminus \bigcup^{m}_{i = 1}\obs_{i}$ which are symmetric in the following sense:
\begin{itemize}
\item 
the jets of   $\mathcal{K}$ are the same at the endpoints of the $2$-periodic orbit $(12)$;
\item the jets of $\mathcal{K}|_{\mathbb{T}_1}$, $\mathcal{K}|_{\mathbb{T}_2}$ are even, 
assuming that $0_1\in \mathbb{T}_1$, $0_2\in\mathbb{T}_2$ are the arc-length parameters of the endpoints of the orbit $(12)$.
\end{itemize}
In particular, by analyticity, the pair of obstacles
$\obs_1,\obs_2$ has some $\Z_2\times \Z_2$-symmetry:
$\obs_1,\obs_2$ are images of each other by the reflection along the
line segment bisector of the trace of the orbit
$(12)$, and each of them is symmetric with respect to the line through
the endpoints of
$(12)$.
\end{defi}

The reason for requiring the two symmetries will be clarified in
Remark~\ref{rem:symmetries} below.  In the following, we let
$\T:=\R/(2\pi\Z)$, and let
$C^\omega(\T,\R)$ be the space of
$2\pi$-periodic real analytic
functions.  
Given $r>0$, we denote by $C_r^\omega(\T,\R)\subset C^\omega(\T,\R)$
the subspace of (bounded) analytic functions defined on the
strip\footnote{~Given a complex number $z$, we denote its real (\resp
  imaginary) part with $\Re z$ ({\resp $\Im
    z$}).} $\{z \in \C/(2\pi \Z):|\Im z|<r\}$ and extending continuously
to the boundary, and for any $f \in C_r^\omega(\T,\R)$, we let
$|f|_r:=\sup_{\{|\Im z|<r\}} |f(z)|$.
We denote by $C^\omega(\T,\R^2)$ the
space of analytic functions
$f\colon \theta \mapsto (f_1(\theta),f_2(\theta))$, with
$f_1,f_2 \in C^\omega(\T,\R)$. Similarly, for any $r>0$, we denote by $ C_r^\omega(\T,\R^2)\subset C^\omega(\T,\R^2)$ the subspace of functions $f=(f_1,f_2) \in (C_r^\omega(\T,\R))^2$ 
endowed with the norm $\|\cdot\|_r$,
where
$\|f\|_r:=\max(|f_1|_r,|f_2|_r)$.
\begin{defi}[Topology on
  $\billiards_{\mathrm{sym}}(m,r)$]\label{topology}
  Let $\mathbf{Conv}\subset C^\omega(\T,\R^2)$ be the set of all
  functions $f \in C^\omega(\T,\R^2)$ such that $f(\T)$ is a simple
  closed curve such that the interior region bounded by $f(\T)$ is
  strongly convex, i.e., the curvature of $f(\T)$ never vanishes.  We
  denote by $\obs(f)$ the interior region bounded by $f(\T)$. For any
  $r >0$, we let
  $\mathbf{Conv}_r:= \mathbf{Conv}\cap C_r^\omega(\T,\R^2)$.  For any
  integer $m \geq 3$, we thus get a map
\begin{equation*}
	\Phi=\Phi(m) \colon 	 \mathbf{Conv}^m\ni (f^{(i)})_{i=1,\cdots,m}  \mapsto  \mathcal{D}:=\R^2 \setminus \bigcup_{i=1}^m \obs(f^{(i)} ).
\end{equation*}
Given $r>0$, let
$\mathbf{W}_{\mathrm{sym}}(m,r):=\Phi^{-1}(\billiards_{\mathrm{sym}}(m))\cap
\mathbf{Conv}^m_r$, and endow it with the topology induced by the
product topology on $(C_r^\omega(\T,\R^2))^m$.  Then we let $\billiards_{\mathrm{sym}}(m,r):=\Phi(\mathbf{W}_{\mathrm{sym}}(m,r))$ and equip it 
with the topology coinduced by the map
$\Phi$. In particular, for any $\mathcal{D}\in \billiards_{\mathrm{sym}}(m)$, we have $\mathcal{D}\in \billiards_{\mathrm{sym}}(m,r)$ for some $r>0$. 
\end{defi}
We are now ready to state the main result of this paper.
\begin{main thm}\label{main thm}
  For any $m \ge 3$ and for any $r>0$, there exists an open and dense
  set of billiard tables
  $\billiards_{\mathrm{sym}}^{*}(m,r)
  \subset\billiards_{\mathrm{sym}}(m,r)$ so that if
  $\mathcal{D},\mathcal{D}'\in \billiards^{*}_{\mathrm{sym}}(m,r)$
  verify $\mathcal{MLS}(\mathcal{D}) = \mathcal{MLS}(\mathcal{D}')$,
  then $\mathcal{D}$ is isometric to $\mathcal{D}'$.
\end{main thm}

\begin{remark}
  In fact, the open and dense condition we need is an explicit
  \emph{non-degeneracy condition}.  Namely: we require that after a
  change of coordinates, the first coefficient in the expansion of the
  dynamics near a certain two-periodic orbit is non-vanishing (see
  Remark~\ref{remarque non deg} and condition $\mathrm{(\star)}$ in
  Lemma~\ref{non vanish}).

  In particular, given a table
  $\mathcal{D}_0=\Phi ((f^{(i)})_{i=1,\cdots,m})\in
  \billiards_{\mathrm{sym}}(m,r)$ for some $r>0$, with
  $(f^{(i)})_{i=1,\cdots,m} \in \mathbf{Conv}_r^m$, and a
  ``non-degenerate'' family $(\mathcal{D}_x)_{x}$ of perturbations of
  $\mathcal{D}_0$ within $\billiards_{\mathrm{sym}}(m,r')$ for some
  $r'\in (0,r]$, we can give an explicit description of the set of
  tables $\mathcal{D}_x$ which are not in
  $\billiards_{\mathrm{sym}}^{*}(m,r')$. More precisely, let
  $\mathbb{D}\subset \C$ be the unit disk, let
  $\Delta := \mathbb{D}^\N$, and let
  $\varepsilon\colon \N \to \R_+\setminus \{0\}$ be an arbitrary
  function that decays exponentially fast, with
  $\sup_{n \in \N} \varepsilon (n)< \varepsilon_0$ for some small
  $\varepsilon_0>0$. For some $r'\in (0,r]$, we thus get a family
  $(\mathcal{D}_x^\varepsilon)_{x \in \Delta}$ of billiards in
  $\billiards_{\mathrm{sym}}(m,r')$, where for
  $x =(x_m)_{m\in \N} \in \Delta$, the billiard
  $\mathcal{D}_x^\varepsilon$ is given by
	\begin{align*}
      \mathcal{D}_x^\varepsilon&:=\Phi(f^{(1)}+g_x^\varepsilon,f^{(2)}+g_x^\varepsilon,f^{(3)},\cdots,f^{(m)}),\\
      g_x^\varepsilon&:= \left[\theta \mapsto \sum_{m \geq 1} \varepsilon(m)  \Re \left(x_m e^{2 \pi \mathrm{i} m \theta}\right) \right]\in C_{r'}^\omega(\T, \R^2).
	\end{align*}
	Let us consider the map
    $\mathfrak{a}_1^\varepsilon\colon \Delta \ni x \mapsto
    a_1(\mathcal{D}_x^\varepsilon)\in \R$ in Lemma~\ref{non
      vanish}. The map $\mathfrak{a}_1^\varepsilon$ is a submersion
    (see the proof of Lemma~\ref{non vanish} for more details), hence
    the set
    $\Delta_{\mathrm{bad}}^\varepsilon:=\{x\in \Delta:
    \mathcal{D}_x^\varepsilon\notin\billiards_{\mathrm{sym}}^{*}(m,r')\}=(\mathfrak{a}_1^\varepsilon)^{-1}(\{0\})$
    of parameters for which the associated table is not in
    $\billiards_{\mathrm{sym}}^{*}(m,r')$ is a codimension one
    submanifold of $\Delta$.
\end{remark}
\begin{remark}
  We have stated our Main Theorem in the case of $m\ge 3$ scatterers,
  but indeed it suffices to prove the statement for $m = 3$.  In fact,
  fix $m > 3$, $r>0$, and let
  \begin{align*}
    \mathcal{D} = \R^{2}\setminus \bigcup^{m}_{i = 1}\obs_{i}\in \billiards_{\mathrm{sym}}(m,r);
  \end{align*}
  for $2 < i\le m$, define
  \begin{align*}
    \mathcal{D}_i := \R^{2}\setminus (\obs_{1}\cup \obs_{2} \cup \obs_{i}).
  \end{align*}
  It is immediate to show that
  $\mathcal{D}_{i}\in \billiards_{\mathrm{sym}}(3,r)$ (since the
  non-eclipse condition holds automatically).  Assume that the Main
  Theorem has been proved for $m = 3$, so that
  $\billiards^{*}_{\mathrm{sym}}(3,r)$ is defined, and let
  \begin{align*}
    \billiards^{*}_{\mathrm{sym}}(m,r) :=
    \{\mathcal{D}\in\billiards_{\mathrm{sym}}(m,r)\textrm{ s.t. } \forall\, 2 < i\le m,\,\mathcal{D}_{i}\in\billiards^{*}_{\mathrm{sym}}(3,r)\}.
  \end{align*}
  It is easy to check that
  ${\billiards}^{*}_{\mathrm{sym}}(m,r)$ is open and dense.
  Since $\mls(\mathcal{D}_{i})$ is the restriction of
  $\mls(\mathcal{D})$ to the periodic orbits that only collide with
  $\obs_{1},\ \obs_{2}$ and $\obs_{i}$, we can apply our Main Theorem
  for $m = 3$ to $\mathcal{D}_{i}$ and recover the geometry of
  $\obs_{1}$, $\obs_{2}$ and $\obs_{i}$ for any $i$.  Since $i$ was
  arbitrary, we proved the Main Theorem for $m$.
\end{remark}

It is a standard observation that any continuous deformation of smooth
domains which preserves the (unmarked) Length Spectrum
$\mathcal{LS}(\mathcal{D})$ automatically preserves the Marked Length
Spectrum (see \eg~\cite[Proposition 3.2.2]{Sib2}); therefore our
result also implies a spectral rigidity result. Let us
first introduce a definition.
\begin{defi}
  Given an integer $m \ge
  3$ and $r>0$, a family $(\mathcal{D}_t)_{t \in (-1,1)}$ of billiards is called an
  iso-length-spectral deformation in $\billiards^{*}_{\mathrm{sym}}(m,r)$
  if
	\begin{itemize}
    \item each $\mathcal{D}_t$ is in $\billiards^{*}_{\mathrm{sym}}(m,r)$,
      and the map
      $(-1,1)\ni t \mapsto \mathcal{D}_t$ is continuous;
    \item $\mathcal{LS}(\mathcal{D}_t)=\mathcal{LS}(\mathcal{D}_0)$,
      for all $t \in (-1,1)$.
	\end{itemize}
\end{defi}
Our Main Theorem thus yields the following result.
\begin{thmrig}
  For $m \ge
  3$ and $r>0$, any iso-length-spectral deformation 
  in $\billiards^{*}_{\mathrm{sym}}(m,r)$ is isometric.
\end{thmrig}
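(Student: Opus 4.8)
The plan is to deduce this rigidity statement directly from the Main Theorem; the only genuine work is to upgrade the hypothesis from preservation of the \emph{unmarked} Length Spectrum $\mathcal{LS}$ to preservation of the \emph{Marked} Length Spectrum $\mathcal{MLS}$. Once this upgrade is in place, the Main Theorem applies to the pair $(\mathcal{D}_0,\mathcal{D}_t)$ for each $t$ and forces them to be isometric. Throughout I write $\mathcal{L}_t$ for the marked length function of $\mathcal{D}_t$, so that $\mathcal{MLS}(\mathcal{D}_t)$ is the function $\sigma\mapsto\mathcal{L}_t(\sigma)$.

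First I would record that along a continuous deformation $t\mapsto\mathcal{D}_t$ in $\billiards^{*}_{\mathrm{sym}}(m)$ the combinatorial marking of periodic orbits is locally constant. Indeed, by strict convexity of the obstacles the billiard map $\mathcal{F}(\mathcal{D}_t)$ is uniformly hyperbolic on the non-escaping set, and the conjugacy with the subshift of finite type of Section~\ref{sec prelim} is structurally stable. Hence every admissible word $\sigma\in\mathrm{Adm}$ is realized by a unique hyperbolic periodic orbit that persists under the deformation and depends continuously on $t$; in particular, for each fixed $\sigma$, the length $t\mapsto\mathcal{L}_t(\sigma)$ is a continuous function of $t$.

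Next I would observe that the set $\mathcal{LS}(\mathcal{D}_0)\subset\R_+$ is discrete. Since each billiard segment has length bounded below by the minimal distance between two distinct obstacles, a periodic orbit of length at most $L$ has boundedly many reflections; as there are only finitely many admissible words of each bounded period, there are only finitely many periodic orbits of length at most $L$, for every $L$. Thus $\mathcal{LS}(\mathcal{D}_0)$ is locally finite, hence a closed discrete subset of $\R_+$. By hypothesis $\mathcal{LS}(\mathcal{D}_t)=\mathcal{LS}(\mathcal{D}_0)$ for all $t$, so for each fixed $\sigma$ the continuous map $t\mapsto\mathcal{L}_t(\sigma)$ takes values in the \emph{fixed} discrete set $\mathcal{LS}(\mathcal{D}_0)$. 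A continuous function into a discrete set is constant, whence $\mathcal{L}_t(\sigma)=\mathcal{L}_0(\sigma)$ for every $\sigma$ and every $t$; that is, $\mathcal{MLS}(\mathcal{D}_t)=\mathcal{MLS}(\mathcal{D}_0)$. This is precisely the content of the cited \cite[Proposition 3.2.2]{Sib}, which one may alternatively invoke as a black box.

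Finally, since all $\mathcal{D}_t$ lie in $\billiards^{*}_{\mathrm{sym}}(m)$ and share the Marked Length Spectrum of $\mathcal{D}_0$, the Main Theorem gives that each $\mathcal{D}_t$ is isometric to $\mathcal{D}_0$, so the deformation is isometric. The one delicate point is exactly the passage from the unmarked to the marked spectrum, namely ruling out that two distinct words might ``exchange'' their lengths along the deformation while keeping the unmarked multiset fixed. The continuous-into-a-discrete-set argument handles this, the two key inputs being the structural stability of the symbolic coding (so the marking of an individual orbit cannot jump) and the discreteness of $\mathcal{LS}$ (so each individual marked length cannot drift); I expect this to be the main, though modest, obstacle.
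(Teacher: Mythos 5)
Your proposal is correct and follows essentially the same route as the paper: the paper derives the rigidity theorem from the Main Theorem by invoking the standard observation (cited as \cite[Proposition 3.2.2]{Sib}) that a continuous deformation preserving the unmarked Length Spectrum automatically preserves the Marked Length Spectrum. Your spelled-out version of that observation -- persistence and continuity of each marked orbit by hyperbolicity, local finiteness of $\mathcal{LS}(\mathcal{D}_0)$, and the continuous-into-a-discrete-set argument -- is a valid rendering of exactly the black box the paper uses.
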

The proof of the Main Theorem (for $m = 3$) is given in
Corollary~\ref{cororororo} and Corollary~\ref{cororororobibis} in
Section~\ref{sec recov geom}, based on the constructions provided in
detail in the next sections.  From now on, we will only consider the
case of three scatterers.  We will abbreviate
$\billiards:=\billiards(3)$ and
$\billiards_{\mathrm{sym}}:=\billiards_{\mathrm{sym}}(3)$ and
similarly for
$\billiards_{\mathrm{sym}}(r):=\billiards_{\mathrm{sym}}(3,r)$ and
$\billiards^{*}_{\mathrm{sym}}(r):=\billiards^{*}_{\mathrm{sym}}(3,r)$.\\

Let
$\mathcal{D}= \R^{2}\setminus \bigcup^{3}_{i = 1}\obs_{i}\in
\billiards$ be a billiard table, and let
$\mathcal{F}:=\mathcal{F}(\mathcal{D})$.  A key object in our study is
the so-called \textit{Birkhoff Normal Form} for saddle fixed points of
symplectic local surface diffeomorphisms, whose definition we now
recall. We introduce it for period two orbits since this is the case
we will consider in the following, but the same can be done for any
periodic orbit (given a periodic orbit of period $p \geq 2$, each
point in the orbit is a saddle fixed point of $\mathcal{F}^p$).  Let
$j\neq k\in \{1,2,3\}$, and let $(s(j,k),0)$ be the
$(s,r)$-coordinates of the point of $\obs_j$ in the orbit
$(jk)$. Recall that by~\cite{Mos,Ste}, there exists an analytic
symplectomorphism $R\colon\mathcal{U}\to \mathcal{V}$ from a
neighborhood $\mathcal{U}\subset \mathcal{M}$ of $(s(j,k),0)$ to a
neighborhood $\mathcal{V}\subset \R^2$ of $(0,0)$ and a unique
analytic map $\Delta=\Delta(\mathcal{D},j,k)\in C^\omega(\R,\R^*)$,
with $\Delta(z)= \lambda +\sum_{\ell\geq 1} a_\ell z^\ell$,
s.t.  
\begin{align*}%
	R \circ \mathcal{F}^2|_{\mathcal{U}} = N \circ R|_{\mathcal{U}},
\end{align*}%
where $N$ is   the \textit{Birkhoff Normal Form} of $\mathcal{F}^2|_{\mathcal{U}}$:
\begin{align*}%
	N=N(\mathcal{D},j,k)\colon (\xi,\eta)\mapsto(\Delta(\xi\eta)\xi,\Delta(\xi\eta)^{-1}\eta).
\end{align*}%
In the following, we refer to $(a_\ell)_\ell$ as the \textit{Birkhoff invariants} or \textit{coefficients} of $N$. \\

\begin{remark}\label{rem:symmetries}
  The two symmetries described above    are needed because of two
  different issues.  Let us consider  a billiard table $\mathcal{D}=\R^{2}\setminus \bigcup_{i =1}^3\obs_{i}\in \billiards_{\mathrm{sym}}$.
  \begin{itemize}
  \item The axial symmetry between $\obs_1$ and
    $\obs_2$ we ask for is similar to the one that appears for
    instance in the work of Zelditch. It is explained by the fact that
    in order to speak about Birkhoff Normal Forms, we need a fixed
    point. 
    As the billiard map has no such fixed
    point, we need at least to consider its square.  In the process,
    some information is lost, unless $\obs_1$ and $\obs_2$ are the
    mirror image of each other; otherwise, we are \textit{a priori} only able to
    recover some averaged information between $\obs_1$ and
    $\obs_2$.
  \item The second symmetry we require is due to a well known
    observation made in~\cite{CdV3}. Indeed, the reason why we ask
    each of the obstacles $\obs_1,\obs_2$ to be themselves symmetric
    follows from the fact that the Birkhoff Normal Form has some
    intrinsic symmetries (it has two axes of symmetry), and only
    conveys partial information on the billiard dynamics, which has
    \textit{a priori} only one natural symmetry, given by the time
    reversal property. Roughly speaking, we lose half of the
    information on the billiard map, unless the map itself has some
    additional symmetry: this symmetry of the map can be ensured
    provided that $\obs_1,\obs_2$ have a $\Z_2$-symmetry.
  \end{itemize}
\end{remark}

\begin{remark}\label{remarque non deg}
  In the following, we show that the Lyapunov exponents of the orbits
  $(h_n)_n$ can be expanded as a series indexed by $\Z^2$
  (see~\eqref{expansion de Lyapunoff}), each of whose coefficients is
  a \mlsinv. The expression of these coefficients combines three
  different sets of geometric data, including the Birkhoff invariants
  we want to reconstruct. We show that under some open and dense
  condition, it is possible to extract enough information from the
  first three lines of the coefficients of the series in order to
  recover separately the three sets of data. More precisely, the
  condition we need is the non-vanishing of the first Birkhoff
  invariant (see condition $\mathrm{(\star)}$ in Lemma~\ref{non vanish}), which can
  be seen dynamically as some twist condition. It guarantees that
  certain linear systems in the data we want to recover are
  invertible. Note that Lemma~\ref{corollar dofe} gives an effective
  way of checking whether a given billiard table
  $\mathcal{D} \in \billiards_{\mathrm{sym}}$ satisfies this twist
  condition; in other words, the property
  $``\mathcal{D} \in \billiards_{\mathrm{sym}}^*"$ itself is a
  \mlsinv. Besides, this condition comes from the particular subset of
  coefficients we consider (which is easiest to work with), and it is
  likely that considering other subsets of coefficients would produce
  another non-degeneracy condition involving different Birkhoff
  invariants. In particular, it seems reasonable to believe that as
  long as the Birkhoff Normal Form is not degenerate (i.e., is not
  linear), our construction can be adapted to produce invertible
  systems in the coefficients we want to reconstruct.
\end{remark}

\subsection{Idea of the proof}
\label{sec:idea-proof}
Let us give an idea of the proof of the
above results. We fix a billiard table
$\mathcal{D}= \R^{2}\setminus \bigcup^{3}_{i = 1}\obs_{i}\in
\billiards_{\mathrm{sym}}$, and let
$\mathcal{F}:=\mathcal{F}(\mathcal{D})$.
Note that it is natural to focus on $2$-periodic orbits, since we may
hope to determine $\mathcal{F}^2$ instead of $\mathcal{F}^p$ for some
higher exponent $p \geq 3$, and also because of the additional
symmetries of such orbits.  As we shall see, the Birkhoff coefficients
$(a_\ell)_{\ell \geq 1}$ above can be obtained by the asymptotics of
Lyapunov exponent for certain periodic orbits which spend a lot of
time near the periodic orbit $(12)$.  We then define a sequence of
periodic orbits $(h_n)_n$ with a certain palindromic symmetry that
accumulate\footnote{ An analogous construction can be found
  in~\cite{BDSKL}.} an orbit $h_\infty$ which is homoclinic to $(12)$.

A key step in the construction is the extension of the coordinates
given by the conjugacy $R$ between $\mathcal{F}^2$ and its Birkhoff
Normal Form, which is initially defined only in a neighborhood of the
saddle fixed point. Indeed, in order to make the connection with the
Lyapunov exponent of the orbits $(h_n)_n$, it is crucial to extend the
conjugacy to describe them globally. The construction of the extension
follows a classical procedure, by using the dynamics to propagate $R$
along the separatrices, i.e., the stable and unstable manifolds of the
origin. This is actually sufficient to describe all points in the
orbits $(h_n)_n$, since for $n$ large enough, these orbits stay in a
small neighborhood of the separatrices.  In this way, we produce
convenient coordinates to describe the dynamics in a neighborhood of
the separatrices, which can be seen as a hyperbolic analogue of the
coordinates provided by the Birkhoff Normal Form near the boundary of
the billiard table, in the elliptic case, and which were used for
instance in~\cite{CdV3}. The problem is that we are extending our
coordinates along two different directions, and at some point, since
the trajectory is periodic, these two extensions will overlap in the
collision space.  In particular, we will need to perform a ``gluing''
of the two charts obtained in this way in a neighborhood of the
homoclinic point on the third scatterer, and we will explain how to
take care of this issue in the sequel.

By the palindromic property, we can write an equation for the images
under the conjugacy map $R$ of the points in the periodic orbits
$(h_n)_n$ (see Lemma~\ref{claim eta n delta n xi n}).  This allows us
to find an implicit expression of the parameters of those points in
terms of the Birkhoff invariants and the coefficients of the arc of
points where those orbits start (as we shall see, this arc is made of
points on the second obstacle which bounce perpendicularly on the
third obstacle after one iteration of the dynamics).  As mentioned
above (see Theorem~\ref{prp:palindromic_lyapunov}), the Marked
Lyapunov Spectrum (for palindromic orbits) is a \mlsinv{}.  The key
observation is stated in Lemma~\ref{lemma exp lya detailll}: we show
that for each integer $n \geq 0$, the Lyapunov exponent of $h_n$
admits a series expansion; more precisely, it holds
\begin{equation}\label{expansion de Lyapunoff}
  2 \lambda^n\cosh\left(2(n+1)\mathrm{LE}(h_n)\right) = \sum_{p=0}^{+\infty}\sum_{q=0}^{p}L_{q,p} n^{q}\lambda^{np},
\end{equation}
for some sequence
$(L_{q,p})_{\substack{p=0,\cdots,+\infty\\ q=0,\cdots,p}}$, and where
$\lambda\in(0,1)$ is the contracting eigenvalue of $D\mathcal{F}^2$ at
the two-periodic orbit $(12)$.  In particular, each coefficient
$L_{q,p}$ is a \mlsinv{}, and by restricting ourselves to $q=0,1,2$,
this gives enough information to recover the Birkhoff
coefficients. Note that the expansion~\eqref{expansion de Lyapunoff}
obtained for the Lyapunov exponents of palindromic orbits in the
horseshoe associated to the homoclinic orbit $h_\infty$ can be seen as
a hyperbolic analogue of the Marvizi–Melrose expansion (see for
instance~\cite[(1.11)]{MM} on p.3) for the maximum lengths of periodic
orbits with a certain rotation number and period (the integer $n$
being related to the period in either case).  Let us also mention that
similar expansions were studied in \cite{FY} for a different purpose;
moreover, the relation between coefficients in Birkhoff normal forms
and spectral properties of the dynamics has been explored as well in
different settings in several works, see \eg~\cite{Be,Sib1}.

One technical issue comes from the
fact that the orbits $(h_n)_n$ bounce on the third obstacle, thus
there are additional terms which come from a certain gluing map
$\mathcal{G}$ taking this last bounce into account, and we need to
find a way to recover this data as well.  The idea is to leverage on the
“triangular” structure of the coefficients: at each step, there are
certain additive constants associated with some terms that we already
know, as well as new coefficients that we want to recover.  Then, we
derive a linear system in the new coefficients, and show that it is
invertible under a suitable twist condition (non-vanishing of the
first Birkhoff invariant).  By induction, modulo some ``homoclinic
parameter'' $\xi_\infty\in \R$, we can thus recover the Birkhoff invariants, as well as
some information on the third obstacle associated to the differential
of the gluing map $\mathcal{G}$.

More precisely, we consider some arc $\Gamma_\infty\ni(0,\xi_\infty)$
which is the image in Birkhoff coordinates of some small arc of points
associated with a perpendicular bounce on the third scatterer (see
also Figure~\ref{picture dynamique}). This arc is the graph of some
analytic function $\gamma$ that is determined by the gluing map
$\mathcal{G}=(\mathcal{G}^+,\mathcal{G}^-)$, i.e., for $\xi$ small,
$\eta=\xi_\infty+\gamma(\xi)$ satisfies the implicit equation
\begin{align*}
	\mathcal{G}^+(\xi,\eta)\mathcal{G}^-(\xi,\eta)=\xi\eta.
\end{align*}
We show that from the sequence
$(L_{q,p})_{\substack{p=0,\cdots,+\infty\\ q=0,1,2}}$, up to the parameter $\xi_\infty$, it is possible
to recover the value of the Birkhoff invariants, as well as the
function $\gamma$ and the differential
$\mathcal{DG}|_{T\Gamma_\infty}$.

Note that homotheties preserve Lyapunov exponents, hence by only
considering the Marked Lyapunov Spectrum, one cannot expect to recover
the scale of the billiard table: the missing parameter $\xi_\infty$
can indeed be interpreted as a \emph{scaling factor}.  In
Subsection~\ref{determ homco}, we prove that its value is a
\mlsinv: 
%
we show (see Proposition~\ref{prop lourde un}) that for some
$\mathcal{L}^\infty\in \R$, the quantity
\begin{equation}\label{equiv longueur}
\mathcal{L}(h_n)-(n+1) \mathcal{L}(12)-\mathcal{L}^\infty
\end{equation}
decays exponentially fast as $n$ goes to infinity (see
Section~\ref{birkkkhhfo} for the notation), and can be expanded as a
series of the same form as the one obtained in~\eqref{expansion de
  Lyapunoff} -- and also similar to the expansion obtained
in~\cite{MM}. The first order term in this expansion is a \mlsinv\ and
can be written in terms of $\xi_\infty^2$ and of a certain quadratic
form. It can be then shown (see the proof of Corollary~\ref{premier cororrr})
that the latter is a
\mlsinv, thus $\xi_\infty$ is a \mlsinv\ too.

It is worth emphasizing that all results proved up to Subsection~\ref{determ homco} -- in particular, the $\mathcal{MLS}$-determination of
the Birkhoff Normal Form and of the gluing map $\mathcal{G}$, up to the homoclinic parameter $\xi_\infty$ -- do not
require any symmetry assumption. Indeed, in our approach, axial
symmetries are needed only to reconstruct the geometry from the
Birkhoff Normal Form and the map $\mathcal{G}$.  Furthermore, although
we focus on Birkhoff Normal Forms associated to $2$-periodic orbits in
this paper, the procedure we describe is more general. More precisely,
under a generic condition (non-vanishing of the first Birkhoff
invariant of each periodic orbit), our construction can be adapted to
show that the Birkhoff Normal Form of any (palindromic) periodic orbit
is a \mlsinv{}. 

Although we restrict ourselves to a specific class of dispersing
billiards in the present work, similar computations can be carried out
for other types of billiards. The general framework where this can be
done is when the billiard under consideration possesses a hyperbolic
periodic orbit with a transverse homoclinic intersection. If we stay
away from singularities (tangential collisions), it is well-known that
this intersection generates a horseshoe; we may then construct a
sequence $(h_n)_{n}$ of periodic orbits within this hyperbolic set
with a specific combinatorics, and study the asymptotics of their
Lyapunov exponents as $n \to +\infty$. The same computations as those
explained in the present paper can be performed to show that the
Birkhoff invariants of the periodic orbit can be reconstructed from
the Lyapunov exponents of the orbits $(h_n)_n$. Yet, what is
especially convenient for the class of open dispersing billiards
considered here is that the symbolic coding which is used to select
the orbits $(h_n)_n$ from the horseshoe has a precise
\emph{geometric} meaning (namely, the sequence of obstacles
corresponding to the consecutive bounces in the orbit $h_n$), which
makes it easier to relate the symbolic coding to the marking used in
our definition of the Marked Length Spectrum.\footnote{ For instance,
  convex billiards may exhibit a hyperbolic set associated to a
  hyperbolic periodic orbit; in this case, similar computations can be
  performed to show that the Birkhoff invariants can be determined
  from the variation of the Lyapunov exponents of certain periodic
  orbits $(h_n)_n$ in the horseshoe, but it is less natural to
  ``mark'' the orbits $(h_n)_n$ by some geometric information.}

Let us now consider the case of symmetric billiard
tables $\mathcal{D}\subset \billiards_{\mathrm{sym}}$. In this case,
we can introduce some flat wall between $\obs_1$ and $\obs_2$, and
``fold" the table in order to virtually create a fixed point of the
billiard dynamics. For some auxiliary billiard table $\mathcal{D}^*$
one of whose obstacles is now flat, we can extract enough information
from $\mathcal{MLS}(\mathcal{D})$ to reconstruct the Birkhoff Normal
Form $N^*$ of the square $T^*:=(\mathcal{F}^*)^2$ of the new billiard
map $\mathcal{F}^*=\mathcal{F}^*(\mathcal{D}^*)$ near the new
$2$-periodic orbit.  Besides, by the construction of $N^*$ (see
\eg~\cite{Bi,Ste,Mos}), and \textit{due to the symmetry of
  $\obs_1,\obs_2$, the jets of $N^*$ and $T^*$ 
  are in one-to-one correspondence, by some invertible triangular
  system}. 
Furthermore, as Colin de Verdi\`ere~\cite{CdV3} already observed in
the elliptic setting,
there is a bijective correspondence between the jet of $T^*$ and the jet of the graphs of $\obs_1,\obs_2$, which can thus be reconstructed.

In order to recover the geometry of the last obstacle, we analyze the information that comes from the gluing map that we were mentioning   previously. We can extract from this map some ``averaged" information between the first two obstacles $\obs_1,\obs_2$ and the third obstacle $\obs_3$, and since the geometry of $\obs_1,\obs_2$ is known, we can also reconstruct the local geometry of $\obs_3$ near a certain homoclinic point. This determines the obstacle $\obs_3$ entirely, by analyticity.
	\medskip

\subsection{Organization of the paper}
We use  the notations introduced in Subsection~\ref{sec:idea-proof}.
The proof of the Main Theorem follows different steps:

\smallskip
\begin{description}
	\item[$*$ Step 1] existence of a \textit{canonical} (which respects the time reversal symmetry) change of coordinates under some non-degeneracy condition;
	\smallskip
	\item[$*$ Step 2] extension of the system of coordinates and expression of the palindromic orbits $(h_n)_{n}$ in those coordinates;
	\smallskip
	\item[$*$ Step 3] definition of the gluing map $\mathcal{G}$;
	\smallskip
	\item[$*$ Step 4] asymptotic expansion (in $n$) of the Lyapunov exponent of $h_n$;
	\smallskip
	\item[$*$ Step 5] extracting a triangular system from the Lyapunov expansion in terms of ``scaled" Birkhoff coefficients and gluing terms;
	\smallskip
	\item[$*$ Step 6] invertibility of this system under some non-degeneracy condition (proof by induction: compute the $i^{\text{th}}$ order terms using the previous
	ones);
	\smallskip
	\item[$*$ Step 7] $\mathcal{MLS}$-determination of the missing ``scale" parameter $\xi_\infty$;
	\smallskip
	\item[$*$ Step 8] Step 5 + Step 6 $\Rightarrow$ the Birkhoff Normal Form and the differential of the gluing map $\mathcal{G}$ are \mlsinv{s};
	\smallskip
	\item[$*$ Step 9] determination of the geometry from the Birkhoff invariants + the gluing map $\mathcal{G}$ in the case of symmetric billiard tables which satisfy a non-degeneracy condition ($\star$), Lemma \ref{non vanish}.
\end{description}
\medskip

Those steps are detailed respectively in: \\
\textbf{(1)} Section~\ref{birkkkhhfo}; \textbf{(2)}  Section~\ref{section extension}; \textbf{(3)} Subsection~\ref{subs prliemr}; \textbf{(4)}-\textbf{(5)} Subsection~\ref{subsection lyapunov exonent exp}; \textbf{(6)} Subsection~\ref{detertmin coeffci}; \textbf{(7)} Subsections~\ref{basic facts}-\ref{sub esti leng}; \textbf{(8)} Subsection~\ref{determ homco};
\textbf{(9)} Section~\ref{sec recov geom}.
\medskip

The central technical part of the proof is Steps 4-5; an outline of the computations carried out there is given after Remark~\ref{remarque prelimina} (see also Remark~\ref{remark partie technique}). Let us also emphasize  formula~\eqref{delta n eta n xi n} which follows from the palindromic symmetry of the orbits $(h_n)_n$ and on which the induction is based.
\medskip

Moreover, the scheme of the proof can be summarized as follows:

\begin{equation*}
	\xymatrix{
		&  \mathcal{MLS}\ar@{->}[ld]|{\mathrm{Marked\ Lyapunov\ Spectrum}} \ar@{->}[rd]& \\
		(L_{q,p})_{p,q}
		\ar@{-}[rd]
		& & 
		\xi_\infty\ar@{-}[ld]
		\\
		& \text{invertible linear system}	\ar@{->}[rd]	\ar@{->}[ld] & \\
		N^* \ar@{<->}[dd]|{\Z_2\times \Z_2-\mathrm{symmetry}}   & & \Gamma_\infty + \mathcal{DG}|_{T\Gamma_\infty}\ar@{->}[dd]\\
		& \text{ geometry of } \{\obs_1,\obs_2\}\ar@{->}[dr]|{\text{canonical conjugacy $R$}}  & \\
		\text{ jet of } \mathcal{K}\text{ at } (s(1,2),0)\ar@{<->}[ur]|{\text{analyticity}}  &   & \text{ geometry of } \obs_3
	}
\end{equation*}

\medskip

\subsection{Previous results}\label{resul prec}

In this paper we will use and further develop many of the ideas that
appeared in the paper~\cite{BDSKL}, joint with P.~B\'alint.  Vaughn
Osterman found out that~\cite{BDSKL} contains a gap as-published; the
gap affects the main result of the paper, but a number of useful
technical results can be still recovered from the paper.  We list the
most important such results in this section; the reader will find in
Appendix~\ref{sec:bdkl-symmetric-case} self-contained proofs of them.
\begin{theorem}\label{corollaire lyapu}
  The Marked Lyapunov Spectrum (for palindromic orbits) is determined
  by the Marked Length Spectrum (see~\eqref{marked spectrum}
  and~\eqref{maked lepnov} for the definitions).
\end{theorem}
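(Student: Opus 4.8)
The plan is to reduce the determination of $\mathrm{LE}(\sigma)$, for an arbitrary periodic orbit, to the determination of the contracting eigenvalue $\lambda(\sigma)$ of the monodromy $D\mathcal{F}^p$, and then to read this eigenvalue off the exponential rate at which the lengths of a suitable family of shadowing orbits approach a limit. Indeed, by the definition~\eqref{eq:le-definition} it suffices to recover $\lambda(\sigma)$ from the Marked Length Spectrum for every admissible $\sigma$, the period $p$ being read directly from the word. This generalizes the mechanism already visible in Theorem~\ref{prop tperiode deux}: there, for a $2$-periodic orbit, the eigenvalue $\lambda$ governs the exponential decay of the length defect $\mathcal{L}(\tau\sigma^{2n})-(2n+1)\mathcal{L}(\sigma)-\mathcal{L}^\infty$, and extracting the decay rate recovers $\lambda$.

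First I would fix a periodic orbit $\gamma$ encoded by an admissible word $\sigma$ of period $p$, together with an auxiliary excursion word $\tau$ chosen so that $\tau\sigma^n$ is admissible for all large $n$ (this only constrains the first and last symbols of $\tau$, and many such $\tau$ exist). Let $\gamma_n$ denote the periodic orbit encoded by $\tau\sigma^n$. As $n\to\infty$, the periodic codings $(\tau\sigma^n)^\infty$ converge, uniformly on compact sets, to the homoclinic coding $\sigma^{-\infty}\tau\sigma^{+\infty}$, so that by expansiveness of the subshift the orbits $\gamma_n$ shadow $\gamma$ over longer and longer stretches and converge to a homoclinic orbit $H_\tau$ of $\gamma$. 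Crucially, every length $\mathcal{L}(\gamma_n)=\mathcal{L}(\tau\sigma^n)$ belongs to the Marked Length Spectrum, so the full sequence $(\mathcal{L}(\gamma_n))_n$ is an \mlsinv{}.

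The analytic heart of the argument is to establish an asymptotic expansion
\begin{equation*}
  \mathcal{L}(\gamma_n)=n\,\mathcal{L}(\sigma)+\mathcal{L}^\infty(\tau)+C_\tau\,\lambda(\sigma)^{n}+o\!\left(\lambda(\sigma)^{n}\right),\qquad n\to\infty,
\end{equation*}
with $C_\tau\in\R$. I would prove it by working in stable/unstable coordinates for $D\mathcal{F}^p$ near $\gamma$: along the shadowing stretch the transverse coordinates of the points of $\gamma_n$ decay, resp.\ grow, geometrically with ratio $\lambda(\sigma)$, the closest approach to $\gamma$ being of size $\lambda(\sigma)^{n/2}$. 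Since billiard orbits are critical points of the length functional $\sum_j h(s_j,s_{j+1})$ (see~\eqref{def hsspremie}) and $\gamma$ is a \emph{nondegenerate} critical point — nondegeneracy being exactly the hyperbolicity furnished by strict convexity, cf.~\eqref{matrice sl deux} — the length defect is quadratic in the transverse displacement and therefore inherits the rate $(\lambda(\sigma)^{n/2})^2=\lambda(\sigma)^{n}$. This is precisely the computation carried out for $p=2$ in Theorem~\ref{prop tperiode deux}, where the leading coefficient appears as $-C\,\mathcal{Q}(\cdot,\cdot)$. The main obstacle is exactly this step: controlling the shadowing uniformly in $n$ and, above all, showing that the leading coefficient does not vanish. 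Since Corollary~E must hold for \emph{all} orbits, I would argue the nonvanishing from the dispersing geometry rather than from a genericity hypothesis — the relevant second-variation form is definite along transverse directions, the analogue of the factor $\mathcal{Q}$ in Theorem~\ref{prop tperiode deux} — and, should it degenerate for one particular excursion, I would use the freedom to replace $\tau$ by another homoclinic word, as the coefficients $C_\tau$ cannot vanish for every choice.

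Granting the expansion, the extraction is elementary. The term $\mathcal{L}(\sigma)$ is read directly from $\mathcal{MLS}(\mathcal{D})$, hence $\mathcal{L}^\infty(\tau)=\lim_{n}\bigl(\mathcal{L}(\gamma_n)-n\,\mathcal{L}(\sigma)\bigr)$ is an \mlsinv{}; setting $d_n:=\mathcal{L}(\gamma_n)-n\,\mathcal{L}(\sigma)-\mathcal{L}^\infty(\tau)$, the root test gives $\lim_{n}|d_n|^{1/n}=\lambda(\sigma)$ whenever $C_\tau\neq0$ (restricting $n$ to a fixed residue class modulo $p$ removes the phase dependence of the coefficient and lets one use the ratio $d_{n+p}/d_n\to\lambda(\sigma)^{p}$ instead), which recovers $\lambda(\sigma)$, and finally $\mathrm{LE}(\sigma)=-\tfrac1p\log\lambda(\sigma)$. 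Since $\sigma$ was an arbitrary admissible word, this shows that the whole map $\mathrm{LE}\colon\mathrm{Adm}\to\R_+$ is determined by $\mathcal{MLS}(\mathcal{D})$, which is the assertion. For $2$-periodic orbits this is already contained in Theorem~\ref{prop tperiode deux}, and one may cross-check the outcome against the independent computation of $\lambda$ from the radii of curvature of Corollary~\ref{main corr} together with~\eqref{matrice sl deux}.
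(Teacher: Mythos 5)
First, a remark on the comparison itself: the statement is not proved in this paper — it is imported from~\cite{BDSKL} (Corollary~E) — so the only in-paper material to measure your proposal against is Theorem~\ref{prop tperiode deux} (= Theorem~A of~\cite{BDSKL}) and the analogous computation in Proposition~\ref{prop lourde un}. Your strategy — recover $\lambda(\sigma)$ as the exponential rate at which $\mathcal{L}(\tau\sigma^{n})-n\,\mathcal{L}(\sigma)$ converges to $\mathcal{L}^\infty$, using that the shadowing orbit's closest approach to $\sigma$ is of order $\lambda^{n/2}$ and that the length defect is quadratic in the transverse displacement — is exactly the mechanism behind those results, so you have identified the intended route, including the parity/phase dependence of the leading coefficient.

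The genuine gap is the one you yourself flag: the non-vanishing of the leading coefficient $C_\tau$, without which $\lim_n |d_n|^{1/n}$ can undershoot $\lambda(\sigma)$ and the extraction fails; and neither of your two proposed fixes is a proof. (i) The appeal to definiteness of the second variation does not apply: $d_n$ is a difference of lengths of orbits with \emph{different} itineraries compared segment by segment (the quantities $\Sigma_n^1+\Sigma_n^2$ of~\eqref{ligne 1}--\eqref{ligne 2}), so the fact that each orbit is a nondegenerate critical point of its own length functional imposes no sign on the difference. Concretely, in Theorem~\ref{prop tperiode deux} the coefficient is $-C\,\mathcal{Q}\big(2R_0/\mathcal{L}(\sigma),2R_1/\mathcal{L}(\sigma)\big)$, and $\mathcal{Q}$ is \emph{not} a definite form — its degree-two homogeneous part $(1+\lambda^2)X^2-(1+\lambda)^2XY+2\lambda Y^2$ vanishes identically on the diagonal $X=Y$ — so its non-vanishing at the relevant arguments is a genuine computation using the relation between $\lambda$, $R_0$, $R_1$ and $\mathcal{L}(\sigma)$, not a soft consequence of dispersing geometry. (ii) The assertion that ``the coefficients $C_\tau$ cannot vanish for every choice of $\tau$'' is precisely the statement that needs proof. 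The way the analogous issue is handled in this paper (Proposition~\ref{prop lourde un}, Corollary~\ref{premier cororrr}) is to factor the coefficient as $\xi_\infty^2$ times an explicit quadratic expression in the second derivatives of the generating function at the fixed point: the factor $\xi_\infty\neq 0$ does follow from transversality of the homoclinic intersection (Remark~\ref{remark inv man}, \eqref{transversalite}), but the second factor must still be shown to be nonzero by explicit computation. So your outline is the correct one, but the step you call ``the main obstacle'' is where the actual content of the cited proof lives, and your proposal does not close it.
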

In particular the above theorem allows to conclude the following result.
\begin{corollary}\label{main corr}
  Assuming that $\obs_2$ is the mirror image of $\obs_1$, then the (common) radius of curvature $R$ at the bouncing points of the $(12)$
  periodic orbit is a \mlsinv.
\end{corollary}

\section{The Birkhoff Normal Form in a neighborhood of period two
  orbits}\label{birkkkhhfo}

Let us fix a billiard table
$\mathcal{D}= \R^{2}\setminus \bigcup^{3}_{i = 1}\obs_{i}\in
\billiards$ and study the local dynamics near $2$-periodic orbits.  We
focus on the $2$-periodic orbit $\sigma=(12)$ (recall that our
symmetry assumption holds for those two scatterera); it has two
perpendicular bounces on the first and the second obstacles. Let us
denote by $x(0)=(s(0),0)$ and $x(1)=(s(1),0)$ the $(s,r)$ coordinates
(recall the definition at the beginning of Section~\ref{sec prelim})
of the points in this orbit, where $s(0)$, (resp. $s(1)$) is the
position of the point on the first (resp. second) obstacle.  We extend
this notation by periodicity by setting $x(k):=x(k\ \text{mod } 2)$,
for $k \in \Z$.  We let $\tau:=(32)$ and, given any integer
$n \geq 1$, set
\begin{align*}
 h_n =h_n(\sigma,\tau)&:=(\tau
  \sigma^{n})=(32\underbrace{1212\dots12}_{2n}).
\end{align*}
The word $h_n$ encodes a periodic orbit of period $2n+2$; observe that
all such orbit are palindromic.

Let
$\mathcal{F}=\mathcal{F}(\mathcal{D})\colon x_0=(s_0,r_0)\mapsto
x_1=(s_1,r_1)$ be the billiard map.  In such coordinates,
$\mathcal{F}$ is exact symplectic, with generating function
\begin{align}\label{eq:generating-function}
  h(s,s'):=\|\Upsilon(s)-\Upsilon(s')\|,
\end{align}
where $\|\Upsilon(s)-\Upsilon(s')\|$ is the Euclidean length of the
line segment between the two points identified by parameters $s$ and
$s'$. In other words, we have
\begin{equation}\label{hamildlit}
dh(s_0,s_1)=-r_0 ds_0+r_1 ds_1,
\end{equation}
i.e., $\partial_1 h(s_0,s_1)=-r_0$, and $\partial_2 h(s_0,s_1)=r_1$.

\begin{figure}[H]
	\begin{center}
		\includegraphics [width=12cm]{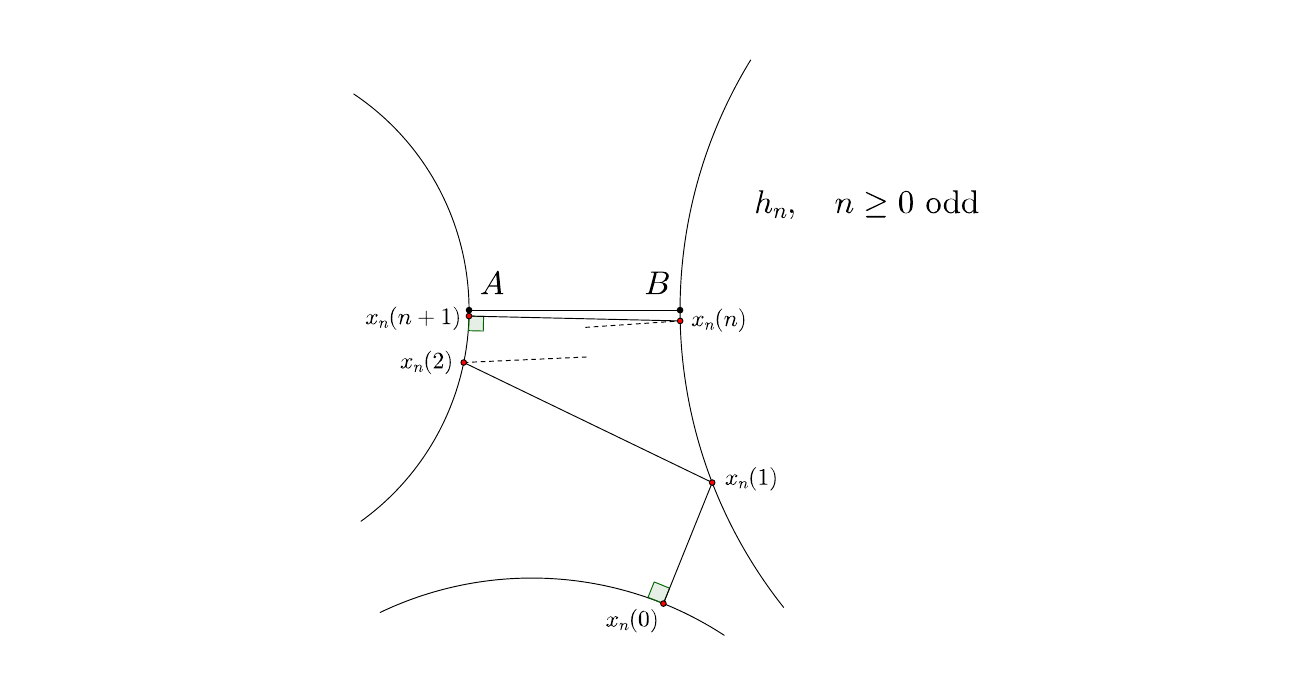}
		\caption{Trace of the orbits $h_n$ on the billiard table when
			$n \geq 0$ is odd.}
	\end{center}\label{fig:trace-orbits-h_n}
\end{figure}

Let us denote by
$\left(x_n(k) = (s_n(k),r_n(k))\right)_{k=0,\cdots,2n+1}$ the
coordinates of the points in the orbit $h_n$, where
$x_n(0)=(s_n(0),0)$ is the only collision on the third obstacle, and
$x_n(n+1)=(s_n(n+1),0)$ is the point of the orbit which is closest to
the periodic orbit $\sigma=(12)$ (see
Figure~\ref{fig:trace-orbits-h_n}). Again, thanks to the
$2n+2$-periodicity of $h_n$, we can extend those coordinates to any
$k \in
\Z$. 
By the palindromic symmetry, for any $k\in \{0,\cdots,n+1\}$,
it holds
\begin{equation}\label{palindr symm}
  x_n(2n+2-k)=\mathcal{I} (x_n(k)),
\end{equation}
where $\mathcal{I}(s,r):=(s,-r)$.  Recall that for a periodic orbit
encoded by a finite word $\varsigma$, we denote its length by
$\mathcal{L}(\varsigma)$ . We have
\begin{align*}
\mathcal{L}(h_n)-(n+1)\mathcal{L}(\sigma)=2 \sum_{k=0}^{n}\Big(
  h(s_n(k),s_n(k+1))-h(s(k),s(k+1))\Big).
\end{align*}

Let $h_\infty=h_\infty(\sigma,\tau)$ be the homoclinic trajectory
encoded by the infinite word
$(\sigma^\infty \tau \sigma^\infty)=(\dots 21212321212\dots)$. We
denote by $(x_\infty(k))_{k \in \Z}$ its coordinates, with
$x_\infty(k)=(s_\infty(k),r_\infty(k))$, for $k \in
\Z$. 
We label them in such a way that $x_\infty(0)$ is associated with the
unique bounce on the third obstacle, and $r_\infty(k)r_n(k) \geq 0$
for all $k\in \Z$.  Since $h_\infty$ is homoclinic to $\sigma$, the
quantity $h(s_\infty(k),s_\infty(k+1))-h(s(k),s(k+1))$ decays
exponentially fast as $k\to+\infty$ (see also Proposition~\ref{prop
  bdskl} below), and the following limit remains well
defined:
\begin{align}\label{eq:definition-Linfty}
  \mathcal{L}^\infty = \mathcal{L}^\infty(\sigma) &= \lim_{n\to \infty}(\mathcal{L}(h_n)-(n+1)\mathcal{L}(\sigma))\\\notag
                     &=2 \sum_{k=0}^{+\infty}\Big(h(s_\infty(k),s_\infty(k+1))-h(s(k),s(k+1))\Big).
\end{align}
Let us now write
\begin{align*}
  \mathcal{L}(h_n)-(n+1)\mathcal{L}(\sigma)-\mathcal{L}^\infty= \Sigma_n^1+\Sigma_n^2,
\end{align*}
where
\begin{subequations}
  \begin{align}
    \Sigma_n^1&:= 2\sum_{k=0}^{n}\Big( h(s_n(k),s_n(k+1))-h(s_\infty(k),s_\infty(k+1))\Big)\label{ligne 1},\\
    \Sigma_n^2&:=2\sum_{k=n+1}^{+\infty}\Big(h(s(k),s(k+1))-h(s_\infty(k),s_\infty(k+1))\Big).\label{ligne 2}
  \end{align}
\end{subequations}
\begin{figure}[H]
	\begin{center}
		\includegraphics [trim = 0cm 2cm 8cm 1cm, width=14cm]{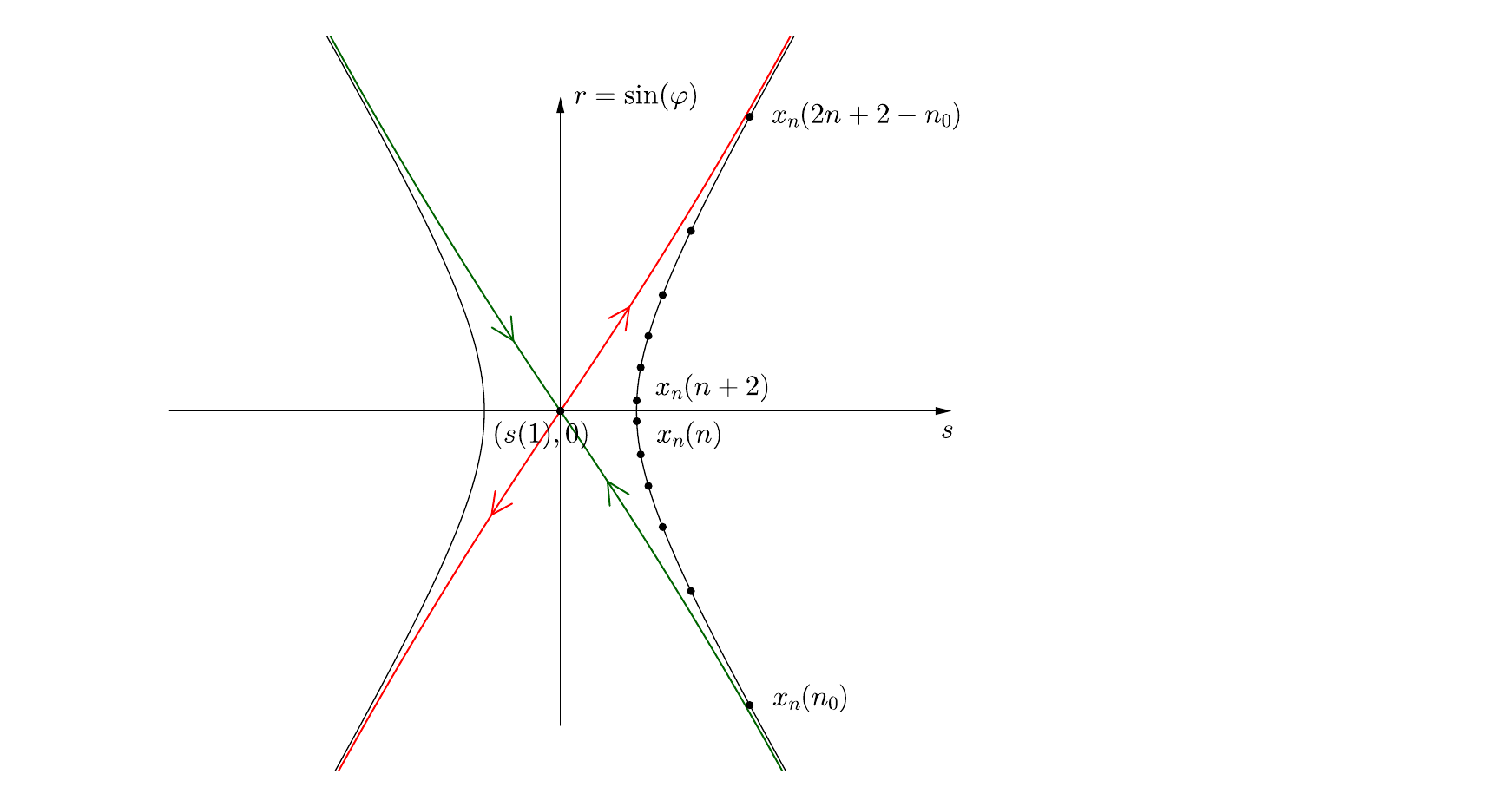}
		\caption{$(s,r)$-representation of the points in $h_n$ near the   orbit $(12)$.}
	\end{center}
\end{figure}
The point $x(1)=(s(1),0)$ is a saddle fixed point of $\mathcal{F}^2$
with eigenvalues $\lambda<1<\lambda^{-1}$.  The following proposition
was shown in~\cite{BDSKL}, it will also be proved later as
Proposition~\ref{coro proximite points}.
\begin{prop}
	\label{prop bdskl}
    There exists an integer $n_{0} \geq 1$ such that
\begin{equation*}
\begin{array}{rcll}
\|x_\infty(k)-x(k)\|&=&O(\lambda^{\frac{k}{2}}), &\text{for all } k \in \N,\nonumber\\
\|x_n(k)-x_\infty(k)\|&=&O(\lambda^{n-\frac {k}{2}}), &\text{for all }
n \geq n_0 \text{ and } k\in \{0,\cdots,n+1\}.
\end{array}
\end{equation*}
\end{prop}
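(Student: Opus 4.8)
The plan is to prove the two lines separately: the first is a statement about the local stable manifold of the saddle, while the second is a hyperbolic boundary-value argument that exploits the two perpendicular bounces of $h_n$.

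For the first line I would view each of $x(0),x(1)$ as a hyperbolic fixed point of $\mathcal{F}^2$ with eigenvalues $\lambda<1<\lambda^{-1}$. Since the forward branch of $h_\infty$ converges to the orbit $(12)$, the points $x_\infty(k)$ lie, for $k$ large, on the local stable manifold $W^s$ of $x(0)$ ($k$ even) or of $x(1)$ ($k$ odd). Applying the stable manifold theorem to $\mathcal{F}^2$ in an adapted metric gives $\|(\mathcal{F}^2)^m(y)-x(j)\|\le C\lambda^{m}\|y-x(j)\|$ for $y\in W^s$ near $x(j)$; writing $x_\infty(k)=(\mathcal{F}^2)^{\lfloor k/2\rfloor}(x_\infty(k\bmod 2))$ and absorbing the finitely many initial iterates into the constant yields $\|x_\infty(k)-x(k)\|=O(\lambda^{k/2})$. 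In particular $r_\infty(k)=O(\lambda^{k/2})$, which I reuse below.

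For the second line I would first note that, by the palindromic symmetry~\eqref{palindr symm} evaluated at $k=0$ and $k=n+1$, both $x_n(0)$ and $x_n(n+1)$ are perpendicular bounces, so they lie on the locus $\{r=0\}$; the same holds for $x_\infty(0)$. Thus $x_n(0)$ and $x_\infty(0)$ both lie on the one-dimensional arc $\CC:=\{r=0\}\cap\obs_3$, and their difference is a single scalar $t_n$ along $\CC$. I would determine $t_n$ from the closing condition at the far end: setting $F_n(x):=r(\mathcal{F}^{n+1}(x))$ for $x\in\CC$ near $x_\infty(0)$, the point $x_n(0)$ is characterised by $F_n(x_n(0))=0$. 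By the first line, $F_n(x_\infty(0))=r_\infty(n+1)=O(\lambda^{(n+1)/2})$, whereas uniform hyperbolicity forces $\mathcal{F}^{n+1}$ to map the tangent to $\CC$---which is uniformly transverse to the stable cone field---into the unstable cone and to expand it by a factor $\asymp\lambda^{-(n+1)/2}$, its image being transverse to $\{r=0\}$; hence $F_n'(x_\infty(0))\asymp\lambda^{-(n+1)/2}\ne 0$. Bounded distortion makes these bounds uniform in $n$, so the implicit function theorem gives $t_n=-F_n(x_\infty(0))/F_n'(x_\infty(0))$ plus higher-order terms, i.e. $\|x_n(0)-x_\infty(0)\|=O(\lambda^{n+1})=O(\lambda^{n})$, the case $k=0$.

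To reach general $k$ I would propagate forward. With $v_k:=x_n(k)-x_\infty(k)$ one has $v_{k+1}=D_{x_\infty(k)}\mathcal{F}\,v_k+O(\|v_k\|^2)$, and $v_0$, being tangent to $\CC$, has a nonzero unstable component of size $\asymp\|v_0\|=O(\lambda^{n+1})$. Decomposing $v_k$ along the stable and unstable subspaces over $h_\infty$, the unstable part grows like $\lambda^{-k/2}$ and the stable part decays, so the linear prediction is $\|v_k\|=O(\lambda^{n+1-k/2})=O(\lambda^{n-k/2})$ for all $0\le k\le n+1$. Since this bound stays uniformly small (at worst $O(\lambda^{(n-1)/2})$ at $k=n+1$), a discrete Gronwall bootstrap shows the quadratic errors remain genuinely higher order and do not accumulate, closing the induction. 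The main obstacle is precisely this uniform control over $\sim n$ iterations: one must check, via the uniform hyperbolicity and bounded-distortion estimates for dispersing billiards, that the expansion factor entering $F_n'$ is non-degenerate and of the exact order $\lambda^{-(n+1)/2}$ independently of $n$, and that the transversality of $\{r=0\}$ to the stable cone field persists along the whole orbit.
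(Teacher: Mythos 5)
Your argument is essentially correct, but note that this paper does not actually prove Proposition~\ref{prop bdskl}: it is imported verbatim from~\cite{BDSKL}, where it is derived from the uniform hyperbolicity of the non-escaping set together with the symbolic coding. The route taken there is the ``product-structure'' one: the codings of $h_n$ and $h_\infty$, read from position $k$, agree on a backward window of length $\sim 2n+2+k$ and a forward window of length $\sim 2n+2-k$, and expansiveness with uniform contraction rate $\lambda^{1/2}$ per bounce immediately yields $\|x_n(k)-x_\infty(k)\|=O(\lambda^{(2n+2-k)/2})=O(\lambda^{n-k/2})$ (and the first line is the special case where the forward window is infinite). Your alternative — stable-manifold theorem for the first line, then a one-dimensional closing function $F_n(x)=r(\mathcal{F}^{n+1}(x))$ on the arc $\{r=0\}\cap\mathcal{M}_3$ followed by forward propagation of $v_k=x_n(k)-x_\infty(k)$ — is a legitimate and more hands-on implementation of the same hyperbolic mechanism, and your exponent bookkeeping ($|t_n|=O(\lambda^{(n+1)/2}/\lambda^{-(n+1)/2})=O(\lambda^{n+1})$, then $\|v_k\|=O(\lambda^{n+1-k/2})$, with quadratic errors smaller by a factor $\lambda^{(n+1)/2}$) checks out. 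Two points deserve to be made explicit: first, identifying the zero of $F_n$ produced by the implicit function theorem with $x_n(0)$ is not circular only because expansiveness of the subshift already forces $x_n(0)\to x_\infty(0)$ qualitatively, so you should invoke that before localizing; second, the quantitative Newton/IFT step needs $|F_n||F_n''|/|F_n'|^2\ll 1$, which holds here since $F_n''=O(\lambda^{-(n+1)})$ gives a ratio $O(\lambda^{(n+1)/2})$ — worth recording, as it is exactly the uniform-in-$n$ distortion control you correctly flag as the crux.
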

The first estimate comes from the fact that the points in the homoclinic orbit
are on the stable manifold of the point $x(1)$. Moreover, as
$n\to\infty$, everything happens in a neighborhood of the unstable and
stable manifolds of the periodic orbit $\sigma$. Indeed, for the first
half of the orbit $h_n$, i.e., for $k\in \{0,\cdots,n+1\}$, the second
estimate above tells us that the points $x_n(k)$ shadow closely the
associated points $x_\infty(k)$ in the homoclinic orbit $h_\infty$,
and thus, stay close to the stable manifold of $x(1)$. On the other
hand, for the second half of $h_n$, i.e., for
$k\in \{n+1,\cdots,2n+2\}$, then by the palindromic
symmetry~\eqref{palindr symm}, the points $x_n(k)$ shadow closely the
points $\mathcal{I}(x_\infty(k))$, and thus, stay close to the
unstable manifold of $x(1)$.

Let us consider the case where $n$ is odd, i.e., $n=2m-1$ for some
integer $m \geq 1$, and let us study the dynamics of
$T:=\mathcal{F}^2$. Here, the period of $h_n=h_{2m-1}$ is equal to
$2n+2=4m$.  For simplicity, we assume in the following that
$s(1)=0$.\\

By~\eqref{hamildlit}, the map $\mathcal{F}$ is symplectic for the form $ds \wedge dr$, where $r:=\sin(\varphi)$. It follows that $T$ is symplectic too, i.e., $T^*(ds \wedge dr)=ds \wedge dr$.
Then, by~\cite{Mos} (see also~\cite{Ste}), there exists a neighborhood $\mathcal{U}$ of $(0,0)$ in the $(s,r)$-plane, and an analytic symplectic change of coordinates
\begin{align*}
  R\colon \left\{
  \begin{array}{rcl}
    \mathcal{U} & \to & \R^2,\\
    (s,r) & \mapsto & (\xi,\eta)
  \end{array}
 \right.
\end{align*}
with $R^{*}(d\xi \wedge d \eta)=ds \wedge dr$, which conjugates $T$ to
its Birkhoff Normal Form $N=R \circ T\circ R^{-1}$:
\begin{align*}
  N=N_\Delta\colon (\xi,\eta)\mapsto(\Delta(\xi\eta)\cdot\xi,\Delta(\xi\eta)^{-1}\cdot\eta),
\end{align*}
where $\Delta$ is an analytic function
\begin{align*}
  \Delta\colon z \mapsto \lambda + \sum_{k=1}^{+\infty} a_k z^k.
\end{align*}
The
numbers $(a_k)_{k \geq 1 }$ are called the \textit{Birkhoff
  invariants} (or \textit{Birkhoff coefficients}) of $T$ at $(0,0)$.

In the following, given two open sets
$\mathcal{V},\mathcal{V}'\subset \R^2$, we will denote by
$\mathrm{Sympl}^\omega(\mathcal{V},\mathcal{V}')$ the set of real
analytic symplectomorphisms from $\mathcal{V}$ to $\mathcal{V}'$ which
preserve the form $d\xi \wedge d\eta$.

\subsection{Canonical choice of the conjugacy map $R$}

The map $R$ mentioned in the previous section is not uniquely defined;
in this section we proceed to identify a canonical choice of
coordinates which respects additional symmetries of the periodic
orbit.

The following lemma ensures that the map $N$ is well defined in a
neighborhood of the vertical and horizontal axes.
\begin{claim}\label{easy claim}
  For all $(\xi,\eta) \in R(\mathcal{U})$, and for each $k \in \Z$,
  the point $(\Delta(\xi\eta)^{-k}\xi,\Delta(\xi\eta)^k \eta)$ is in
  the domain of definition of $N$. In particular, $N$ can be extended
  to each point in the orbit of $(\xi,\eta)$.
\end{claim}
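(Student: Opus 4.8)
The plan is to exploit the one structural feature of the Birkhoff Normal Form that trivializes the statement: the map $N$ preserves the product of its two coordinates. First I would record the elementary computation that if $(\xi',\eta')=N(\xi,\eta)=(\Delta(\xi\eta)\xi,\Delta(\xi\eta)^{-1}\eta)$, then $\xi'\eta'=\Delta(\xi\eta)\xi\cdot\Delta(\xi\eta)^{-1}\eta=\xi\eta$, so the factor $\Delta$ cancels and $z:=\xi\eta$ is a first integral of $N$. An immediate induction then gives $N^k(\xi,\eta)=(\Delta(z)^k\xi,\Delta(z)^{-k}\eta)$ for every $k\in\Z$ (for $k<0$ one uses that $N^{-1}(\xi,\eta)=(\Delta(z)^{-1}\xi,\Delta(z)\eta)$ makes sense because $\Delta(z)\neq 0$). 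Replacing $k$ by $-k$, the points $(\Delta(\xi\eta)^{-k}\xi,\Delta(\xi\eta)^k\eta)$ appearing in the statement are exactly the iterates $N^{-k}(\xi,\eta)$; hence the set in the lemma is precisely the full $N$-orbit of $(\xi,\eta)$, and the two assertions of the lemma coincide.

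Next I would justify that $\Delta(z)$ is a well-defined nonzero scalar on the relevant range. Since $\Delta(z)=\lambda+\sum_{k\geq 1}a_k z^k$ is analytic with $\Delta(0)=\lambda\in(0,1)$, there is $\delta>0$ such that $\Delta$ is analytic and bounded away from $0$ on $(-\delta,\delta)$. Shrinking the neighborhood $\mathcal{U}$ if necessary (which does not affect the germ of the conjugacy $R$), I may assume $R(\mathcal{U})\subset\{(\xi,\eta):|\xi\eta|<\delta\}$. The natural domain of the formula defining $N$, namely the set where $\Delta(\xi\eta)$ is defined and nonzero, then contains this hyperbolic strip.

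The two observations combine at once. Given $(\xi,\eta)\in R(\mathcal{U})$, put $z=\xi\eta$, so $|z|<\delta$. Each candidate point $(\Delta(z)^{-k}\xi,\Delta(z)^k\eta)$ has coordinate product $\Delta(z)^{-k}\xi\cdot\Delta(z)^k\eta=\xi\eta=z$, independent of $k$; since $|z|<\delta$ it lies in the strip where $N$ is defined. This gives the first claim, and since these points are the orbit $\{N^j(\xi,\eta)\}_{j\in\Z}$, the ``in particular'' statement follows. The only real point to watch, and the entire content of the lemma, is that ``domain of definition of $N$'' must be read as the maximal domain of its defining formula (equivalently the strip where $\Delta$ converges and does not vanish), rather than the a priori smaller set $R(\mathcal{U})$ on which $R$ conjugates $T$ to $N$. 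The gain is that an $N$-orbit never leaves the invariant level curve $\{\xi\eta=z\}$, along which $\Delta(z)$ is a fixed nonzero constant, so no genuine extension of $\Delta$ beyond the value already used at $(\xi,\eta)$ is ever required.
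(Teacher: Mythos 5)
Your proof is correct and rests on exactly the same observation as the paper's: the product $\xi\eta$ is invariant under $N$, so each candidate point has coordinate product $\xi\eta$ and $N$ is defined there precisely because $\Delta$ is defined at $\xi\eta$ for $(\xi,\eta)\in R(\mathcal{U})$. The extra remarks about shrinking $\mathcal{U}$ and the non-vanishing of $\Delta$ are harmless elaborations of the same argument.
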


\begin{proof}
  Let $k \in \Z$; clearly, $N$ is well defined at a point
  $(\Delta(\xi\eta)^{k}\xi,\Delta(\xi\eta)^{-k} \eta)$ if and only if
  $\Delta$ is well defined at the point
  $(\Delta(\xi\eta)^{k}\xi)\cdot (\Delta(\xi\eta)^{-k} \eta)=\xi\eta$,
  which is true, provided that $(\xi,\eta) \in R(\mathcal{U})$.
\end{proof}

As a consequence of this observation, there exists some $\epsilon_0>0$
such that the Birkhoff Normal Form $N$ is well defined on the
neighborhood
$\mathcal{V}_{\epsilon_{0}}=\{(\xi,\eta)\in \R^2:
|\xi\eta|<\epsilon_0\}$ of the coordinate axes $\{\xi=0\}$ and
$\{\eta=0\}$. In particular, $N$ preserves
$\mathcal{V}_{\epsilon_{0}}$ and acts merely as a translation along
hyperbolas..

Next, we study the symmetries of Birkhoff Normal Forms.
\begin{defi}[Centralizer of $N$]\label{defi centr}
  Let $0 < \epsilon_1 \leq \epsilon_{0}$; we define the symplectic
  centralizer $\mathcal{C}_N=\mathcal{C}_N(\epsilon_1)$ of $N$ as the
  set of maps
  $F \in
  \mathrm{Sympl}^\omega(\mathcal{V}_{\epsilon_{1}},\mathcal{V}_{\epsilon_{0}})$,
  such that
  \begin{equation*}
	F \circ N|_{\mathcal{V}_{\epsilon_1}} =N \circ F|_{\mathcal{V}_{\epsilon_1}}.
  \end{equation*}
  Notice that, since $N$ preserves $\mathcal V_{\epsilon}$, for any
  $\epsilon \leq \epsilon_0$, we can iterate the above expression and
  obtain
  \begin{equation}\label{iteration de N}
	F \circ N^j|_{\mathcal{V}_{\epsilon_1}} =N^j \circ F|_{\mathcal{V}_{\epsilon_1}},\quad\text{for all }j \in \Z.
  \end{equation}
\end{defi}
\begin{remark}\label{rmk:extension-centralizer}
  Observe that an analytic symplectic map $F$ is defined in a
  neighborhood $\mathcal V$ of $(0,0)$ and commutes with $N$, then it
  can always be analytically and symplectically extended to
  $\tilde{\mathcal V}= \bigcup_{k\in\mathbb Z}N^{k}\mathcal V$ using
  $N$ as follows. For $x\in\tilde{\mathcal V}$, let $k$ be so that
  $N^{k}x\in\mathcal V$, then $F(x) = N^{-k}\circ F\circ N^{k}$.  It
  is then immediate to show that this is an analytic, symplectic
  extension and it commutes with $N$ on $\tilde{\mathcal V}$.
\end{remark}
\begin{lemma}\label{generique centralisateur}
  Let $N$ be so that the Birkhoff invariants $(a_k)_{k \geq 1 }$ are
  not all equal to zero.  Then, provided that $\epsilon_1>0$ is chosen
  sufficiently small (depending on the sequence $(a_{k})_{k\geq 1}$),
  the centralizer $\mathcal{C}_N=\mathcal{C}_N(\epsilon_1)$ of $N$ is
  given by the set of maps of the form
\begin{align}\label{eq centrlaizer}
  F
  \colon
  (\xi,\eta)\mapsto(\widetilde{\Delta}(\xi\eta)\xi,\widetilde{\Delta}(\xi\eta)^{-1}\eta),\
  \text{for some }\widetilde{\Delta}\in
  C^\omega((-\epsilon_{1},\epsilon_{1}),\R^*).
\end{align}
\end{lemma}

\begin{proof}
  Clearly, any map of the above form commutes with $N$.  The fact that
  $\mathcal{C}_N$ consists only of such maps follows from the fact
  that any map in $\mathcal{C}_N$ has to respect the symmetries of
  $N$; in particular, it has to map hyperbolas to hyperbolas, and
  preserve the rate of contraction/expansion along each of them.

  More precisely, let us fix a sufficiently small $\epsilon_1>0$ to be
  chosen later and introduce the shorthand notation
  $\mathcal{V} = \mathcal{V}_{\epsilon_{1}}$.  Take
  $F \in \mathcal{C}_N$, and denote
  $F\colon
  (\xi,\eta)\mapsto(u(\xi,\eta),v(\xi,\eta))$.

  We first show that $F$ must necessarily fix the axes $\{\xi=0\}$ and
  $\{\eta=0\}$, hence the origin $(0,0)$.  For any $j\in\Z$,
  by~\eqref{iteration de N}, and after projection on the two
  coordinates, for each $(\xi,\eta)\in \mathcal{V}$, we get
\begin{align*}
  u(\Delta(\xi\eta)^{j}
  \xi,\Delta(\xi\eta)^{-j}\eta) &= \Delta(u(\xi,\eta)v(\xi,\eta))^{j} u(\xi,\eta),\\
  v(\Delta(\xi\eta)^{j}
  \xi,\Delta(\xi\eta)^{-j}\eta) &=  \Delta(u(\xi,\eta)v(\xi,\eta))^{-j} v(\xi,\eta).
\end{align*}
For $\eta=0$, we have $\Delta(\xi\eta)=\Delta(0)=\lambda$, so that
\begin{align*}
  \Delta(u(\xi,0)v(\xi,0))^{j} v(\lambda^j \xi,0)= v(\xi,0).
\end{align*}
By letting $j \to +\infty$, we deduce that $v(\xi,0)=0$.  Similarly,
$u(0,\eta)=0$ for all $\eta\in \R$. In particular, $F$ fixes the
coordinate axes $\{\xi=0\}$ and $\{\eta=0\}$, and therefore
$F(0,0)=(0,0)$.

Let us now write
\begin{align*}
  u(\xi,\eta)=\sum_{k,\ell \geq 0} u_{k,\ell} \xi^k \eta^\ell.
\end{align*}
Fix $(\xi,\eta)\in \mathcal{V}\setminus \{(0,0)\}$. The above equation
yields:
\begin{align*}
\Delta^j(u(\xi,\eta)v(\xi,\eta))u(\xi,\eta)&=u(\Delta^j(\xi\eta) \xi,\Delta^{-j}(\xi\eta)\eta)\\
&=\sum_{k,\ell\geq 0} u_{k,\ell} \Delta^{j(k-\ell)}(\xi\eta) \xi^k \eta^\ell.
\end{align*}
The left hand side goes to zero as $j$ goes to infinity, hence
$u_{k,\ell}=0$ for $\ell \geq k$. We deduce that for
$j \gg 1$,\footnote{ By symplecticity, $u_{1,0}v_{0,1}=1$ hence the
  right hand side is different from zero for $\xi\eta\ll 1$.}
\begin{align*}
  \Delta^j(u(\xi,\eta)v(\xi,\eta))u(\xi,\eta)=\Delta^j(\xi\eta)
  \cdot\eta^{-1}\sum_{k\geq 1} u_{k,k-1}(\xi\eta)^k+o(\lambda^j),
\end{align*}
and thus, as $j\to +\infty$,
\begin{align*}
  \left(\frac{\Delta(u(\xi,\eta)v(\xi,\eta))}{\Delta(\xi\eta)}\right)^j=\frac{\eta^{-1}\sum_{k\geq 1} u_{k,k-1}(\xi\eta)^{k}}{u(\xi,\eta)}+o(1).
\end{align*}
Since the right hand side does not depend on $j$, we obtain
\begin{equation}\label{eq delta}
\Delta(u(\xi,\eta)v(\xi,\eta))=\Delta(\xi\eta),
\end{equation}
and also
\begin{equation}\label{eq uxieta}
u(\xi,\eta)=\eta^{-1}\sum_{k\geq 1} u_{k,k-1}(\xi\eta)^k. 
\end{equation}
Let $c_1$ be the analytic function
$z \mapsto \sum_{k\geq 1} u_{k,k-1}z^k$, so that
$u(\xi,\eta)\eta=c_1(\xi\eta)$. Similarly, we have
$v(\xi,\eta)\xi=:c_2(\xi\eta)$ for some analytic function $c_2$. For
any $z \neq 0$, we set $c(z):= \frac{c_1(z)c_2(z)}{z}$.  Then, for
each $(\xi,\eta)\in \mathcal{V}\setminus\{(0,0)\}$, it holds
\begin{align*}
  u(\xi,\eta)v(\xi,\eta)=\frac{c_1(\xi\eta)c_2(\xi\eta)}{\xi\eta}=c(\xi\eta).
\end{align*}
Therefore, for $|c_\flat|$ sufficiently small, $F$ maps the hyperbola
$\{\xi\eta=c_\flat\}$ to the hyperbola $\{uv=c_\sharp\}$, with
$c_\sharp:=c(c_\flat)$.  By~\eqref{eq delta}, we also have
$\Delta(uv)=\Delta(\xi\eta)$, i.e.,
$\Delta(c(\xi\eta))=\Delta(\xi\eta)$. Note that
$\lim_{\xi\eta\to 0} c(\xi\eta)=\lim_{\xi\eta\to 0}
u(\xi,\eta)v(\xi,\eta)=0$.

By assumption, the Birkhoff invariants $(a_k)_{k \geq 1 }$ are not all
equal to zero.  Let $k_0 \geq 1$ be the smallest positive integer such
that $a_{k_0} \neq 0$. Then, $\Delta(z)-\lambda = a_{k_0}z^{k_0}+o(|z|^{k_0})$
for $|z|\ll 1$, and thus, assuming that $\epsilon_1>0$ was chosen sufficiently small,  $\Delta|_{(0,\epsilon_1)}$ is strictly
monotonic. It follows from the previous discussion
that for all $(\xi,\eta)\in \mathcal{V}$,
\begin{align*}
  \Delta(c(\xi\eta))-\lambda=\sum_{k \geq k_0} a_k(c(\xi\eta))^k=\sum_{k \geq k_0} a_k (\xi\eta)^k=\Delta(\xi\eta)-\lambda,
\end{align*}
and then, $c(\xi\eta)=\xi\eta$, by the strict monotonicity of $\Delta|_{(0,\epsilon_1)}$. In other words,
since $F$ maps hyperbolas to hyperbolas, the local non-degeneracy of $\Delta$
together with
\eqref{eq delta} compel  $F$  to fix each hyperbola near the origin, i.e.,
\begin{align*}
  c(\xi\eta)=u(\xi,\eta)v(\xi,\eta)=\xi\eta.
\end{align*}
For any $(\xi,\eta) \in \mathcal{V}$ such that $\xi\eta\neq 0$, let us set
\begin{align*}
  \widetilde{\Delta}(\xi\eta):=\frac{u(\xi,\eta)}{\xi}=\frac{c(\xi\eta)}{v(\xi,\eta)\xi}=\frac{\xi\eta}{v(\xi,\eta)\xi}=\frac{\eta}{v(\xi,\eta)}.
\end{align*}
For any $(\xi,\eta)\in \mathcal{V}\setminus \{(0,0)\}$, we also have
$\widetilde{\Delta}(\xi\eta)=\frac{u(\xi,\eta)}{\xi}=\sum_{j\geq 0}
u_{j+1,j}(\xi\eta)^{j}$, thus we set
$\widetilde{\Delta}(0):=\lim_{(\xi,\eta)\to
  (0,0)}\frac{u(\xi,\eta)}{\xi}=u_{1,0}=v_{0,1}^{-1}$.  We conclude
that $\widetilde{\Delta} \in C^\omega((-\epsilon_1,\epsilon_1),\R^*)$,
and for each $(\xi,\eta)\in \mathcal{V}$:
\begin{align*}
  F(\xi,\eta)=(u(\xi,\eta),v(\xi,\eta))=(\widetilde{\Delta}(\xi\eta)\xi,\widetilde{\Delta}(\xi\eta)^{-1}\eta).&\qedhere
\end{align*}
\end{proof}

\begin{remark}\label{remark deux point quatre}
In $(s,r)$-coordinates, the horizontal axis $\{r=0\}=\{\varphi=0\}$ plays a special role, because of the reflection symmetry of the billiard map:
\begin{align*}
  \mathcal{F}(s,r)=(s',r')\quad \Longleftrightarrow\quad \mathcal{F}(s',-r')=(s,-r).
\end{align*}
This time-reserval symmetry also exchanges the stable and unstable spaces.

In $(\xi,\eta)$-coordinates, the stable space is the horizontal axis
$\{\eta=0\}$, while the unstable space is the vertical axis
$\{\xi=0\}$. Moreover, $2$-periodic points are on the axis of symmetry
$\{r=0\}$ -- and more generally, all the points associated to
perpendicular bounces in palindromic orbits -- hence their stable and
unstable manifolds are symmetric with respect to $\{r=0\}$. It is thus
natural to require the new axis of symmetry to be $\{\xi=\eta\}$. By
the previous study, under some non-degeneracy condition, maps in the
centralizer of $N$ translate points along hyperbolas
$\{\xi\eta=\mathrm{cst}\}$, hence typically, they do not preserve the
axis $\{\xi=\eta\}$. As a consequence, there is a canonical choice for
the conjugacy map $R$ defined above, which preserves this symmetry.
\end{remark}

In the following, we assume that the Birkhoff invariants
$(a_k)_{k \geq 1 }$ are not all equal to zero, and that the
neighborhood $\mathcal{U}$ in the definition of the change of
coordinates $R$ introduced at the beginning of
Section~\ref{birkkkhhfo} is sufficiently small such that the
neighborhood $\mathcal{V}:=R(\mathcal{U})\subset \R^2$ of $(0,0)$
satisfies $\mathcal{V} \subset \mathcal{V}_{\epsilon_{1}}$, where
$\epsilon_1$ satisfies the conclusion of Lemma~\ref{generique
  centralisateur}.

\begin{corollary}\label{conjug canonique}
  Assume that the Birkhoff invariants $(a_k)_{k \geq 1 }$ are not all
  equal to zero.  Let
  $\mathcal U^{*} = \mathcal U \cap \mathcal F^{-2}\mathcal U$. Then,
  there exists a unique map $R_0
  \in\mathrm{Sympl}^\omega(\mathcal U,\R^2)$ such that
 \begin{align*}
   R_0\circ\mathcal{F}^2|_{\mathcal{U}^{*}}=N\circ R_0|_{\mathcal{U}^{*}},\qquad \text{and}\qquad R_0(\{(s,0)\ s \geq 0\})\subset\{(\xi,\xi),\ \xi\geq 0\}.
 \end{align*}
\end{corollary}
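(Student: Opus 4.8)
The plan is to describe all conjugacies of $T:=\mathcal{F}^2$ to $N$ and then single out the one with the prescribed symmetry. First I would record that the conjugacies form a torsor under the centralizer: if $R_0\in\mathrm{Sympl}^\omega(\R^2,\R^2)$ satisfies $R_0\circ T|_{\mathcal{U}}=N\circ R_0|_{\mathcal{U}}$, then $F:=R_0\circ R^{-1}$ commutes with $N$ on $\mathcal{V}=R(\mathcal{U})$, so by Lemma~\ref{generique centralisateur} we have $F=N_{\widetilde{\Delta}}$ for some $\widetilde{\Delta}\in C^\omega(\R,\R^*)$; conversely every $N_{\widetilde{\Delta}}\circ R$ is a conjugacy, since $N_{\widetilde{\Delta}}$ commutes with $N$. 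Thus the problem reduces to finding a unique $\widetilde{\Delta}\in C^\omega(\R,\R^*)$ for which $R_0=N_{\widetilde{\Delta}}\circ R$ sends $\{(s,0):s\ge 0\}$ onto $\{(\xi,\xi):\xi\ge 0\}$.

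The engine of the proof is the time-reversal symmetry $\mathcal{I}$. Set $\widetilde{\mathcal{I}}:=R\circ\mathcal{I}\circ R^{-1}$. Since $\mathcal{I}\circ T\circ\mathcal{I}=T^{-1}$, a direct computation gives $\widetilde{\mathcal{I}}\circ N\circ\widetilde{\mathcal{I}}=N^{-1}$; writing $J\colon(\xi,\eta)\mapsto(\eta,\xi)$, which also satisfies $J\circ N\circ J=N^{-1}$, the composition $J\circ\widetilde{\mathcal{I}}$ commutes with $N$, so Lemma~\ref{generique centralisateur} yields $J\circ\widetilde{\mathcal{I}}=N_{\widehat{\Delta}}$, i.e.\ $\widetilde{\mathcal{I}}(\xi,\eta)=(\widehat{\Delta}(\xi\eta)^{-1}\eta,\widehat{\Delta}(\xi\eta)\xi)$ for some $\widehat{\Delta}\in C^\omega(\R,\R^*)$. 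Now $\mathcal{I}$ fixes $\{r=0\}$ pointwise, hence $\widetilde{\mathcal{I}}$ fixes the analytic curve $\gamma:=R(\{(s,0):s\ge0\})$ pointwise; writing $\gamma(s)=(\xi(s),\eta(s))$ with $\gamma(0)=(0,0)$, the relation $\widetilde{\mathcal{I}}(\gamma(s))=\gamma(s)$ forces
\[ \widehat{\Delta}(\xi(s)\eta(s))=\frac{\eta(s)}{\xi(s)}. \]
This is the crucial structural fact: it says that along $\gamma$ the slope $\eta/\xi$ is an analytic function of the product $\xi\eta$. This would otherwise fail, for (using the preceding Remark, where $\{\eta=0\}=R(W^s)$, $\{\xi=0\}=R(W^u)$ and $\mathcal{I}$ exchanges $W^s,W^u$ while fixing $\{r=0\}$) the curve $\gamma$ is transverse to both axes, so $\xi(s)\eta(s)$ vanishes to second order at $s=0$ and $s$ cannot be recovered analytically from $\xi\eta$.

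With this in hand the diagonal condition becomes explicit: $N_{\widetilde{\Delta}}(\gamma(s))$ lies on $\{\xi=\eta\}$ iff $\widetilde{\Delta}(\xi(s)\eta(s))^2\,\xi(s)=\eta(s)$, i.e.\ iff $\widetilde{\Delta}(z)^2=\widehat{\Delta}(z)$, so I set $\widetilde{\Delta}:=\sqrt{\widehat{\Delta}}$. For this to lie in $C^\omega(\R,\R^*)$ I must know $\widehat{\Delta}>0$, which is the main point and a genuinely geometric input (the algebra above is sign-agnostic). It amounts to $\widehat{\Delta}(0)=\eta'(0)/\xi'(0)>0$, i.e.\ $\gamma\subset\{\xi\eta\ge0\}$. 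Using that $DR(0)$ conjugates $DT(0)$ to $\mathrm{diag}(\lambda,\lambda^{-1})$ and that $\mathcal{I}$ is anti-symplectic, one reduces $\mathrm{sign}\,\widehat{\Delta}(0)$ to $-\mathrm{sign}(v_1v_2)$, where $v=(v_1,v_2)$ is the stable eigenvector of $D\mathcal{F}^2$ at the perpendicular $2$-periodic point; a direct computation from~\eqref{matrice sl deux} (at $r=0$, $\nu=1$) shows the stable direction has negative slope in $(s,r)$-coordinates — in the curvature-symmetric model $v\propto(\sqrt{\mathscr{L}},-\sqrt{\mathscr{L}\mathcal{K}^2+2\mathcal{K}})$ — so $v_1v_2<0$ and $\widehat{\Delta}(0)>0$. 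Since $\widehat{\Delta}$ never vanishes it is positive on all of $\R$, whence $\widetilde{\Delta}=\sqrt{\widehat{\Delta}}\in C^\omega(\R,\R^*)$; replacing $\widetilde{\Delta}$ by $-\widetilde{\Delta}$ if needed (composing with $-\mathrm{Id}=N_{-1}$, which still commutes with $N$) sends the positive half-line to the positive diagonal, giving existence. For uniqueness, any two solutions differ by $N_{\mu}$ mapping $\{(\xi,\xi):\xi\ge0\}$ onto itself; from $N_{\mu}(\xi,\xi)=(\mu(\xi^2)\xi,\mu(\xi^2)^{-1}\xi)$ one gets $\mu(\xi^2)^2\equiv1$, hence $\mu\equiv1$ by analyticity and positive orientation, so $N_{\mu}=\mathrm{Id}$. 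I expect the sign $\widehat{\Delta}>0$ to be the real obstacle: it is the dispersing geometry, not the formalism, that selects $\{\xi=\eta\}$ over the anti-diagonal.
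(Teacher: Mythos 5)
Your proof is correct; the uniqueness half is essentially identical to the paper's (both reduce to Lemma~\ref{generique centralisateur} and the observation that an element of the centralizer fixing the positive half-diagonal is the identity). For existence, however, you take a genuinely different and in fact more careful route. The paper projects along the hyperbolas $\{\xi\eta=\mathrm{Const}\}$ onto the curve $R(\{r=0\})$ and sets $\delta:=\sqrt{\pi_2/\pi_1}$; this tacitly assumes two things: (a) that the slope $\eta/\xi$ along $R(\{r=0\})$ is an \emph{analytic function of $\xi\eta$} (nontrivial, since $\xi\eta$ vanishes to second order along that curve at the origin, so a generic analytic curve through $(0,0)$ transverse to both axes would produce a $\sqrt{\xi\eta}$-type dependence), and (b) that $\pi_2/\pi_1>0$, which the paper dispatches with the normalization ``$\xi(s,0),\eta(s,0)\ge 0$ after composing with $-\mathrm{id}$'' and a footnote, even though $-\mathrm{id}$ alone cannot fix the sign of the product. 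You supply both missing ingredients: (a) follows from your observation that $J\circ R\,\mathcal I\,R^{-1}$ lies in the centralizer, which forces $\eta(s)/\xi(s)=\widehat\Delta(\xi(s)\eta(s))$ with $\widehat\Delta$ analytic — in effect you front-load the content of Lemma~\ref{lemma reflexion map}; and (b) is your eigenvector computation from~\eqref{matrice sl deux}, which uses the dispersing geometry (all entries of $D\mathcal F^2$ positive at a perpendicular $2$-periodic point, hence $E^u$ of positive slope, $E^s$ of negative slope, and $\det[v^s\,|\,v^u]>0$) to conclude $\widehat\Delta(0)>0$. One small caveat: the clean reduction of $\operatorname{sign}\widehat\Delta(0)$ to $-\operatorname{sign}(v_1v_2)$ for the stable eigenvector alone also uses that the unstable slope is positive and that $\det[v^s\,|\,v^u]>0$; both hold here for the same positivity reason, so your conclusion stands, but the formula as stated suppresses these two sign factors. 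Overall your argument buys a complete justification of the analyticity and positivity that the paper's construction of $\delta$ leaves implicit, at the cost of invoking the time-reversal involution one subsection early.
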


\begin{proof}
  Let us start by showing the uniqueness of $R_0$. Let
  $R,\widetilde{R}$ be two such maps. Then
  $R^{-1}\circ N \circ R=\widetilde{R}^{-1} \circ N \circ
  \widetilde{R}=\mathcal{F}^2$, hence, letting
  $F:=\widetilde{R}\circ R^{-1}$, it holds
\begin{equation}\label{centr r r tilde}
  F\circ N =N\circ F.
\end{equation}
By construction of $R$ and $\widetilde{R}$, the map $F$ fixes the
horizontal and vertical axes $\{\xi=0\}$ and $\{\eta=0\}$; as noted in
Remark~\ref{rmk:extension-centralizer}, it can be extended to a
neighborhood $\mathcal V_{\epsilon}$ for some
$\epsilon < \epsilon_{1}$.  By \eqref{centr r r tilde},
$F\in \mathcal{C}_N(\epsilon)$ (recall Definition \ref{defi
  centr}). By Lemma \ref{generique centralisateur}, the centralizer
$\mathcal{C}_N(\epsilon)$ is reduced to the set of maps which
translate points along hyperbolas $\{\xi\eta=\text{cst}\}$, and then
\begin{align*}
  F(\xi,\eta)=(\widetilde{\Delta}(\xi\eta)\xi,\widetilde{\Delta}(\xi\eta)^{-1}\eta),
\end{align*}
for some real analytic map $\widetilde{\Delta}\in C^\omega((-\epsilon,\epsilon),\R^*)$, $\epsilon>0$. Since both $R$ and $\widetilde{R}$ fix the positive axis $\{(\xi,\xi),\ \xi\geq 0\}$, so does $N_{\widetilde{\Delta}}$, and then,
\begin{align*}
  (\widetilde{\Delta}(\xi^2)\xi,\widetilde{\Delta}(\xi^2)^{-1}\xi)=(\tilde\xi(\xi),\tilde\xi(\xi)),\qquad \forall\, \xi \in \R,
\end{align*}
for some function $\tilde\xi \colon \R\to \R$. By taking the product
of the two coordinates, we deduce that $\xi^2=(\tilde\xi(\xi))^2$, and
then $\tilde\xi(\xi)=\xi$, since $\xi,\tilde\xi\geq 0$.  We deduce
that $\widetilde{\Delta}\equiv 1$, and then
$F=\widetilde{R}\circ R^{-1}=\mathrm{id}$, which concludes the proof of
uniqueness.\\

To show the existence of such a map $R_0$, let us fix an analytic
symplectomorphism $R(s,r)=(\xi(s,r),\eta(s,r))$ such that
$R \circ T \circ R^{-1}=N$.  After possibly composing $R$ with
$-\mathrm{id}$,\footnote{ By symplecticity, we do not need to consider
  the reflections $(\xi,\eta)\mapsto(\xi,-\eta)$ or
  $(\xi,\eta)\mapsto(-\xi,\eta)$.}  we may assume that
$\xi(s,0),\eta(s,0) \geq 0$ for all $s \geq
0$. 
Let
$R^{-1}\colon
(\xi,\eta)\mapsto(\mathcal{S}(\xi,\eta),\mathcal{R}(\xi,\eta))$, and
let $\pi\colon(\xi,\eta)\mapsto (\pi_1(\xi\eta),\pi_2(\xi\eta))$ be
the projection along hyperbolas $\{\xi\eta=\mathrm{cst}\}$ onto the
set $\{\mathcal{R}(\xi,\eta)=0\}$.  We denote by
$\theta\in [0,\frac\pi 2]$ the angle between the positive parts of the
horizontal axis and of the unstable space of $T$. Since the coordinate
$\eta$ vanishes only on the stable space $\{r=\tan(\theta)s\}$, we may
define
$\delta(\xi\eta):=\sqrt{\frac{\pi_2(\xi\eta)}{\pi_1(\xi\eta)}}$, and
we set
$N_\delta\colon (\xi,\eta)\mapsto
(\delta(\xi\eta)\xi,\delta^{-1}(\xi\eta)\eta)$.  Clearly,
$N_{\delta}\in \mathrm{Sympl}^\omega(\R^2,\R^2)$, and
$N_{\delta} (\{\mathcal{R}(\xi,\eta)=0,\ \xi,\eta \geq
0\})=\{(\tilde{\xi},\tilde{\xi}),\ \tilde{\xi} \geq 0\}$. Then, the
map $R_0\colon(s,r)\mapsto N_{\delta}\circ R(s,r)$ satisfies the
required conditions:
\begin{align*}
  R_0(s,0)=(\delta(\xi(s,0)\eta(s,0))\xi(s,0),\delta(\xi(s,0)\eta(s,0))^{-1}\eta(s,0))\in \{(\tilde{\xi},\tilde{\xi}),\ \tilde{\xi}\geq 0\},
\end{align*}
and $R_0 T R_0^{-1}=N_{\delta}RTR^{-1}N_{\delta}^{-1}=N_{\delta}NN_{\delta}^{-1}=N$.
\end{proof}
We call \emph{Birkhoff coordinates} the coordinates $(\xi,\eta)$
obtained via the change of coordinates $R_{0}$.

\subsection{The time reversal involution in Birkhoff coordinates}

By the time reversal property, the map
$\mathcal{I}\colon (s,r)\mapsto (s,-r)$ conjugates the
billard map $\mathcal{F}$ to its inverse $\mathcal{F}^{-1}$, and thus,
$\mathcal{I} \circ T \circ \mathcal{I}=T^{-1}$.  Assume that the Birkhoff invariants $(a_k)_{k \geq 1 }$ are not all
equal to zero, and let $R_0\colon \mathcal{U} \to \R^2$ be the
canonical symplectic change of coordinates given by Lemma~\ref{conjug
	canonique}. Since
$R_0\circ T \circ R_0^{-1}=N$, we get
\begin{align*}
	(R_0\circ \mathcal{I}\circ R_0^{-1})\circ N \circ (R_0\circ \mathcal{I} \circ R_0^{-1})=R_0 \circ  T^{-1} \circ R_0^{-1}=N^{-1}.
\end{align*}
Set $\mathcal{I}^*:=R_0\circ \mathcal{I} \circ R_0^{-1}$. We thus have
\begin{equation}\label{anti commm}
\mathcal{I}^* \circ N \circ \mathcal{I}^* = N^{-1}.
\end{equation}

\begin{lemma}\label{lemma reflexion map}
	The map $\mathcal{I}^*$ is the reflection along the bisectrix
	$\{\xi=\eta\}$:
	\begin{align}\label{prop istar}
		\mathcal{I}^*=\mathcal{I}_0\colon (\xi,\eta)\mapsto(\eta,\xi).
	\end{align}
\end{lemma}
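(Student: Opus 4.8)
The plan is to exploit the fact that both $\mathcal{I}^*$ and the candidate reflection $\mathcal{I}_0$ anti-commute with $N$ and fix the positive diagonal $\{(\xi,\xi),\ \xi\geq 0\}$ pointwise, and then to deduce that their composition lies in the centralizer $\mathcal{C}_N^{\mathcal{V}}$ analyzed in Lemma~\ref{generique centralisateur}, which will force it to be the identity.

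First I would record the elementary properties of $\mathcal{I}^*=R_0\circ\mathcal{I}\circ R_0^{-1}$. Since $\mathcal{I}$ is an involution, so is $\mathcal{I}^*$; since $\mathcal{I}(s,r)=(s,-r)$ is anti-symplectic and $R_0$ is symplectic, $\mathcal{I}^*$ is an anti-symplectic analytic map; and by~\eqref{anti commm} it anti-commutes with $N$. Moreover, because $R_0$ maps $\{(s,0),\ s\geq 0\}$ onto $\{(\xi,\xi),\ \xi\geq 0\}$ (Corollary~\ref{conjug canonique}) and $\mathcal{I}$ fixes $\{(s,0)\}$ pointwise, the map $\mathcal{I}^*$ fixes the positive diagonal pointwise: for $(\xi,\xi)=R_0(s,0)$ one has $\mathcal{I}^*(\xi,\xi)=R_0(\mathcal{I}(s,0))=R_0(s,0)=(\xi,\xi)$. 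A one-line computation then shows that $\mathcal{I}_0\colon(\xi,\eta)\mapsto(\eta,\xi)$ shares all these properties: it is an anti-symplectic involution fixing the diagonal pointwise, and since $\xi\eta$ is invariant under $N$ so that $N^{-1}(\xi,\eta)=(\Delta(\xi\eta)^{-1}\xi,\Delta(\xi\eta)\eta)$, one checks directly that $\mathcal{I}_0\circ N\circ\mathcal{I}_0=N^{-1}$.

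The crux is then to consider $F:=\mathcal{I}_0\circ\mathcal{I}^*$. As a composition of two anti-symplectic analytic involutions, $F\in\mathrm{Sympl}^\omega(\R^2,\R^2)$. Rewriting the anti-commutation relations as $\mathcal{I}^*\circ N=N^{-1}\circ\mathcal{I}^*$ and $\mathcal{I}_0\circ N^{-1}=N\circ\mathcal{I}_0$ (valid because each map is an involution), I compute
\[
F\circ N=\mathcal{I}_0\circ\mathcal{I}^*\circ N=\mathcal{I}_0\circ N^{-1}\circ\mathcal{I}^*=N\circ\mathcal{I}_0\circ\mathcal{I}^*=N\circ F,
\]
so that $F$ commutes with $N$ on $\mathcal{V}$, i.e.\ $F\in\mathcal{C}_N^{\mathcal{V}}$. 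By Lemma~\ref{generique centralisateur} (whose hypothesis, that not all Birkhoff invariants vanish, is in force) we obtain $F=N_{\widetilde{\Delta}}$ for some $\widetilde{\Delta}\in C^\omega(\R,\R^*)$. Finally, since both $\mathcal{I}_0$ and $\mathcal{I}^*$ fix the positive diagonal pointwise, so does $F$; evaluating $F(\xi,\xi)=(\widetilde{\Delta}(\xi^2)\xi,\widetilde{\Delta}(\xi^2)^{-1}\xi)=(\xi,\xi)$ for $\xi\geq 0$ forces $\widetilde{\Delta}(\xi^2)=1$ on an interval, hence $\widetilde{\Delta}\equiv 1$ by analyticity and $F=\mathrm{id}$. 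Therefore $\mathcal{I}^*=\mathcal{I}_0^{-1}=\mathcal{I}_0$, as claimed.

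I expect the only genuine subtlety to be the bookkeeping of domains: one must ensure that $F$ and the commutation relation $F\circ N=N\circ F$ are defined on a common neighborhood $\mathcal{V}$ on which Lemma~\ref{generique centralisateur} applies. This can be arranged by choosing the initial neighborhood $\mathcal{U}$ to be symmetric under $\mathcal{I}$ and shrinking if necessary; once the identity $\mathcal{I}^*=\mathcal{I}_0$ is established on such a neighborhood, it propagates everywhere by analytic continuation.
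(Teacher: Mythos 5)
Your argument is correct, but it is not the route the paper takes in its displayed proof; it is, however, precisely the alternative argument that the authors sketch in the remark immediately following Lemma~\ref{lemma reflexion map}. The paper's own proof is a direct power-series computation: writing $\mathcal{I}^*=(u,v)$, it first shows $u(\xi,0)=v(0,\eta)=0$ (so the stable and unstable axes are exchanged), then iterates the anti-commutation relation $\mathcal{I}^*\circ N^{j}=N^{-j}\circ\mathcal{I}^*$ to kill all but one monomial in each coordinate, obtaining $u=u_{0,1}\eta$ and $v=v_{1,0}\xi$, and finally pins down $u_{0,1}=v_{1,0}=1$ using anti-symplecticity together with the normalization of $R_0$ on the diagonal. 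Your proof instead observes that $\mathcal{I}_0$ satisfies the same anti-commutation relation as $\mathcal{I}^*$, so $F=\mathcal{I}_0\circ\mathcal{I}^*$ lies in the centralizer $\mathcal{C}_N^{\mathcal{V}}$, and then invokes Lemma~\ref{generique centralisateur} plus the pointwise fixing of the positive diagonal to force $\widetilde{\Delta}\equiv 1$. This is cleaner and shorter because it reuses the centralizer computation already done (the paper's direct proof essentially re-runs that computation from scratch inside the proof of the lemma), at the price of depending on the non-vanishing hypothesis on the Birkhoff invariants --- which is in force throughout that section anyway, since it is needed to define $R_0$ canonically. Two small points in your favor: you correctly upgrade ``preserves the bisectrix'' to ``fixes the positive diagonal pointwise'' (which follows from $\mathcal{I}$ fixing $\{r=0\}$ pointwise and makes the final step immediate), and your closing caveat about arranging a common $\mathcal{I}$-symmetric neighborhood is exactly the domain bookkeeping the paper performs at the start of Section~\ref{section extension}.
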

\begin{proof}
	Let us write $\mathcal{I}^*(\xi,\eta)=(u,v)$, with $u=u(\xi,\eta)$
	and $v=v(\xi,\eta)$. For every $(\xi,\eta)\in \R^2$, we have
	\begin{align}\label{first rem}
		u(\xi,0)=0  ,\qquad v(0,\eta)=0.
	\end{align}
	In other words, $\mathcal{I}^*$ maps the horizontal axis
	$\{\eta=0\}=\{(\xi,0): \xi \in \R\}$ to the vertical axis
	$\{\xi=0\}=\{(0,\eta): \eta \in \R\}$, and vice versa. Indeed it
	follows fom the definition of the map $N$ that $\{\eta=0\}$ is the
	stable manifold of $(0,0)$, since $N^j(\xi,0)=(\lambda^j \xi,0)$,
	for $j\geq 0$, and similarly, $\{\xi=0\}$ is the unstable manifold
	of the origin. Moreover,~\eqref{anti commm} implies that $N$
	exchanges the stable manifold with the unstable manifold: given
	$p \in \R^2$ such that $\lim_{j \to +\infty} N^j(p)=(0,0)$, then its
	image $p^*:=\mathcal{I}^*(p)$ satisfies
	\begin{align*}
		\lim_{j \to+\infty} N^{-j} (p^*)=\mathcal{I}^* (\lim_{j \to+\infty} N^j(p))=\mathcal{I}^* (0,0)=(0,0).
	\end{align*}
	Here, we have used that $\mathcal{I}^*(0,0)=(0,0)$ (by~\eqref{first
		rem}).

	Moreover, by~\eqref{anti commm}, we know that
	$\mathcal{I}^*\circ N^{-1}=N\circ \mathcal{I}^*$.
	Therefore, 
	given any $(\xi,\eta)\in \R^2$, we obtain
    \begin{align*}
      \mathcal{I}^*(\Delta(\xi\eta)^{-1}\xi,\Delta(\xi\eta)\eta)=(\Delta(u(\xi,\eta)v(\xi,\eta))u(\xi,\eta),\Delta(u(\xi,\eta)v(\xi,\eta))^{-1}v(\xi,\eta)).
    \end{align*}
	In particular, by considering the projection on the first coordinate, we get
	\begin{equation}\label{rel for u}
	u(\Delta(\xi\eta)^{-1}\xi,\Delta(\xi\eta)\eta)=\Delta(u(\xi,\eta)v(\xi,\eta))u(\xi,\eta).
	\end{equation}
	For $\xi=0$, by the power series expansion of $\Delta$, and by~\eqref{first rem}, we have $\Delta(\xi\eta)=\Delta(0\cdot \eta)=\Delta(0)=\lambda$ and $\Delta(u(\xi,\eta)v(\xi,\eta))=\Delta(u(0,\eta)v(0,\eta))=\Delta(u(0,\eta)\cdot 0)=\Delta(0)=\lambda$. We deduce from~\eqref{rel for u} that for any $\eta \in \R$,
    \begin{align*}
      u(0,\lambda \eta)=\lambda u(0,\eta).
    \end{align*}
	By considering the power series expansion $u(\xi,\eta)=\sum_{k,\ell \geq 0}u_{k,\ell} \xi^k \eta^\ell$, this relation implies that $u_{0,\ell}=0$ for all $\ell \neq 1$, and then,
    \begin{align*}
      u(0,\eta)=u_{0,1} \eta.
    \end{align*}
	Besides, for any $(\xi,\eta)\in \R^2$, and any $j \geq 0$, we have $\mathcal{I}^*\circ N^j=N^{-j}\circ \mathcal{I}^*$. Similarly, by projecting on the first coordinate, we obtain
	\begin{equation}\label{rel for u bis}
	u(\Delta(\xi\eta)^{j}\xi,\Delta(\xi\eta)^{-j}\eta)=\Delta(u(\xi,\eta)v(\xi,\eta))^{-j}u(\xi,\eta).
	\end{equation}
	For any $j \geq 0$, we have
	\begin{align*}
		u(\xi,\eta)&=\Delta(u(\xi,\eta)v(\xi,\eta))^j\cdot u(\Delta(\xi\eta)^j \xi,\Delta(\xi\eta)^{-j} \eta)\\
		&=\left(\frac{\Delta(u(\xi,\eta)v(\xi,\eta))}{\Delta(\xi\eta)}\right)^{j}\cdot \left(u_{0,1} \eta+\Delta(\xi\eta)^{j}\sum_{k=1}^{+\infty} \sum_{\ell=0}^{+\infty} u_{k,\ell}\Delta(\xi\eta)^{j(k-\ell)} \cdot \xi^k \eta^\ell\right),
	\end{align*}
	where we have used that $u(0,\Delta(\xi\eta)^{-j} \eta)=u_{0,1} \Delta(\xi\eta)^{-j} \eta$.
	Since the left hand side is bounded independently of $j$, then, arguing as  in Lemma~\ref{generique centralisateur}, we get $\Delta(u(\xi,\eta)v(\xi,\eta))=\Delta(\xi\eta)$, and
    \begin{align*}
      u(\xi,\eta)=u_{0,1} \eta.
    \end{align*}
	Similarly, there exists $v_{1,0} \in \R$ such that $v(\xi,\eta)=v_{1,0} \xi$. Since $\mathcal{I}^*$ is anti-symplectic ($R_0$ is symplectic and $\mathcal{I}$ is anti-symplectic), we have
    \begin{align*}      d\xi \wedge d\eta=dv \wedge du=(u_{0,1}v_{1,0}) d\xi \wedge d\eta,
    \end{align*}
	and then $u_{0,1},v_{1,0} \in \R^*$, and $v_{1,0}=u_{0,1}^{-1}$. Besides, $R_0^{-1}=(S,\Phi)$ maps $\{(\xi,\xi),\ \xi \geq 0\}$ to $\{(s,0),\ s \geq 0\}$, hence for any $\xi\geq 0$, we have
    \begin{align*}
      (u_{0,1}\xi,u_{0,1}^{-1}\xi)=\mathcal{I}^*(\xi,\xi)=R_0\circ \mathcal{I}(S(\xi,\xi),0)=R_0(S(\xi,\xi),0)\in \{(\xi,\xi),\ \xi \geq 0\},
    \end{align*}
	and then $u_{0,1}=v_{1,0}=1$. We conclude that
	\begin{align*}
		\mathcal{I}^*(\xi,\eta)&=(\eta,\xi).\qedhere
	\end{align*}
\end{proof}

\begin{remark}
	Note that~\eqref{prop istar} can also be obtained as follows:  by~\eqref{anti commm}, both $\mathcal{I}^*=R_0\mathcal{I}R_0^{-1}$ and $\mathcal{I}_0$ conjugate $N$ with $N^{-1}$, hence $\mathcal{I}^*\circ \mathcal{I}_0^{-1}$ is in the centralizer of $N$. By Lemma~\ref{generique centralisateur} and since $\mathcal{I}^*,\mathcal{I}_0$ preserve the bisectrix $\{\xi=\eta\}$ (as $R_0$ does), we conclude that $\mathcal{I}^*=\mathcal{I}_0$.
\end{remark}

\section{Extension of the Birkhoff coordinates along the separatrices}\label{section extension}

Let us fix a billiard table $\mathcal{D}\in \billiards$.  In this
section, we consider the Birkhoff Normal Form $N$ introduced above for the $2$-periodic $(12)$ and
we assume that the Birkhoff invariants $(a_k)_{k \geq 1 }$ are not all
equal to zero.  We   denote by $R_0\colon \mathcal{U} \to \R^2$ the
canonical symplectic change of coordinates given by Lemma~\ref{conjug
  canonique} and we set $\mathcal{V}:=R_0(\mathcal{U})$.  We will also
use the notation introduced at the beginning of
Section~\ref{birkkkhhfo}.

Up to this point, the model for the dynamics of $T$ given by its
Birkhoff Normal Form $N$ only accounts for the dynamics in a
neighborhood of the $2$-periodic orbit.  In this section, we explain how
to extend this model in such a way that it also describes the global
dynamics of the palindromic orbits $(h_n)_{n \geq 1}$ introduced
earlier. In the $(\xi,\eta)$-coordinates, the only non-wandering point
of the map $N$ is the origin $(0,0)$; to describe recurrence
properties of the dynamics of $T$, we explain a \emph{gluing}
construction for some points in this model, for which we have more
information due to additional symmetries.  This is, in particular, the
case for the palindromic orbits $(h_n)_{n \geq 1}$, which have two
symmetries, and for which we have a good control on the gluing
map. Moreover, for $n$ large enough, those orbits always stay in a
neighborhood of the separatrices, and the local dynamics of $N$ near
the fixed point is sufficient to describe them, based on the relation
$NR_0=R_0T$ which can be used to extend the system of coordinates by
the dynamics. Although this relation is only true locally (some points
escape in the billiard dynamics, so the map $T$ is not everywhere
defined), it is sufficient for our purpose, which consists in
determining explicitly a link between the Birkhoff invariants and the
Lyapunov exponents of the palindromic orbits.  The extension of the
coordinates to a neighborhood of the separatrices that we describe in
the following can be seen as a hyperbolic analogue of the local
coordinates in a neighborhood of the boundary given by the Birkhoff
Normal Form in the elliptic setting, which was used, for instance,
in~\cite{CdV3}.\\

After possibly replacing $\mathcal{U}$ with
$\mathcal{U}\cap \mathcal{I} (\mathcal{U})$, where
$\mathcal{I}\colon (s,r)\mapsto(s,-r)$, we can assume that
the neighborhood $\mathcal{U}$ is symmetric with respect to the axis
$\{r=0\}$.  For any sufficiently large odd integer $n=2m-1$, and
after a certain time, Proposition~\ref{prop bdskl} implies that the
iterates under $T=\mathcal{F}^2$ of the point $x_n(1)$ in the
palindromic orbit $h_n$ are contained in the neighborhood
$\mathcal{U}$.  More precisely, there exists $m_0\geq 0$ such that if
$n=2m-1\geq n_0:=2m_0-1$, we have $x_{n}(2k+1) \in \mathcal{U}$, for
all $k \in
\{m_0,m_0+1,\cdots,2m-m_0-1\}$. 
We denote by $(\xi_{n}(2k+1),\eta_{n}(2k+1))$ the coordinates of the
point $R_0(x_{n}(2k+1))$.  The contraction rate $\Delta$ is constant
along this orbit segment: for any integer
$k \in \{m_0,m_0+1,\cdots,2m-m_0-1\}$, we have
\begin{align*}
  \Delta(\xi_{n}(2k+1)\eta_{n}(2k+1))=\Delta(\xi_{n}(2m-1)\eta_{n}(2m-1))=:\Delta_{n}.
\end{align*}

By Lemma~\ref{easy claim}, the map $N$ is well defined in a
neighborhood of the coordinate axes.  In particular, due to the
relations $R_0=NR_0T^{-1}$ and $R_0=N^{-1}R_0T$, 
it is possible to
extend the system of coordinates given by $R_0$ to a neighborhood of
the separatrices as follows.

Let $\mathcal{NE}^{- 1}$ be the set of all parameters
$(s,r)\in \mathcal{M}$ in the collision space such that
$T^{-1}(s,r)$ is well defined, i.e., such that both
$\mathcal{F}^{-1}(s,r)$ and $\mathcal{F}^{-2}(s,r)$ are well defined.  For
any $(s,r) \in T^{- 1}(\mathcal{U}\cap \mathcal{NE}^{-1})$,
we have $T (s,r) \in \mathcal{U}$, thus we can set
$R_{-1}(s,r):=N^{-1} R_0 T(s,r)$. By
induction, for each integer $\ell \geq 2$, we define
\begin{align*}
  \mathcal{NE}^{- \ell}:=\{(s,r)\in \mathcal{NE}^{-1}: T^{-1}(s,r) \in \mathcal{NE}^{-(\ell-1)}\},
\end{align*}
and for each
$(s,r)\in \mathcal{U}^{-\ell}:=T^{-
  \ell}(\mathcal{U}\cap\mathcal{NE}^{- \ell})$, we set
\begin{align*}
  R_{- \ell}(s,r):=N^{-1} R_{- (\ell-1)} T(s,r)=N^{- \ell} R_0 T^{ \ell}(s,r).
\end{align*}

On
$\mathcal{U}^{- (\ell-1)}\cap\mathcal{U}^{-
  \ell}$, it holds $N^{-1} R_{-(\ell-1)} T=R_{-(\ell-1)}$,
hence $R_{- \ell}$ coincides with $R_{-(\ell-1)}$ on this set. We
define $R_{-}$ as the map obtained in this way by extending the
conjugacy $R_0$ to a neighborhood of the arc of the stable manifold
between the points $(0,0)$ and
$x_\infty(1)$. 
More precisely, $R_-$ is defined on
$\mathcal{U}^{-}:=\bigcup_{\ell=0}^{m_0}\mathcal{U}^{-
  \ell}$ as follows: for any $\ell \in \{0,\cdots,m_0 \}$
and
$(s,r) \in \mathcal{U}^{- \ell}\backslash\bigcup_
{k=0}^{\ell-1}\mathcal{U}^{- k}$, we set
$R_{-}(s,r):=R_{- \ell}(s,r)$. In a symmetric way, we
define $\mathcal{U}^{+}$ and extend $R_0$ to a map
$R_+$ defined on a neighborhood of the arc of the unstable manifold
between the points $(0,0)$ and $x_\infty(-1)$.

By the above remark, for any integer $n=2m-1 \geq n_0$, every point $h_n(k)$  labeled with some odd integer $k$ belongs to the set
$ \mathcal{U}^{+ }\cup\mathcal{U}^{-}$, thus
it has an image either by $R_+$ or $R_-$. We let $R$ be the map
defined on
$ \mathcal{U}^{+ }\cup\mathcal{U}^{-}$ by
$R|_{\mathcal{U}^{\pm}}:=R_\pm$. For each
$k \in \{0,\cdots,m-m_0-1\}$, we have
$R(x_n(2m\pm(2 k+1)))=R_{0}(x_n(2m\pm(2 k+1)))$, while for
$k \in \{m-m_0,\cdots,m-1\}$, the point
$R(x_n(2m\pm(2 k+1)))=R_{\pm (m-k)}(x_n(2m\pm(2 k+1)))$ is well
defined.

Moreover,  
for some neighborhood $\mathcal{U}_n \subset \mathcal{U}$ of the point $x_n(2m)=x_n(n+1)$, it follows from the above definitions that
\begin{equation}\label{eq extenenen R pal}
R \circ T^{\pm k}|_{\mathcal{U}_n}=N^{\pm k} \circ R|_{\mathcal{U}_n}=N^{\pm k} \circ R_0|_{\mathcal{U}_n},\qquad \forall\, k \in \{0,\cdots,m-1\}.
\end{equation}
More generally, 
for each $x \in \mathcal{U}^{+ }\cup\mathcal{U}^{-}$ and each integer $k$ such that $x,\cdots, T^k(x)\in \mathcal{U}^{+ }\cup\mathcal{U}^{-}$, 
we have
\begin{equation}\label{eq extenenen R}
R \circ T^{k}(x)=N^{k} \circ R(x).
\end{equation}

The next lemma says that after the extension, the image of the time reversal involution is still given by the map $\mathcal{I}_0\colon (\xi,\eta)\mapsto (\eta,\xi)$:
\begin{lemma}
	The extended system of coordinates $R$ satisfies
	\begin{equation}\label{plus general}
	R\circ \mathcal{I} \circ R^{-1} = \mathcal{I}_0.
	\end{equation}
\end{lemma}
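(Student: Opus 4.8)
The plan is to reduce the identity on the extended domain $\mathcal{U}^+\cup\mathcal{U}^-$ to the base case already established on $\mathcal{U}$, and then propagate it along the separatrices using the dynamics. Three intertwining relations are used throughout: the time reversal property $\mathcal{I}\circ T\circ \mathcal{I}=T^{-1}$, which gives $\mathcal{I}\circ T^{\ell}=T^{-\ell}\circ\mathcal{I}$ for all $\ell$; the anti-commutation $\mathcal{I}_0\circ N\circ\mathcal{I}_0=N^{-1}$, which follows from~\eqref{anti commm} together with~\eqref{prop istar} and yields $\mathcal{I}_0\circ N^{\ell}=N^{-\ell}\circ\mathcal{I}_0$; and the base case $R_0\circ\mathcal{I}=\mathcal{I}_0\circ R_0$ on $\mathcal{U}$, which is exactly Lemma~\ref{lemma reflexion map}.

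First I would record the symmetry of the extension domains. Since $\mathcal{U}$ is chosen $\mathcal{I}$-invariant and $\mathcal{I}$ conjugates $T$ to $T^{-1}$ (hence interchanges the sets $\mathcal{NE}^{-\ell}$ with their forward analogues), the relation $\mathcal{I}\circ T^{-\ell}=T^{\ell}\circ\mathcal{I}$ shows that $\mathcal{I}(\mathcal{U}^{-\ell})=\mathcal{U}^{+\ell}$, where $\mathcal{U}^{-\ell}=T^{-\ell}(\mathcal{U}\cap\mathcal{NE}^{-\ell})$ and $\mathcal{U}^{+\ell}$ denotes its symmetric (forward) analogue. In particular $\mathcal{I}$ carries $\mathcal{U}^-$ onto $\mathcal{U}^+$ and vice versa, so it suffices to establish~\eqref{plus general} on $\mathcal{U}^-$.

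The core computation is then a direct chain. Fix $x\in\mathcal{U}^{-\ell}$ and set $y:=T^{\ell}(x)\in\mathcal{U}$, so that $R(x)=R_{-\ell}(x)=N^{-\ell}R_0(y)$ by the definition of the extension. Since $\mathcal{I}(x)\in\mathcal{U}^{+\ell}$ with $T^{-\ell}(\mathcal{I}(x))=\mathcal{I}(T^{\ell}(x))=\mathcal{I}(y)\in\mathcal{U}$, the symmetric extension formula gives $R(\mathcal{I}(x))=R_{+\ell}(\mathcal{I}(x))=N^{\ell}R_0(\mathcal{I}(y))$. Applying the base case $R_0\mathcal{I}=\mathcal{I}_0 R_0$ and then the anti-commutation $N^{\ell}\mathcal{I}_0=\mathcal{I}_0 N^{-\ell}$, I obtain
\[
R(\mathcal{I}(x))=N^{\ell}\mathcal{I}_0 R_0(y)=\mathcal{I}_0 N^{-\ell}R_0(y)=\mathcal{I}_0 R(x),
\]
which is precisely~\eqref{plus general} on $\mathcal{U}^-$. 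The symmetric argument, starting from $x\in\mathcal{U}^{+\ell}$, yields the identity on $\mathcal{U}^+$, and on the overlap $\mathcal{U}=\mathcal{U}^{+0}\cap\mathcal{U}^{-0}$ both reduce to Lemma~\ref{lemma reflexion map}, so the two definitions are consistent.

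I expect the only genuine subtlety—rather than a real obstacle—to be the bookkeeping of the extension domains: one must check that $\mathcal{I}$ indeed carries $\mathcal{U}^{-\ell}$ into the domain on which the forward extension $R_{+\ell}$ is defined (so that the formula $R(\mathcal{I}(x))=N^{\ell}R_0(\mathcal{I}(y))$ is legitimate), and that the well-definedness conditions $\mathcal{NE}^{\mp\ell}$ are respected under $\mathcal{I}$. This is guaranteed by the $\mathcal{I}$-invariance of $\mathcal{U}$ and the time reversal property, but it is the point requiring care; the algebraic intertwining itself is automatic once the domains are matched.
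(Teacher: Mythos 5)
Your proof is correct and follows essentially the same route as the paper, whose own proof is a one-line appeal to the intertwining relation~\eqref{eq extenenen R} and Lemma~\ref{lemma reflexion map}; you simply unwind the definition $R_{\mp\ell}=N^{\mp\ell}R_0T^{\pm\ell}$ and make explicit the domain bookkeeping ($\mathcal{I}(\mathcal{U}^{-\ell})=\mathcal{U}^{+\ell}$) that the paper leaves implicit.
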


\begin{proof}
	It follows directly from~\eqref{eq extenenen R} and
	Lemma~\ref{lemma reflexion map}.
\end{proof}

In particular,  by \eqref{eq extenenen R}, for each $n=2m-1$ with $m \geq m_0$, and

for $k\in \{0,\cdots,n\}$, it holds
\begin{align*}
  R(x_n(2k+1))=(\xi_{n}(2k+1),\eta_{n}(2k+1)):=((\Delta_{n}^{m-1-k})^{-1}\xi_{n}(n),\Delta_{n}^{m-1-k}\eta_{n}(n)).
\end{align*}
Let us abbreviate
\begin{align*}
  R(x_n(1))=(\xi_{n},\eta_{n}):=((\Delta_{n}^{m-1})^{-1}\xi_{n}(n),\Delta_{n}^{m-1}\eta_{n}(n)).
\end{align*}
\begin{claim}\label{claim eta n delta n xi n}
	It holds
	\begin{align*}
		\eta_n=\Delta_n^{n} \xi_n.
	\end{align*}
\end{claim}
\begin{proof}
	By \eqref{plus general}, we have $R(x_n(-1))=R\circ \mathcal{I}(x_n(1))=\mathcal{I}_0\circ R(x_n(1))=\mathcal{I}_0(\xi_n,\eta_n)=(\eta_n,\xi_n)$. Therefore,
	\begin{align*}
		(\eta_n,\xi_n)=R(x_n(-1))=R(x_n(2n+1))=R \circ T^{n}(x_n(1))=N^{n} (\xi_n,\eta_n),
	\end{align*}
	which gives the required identity.
\end{proof}
Then, we have
\begin{equation}\label{eq new parmaetere}
(\xi_{n}(2k+1),\eta_{n}(2k+1))=(\Delta_{n}^{k} \xi_n, \Delta_{n}^{-k} \eta_n)=\xi_n(\Delta_{n}^{k}, \Delta_{n}^{n-k}),\quad \forall\, k \in \{0,\cdots,n\}.
\end{equation}

In the same way as above, we can extend our system of coordinates such that the images of the forward iterates of the point $x_\infty(1)$ in the homoclinic orbit $h_\infty$  are
\begin{align*}
  R(x_\infty(2k+1))=R_-(x_\infty(2k+1))=(\xi_\infty(2k+1),0)=(\lambda^k \xi_\infty,0), \quad \forall\, k =0,1,\cdots
\end{align*}

for some $\xi_\infty \in \R\setminus \{0\}$ (the second coordinate
vanishes since we are on the stable manifold $\{\eta=0\}$ of the
origin). Recall that $\mathcal{I}\colon (s,r)\mapsto (s,-r)$. By
\eqref{palindr symm}, 
for all $n=2m-1\geq n_0$, $k \geq 0$, we have
$x_n(-2k-1)=x_n(2n+2-(2k+1))=
\mathcal{I}(x_n(2k+1))$.  Thus, we extend analogously the coordinates
in the past, such that the preimages of $x_\infty(-1)$ have
coordinates $R(x_\infty(-2k-1))=
R_+(x_\infty(-2k-1))$, i.e.,
\begin{align*}
  R(x_\infty(-2k-1))=(0,\xi_\infty(-2k-1))=(0,\lambda^k \xi_\infty), \quad\forall\,  k=1,2,\dots
\end{align*}

\begin{remark}
  In the previous construction, we stop the extension after the time
  $\pm m_0$ where we reach a neighborhood of the point
  $x_\infty(\pm 1)$. Indeed, after that time, in the initial
  $(s,r)$-collision space, the neighborhoods of the separatrices start
  to overlap; in particular, they both contain a neighborhood of the
  point $x_\infty(0)$ on the third obstacle. Besides, the point of
  this construction is to study the dynamics of the map $T$ through
  its Birkhoff Normal Form $N$. Note that the latter only depends on
  the obstacles $\obs_1,\obs_2$. By analyticity, as long as the points
  bounce between the first two obstacles, it is legitimate to replace
  the billiard dynamics with that of $N$, but it does not carry any
  meaningful information once the points reach the third obstacle.
\end{remark}

\begin{figure}[H]
  \begin{center}
    \includegraphics [width=11cm]{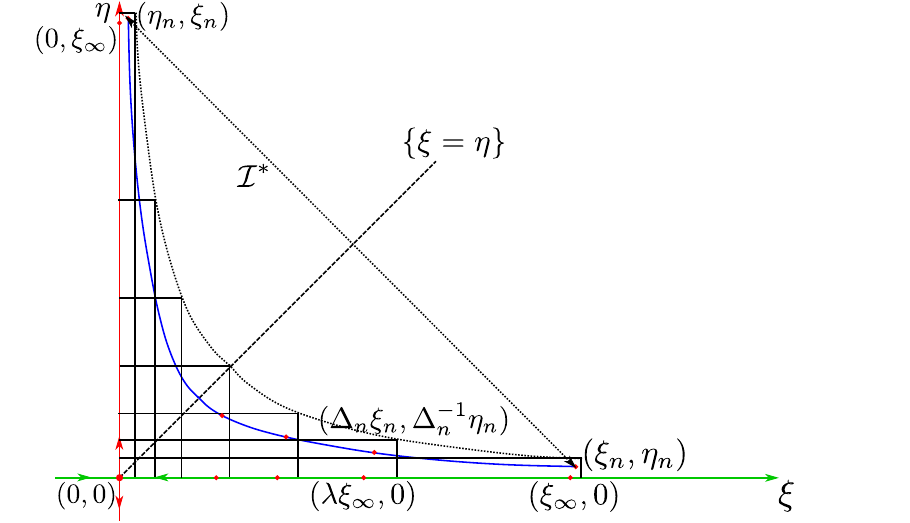}
        \caption{$(\xi,\eta)$-representation of the points in the palindromic orbit $h_n$.}
\end{center}
\end{figure}

\section{Marked Lyapunov Spectrum and Birkhoff invariants}\label{section marked lyapuno}

\subsection{Preliminary estimates on the parameters}\label{subs prliemr}

Recall that $T=\mathcal{F}^2$ where $\mathcal{F}$ is the billiard map
and that $\mathcal{M}_i$ denotes the set of $(s,r)$-coordinates
of collisions emanating from the $i^{\text{th}}$ scatterer.

We let $\mathcal{U}\subset \R^2$ be a small neighborhood of $(0,0)$ as
defined in Section~\ref{birkkkhhfo} and denote
$\mathcal V = R_{0}\mathcal U\subset \R^{2}$; recall that by
construction of $R_{0}$ For any $x=(s,0)\in \mathcal{U}\cap \{r=0\}$,
we have $R_0(x)\in \mathcal{V}\cap \{\xi=\eta\}$.

Let $\mathscr{O}_\infty\subset \mathcal{M}_2$ be a small neighborhood
of the point $x_\infty(-1)$. We denote by
$\Omega_\infty:=R(\mathscr{O}_\infty)$ the image of
$\mathscr{O}_\infty$ in Birkhoff coordinates, and we let
\begin{align*}
  \mathcal{G}:=R\circ T \circ R^{-1}|_{\Omega_\infty}=R_-\circ T \circ R_+^{-1}|_{\Omega_\infty}
\end{align*}
be the gluing map between $R_+$ and $R_-$. It satisfies the
time reversal property
\begin{align}\label{renve temps}
  \mathcal{G}^{-1}=\mathcal{I}_0\circ \mathcal{G} \circ \mathcal{I}_0,\qquad \mathcal{I}_0\colon (\xi,\eta)\mapsto (\eta,\xi).
\end{align}

\begin{figure}[H]
	\begin{center}
		\includegraphics [width=11cm]{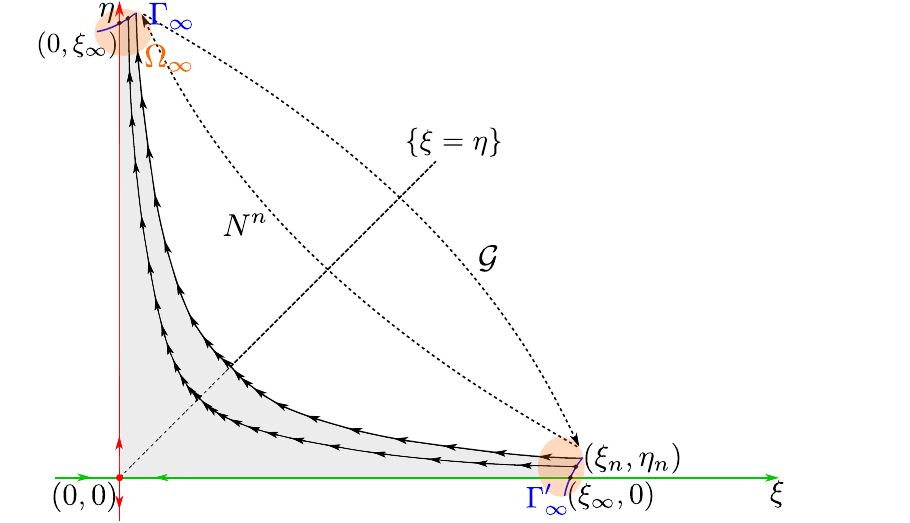}
		\caption{Gluing map between the extensions $R_+,R_-$.}\label{picture dynamique}
	\end{center}
\end{figure}

Let
$\mathscr{A}_\infty:= \mathscr{O}_\infty \cap
\mathcal{F}^{-1}(\{r=0\})$ be the curve in $\mathscr{O}_\infty$
containing $x_\infty(-1)$ and made of points whose image under the
billiard map $\mathcal{F}$ is associated to an orthogonal collision on
$\mathcal{O}_{3}$; let
$\Gamma_\infty:=R(\mathscr{A}_\infty)\subset \Omega_\infty$ be the
image of $\mathscr{A}_\infty$ in Birkhoff coordinates, and set
$\Gamma_\infty':=\mathcal{G}(\Gamma_\infty)\subset
\mathcal{G}(\Omega_\infty)$.

\begin{claim}\label{claim petitt}
  For any $x=(s,r)\in \mathscr{A}_\infty$, we have
  $T(s,r)=\mathcal{I}(x)=(s,-r)$, and
  $DT_{(s,r)}\in \mathrm{SL}(2,\R)$. Analogously, for any
  $(\xi,\eta)\in \Gamma_\infty$, it holds
  $\mathcal{G}(\xi,\eta)=\mathcal{I}_0(\xi,\eta)=(\eta,\xi)$, and
  $D\mathcal{G}_{(\xi,\eta)} \in \mathrm{SL}(2,\R)$. In particular, we
  have $\Gamma_\infty'=\mathcal{I}_0(\Gamma_\infty)$.
\end{claim}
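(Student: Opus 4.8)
The plan is to establish the two assertions in the $(s,r)$-coordinates first, and then transport them to Birkhoff coordinates through the conjugacy $R$. I would begin by unraveling the definition of $\mathscr{A}_\infty$: any $x=(s,r)\in\mathscr{A}_\infty$ satisfies $\mathcal{F}(x)\in\{r=0\}$, so the point $y:=\mathcal{F}(x)$ is a perpendicular collision on $\obs_3$ and is therefore fixed by the involution, $\mathcal{I}(y)=y$. The identity $T(x)=\mathcal{I}(x)$ then reduces to a one-line computation using the time-reversal property $\mathcal{I}\circ\mathcal{F}\circ\mathcal{I}=\mathcal{F}^{-1}$, rewritten as $\mathcal{F}\circ\mathcal{I}=\mathcal{I}\circ\mathcal{F}^{-1}$. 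Namely,
\[
T(x)=\mathcal{F}(y)=\mathcal{F}(\mathcal{I}(y))=\mathcal{I}(\mathcal{F}^{-1}(y))=\mathcal{I}(x),
\]
where the last equality uses $\mathcal{F}^{-1}(y)=x$; this yields $T(s,r)=(s,-r)$. The inclusion $D_{(s,r)}T\in\mathrm{SL}(2,\R)$ is immediate: since $x\in\mathscr{A}_\infty\subset\mathscr{O}_\infty$, the map $T=\mathcal{F}^2$ is defined and differentiable at $x$, and $D_xT$ is the product of two copies of the matrix~\eqref{matrice sl deux}, each of determinant one.

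Next I would transfer these statements to Birkhoff coordinates. For $(\xi,\eta)\in\Gamma_\infty=R(\mathscr{A}_\infty)$ I write $(\xi,\eta)=R_+(x)$ with $x\in\mathscr{A}_\infty$, noting that $x$ lies near $x_\infty(-1)$ on the unstable side, where $R=R_+$. Unfolding the definition of the gluing map and invoking the previous step,
\[
\mathcal{G}(\xi,\eta)=R_-\circ T\circ R_+^{-1}(\xi,\eta)=R_-(T(x))=R_-(\mathcal{I}(x)).
\]
Since $\mathcal{I}(x)=T(x)$ lies near $x_\infty(1)$ on the stable side, its image is computed by $R_-$; the extended time-reversal identity~\eqref{plus general}, namely $R\circ\mathcal{I}\circ R^{-1}=\mathcal{I}_0$, evaluated at $x$ gives $R_-(\mathcal{I}(x))=\mathcal{I}_0(R_+(x))=\mathcal{I}_0(\xi,\eta)=(\eta,\xi)$. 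Hence $\mathcal{G}(\xi,\eta)=\mathcal{I}_0(\xi,\eta)$ along $\Gamma_\infty$. The relation $D_{(\xi,\eta)}\mathcal{G}\in\mathrm{SL}(2,\R)$ again follows from symplecticity: $\mathcal{G}=R_-\circ T\circ R_+^{-1}$ is a composition of maps preserving $d\xi\wedge d\eta=ds\wedge dr$, hence is itself symplectic. Finally, $\Gamma_\infty'=\mathcal{G}(\Gamma_\infty)=\mathcal{I}_0(\Gamma_\infty)$ is read off directly from the pointwise identity just proved.

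The only point requiring genuine care — and the one I would flag as the main (albeit mild) obstacle — is the bookkeeping of which extension applies at each endpoint: the source $x$ sits on the unstable branch near $x_\infty(-1)$, while its image $\mathcal{I}(x)=T(x)$ sits on the stable branch near $x_\infty(1)$, so one must apply~\eqref{plus general} with $R=R_+$ on the domain side and $R=R_-$ on the target side, consistently with the definition $\mathcal{G}=R_-\circ T\circ R_+^{-1}$. I would also stress that there is no contradiction between $\mathcal{G}|_{\Gamma_\infty}=\mathcal{I}_0$ and $D\mathcal{G}\in\mathrm{SL}(2,\R)$, even though $\mathcal{I}_0$ is anti-symplectic: the two maps coincide only along the one-dimensional curve $\Gamma_\infty$, not on a full neighborhood, so their differentials need not agree.
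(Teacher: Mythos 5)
Your proof is correct and follows essentially the same route as the paper: the identity $T(x)=\mathcal{I}(x)$ via the time-reversal relation applied at the fixed point $\mathcal{F}(x)\in\{r=0\}$, determinant one from~\eqref{matrice sl deux}, and the transfer to Birkhoff coordinates via~\eqref{plus general}. Your extra bookkeeping of $R_{+}$ versus $R_{-}$ and the remark on the anti-symplecticity of $\mathcal{I}_0$ are sound clarifications of points the paper leaves implicit, but the argument is the same.
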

\begin{proof}
  Let $x=(s,r)\in \mathscr{A}_\infty$. The point
  $\mathcal{F}(x)\in \{r=0\}$ is invariant under the involution
  $\mathcal{I}\colon (s,r)\mapsto (s,-r)$, hence, using
  Remark~\ref{remark deux point quatre},
  \begin{align*}
    T(x)=\mathcal{F} (\mathcal{F}(x))=\mathcal{F} \circ
    \mathcal{I}(\mathcal{F}(x))= \mathcal{I}\circ \mathcal{F}^{-1}
    (\mathcal{F}(x))=\mathcal{I} (x).
  \end{align*}
  Moreover, by~\eqref{matrice sl deux}, 
  we have
    $\det D T_x=\det D\mathcal{F}^2_{(s,r)}  
    =1$.
  By~\eqref{plus general}, $R\circ \mathcal{I}=\mathcal{I}_0 \circ R$,
  hence for any $(\xi,\eta)=R(x)\in \Gamma_\infty$, we also have
  \begin{align*}
    \mathcal{G}(\xi,\eta)=R T R^{-1} (R(x))=R T (x)=R\circ \mathcal{I} (x) =\mathcal{I}_0 \circ R (x)=\mathcal{I}_0(\xi,\eta),
  \end{align*}
  and $\det D\mathcal{G}_{(\xi,\eta)}=\det DT_x=1$.
\end{proof}

As in Section~\ref{section extension}, for any large integer $n\geq n_0$,
we let $(\xi_n,\eta_n):=R_-(x_n(1))$, $\Delta_n:=\Delta(\xi_n\eta_n)$,
and we let
$(\xi_\infty,0):=R_-(x_\infty(1))=\lim_{n \to +\infty}
(\xi_n,\eta_n)$.

We let
$\gamma\colon \xi \mapsto \sum_{j = 1}^{\infty} \gamma_j \xi^j $ be
the analytic function such that $\Gamma_\infty$ is the graph of
$\xi_\infty+\gamma(\cdot)$, i.e., for any
$(\xi,\eta)\in \Gamma_\infty$, we have $\eta=\xi_\infty+
\gamma(\xi)$. As we have seen in Lemma~\ref{claim petitt},
$\Gamma_\infty'=\mathcal{I}_0(\Gamma_\infty)$, hence for any
$(\xi,\eta)\in \Gamma_\infty'$, we also have
$\xi=\xi_\infty+ \gamma(\eta)$.

\begin{claim}\label{petitt lemmmma}
For each  integer $n \geq n_0$, it holds
\begin{align*}
  R(x_n(-1))=(\eta_n,\xi_n),\qquad R(x_\infty(-1))=(0,\xi_\infty),\qquad \xi_n=\xi_\infty+\gamma(\eta_n).
\end{align*}
In particular, $(\eta_n,\xi_n),(0,\xi_\infty)\in \Gamma_\infty$, while $(\xi_n,\eta_n),(\xi_\infty,0)\in \Gamma_\infty'$.
\end{claim}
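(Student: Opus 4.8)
The plan is to obtain all three identities from a single mechanism: the palindromic time-reversal symmetry of the orbits, transported into Birkhoff coordinates by the conjugacy relation \eqref{plus general}. I would organize the argument in three passes (symmetry in $\mathcal{M}$, transport to the $(\xi,\eta)$-model, landing on $\Gamma_\infty$), and the only genuinely delicate point will be bookkeeping about the domains on which the extended map $R$ is defined.

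First I would record the symmetry identities at the level of the collision space. For the finite orbits, the palindromic symmetry \eqref{palindr symm} applied with $k=1$, together with $2n+1\equiv-1\pmod{2n+2}$, gives immediately
\[
  x_n(-1)=x_n(2n+1)=\mathcal{I}(x_n(1)).
\]
For the homoclinic orbit I would argue by uniqueness: since $\mathcal{I}\circ\mathcal{F}\circ\mathcal{I}=\mathcal{F}^{-1}$ and the bi-infinite admissible word encoding $h_\infty=(\sigma^\infty\tau\sigma^\infty)$ is symmetric about its central symbol (the $\mathcal{O}_3$-bounce labelled $x_\infty(0)$), the sequence $k\mapsto\mathcal{I}(x_\infty(-k))$ is an $\mathcal{F}$-orbit realizing the same admissible word as $x_\infty$; by the uniqueness of orbits with a prescribed encoding it coincides with $x_\infty$, so $x_\infty(-k)=\mathcal{I}(x_\infty(k))$ for all $k$, and in particular $x_\infty(-1)=\mathcal{I}(x_\infty(1))$. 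Taking $k=0$ in each case also shows $x_n(0)=\mathcal{I}(x_n(0))$ and $x_\infty(0)=\mathcal{I}(x_\infty(0))$, i.e.\ both are perpendicular bounces on $\mathcal{O}_3$.

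Next I would transport these to Birkhoff coordinates. Because $\mathcal{U}$ was chosen symmetric under $\mathcal{I}$, the extended domains satisfy $\mathcal{I}(\mathcal{U}^{-})=\mathcal{U}^{+}$, so for $n\geq n_0$ the map $R$ is defined at $x_n(\pm1)$ and at $x_\infty(\pm1)$, and \eqref{plus general} may be invoked there. Using the constructions of Section~\ref{section extension}, namely $R(x_n(1))=(\xi_n,\eta_n)$ and $R(x_\infty(1))=(\xi_\infty,0)$, I get
\[
  R(x_n(-1))=R\circ\mathcal{I}(x_n(1))=\mathcal{I}_0\circ R(x_n(1))=(\eta_n,\xi_n),
\]
and likewise $R(x_\infty(-1))=\mathcal{I}_0(\xi_\infty,0)=(0,\xi_\infty)$, which is the first half of the statement.

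Finally I would land these points on $\Gamma_\infty$ and read off the graph relation. Since $\mathcal{F}(x_n(-1))=x_n(0)$ and $\mathcal{F}(x_\infty(-1))=x_\infty(0)$ are orthogonal collisions on $\mathcal{O}_3$ (by the previous paragraph), and since for $n\geq n_0$ these points lie in $\mathscr{O}_\infty$ (as $x_n(-1)=\mathcal{I}(x_n(1))\to\mathcal{I}(x_\infty(1))=x_\infty(-1)$ by Proposition~\ref{prop bdskl}), they belong to $\mathscr{A}_\infty$, hence their $R$-images lie in $\Gamma_\infty=R(\mathscr{A}_\infty)$. Applying the graph description of $\Gamma_\infty$ — that $\eta=\xi_\infty+\gamma(\xi)$ for $(\xi,\eta)\in\Gamma_\infty$ — to the point $(\eta_n,\xi_n)\in\Gamma_\infty$ yields $\xi_n=\xi_\infty+\gamma(\eta_n)$. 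The memberships in the ``in particular'' clause follow at once, using $\Gamma_\infty'=\mathcal{I}_0(\Gamma_\infty)$ for $(\xi_n,\eta_n)=\mathcal{I}_0(\eta_n,\xi_n)$ and $(\xi_\infty,0)=\mathcal{I}_0(0,\xi_\infty)$. The main obstacle is not computational but one of careful bookkeeping: verifying that $R$ is genuinely defined at the four points $x_n(\pm1)$, $x_\infty(\pm1)$ and that $x_n(-1)\in\mathscr{O}_\infty$ for large $n$, both of which rest on the symmetric choice of $\mathcal{U}$ and on the convergence estimates of Proposition~\ref{prop bdskl}.
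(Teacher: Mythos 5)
Your proof is correct and reaches every assertion of the lemma; the only difference from the paper's argument is where the central symmetry identity $R(x_n(-1))=\mathcal{I}_0(R(x_n(1)))$ comes from. The paper first observes that $x_n(-1)\in\mathscr{A}_\infty$ (because $\mathcal{F}(x_n(-1))=x_n(0)\in\{r=0\}$) and then invokes Lemma~\ref{claim petitt}, which says that the gluing map $\mathcal{G}$ restricted to $\Gamma_\infty$ acts as $\mathcal{I}_0$; this uses only the perpendicularity of the single bounce at $x_n(0)$ together with time reversal. You instead derive $x_n(-1)=\mathcal{I}(x_n(1))$ from the global palindromic symmetry~\eqref{palindr symm} and push it through the conjugation~\eqref{plus general}. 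The two routes encode the same underlying symmetry and require the same domain bookkeeping (that $R$ is defined at $x_n(\pm1)$ and $x_\infty(\pm1)$, and that $x_n(-1)\in\mathscr{O}_\infty$ for large $n$), which you handle correctly; your extra verification that $x_\infty(-1)=\mathcal{I}(x_\infty(1))$ via uniqueness of the symbolic encoding is sound, though the paper sidesteps it because $\mathscr{A}_\infty$ is by definition the curve in $\mathscr{O}_\infty$ through $x_\infty(-1)$. The remaining steps --- membership of $R(x_n(-1))$ in $\Gamma_\infty$, reading off $\xi_n=\xi_\infty+\gamma(\eta_n)$ from the graph description of $\Gamma_\infty$, and the ``in particular'' clause via $\Gamma_\infty'=\mathcal{I}_0(\Gamma_\infty)$ --- coincide with the paper's.
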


\begin{proof}
  Let $n\geq n_0$.  We have
  $x_n(0)=\mathcal{F}(x_n(-1))\in \{r=0\}$, i.e.,
  $x_n(-1)\in \mathscr{A}_\infty$, and $R(x_n(-1))\in
  \Gamma_\infty$. It follows from Lemma~\ref{claim petitt} that
  $(\xi_n,\eta_n)=R(T(x_n(-1)))=\mathcal{G}(R(x_n(-1)))=\mathcal{I}_0(R(x_n(-1)))$,
  which gives the first identity.  Besides,
  $(\eta_n,\xi_n)\in \Gamma_\infty$, and
  $(\xi_n,\eta_n)=R (x_n(1))\in \Gamma_\infty'$, so that
  $\xi_n=\xi_\infty+\gamma(\eta_n)$.

  Similarly, $x_\infty(-1)\in \mathscr{A}_\infty$,
  $R(x_\infty(-1))\in \Gamma_\infty$, and by Lemma~\ref{claim petitt},
  we have
  \begin{align*}
    R(x_\infty(-1)) &= \mathcal{G}^{-1}(\xi_\infty,0)=\mathcal{I}_0(\xi_\infty,0)=(0,\xi_\infty).\qedhere
  \end{align*}
\end{proof}

Let us denote by $\mathcal{G}^\pm$ the coordinate functions of
$\mathcal{G}$, i.e.,
$\mathcal{G}\colon(\xi,\eta)\mapsto
(\mathcal{G}^+(\xi,\eta),\mathcal{G}^-(\xi,\eta))$.  Since
$\mathcal{G}(0,\xi_\infty)=(\xi_\infty,0)$, for any
$(\xi,\eta)\in \Omega_\infty$, we may write
\begin{align*}
  \mathcal{G}(\xi,\eta)=(\mathcal{G}^+(\xi,\eta),\mathcal{G}^-(\xi,\eta))=(\xi_\infty+
  G^+(\xi,\eta-\xi_\infty),
  G^-(\xi,\eta-\xi_\infty)),
\end{align*}
for two analytic functions
$G^\pm\colon (\xi,\eta)\mapsto \sum_{j+k \geq 1}G_{j,k}^\pm \xi^j\eta^k$.
Note that by the time reversal property~\eqref{renve temps}, for any
$(\xi,\eta)\in \mathcal{G}(\Omega_\infty)$, we have
\begin{equation}\label{cons temp rev}
\mathcal{G}^{-1}(\xi,\eta)=(G^-(\eta,\xi-\xi_\infty),\xi_\infty+G^+(\eta,\xi-\xi_\infty)).
\end{equation}
As a consequence of Lemma~\ref{claim petitt}, we get, for $|\eta|$
sufficiently small:
\begin{align}\label{lemma int fkl}
  G^-(\eta,\gamma(\eta))&=\eta,
  &G^+(\eta,\gamma(\eta))&=\gamma(\eta).
\end{align}
For $i=1,2$, we set
$G_i^\pm\colon \eta \mapsto \partial_i G^\pm(\eta,\gamma(\eta))$.
\begin{lemma}\label{lemma hidden sym}
  The following relations hold:
  \begin{align*}
    G_1^-&=-G_2^+=1-\gamma'G_2^-,\\
    G_1^+&=\gamma'(2-\gamma'G_2^-).
	\end{align*}
\end{lemma}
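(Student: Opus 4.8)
The plan is to extract three scalar relations among the quantities $\gamma'$, $G_1^\pm$, $G_2^\pm$ along $\Gamma_\infty$ and then solve the resulting (essentially linear) system. Two relations come from differentiating the constraints~\eqref{lemma int fkl}, which record that $\Gamma_\infty$ is carried to $\Gamma_\infty'=\mathcal{I}_0(\Gamma_\infty)$ by $\mathcal{G}$ acting as the swap; the third comes from the fact, established in Lemma~\ref{claim petitt}, that $D\mathcal{G}$ lies in $\mathrm{SL}(2,\R)$ along $\Gamma_\infty$.

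First I would differentiate the two identities of~\eqref{lemma int fkl} with respect to $\eta$. Using the chain rule together with the definitions $G_i^\pm(\eta)=\partial_i G^\pm(\eta,\gamma(\eta))$, the identity $G^-(\eta,\gamma(\eta))=\eta$ yields $G_1^- + \gamma' G_2^- = 1$, that is $G_1^- = 1-\gamma' G_2^-$, which is already the right-most equality of the first line. Similarly, $G^+(\eta,\gamma(\eta))=\gamma(\eta)$ yields $G_1^+ + \gamma' G_2^+ = \gamma'$, that is $G_1^+ = \gamma'(1-G_2^+)$.

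Next I would bring in the determinant condition. Since $\mathcal{G}^+(\xi,\eta)=\xi_\infty+G^+(\xi,\eta-\xi_\infty)$ and $\mathcal{G}^-(\xi,\eta)=G^-(\xi,\eta-\xi_\infty)$, the additive constant $\xi_\infty$ disappears under differentiation, and at a point of $\Gamma_\infty$ with first coordinate $\eta$ (so that the second argument equals $\gamma(\eta)$, matching the evaluation point in the definition of $G_i^\pm$) the Jacobian $D\mathcal{G}$ has first row $(G_1^+,G_2^+)$ and second row $(G_1^-,G_2^-)$. By Lemma~\ref{claim petitt} its determinant equals $1$, hence $G_1^+G_2^- - G_2^+G_1^- = 1$. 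Substituting the two differentiated relations into this identity, the quadratic cross-terms $\gamma' G_2^+ G_2^-$ cancel and one is left with the linear relation $\gamma' G_2^- - G_2^+ = 1$, i.e. $G_2^+ = \gamma' G_2^- - 1$. Comparing with $G_1^- = 1-\gamma' G_2^-$ gives $G_1^- = -G_2^+$, which completes the first line; inserting $G_2^+ = \gamma' G_2^- - 1$ into $G_1^+ = \gamma'(1-G_2^+)$ gives $G_1^+ = \gamma'(2-\gamma' G_2^-)$, the second line. The only point needing care is the correct identification of the Jacobian of $\mathcal{G}$ restricted to $\Gamma_\infty$ in terms of the $G_i^\pm$ and the verification that the cross-terms cancel; everything else is a direct chain-rule computation.
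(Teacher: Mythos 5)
Your proposal is correct and follows essentially the same route as the paper: both proofs combine the two relations obtained by differentiating~\eqref{lemma int fkl} with the determinant condition $\det D\mathcal{G}=1$ from Lemma~\ref{claim petitt}. The only difference is cosmetic — you substitute directly into the determinant and let the cross-terms cancel, whereas the paper eliminates via a product identity — and both computations land on the same relations.
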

\begin{proof}
  By differentiating~\eqref{lemma int fkl}, for $|\eta|$ sufficiently
  small, we obtain
  \begin{align*}
    \partial_1 G^-(\eta,\gamma(\eta))+\gamma'(\eta)\partial_2 G^-(\eta,\gamma(\eta))&=G_1^-(\eta)+\gamma'(\eta)G_2^-(\eta)=1,\\
    \partial_1 G^+(\eta,\gamma(\eta))+\gamma'(\eta)\partial_2 G^+(\eta,\gamma(\eta))&=G_1^+(\eta)+\gamma'(\eta)G_2^+(\eta)=\gamma'(\eta).
  \end{align*}
  Now, by Lemma~\ref{claim petitt}, the differential
  $D\mathcal{G}_{(\eta,\xi_\infty+\gamma(\eta))}$ of the gluing
  map 
  is in $\mathrm{SL}(2,\R)$. We have
  \begin{align*}
	D\mathcal{G}_{(\eta,\xi_\infty+\gamma(\eta))}
	&= \begin{pmatrix}
		\partial_1 G^+(\eta,\gamma(\eta)) & \partial_2 G^+(\eta,\gamma(\eta))\\
		\partial_1 G^-(\eta,\gamma(\eta))& \partial_2 G^-(\eta,\gamma(\eta))
	\end{pmatrix}
	=\begin{pmatrix}
	G_1^+(\eta) & G_2^+(\eta)\\
	G_1^-(\eta) & G_2^-(\eta)
      \end{pmatrix},
  \end{align*}
  and thus,
  \begin{align*}
    G_1^-(\eta) G_2^+(\eta)=G_1^+(\eta)G_2^-(\eta)-1.
  \end{align*}
  We deduce from the relations obtained previously that
  \begin{align*}
    (G_1^+(\eta)-\gamma'(\eta))(\gamma'(\eta)G_2^-(\eta)-1)&=\gamma'(\eta)G_1^-(\eta)G_2^+(\eta)\\
                                                           &=\gamma'(\eta)(G_1^+(\eta)G_2^-(\eta)-1),
  \end{align*}
  which yields
  \begin{align*}
	G_1^+=\gamma'(2-\gamma'G_2^-).
  \end{align*}
  Combining this with the relations obtained above, we conclude that
  $G_1^-=1-\gamma' G_2^-$ and
  $G_2^+=1-(\gamma')^{-1}G_1^+=\gamma'G_2^--1=-G_1^-$.
\end{proof}

The above lemma leads us to define the analytic function
\begin{align*}
  g\colon \eta\mapsto \sum_{k=0}^{+\infty} g_{k} \eta^k:=G_2^- (\eta)=\partial_2 G^-(\eta,\gamma(\eta)),
\end{align*}
so that for $|\eta|$ sufficiently small, we have
\begin{align}\label{expre differentielllle}
D\mathcal{G}_{(\eta,\xi_\infty+\gamma(\eta))}
= \begin{pmatrix}
\gamma'(\eta)(2-\gamma'(\eta)g(\eta)) & \gamma'(\eta)g(\eta)-1\\
1-\gamma'(\eta)g(\eta) & g(\eta)
\end{pmatrix}.
\end{align}

\begin{remark}\label{remark inv man}
    Let
  $\mathcal{W}_\infty^+\subset\Omega_\infty$ be the image in Birkhoff coordinates of the arc of the stable manifold of $(0,0)$ containing the homoclinic point $(0,\xi_\infty)$,
  and let $\mathcal{W}_\infty^-\subset \mathcal{G}(\Omega_\infty)$ be
  the arc of the unstable manifold of $(0,0)$ containing
  $(\xi_\infty,0)$. We write $\mathcal{W}_\infty^+=\{(\eta,\xi_\infty+w(\eta)):|\eta|\text{
  	small\,}\}$, for
  some analytic function $w$. By the time reversal symmetry,
  $T\circ R_+^{-1}(\mathcal{W}_\infty^+)=\mathcal{I}(T^{-1}\circ
  R_-^{-1}(\mathcal{W}_\infty^-))$, hence
	$$
	\mathcal{W}_\infty^-=\mathcal{I}_0(\mathcal{W}_\infty^+),
	$$ i.e., $\mathcal{W}_\infty^-=\{(\xi_\infty+w(\eta),\eta):|\eta|\text{ small\,}\}\subset\mathcal{G}(\Omega_\infty)$.
    By~\eqref{cons temp rev} and the fact that the gluing map
    $\mathcal{G}$ preserves the invariant subspaces of the saddle
    fixed point $(0,0)$, we can write analogous relations between
    $G^+$ and $G^-$, but involving the function $w$ instead of
    $\gamma$, i.e., for $|\eta|$ sufficiently small, it holds
\begin{equation*}
G^-(\eta,w(\eta))=0, \qquad G^+(0,\eta)=w(G^-(0,\eta)).
\end{equation*}
Let us denote by $\sum_{j = 1}^{\infty} \gamma_j \xi^j$, resp. $\sum_{j = 1}^{\infty} w_j \xi^j$ the expansion of $\gamma$, resp. $w$.
Differentiating $G^-(\eta,w(\eta))=0$ and evaluating at $0$, we get $G^-_1(0)=-w_1 G^-_2(0)$. On the other hand, the identities in Lemma \ref{lemma hidden sym} yield $G_1^-(0)=1-\gamma_1 G_2^-(0)$. In particular, 
\begin{equation}\label{g zero gamma un}
g_0=G_2^-(0)=(\gamma_1-w_1)^{-1}.
\end{equation}
The arc $\mathcal{W}_\infty^+$ of the stable manifold is transverse to
the unstable manifold of $N$ at the homoclinic point $(0,\xi_\infty)$,
which is vertical in those coordinates, hence $w_1 \neq \infty$.
Besides, $\mathscr{A}_\infty \subset T^{-1}(\{r=0\})$ is the
image under $T^{-1}$ of some arc on the third scatterer, and then, its
image $\Gamma_\infty$ under $R$ is also transverse to the unstable
manifold of $N$ at $(0,\xi_\infty)$, i.e., $\gamma_1 \neq \infty$.
Since the gluing map
$\mathcal{G}=R\circ T \circ R^{-1}|_{\Omega_\infty}$ is defined
dynamically, we deduce that
\begin{align*} \Gamma_\infty'&=\mathcal{G}(\Gamma_\infty)=\mathcal{I}_0(\Gamma_\infty)=\{(\xi_\infty+\gamma(\eta),\eta):|\eta|\text{ small\,}\}\\
and\  \mathcal{W}_\infty^-&=\mathcal{G}(\{\xi=0\})=\mathcal{I}_0(\mathcal{W}_\infty^+)=\{(\xi_\infty+w(\eta),\eta):|\eta|\text{ small\,}\}
\end{align*} are still transverse at $(\xi_\infty,0)$, i.e., $\gamma_1\neq w_1$; as $\gamma_1,w_1\neq \infty$, and by \eqref{g zero gamma un}, we deduce
\begin{equation}\label{transversalite}
	|g_0|=|G_2^-(0)|=|\gamma_1-w_1|^{-1}\in (0,+\infty).
\end{equation}
\end{remark}

Let us recall that for any integer $n \geq n_0$, we let
$\Delta_n:=\Delta(\zeta_n)$, with $\zeta_n:=\xi_n\eta_n$. Let us also recall that by Lemma \ref{claim eta n delta n xi n}, it holds
\begin{equation}\label{eta n xi n bis}
	\eta_n=\Delta_n^{n} \xi_n.
\end{equation}
\begin{claim}\label{claim deuxis}
	For any integer $n \geq n_0$, we have
	\begin{equation}\label{delta n eta n xi n}
      \eta_n=\Delta(\eta_n(\xi_\infty+ \gamma(\eta_n)))^{n}(\xi_\infty+\gamma(\eta_n)).
	\end{equation}
\end{claim}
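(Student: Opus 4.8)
The plan is to determine $R(x_n(-1))$ in two independent ways and compare the outcomes. The first computation propagates the Birkhoff coordinates forward along the orbit from $x_n(1)$ using the normal form dynamics, while the second reads them off directly from the palindromic symmetry already encoded in Lemma~\ref{petitt lemmmma}; the equality of the two expressions yields the desired identity.

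First I would exploit the $(2n+2)$-periodicity of the orbit $h_n$, which gives $x_n(2n+1)=x_n(-1)$. Applying the propagation formula~\eqref{eq new parmaetere} at the extremal index $k=n$ (legitimate, since that formula is asserted for all $k\in\{0,\dots,n\}$) produces
\begin{align*}
  R(x_n(-1))=R(x_n(2n+1))=(\Delta_n^{n}\xi_n,\Delta_n^{-n}\eta_n).
\end{align*}
On the other hand, Lemma~\ref{petitt lemmmma} states that $R(x_n(-1))=(\eta_n,\xi_n)$. Matching the first coordinates of these two expressions gives exactly $\eta_n=\Delta_n^{n}\xi_n$; matching the second coordinates gives $\xi_n=\Delta_n^{-n}\eta_n$, which is the same relation and serves as a built-in consistency check.

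For the second, equivalent formulation I would simply substitute the identities recorded in Lemma~\ref{petitt lemmmma}, namely $\xi_n=\xi_\infty+\gamma(\eta_n)$, together with the definition $\Delta_n=\Delta(\zeta_n)=\Delta(\xi_n\eta_n)=\Delta\bigl(\eta_n(\xi_\infty+\gamma(\eta_n))\bigr)$, into $\eta_n=\Delta_n^{n}\xi_n$. This turns the relation into the closed implicit equation~\eqref{delta n eta n xi n} for $\eta_n$ alone.

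The step carrying the real content is not the algebra above but the fact that~\eqref{eq new parmaetere} remains valid at $k=n$: this is precisely the index at which the two extensions $R_+$ and $R_-$ of the Birkhoff coordinates overlap near the homoclinic point $x_\infty(-1)$ on the third scatterer. That propagating by $N$ is compatible with reading off $R(x_n(-1))=(\eta_n,\xi_n)$ is exactly what the gluing map $\mathcal{G}$ and the identity $R\circ\mathcal{I}\circ R^{-1}=\mathcal{I}_0$ from the previous section guarantee; since all of that work has already been carried out, here it only needs to be invoked, and I expect no further obstacle.
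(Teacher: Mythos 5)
Your proposal is correct and follows essentially the same route as the paper: the paper writes $(\eta_n,\xi_n)=R(x_n(-1))=R(x_n(2n+1))=R\circ T^{n}(x_n(1))=N^{n}(\xi_n,\eta_n)$, which is exactly your application of~\eqref{eq new parmaetere} at $k=n$ combined with Lemma~\ref{petitt lemmmma}, and then substitutes $\xi_n=\xi_\infty+\gamma(\eta_n)$ and $\Delta_n=\Delta(\xi_n\eta_n)$ to get~\eqref{delta n eta n xi n}. Your closing remark correctly identifies that the substance lives in the already-established extension/gluing results rather than in this lemma itself.
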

\begin{proof}
	Let $n \geq n_0$. By \eqref{eta n xi n bis}, $\eta_n=\Delta_n^n \xi_n$, and by definition,
    $\Delta_n=\Delta(\zeta_n)=\Delta(\xi_n\eta_n)$; moreover, by
    Lemma~\ref{petitt lemmmma}, we have
    $\xi_n=\xi_\infty+\gamma(\eta_n)$, which concludes the proof.
\end{proof}

In other words, Lemma~\ref{claim deuxis} tells us that for each
integer $n \geq n_0$, the coordinates
$(\xi_n,\eta_n)=(\xi_\infty+\gamma(\eta_n),\eta_n)$ of the image under
$R$ of the periodic point $x_n(1)$ are defined implicitely in terms of
the coefficients of $\Delta$ and $\gamma$, according to the previous
equation.

Let us now give the proof of Proposition \ref{prop bdskl} stated
earlier in the paper\footnote{ This proposition can be proved using
  several coordinate systems; in~\cite{BDSKL} it was proved using a
  linearization near the periodic point; here it will be proved using
  the Birkhoff Normal Form.}.
\begin{prop}\label{coro proximite points}
	For $n \geq n_0$, it holds
\begin{equation*}
	\begin{array}{rcll}
		\|x_\infty(k)-x(k)\|&=&O(\lambda^{\frac{k}{2}}), &\text{for all } k \in \N,\nonumber\\
		\|x_n(k)-x_\infty(k)\|&=&O(\lambda^{n-\frac {k}{2}}), &\text{for all }
		k\in \{0,\cdots,n+1\}.
	\end{array}
\end{equation*}
\end{prop}
\begin{proof}
	The first point follows from the fact that for each $k \geq 0$,
	\begin{equation*}
		R(x_\infty(2k+1))=(\lambda^k \xi_\infty,0),
	\end{equation*}
and when $k\gg 1$, $x_\infty(2k)-x(2k)= D\mathcal{F}_{x(2k-1)} (x_\infty(2k-1)-x(2k-1))+H.O.T.$

	Let us deal with the second point. As $n \to +\infty$, the symbolic codings of the points $x_n(1)$ and $x_\infty(1)$ match on longer and longer chunks, hence, by expansiveness, $\lim_{n \to +\infty}  x_n(1)=x_\infty(1)$. We deduce that $R(x_n(1))=R(\xi_n,\eta_n)\to_{n} R(x_\infty(1))$, i.e.,
	$$
	\lim_{n \to +\infty} \xi_n=\xi_\infty,\quad \lim_{n \to +\infty} \eta_n=0,
	$$
	and $\Delta_n=\Delta(\xi_n \eta_n)=\lambda+O(\xi_n \eta_n)=\lambda+o(1)$. Then, by Lemma \ref{claim eta n delta n xi n}, $\eta_n=\lambda^n \xi_\infty +o(\lambda^n)$, and thus, $\Delta_n=\Delta(\xi_n \eta_n)=\lambda+O(\lambda^n)$. By \eqref{eq new parmaetere}, we have
	\begin{equation*}
		(\xi_{n}(2k+1),\eta_{n}(2k+1))=\xi_n(\Delta_{n}^{k}, \Delta_{n}^{n-k}),
	\end{equation*}
	for all $k \in \{0,\cdots,n\}$, hence, for $k\in \{0,\cdots,\frac{n+1}{2}\}$,
	\begin{equation*}
		R(x_n(2k+1))-R(x_\infty(2k+1))=(\Delta_{n}^{k}\xi_n-\lambda^k \xi_\infty, \Delta_{n}^{n-k}\xi_n).
	\end{equation*}
We have seen that $|\Delta_n-\lambda|=O(\lambda^{n})$, and by Lemma~\ref{petitt lemmmma},
\begin{equation*}
	|\xi_n-\xi_\infty|=|\gamma(\eta_n)|=O(\lambda^{n}),
\end{equation*}
thus for any for $k\in \{0,\cdots,\frac{n+1}{2}\}$,
$|\Delta_n^{k}\xi_n-\lambda^k\xi_\infty|=O(n\lambda^n)=O(\lambda^{n-k})$, while
$ |\Delta_n^{n-k}\xi_n|
=O(\lambda^{n-k})$, hence
$$
\|x_n(2k+1)-x_\infty(2k+1)\|=O(\|(\Delta_{n}^{k}\xi_n-\lambda^k \xi_\infty, \Delta_{n}^{n-k}\xi_n)\|)=O(\lambda^{n-k}).
$$
The estimate for even indices follows from the estimate for odd indices and the fact that $x_n(2k)-x_\infty(2k)= D\mathcal{F}_{x_\infty(2k-1)} (x_n(2k-1)-x_\infty(2k-1))+H.O.T.$
\end{proof}

\subsection{Lyapunov exponents and asymptotic expansions of the parameters}\label{subsection lyapunov exonent exp}

In this part, we use the same notation as in the previous subsection,
and show the relation between the above formulas and the Marked
Lyapunov Spectrum of the billiard table.

\begin{remark}\label{remark approche}
  Let us make a few comments and introduce some notation.
  \begin{enumerate}
  \item Given $\xi_\infty \in \R$ and the pair of functions
    $(\gamma,g)$, then by~\eqref{expre differentielllle}, it is
    possible to reconstruct the restriction of the gluing map
    $\mathcal{G}|_{\Gamma_\infty}$. Conversely, given
    $\xi_\infty \in \R$ and the coordinate functions
    $(\mathcal{G}^+,\mathcal{G}^-)$ of $\mathcal{G}$, then the
    function $\gamma$ can be recovered. Indeed, the gluing map
    $\mathcal{G}$ is dynamically defined, hence it maps some unstable
    cone at $(0,\xi_\infty)$ into some unstable cone at
    $(\xi_\infty,0)$. In particular, for $\xi$ small,
    $\eta=\xi_\infty+\gamma(\xi)$ is determined by the implicit
    equation
    \begin{align*}
      \mathcal{G}^+(\xi,\eta)\mathcal{G}^-(\xi,\eta)=\xi\eta.
    \end{align*}
  \item The homoclinic parameter $\xi_\infty \in \R$ can be regarded
    as a scaling factor: we will show in Subsection~\ref{sub esti
      leng} how its value is determined by the Marked Length Spectrum.
    For any integer $j \geq 0$, we introduce scaled coefficients
    \begin{equation}\label{scaled coefff}
    \ba_j:=\lambda^{-1} a_i \xi_\infty^{2j},\quad
    \bg_j:=\gamma_j \xi_\infty^{j-1},\quad \text{and}\quad
    \bgg_j:=g_j \xi_\infty^j,
    \end{equation}
    with $a_0:=\lambda$ and $\gamma_0:=\xi_\infty$.  Note that
    $\ba_0=\bg_0=1$ and $\bgg_0=g_0$.

    In the following, for any integer $n \geq n_0$, we also let
    \begin{equation}\label{scaled eta n}
    \bar\eta_n:=(\xi_\infty \lambda^n)^{-1}\eta_n, 
    \quad \text{and}\quad \bz_n:=(\xi_\infty^2\lambda^{n})^{-1}\zeta_n=(\xi_\infty^2\lambda^{n})^{-1} \xi_n \eta_n.
    \end{equation}
  \item Given the homoclinic parameter $\xi_\infty \in \R$, the
    Birkhoff Normal Form $N$ and the gluing map $\mathcal{G}$, we will
    find an explicit expression for the parameters $(\xi_n,\eta_n)_n$
    of the periodic orbits $(h_n)_{n\geq 0}$ and of their Lyapunov
    exponent. More precisely, under the assumption that the first
    Birkhoff invariant $a_1$ does not vanish, we show in Lemma~\ref{lemma exp lya detailll} and Corollary~\ref{coroll detm inv}
    that there is a one-to-one correspondence between the sequence of
    Lyapunov exponents $(\mathrm{LE}(h_n))_{n\geq 0}$ and the
    coefficients $(\ba_j, \bg_j, \bgg_j)_{j=0}^{\infty}$.
  \end{enumerate}
\end{remark}
By the above remark, if we know the value of $\xi_\infty$ and of the
scaled coefficients $\{\ba_j\}_{j}$, $\{\bg_j\}_{j}$ and
$\{\bgg_j\}_{j}$, then it is possible to reconstruct $\{a_j\}_{j}$,
$\{\gamma_j\}_{j}$ and $\{g_j\}_{j}$. In order to ease our notation,
we henceforth assume that $\xi_\infty=1$ in the rest of this section.

The next lemma tells us how the Lyapunov exponent of the associated
orbit can be expressed in terms of the new coordinates.

\begin{lemma}\label{lemme exposant de lya}
  For each integer $n \geq n_0$, we let
  \begin{align*}
  \Delta_n':=\Delta'(\zeta_n)\zeta_n=\sum_{k=1}^{+\infty} k a_k
    \zeta_n^k.
  \end{align*}
  Then, the Lyapunov exponent of the periodic orbit $h_n$ satisfies
  \begin{align*}
    2 \cosh(2(n+1)\mathrm{LE}(h_n))=\lambda^{-n}\mathrm{I}_n+\mathrm{II}_n+\lambda^{n}\mathrm{III}_n,
  \end{align*}
  where
  \begin{align*}
    \mathrm{I}_n&:=\lambda^{n}\Delta_n^{-n}\big(1-n\Delta'_n\Delta_n^{-1}\big)g(\eta_n),\\
    \mathrm{II}_n&:=2n\Delta'_n\Delta_n^{-1}\big(1-\gamma'(\eta_n)g(\eta_n)\big),\\
    \mathrm{III}_n&:=\lambda^{-n}\Delta_n^{n}\big(1+n\Delta'_n\Delta_n^{-1}\big)\gamma'(\eta_n) \big(2-\gamma'(\eta_n)g(\eta_n)\big).
  \end{align*}
    Let us recall that here, $\Delta_n=\Delta(\zeta_n)$, and $\Delta(0)=\lambda$.
\end{lemma}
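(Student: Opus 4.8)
The plan is to recognize the left-hand side as the trace of the linearized return map $D_{x_n(1)}\mathcal{F}^{2n+2}$, and then to compute this trace in Birkhoff coordinates, where the return map splits into the normal form $N$ followed by the gluing map $\mathcal{G}$. Concretely, since $h_n$ is a hyperbolic periodic orbit of period $2n+2$ with positive eigenvalues $\lambda(h_n)<1<\lambda(h_n)^{-1}$ (recall $\mathcal{F}^{2n+2}=(\mathcal{F}^2)^{n+1}=T^{n+1}$ is a power of $T$), the definition~\eqref{eq:le-definition} gives $\lambda(h_n)=e^{-(2n+2)\mathrm{LE}(h_n)}$, so that
$$
2\cosh\big(2(n+1)\mathrm{LE}(h_n)\big)=\lambda(h_n)+\lambda(h_n)^{-1}=\mathrm{tr}\,D_{x_n(1)}T^{n+1}.
$$
I would then factor the return map: writing $T^{n+1}=T\circ T^n$ and using $T^n(x_n(1))=x_n(-1)$, the chain rule gives $D_{x_n(1)}T^{n+1}=D_{x_n(-1)}T\cdot D_{x_n(1)}T^n$. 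By the extension relation~\eqref{eq extenenen R}, $R\circ T^n=N^n\circ R$ on a neighborhood of $x_n(1)$, while $\mathcal{G}=R\circ T\circ R^{-1}$ near $R(x_n(-1))$; hence $D_{x_n(1)}T^{n+1}$ is conjugate (via $D_{x_n(1)}R$) to $D_{(\eta_n,\xi_n)}\mathcal{G}\cdot D_{(\xi_n,\eta_n)}N^n$, and in particular has the same trace.

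Next I would compute the two differentials explicitly. Since $N$ preserves the product $\xi\eta$, the factor $\Delta(\xi\eta)$ is constant along the orbit and $N^n(\xi,\eta)=(\Delta(\xi\eta)^n\xi,\Delta(\xi\eta)^{-n}\eta)$; differentiating at $(\xi_n,\eta_n)$, using $\Delta'(\zeta_n)\zeta_n=\Delta_n'$ together with the relation $\eta_n=\Delta_n^n\xi_n$ from Lemma~\ref{claim deuxis}, yields
$$
D_{(\xi_n,\eta_n)}N^n=\begin{pmatrix}\Delta_n^n\big(1+n\Delta_n'\Delta_n^{-1}\big)&n\Delta_n'\Delta_n^{-1}\\[2pt]-n\Delta_n'\Delta_n^{-1}&\Delta_n^{-n}\big(1-n\Delta_n'\Delta_n^{-1}\big)\end{pmatrix},
$$
and a quick determinant check confirms this matrix lies in $\mathrm{SL}(2,\R)$. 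For $D\mathcal{G}$ I would simply invoke~\eqref{expre differentielllle} at the point $R(x_n(-1))=(\eta_n,\xi_n)=(\eta_n,\xi_\infty+\gamma(\eta_n))$, writing $\gamma'=\gamma'(\eta_n)$ and $g=g(\eta_n)$.

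It then remains to multiply these two $2\times2$ matrices and take the trace. The $(2,2)$–$(2,2)$ product contributes $g\,\Delta_n^{-n}(1-n\Delta_n'\Delta_n^{-1})=\lambda^{-n}\mathrm{I}_n$; the two off-diagonal cross terms, being negatives of each other in $D N^n$, combine to $2n\Delta_n'\Delta_n^{-1}(1-\gamma' g)=\mathrm{II}_n$; and the $(1,1)$–$(1,1)$ product gives $\gamma'(2-\gamma' g)\,\Delta_n^n(1+n\Delta_n'\Delta_n^{-1})=\lambda^n\mathrm{III}_n$. Summing the three groups recovers the stated identity exactly; the external factors $\lambda^{\pm n}$ merely cancel the $\lambda^{\mp n}$ built into $\mathrm{I}_n$ and $\mathrm{III}_n$, a normalization convenient for the asymptotic expansion performed later.

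I expect the only genuine difficulty to be the first step — justifying rigorously that the extended coordinates $R$ and the dynamically defined gluing map $\mathcal{G}$ splice together to realize $D_{x_n(1)}T^{n+1}$ as $D\mathcal{G}\cdot DN^n$ along the homoclinic excursion. This is precisely where the construction of Section~\ref{section extension} and the identity $\mathcal{G}|_{\Gamma_\infty}=\mathcal{I}_0$ from Lemma~\ref{claim petitt} are needed, and one must check that the relations $R\circ T^n=N^n\circ R$ and $\mathcal{G}=R\circ T\circ R^{-1}$ hold on full neighborhoods of $x_n(1)$ and $x_n(-1)$ (not merely along the orbit) so that differentiation is legitimate. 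Once this factorization is secured, the matrix computation and the grouping of terms are entirely routine.
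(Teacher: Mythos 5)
Your proposal is correct and follows essentially the same route as the paper: identify $2\cosh(2(n+1)\mathrm{LE}(h_n))$ with the trace of the linearized return map, conjugate via the extended coordinates $R$ to the product of $D N^{n}$ (computed using $\eta_n=\Delta_n^{n}\xi_n$) and $D\mathcal{G}$ at $(\eta_n,\xi_n)\in\Gamma_\infty$ from~\eqref{expre differentielllle}, and take the trace. The only cosmetic difference is that you base the return map at $x_n(1)$ rather than $x_n(-1)$, which changes nothing by cyclicity of the trace, and your explicit matrix for $D_{(\xi_n,\eta_n)}N^{n}$ agrees with the paper's.
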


\begin{proof}
  By the $(2n+2)$-periodicity of $h_n$, we have
  $T^{n+1}(x_n(-1))=x_n(-1)$, and since
  $D_{x_n(-1)} T^{n+1} \in \mathrm{SL}(2,\R)$, we obtain
  \begin{align*}
    &2 \cosh(2(n+1)\mathrm{LE}(h_n))=\mathrm{tr}(D T^{n+1}_{x_n(-1)})=\mathrm{tr}(D (T^{n}R^{-1}  \circ RT R^{-1}\circ R)_{x_n(-1)} )\\
    &=\mathrm{tr}(D (RT^{n}R^{-1})_{(\xi_n,\eta_n)} \cdot D(RTR^{-1})_{(\eta_n,\xi_n)})=\mathrm{tr}(D N^{n}_{(\xi_n,\eta_n)} \cdot  D\mathcal{G}_{(\eta_n,\xi_n)}).
  \end{align*}
  By Lemma~\ref{claim deuxis}, we have
  \begin{align*}
	DN^{n}_{(\xi_n,\eta_n)}
	&=\begin{pmatrix}\Delta_n^{n} & 0\\
      0 & \Delta_{n}^{-n}
	\end{pmatrix}+n\Delta'_n\Delta_n^{-1}\begin{pmatrix}
      \Delta_n^{n} & 1\\
      -1 &- \Delta_n^{-n}
    \end{pmatrix},
  \end{align*}
  with $\Delta_n':=\Delta'(\zeta_n)\zeta_n$, and then, it follows
  from~\eqref{expre differentielllle} that
  \begin{align*}
    &2 \cosh(2(n+1)\mathrm{LE}(h_n))\\
    &=\mathrm{tr} \left(\begin{pmatrix}\Delta_n^{n} & 0\\
        0 & \Delta_{n}^{-n}
      \end{pmatrix}\cdot \begin{pmatrix}
		G_1^+(\eta_n) & G_2^+(\eta_n)\\
		G_1^-(\eta_n) & G_2^-(\eta_n)
      \end{pmatrix}\right)\\
    &+n\Delta_n'\Delta_n^{-1}\mathrm{tr} \left(\begin{pmatrix}
        \Delta_n^{n} & 1\\
        -1 &- \Delta^{-n}
      \end{pmatrix}\cdot \begin{pmatrix}
        G_1^+(\eta_n) & G_2^+(\eta_n)\\
        G_1^-(\eta_n) & G_2^-(\eta_n)
      \end{pmatrix}\right)\\
    &=\Delta_n^{-n} g(\eta_n)+\Delta_n^{n}\gamma'(\eta_n) (2-\gamma'(\eta_n)g(\eta_n))\\
    &-n\Delta_n'\Delta_n^{-1}\big[\Delta_n^{-n} g(\eta_n)+2(\gamma'(\eta_n)g(\eta_n)-1)-\Delta_n^{n} \gamma'(\eta_n)(2-\gamma'(\eta_n)g(\eta_n))\big]\\
    &= \Delta_n^{-n}\big(1-n\Delta_n'\Delta_n^{-1}\big)g(\eta_n)+2n\Delta_n'\Delta_n^{-1}\big(1-\gamma'(\eta_n)g(\eta_n)\big)\\
    &+\Delta_n^{n}\big(1+n\Delta_n'\Delta_n^{-1}\big)\gamma'(\eta_n) (2-\gamma'(\eta_n)g(\eta_n)).\qedhere
  \end{align*}
\end{proof}
\begin{remark}\label{remarque prelimina}
	Our choice for the definitions of $\mathrm{I}_n$, $\mathrm{II}_n$, $\mathrm{III}_n$ will become clearer in the following.  Roughly speaking, we write them in  this way so that their expansions begin with the ``same weight'', i.e., are $0$-triangular  in the sense of Definition~\ref{triangulaire series}.
\end{remark}

In the following, as explained in Remark~\ref{remark approche}, we
derive asymptotic expansions with respect to $n$ of the parameters
$\eta_n$ and of the other symbols which appear in the expression of
the Lyapunov exponent $\mathrm{LE}(h_n)$ obtained in Lemma~\ref{lemme
  exposant de lya}. In Lemma~\ref{corollar dofe}, we compute the first
terms in these expansions. In Lemma~\ref{prop structure du dev}, we
study their general structure, and show that they can be expressed as
certain series mixing polynomials and exponentials in $n$, and whose
coefficients are ``homogeneous" combinations of the gluing terms and
of Birkhoff coefficients. In Lemma~\ref{lemme utile mochomoge}, we
compute the value of the coefficients of the different terms in the
expansions of the parameters; each time, we focus on the terms with
the largest index, as we see them for the first time, while the
previous terms appear as additive constants.  These
estimates will later be used (see \eg~Lemma \ref{lemma exp lya
  detailll}) to show that the expression involving the Lyapunov
exponents $\mathrm{LE}(h_n)$ in Lemma~\ref{lemme exposant de lya}
admits an asymptotic expansion as $n$ goes to $+\infty$, of the form
$\sum_{q,p \geq 0} L_{q,p} n^q (\lambda^n)^p$, where the coefficients
$(L_{q,p})_{q,p}$ depend on the constants $\{\bar a_i\}_{i \geq 1}$
and $\{\bar \gamma_i\}_{i \geq 1}$. In particular, as
$\lambda\in (0,1)$, each of these terms decays at a different speed
with respect to $n$, hence the coefficients $(L_{q,p})_{q,p}$ are
determined by the collection of Lyapunov exponents
$(\mathrm{LE}(h_n))_{n\geq 1}$.

Due to the different roles that the various coefficients
$\{\bar a_i\}_{i \geq 1}$, $\{\bar \gamma_i\}_{i \geq 1}$ of $\Delta$
and $\gamma$ play (see \eg~the formula given by Lemma~\ref{claim
  deuxis}), we hope to be able to distinguish between them in the
estimates; in particular, we expect Birkhoff coefficients to ``weigh
more'' than the gluing terms, since the periodic orbits $h_n$ spend
much more time in a neighborhood of the saddle than in the gluing
region. The way we actually show this is by carefully analyzing the
dependence of the quantities $(L_{q,p})_{q,p}$ on the coefficients
$\{\bar a_i\}_{i\geq 1}$, $\{\bar \gamma_i\}_{i \geq 1}$: in a first
time, we compute inductively the expansion of $\eta_n$ in terms of
$\{\bar a_i\}_{i \geq 1}$, $\{\bar \gamma_i\}_{i \geq 1}$ thanks to
the formula given by Lemma~\ref{claim deuxis}.  Next, we compute the
expansions of the other expressions which appear in the formula given
by Lemma~\ref{lemme exposant de lya}. Finally, we show that, provided
a certain non-degeneracy condition is satisfied, the system of
equations given by the coefficients $(L_{q,p})_{q,p} $ can be
inverted, i.e., the coefficients $\{\bar a_i\}_{i \geq 1}$ and
$\{\bar \gamma_i\}_{i \geq 1}$ can be reconstructed from these data
(see \eg~Corollary \ref{coroll detm inv} for more details).

\begin{lemma}\label{corollar dofe}
  With the notation introduced in~\eqref{scaled coefff}--\eqref{scaled
    eta n}, it holds:
  \begin{align*}
    \bar\eta_n&=1+[n\ba_1+\bg_1]
                \lambda^{n}+\Big[n^2\frac{3\ba_1^2}{2}+n\big(-\frac{\ba_1^2}{2}+4\ba_1
                \bg_1+\ba_2\big)+(\bg_1^2+\bg_2) \Big]\lambda^{2n}  \\
    &\phantom = +O(n^3\lambda^{3n}).
  \end{align*}
  By the fact that $\Delta_n=\Delta(\zeta_n)$ and
  $\Delta_n':=\Delta'(\zeta_n)\zeta_n$, the previous estimates give
  \begin{align*}
    \lambda^{n}\Delta_n^{-n}&=
                              1-n\ba_1 \lambda^{n}-\Big[ n^2\frac{\ba_1^2}{2}+ n\big(2\ba_1\bg_1+\ba_2-\frac{\ba_1^2}{2}\big)\Big]\lambda^{2n}+O(n^3 \lambda^{3n}),\\
    1-n\Delta'_n\Delta_n^{-1}
                            &=1-n\ba_1\lambda^{n}-[n^2\ba_1^2+n(2
                              \ba_1 \bg_1+2\ba_2-\ba_1^2)]\lambda^{2n}+O(n^3\lambda^{3n}),
  \end{align*}
  and
  \begin{align*}
	\lambda^{n}\Delta_n^{-n}\big(1-n\Delta'_n\Delta_n^{-1}\big)=1-2n\ba_1\lambda^{n}-\Big[n^2\frac{\ba_1^2}{2}+n\big(4 \ba_1 \bg_1+3\ba_2-\frac{3\ba_1^2}{2}\big)\Big]\lambda^{2n}+O(n^3\lambda^{3n}).
  \end{align*}
  In particular,
  \begin{align}\label{eq intermedi}
    2 \cosh(2(n+1)\mathrm{LE}(h_n))=\lambda^{-n}g_0-2ng_0 \ba_1 +O(1),
  \end{align}
  hence the coefficients $g_0$ and $\ba_1$ are determined by the
  Marked Length Spectrum.
\end{lemma}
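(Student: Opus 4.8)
The starting point is the implicit relation \eqref{delta n eta n xi n} of Lemma~\ref{claim deuxis}, which under the normalization $\xi_\infty=1$ reads $\eta_n=\Delta_n^{\,n}\xi_n$ with $\xi_n=1+\gamma(\eta_n)$, $\zeta_n=\xi_n\eta_n$ and $\Delta_n=\Delta(\zeta_n)$. Using the $\xi_\infty=1$ form of \eqref{scaled coefff} (so $a_k=\lambda\ba_k$, $\gamma_k=\bg_k$, $g_k=\bgg_k$) and $\bar\eta_n=\lambda^{-n}\eta_n$, this becomes
\begin{equation*}
  \bar\eta_n=e^{B_n}\,\xi_n,\qquad B_n:=n\log\frac{\Delta_n}{\lambda}=n\log\Big(1+\sum_{k\ge 1}\ba_k\zeta_n^k\Big).
\end{equation*}
The plan is to solve this fixed-point equation by a bootstrap graded by powers of $\lambda^n$, in which the coefficient of $\lambda^{np}$ is a polynomial in $n$ of degree $\le p$ (the triangular structure anticipated in Remark~\ref{remarque prelimina}). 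Proposition~\ref{prop bdskl}, applied to $x_n(1)$ (whose image under $R$ is $(\xi_n,\eta_n)$ and which shadows $x_\infty(1)=(1,0)$), seeds the induction: it gives $\eta_n=O(\lambda^n)$, hence $\bar\eta_n=O(1)$ and $\zeta_n=O(\lambda^n)$, so that $n\zeta_n\to 0$, $B_n\to 0$ and $\bar\eta_n\to 1$.

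For the core computation I would write $\bar\eta_n=1+e_1\lambda^n+e_2\lambda^{2n}+O(n^3\lambda^{3n})$ with $e_1,e_2$ polynomials in $n$ to be determined, and re-expand $\zeta_n=\eta_n(1+\gamma(\eta_n))=\lambda^n\bar\eta_n+\bg_1\lambda^{2n}\bar\eta_n^2+O(\lambda^{3n})$. Then $\log(\Delta_n/\lambda)=\ba_1\zeta_n+(\ba_2-\tfrac{\ba_1^2}{2})\zeta_n^2+O(\zeta_n^3)$, and after feeding back the first-order value $e_1=n\ba_1+\bg_1$ one gets $B_n=n\ba_1\lambda^n+[\,n^2\ba_1^2+n(2\ba_1\bg_1+\ba_2-\tfrac{\ba_1^2}{2})\,]\lambda^{2n}+O(n\lambda^{3n})$. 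Exponentiating via $e^{B_n}=1+B_n+\tfrac{B_n^2}{2}+\cdots$, multiplying by $\xi_n=1+\bg_1\lambda^n+(\bg_1 e_1+\bg_2)\lambda^{2n}+O(\lambda^{3n})$, and matching the coefficients of $\lambda^n$ and $\lambda^{2n}$ self-consistently yields $e_1=n\ba_1+\bg_1$ and the stated value of $e_2$, hence the claimed expansion of $\bar\eta_n$; the remainder is genuinely $O(n^3\lambda^{3n})$ because the first neglected coefficient $e_3$ is a degree-$3$ polynomial in $n$.

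The three derived expansions then require no new idea. One has $\lambda^n\Delta_n^{-n}=e^{-B_n}$ directly from the definition of $B_n$; and since $\Delta_n'\Delta_n^{-1}=\zeta\,\tfrac{d}{d\zeta}\log\Delta\big|_{\zeta_n}=\ba_1\zeta_n+(2\ba_2-\ba_1^2)\zeta_n^2+O(\zeta_n^3)$, substituting the expansion of $\zeta_n$ gives $1-n\Delta_n'\Delta_n^{-1}$. Multiplying $e^{-B_n}$ by this second series produces the product $\lambda^n\Delta_n^{-n}(1-n\Delta_n'\Delta_n^{-1})$. In each case the stated $\lambda^{2n}$-coefficient is obtained by collecting the $n^2$- and $n$-terms, and the only point needing attention is the cancellation of the $n^2\ba_1^2$-contributions in the product, which leaves the coefficient $-\tfrac{\ba_1^2}{2}$.

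Finally I would feed these into Lemma~\ref{lemme exposant de lya}. Using $g(\eta_n)=g_0+O(\lambda^n)$, $\gamma'(\eta_n)=\bg_1+O(\lambda^n)$ and $\Delta_n^{\,n}=\lambda^n e^{B_n}\sim\lambda^n$, the terms $\mathrm{II}_n=2n\Delta_n'\Delta_n^{-1}(1-\gamma'g)=O(n\lambda^n)$ and $\lambda^n\mathrm{III}_n=\Delta_n^{\,n}(1+n\Delta_n'\Delta_n^{-1})\gamma'(2-\gamma'g)=O(\lambda^n)$ are both $O(1)$, while
\begin{equation*}
  \lambda^{-n}\mathrm{I}_n=\lambda^{-n}\big[\lambda^n\Delta_n^{-n}(1-n\Delta_n'\Delta_n^{-1})\big]\,g(\eta_n)=\lambda^{-n}g_0-2ng_0\ba_1+O(1),
\end{equation*}
which is \eqref{eq intermedi}. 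The $\mathcal{MLS}$-determination is then immediate: by Theorem~\ref{corollaire lyapu} the left-hand side $2\cosh(2(n+1)\mathrm{LE}(h_n))$ is a \mlsinv\ for every $n$, and $\lambda=e^{-2\,\mathrm{LE}(\sigma)}$ for $\sigma=(12)$ is likewise a \mlsinv\ (cf.\ \eqref{eq:le-definition} and Theorem~\ref{corollaire lyapu}); since $g_0\ne 0$ by \eqref{transversalite}, one recovers $g_0=\lim_{n\to\infty}\lambda^n\cdot 2\cosh(2(n+1)\mathrm{LE}(h_n))$ and then $\ba_1$ from the coefficient of the linear-in-$n$ term. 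The main difficulty throughout is bookkeeping: justifying that the formal series in $\lambda^n$ with polynomial-in-$n$ coefficients is a \emph{genuine} asymptotic expansion with the advertised $O(n^3\lambda^{3n})$ control, i.e.\ that the bootstrap closes and that the polynomial degrees never exceed the power of $\lambda^n$.
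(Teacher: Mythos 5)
Your proposal is correct and follows essentially the same route as the paper: a bootstrap on the implicit palindromic relation of Lemma~\ref{claim deuxis}, seeded by Proposition~\ref{prop bdskl}, then fed into Lemma~\ref{lemme exposant de lya} together with Theorem~\ref{corollaire lyapu} and the non-vanishing of $g_0$ from~\eqref{transversalite}. The only (cosmetic) difference is that you organize the expansion of $\Delta_n^{\pm n}$ through $B_n=n\log(\Delta_n/\lambda)$ and $e^{\pm B_n}$, whereas the paper expands the $n$-th power directly via binomial coefficients; the resulting coefficients agree.
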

\begin{proof}
  Let $n \geq n_0$. By Lemma~\ref{claim deuxis}, and since we assume
  that $\xi_\infty=1$, we have
  \begin{align*}
    &\eta_n=\Delta(\eta_n(1+ \gamma(\eta_n)))^{n}(1+\gamma(\eta_n))\\
            &=\Big(\sum_{j=0}^3 a_j \eta_n^j\cdot\Big(\sum_{k=0}^3 \gamma_k \eta_n^k\Big)^j+O(\eta_n^4)\Big)^{n}\cdot\Big(\sum_{\ell=0}^3 \gamma_\ell \eta_n^\ell+O(\eta_n^4)\Big),
  \end{align*}
  where recall that $\gamma_0=\xi_{\infty} = 1$; this yields the
  expansion
  \begin{align*}
    \eta_n&=\lambda^{n}\Big(1+[n\lambda^{-1}a_1 +\gamma_1] \eta_n+\Big[n^2\frac{(\lambda^{-1}a_1 )^2}{2}\\
          &+n\Big(-\frac{(\lambda^{-1}a_1 )^2}{2}+2\lambda^{-1}a_1 \gamma_1+\lambda^{-1} a_2 \Big)+\gamma_2 \Big]\eta_n^2+
            O(n^3\eta_n^3)\Big).
  \end{align*}

  By considering first order terms, we obtain
  $\eta_n=\lambda^{n} +O(n\lambda^{2n})$.  Plugging this back into the
  previous equation, we deduce that
  \begin{align*}
    \eta_n&=\lambda^{n}+[n\ba_1+\bg_1] \lambda^{2n}+O(n^2\lambda^{3n}).
  \end{align*}
  We thus obtain
  \begin{align*}
    &\bar\eta_n=\lambda^{-n}\Big(\lambda^{n}+[n\ba_1+\bg_1] \lambda^{2n}+\Big[(n\ba_1+\bg_1)^2+n^2\frac{\ba_1^2}{2}+\\
    &+n\Big(-\frac{\ba_1^2}{2}+2 \ba_1\bg_1+\ba_2 \Big)+\bg_2 \Big]\lambda^{3n}+O(n^3\lambda^{4n})\Big)\\
    &= 1+[n\ba_1+\bg_1] \lambda^{n}+\Big[n^2\frac{3\ba_1^2}{2}+n\Big(-\frac{\ba_1^2}{2}+4\ba_1 \bg_1+\ba_2\Big)+(\bg_1^2+\bg_2) \Big]\lambda^{2n}+O(n^3\lambda^{3n}).
  \end{align*}
  To obtain the expansions of $\Delta_n^{\pm n}$, we argue as follows:
  by definition, we have $\Delta_n=\Delta(\zeta_n)$, with
  $\zeta_n=\eta_n(1+\gamma(\eta_n)) 
  =\lambda^{n} \bz_n$ as in~\eqref{scaled eta n}, so that
  \begin{align*}
    &\bz_n=\bar{\eta}_{n}+\bg_1 \lambda^{n}\bar{\eta}_{n}^2+\bg_2 \lambda^{2n} \bar{\eta}_{n}^3+O(n^3 \lambda^{3n})\\
    &=  1+[n\ba_1+2 \bg_1]\lambda^{n}+\Big[n^2 \frac{3\ba_1^2}{2}+n\Big(-\frac{\ba_1^2}{2}+6 \ba_1 \bg_1 +\ba_2\Big)+(3\bg_1^2+2 \bg_2)\Big] \lambda^{2n}+O(n^3\lambda^{3n}).
  \end{align*}
  To conclude, it suffices to expand the following expressions:
  \begin{equation*}
    \Delta_n^{-n}=(\Delta^{-1}(\zeta_n))^{n}=\lambda^{-n}\big(1-\ba_1  \lambda^{n} \bz_n+(\ba_1^2-\ba_2) \lambda^{2n}\bz_n^2
    \big)^{n}+O(n^3 \lambda^{2n}),
  \end{equation*}
  and
  \begin{align*}
	1-n\Delta'_n\Delta_n^{-1}&=1-n\Delta'(\zeta_n)\zeta_n\cdot\Delta^{-1}(\zeta_n)\\
                             &=1-n \lambda^{n}\bz_n\big( \ba_1+2 \ba_2\lambda^{n}\bz_n\big)\cdot\big(1-\ba_1  \lambda^{n} \bz_n\big)+O(n^3 \lambda^{3n})\\
                             &=1-n\lambda^{n}\big(\ba_1\bz_n+(2\ba_2-\ba_1^2)\lambda^{n}\bz_n^2\big)+O(n^3 \lambda^{3n}).
  \end{align*}

  The previous estimates and the expression obtained in Lemma~\ref{lemme exposant de lya} yield
  \begin{align*}
    2 \cosh(2(n+1)\mathrm{LE}(h_n))=\lambda^{-n}g_0-2n g_0 \ba_1 +O(1).
  \end{align*}
  Indeed, the other expressions in the formula given by Lemma~\ref{lemme exposant de lya} are bounded, since
  $\Delta_n^{\pm n}=O(\lambda^{\pm n})$, while
  $\eta_n=O(\lambda^{n})$.

  By Theorem~\ref{corollaire lyapu}, the quantities on the left hand
  side can be computed, thus we can recover the value of $g_0$ and
  $\ba_1$ by separating terms growing at different speeds:
  \begin{align*}
    g_0&=\lim_{n \to +\infty} 2 \cosh(2(n+1)\mathrm{LE}(h_n)) \lambda^{n},\\
    \ba_1&=\lim_{n \to +\infty} \frac{1}{2n}(\lambda^{-n}- 2g_0^{-1} \cosh(2(n+1)\mathrm{LE}(h_n))).
  \end{align*}
  Indeed, recall that by Remark~\ref{remark inv man}, the coefficient $g_0$ does not vanish.
\end{proof}

More generally, we prove the following lemma.
\begin{lemma}\label{prop structure du dev}
  There exists $n_0 > 0$ so that for any integer $i \geq 1$, there
  exists a sequence $\left(P_k^{(i)}\right)_{k \geq 1}$ of polynomials
  such that for any integer $n \geq n_0$:
  \begin{align*}
    \bar{\eta}_n^i &= 1+\sum_{k=1}^{+\infty} P_k^{(i)}(n) \lambda^{nk},
  \end{align*}
  where for each $k \geq 1$, the polynomial
  $P_k^{(i)}(X)=\sum_{j=0}^k \mu_{j,k}^{(i)} X^j$ has degree
  $k$. 
  For simplicity, we abbreviate $P_k^{(1)}=P_k$ and
  $\mu_{j,k}^{(1)}=\mu_{j,k}$ in the following.\footnote{ Note that it is
    sufficient to show the result for $i=1$, as such expansions are
    stable by taking powers. This is what we are going to do in the
    following proof. }

  Similarly, there exist three sequences $(Q_k^\pm)_{k \geq 1}$,
  $(R_k)_{k\geq 1}$ of polynomials such that
  \begin{align*}
    \lambda^{\mp n}\Delta_n^{\pm n}&=1+\sum_{k=1}^{+\infty} Q_k^\pm (n) \lambda^{nk},\\
    1-n\Delta'_n\Delta_n^{-1}&=1+\sum_{k=1}^{+\infty} R_k(n) \lambda^{nk},
  \end{align*}
  where for each $k \geq 1$, the polynomials $Q_k^\pm(X)=\sum_{j=0}^k \nu_{j,k}^\pm X^j$ and  $R_k(X)=\sum_{j=0}^k \rho_{j,k}^\pm X^j$ have degree $k$.

  In particular, by Lemma~\ref{corollar dofe}, it holds
  \begin{align}\label{puissances de mu}
    \left\{
    \begin{array}{lllllll}
      \mu_{0,0}^{(i)}=1, & & \mu_{1,1}^{(i)}=i \ba_1, &  & \mu_{2,2}^{(i)}=\frac{i(i+2)}{2}\ba_1^2, \\
      \nu_{0,0}^-=1, & & \nu_{1,1}^-=-\ba_1, & & \nu_{2,2}^-=-\frac{\ba_1^2}{2},\\ 
                                                                                     \rho_{0,0}=1, & & \rho_{1,1}=- \ba_1, & & \rho_{2,2}=-\ba_1^2. 
    \end{array}
  \right.
\end{align}
\end{lemma}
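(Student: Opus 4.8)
The plan is to turn the implicit relation of Lemma~\ref{claim deuxis} into a rescaled fixed-point equation and solve it order by order in $\lambda^n$, keeping careful track of the degree in $n$ of each coefficient. Setting $\xi_\infty=1$ and writing $\eta_n=\lambda^n\bar\eta_n$, $\zeta_n=\lambda^n\bz_n$ with $\bz_n=\bar\eta_n(1+\gamma(\lambda^n\bar\eta_n))$, equation~\eqref{delta n eta n xi n} becomes
\begin{equation*}
  \bar\eta_n=(1+B_n)^n\,(1+\gamma(\lambda^n\bar\eta_n)),\qquad B_n:=\sum_{k\ge1}\ba_k\,\lambda^{nk}\bz_n^k,
\end{equation*}
after factoring $\Delta(\zeta_n)^n=\lambda^n(1+B_n)^n$ and using $\ba_k=\lambda^{-1}a_k$. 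The crucial feature is that on the right-hand side $\bar\eta_n$ only ever occurs multiplied by at least one factor $\lambda^n$ (inside $\gamma$, inside $\bz_n$, and inside $B_n$), so the coefficient of $\lambda^{nk}$ of the right-hand side depends on $\bar\eta_n$ only through its coefficients of $\lambda^{n\ell}$ with $\ell<k$. This is what makes the order-by-order recursion well posed.

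First I would introduce the class $\mathcal S$ of series $\sum_{k\ge0}p_k(n)\lambda^{nk}$ with $\deg p_k\le k$, and check that it is a ring stable under substitution of an element (carrying a prefactor $\lambda^n$) into the analytic function $\gamma$. The heart of the matter is the power operation: if $B_n=\sum_{k\ge1}b_k(n)\lambda^{nk}$ enjoys the \emph{sharper} bound $\deg b_k\le k-1$, then $(1+B_n)^{\pm n}=1+\sum_{k\ge1}c_k(n)\lambda^{nk}\in\mathcal S$. This follows from $(1+B_n)^{\pm n}=\sum_{j\ge0}\binom{\pm n}{j}B_n^{\,j}$: the coefficient of $\lambda^{nk}$ in $B_n^{\,j}$ is a sum of products $b_{k_1}\cdots b_{k_j}$ with $k_1+\dots+k_j=k$, hence of degree $\le k-j$, while $\binom{\pm n}{j}$ has degree $j$, so each contribution has degree $\le j+(k-j)=k$. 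One checks from the definitions that $B_n$ has exactly this degree-drop property once $\bar\eta_n\in\mathcal S$ (each explicit factor $\lambda^{nk}$ in its definition forces the drop), and the same estimate handles $\lambda^{\mp n}\Delta_n^{\pm n}=(1+B_n)^{\pm n}$ as well as $1-n\Delta_n'\Delta_n^{-1}$ (here the explicit factor $n$ multiplies $\Delta_n'\Delta_n^{-1}$, which is $\lambda^n$ times an element of $\mathcal S$, so the product raises the degree by one at each order).

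With these tools the structural claim follows by induction on $k$: assuming $\bar\eta_n=1+\sum_{\ell<k}P_\ell(n)\lambda^{n\ell}+O(n^k\lambda^{nk})$ with $\deg P_\ell=\ell$, the remark above shows that the $\lambda^{nk}$-coefficient of the right-hand side is a polynomial in $n$ of degree $\le k$ built from $P_1,\dots,P_{k-1}$ and the data $(\ba_j,\bg_j)$, which identifies $P_k$. Powers $\bar\eta_n^i$ remain in $\mathcal S$ since $\mathcal S$ is a ring (this is the footnote), and the same argument produces the sequences $(Q_k^\pm)$ and $(R_k)$. It remains to see that the degree is \emph{exactly} $k$, i.e.\ $\mu_{k,k}\ne0$. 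For this I would isolate the top diagonal, keeping only the coefficient of $n^k\lambda^{nk}=(n\lambda^n)^k$: precisely because of the degree-drop bound, the lower-order corrections coming from $\gamma$ and from $\bz_n$ decouple, and the generating series $E(t)=\sum_k\mu_{k,k}t^k$ (with $t\leftrightarrow n\lambda^n$) satisfies the scalar functional equation $E(t)=\exp(\ba_1 t\,E(t))$. Its tree-function solution gives $\mu_{k,k}=\tfrac{(k+1)^{k-1}}{k!}\,\ba_1^{\,k}$, together with the analogous relations $\rho_{k,k}=-\ba_1\mu_{k-1,k-1}$ and $\nu_{k,k}^\pm=[t^k]\exp(\pm\ba_1 tE(t))$; all are nonzero under the standing assumption $a_1\ne0$, so the degrees are exactly $k$. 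The explicit values in~\eqref{puissances de mu} for $k\le2$ agree with the computation already carried out in Lemma~\ref{corollar dofe}.

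The main obstacle is the degree bookkeeping for $B_n\mapsto(1+B_n)^{\pm n}$ together with the self-referential nature of the fixed-point equation: one must verify simultaneously that the degree-drop property $\deg b_k\le k-1$ is stable under the recursion and that $\bar\eta_n$ genuinely enters the right-hand side only with a gained factor $\lambda^n$, so that the induction closes and the top diagonal decouples into the scalar equation above. A secondary technical point is to justify that the resulting series are valid (as convergent or asymptotic expansions) for $n\ge n_0$; this follows from $\eta_n=O(\lambda^n)\to0$ and the analyticity of $\Delta$ and $\gamma$ near the origin, which keeps all evaluations inside their domains of convergence.
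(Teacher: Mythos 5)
Your proposal is correct and follows essentially the same route as the paper's proof: the order-by-order induction on the implicit equation of Lemma~\ref{claim deuxis}, with the degree bound obtained by expanding $(1+B_n)^{\pm n}$ in binomial coefficients and observing that the degree $j$ of the $j$-th binomial coefficient is compensated by at least $j$ powers of $\lambda^{n}$ coming from $B_n^{\,j}$ --- exactly the mechanism behind the paper's notion of balanced series (Remark~\ref{rem:balanced-series}). The one genuine addition is your tree-function computation of the top diagonal, giving $\mu_{k,k}=\frac{(k+1)^{k-1}}{k!}\,\ba_1^{\,k}\neq 0$ and hence the claimed \emph{exact} degree $k$; the paper's own argument only establishes degree at most $k$ (which is all that is used subsequently), so this refinement is welcome but not essential.
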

\begin{proof}
   Let us first consider
  $\bar{\eta}_n$,
  $n \geq
  n_0$.
  We will prove by induction on $\ell \geq
  0$ that $\bar{\eta}_n=\bar{\eta}_{n,\ell}+O
  \big(n^{\ell+1}\lambda^{n(\ell+1)}\big)$, where
  \begin{equation}\label{recurrence}
    \bar{\eta}_{n,\ell}= 1+\sum_{k=1}^{\ell} P_k(n)\lambda^{nk},
  \end{equation}
  for certain polynomials
  $P_1,P_2,\cdots,P_{\ell}$ satisfying the above properties.

  It is clear for $\ell=0$. Assume that it holds for $\ell-1 \geq
  0$.  To show the result for
  $\ell$, we use the formula given by Lemma~\ref{claim deuxis}:
  \begin{align}
\bar\eta_n&= \lambda^{-n}\Delta(\eta_n(1+ \gamma(\eta_n)))^{n}(1+\gamma(\eta_n))\nonumber \\
&=\Big(1+\sum_{p=1}^{\ell}\lambda^{-1} a_p \eta_n^p (1+ \gamma(\eta_n))^p+O(\eta_n^{\ell+1})\Big)^{n} (1+\gamma(\eta_n)) \nonumber \\
&=\sum_{r=0}^{n} \begin{pmatrix}
n\\
r
\end{pmatrix}
\bigg(\sum_{p=1}^{\ell}\lambda^{-1} a_p \eta_n^p \Big(1+ \sum_{q=1}^{\ell} \gamma_q  \eta_n^q\Big)^p+O(\eta_n^{\ell+1})\bigg)^{r} \Big(1+ \sum_{s=1}^{\ell} \gamma_s \eta_n^s+O(\eta_n^{\ell+1})\Big) \nonumber \\
&=\sum_{r=0}^{\ell} \begin{pmatrix}
n\\
r
\end{pmatrix}
\bigg(\sum_{p=1}^{\ell}\ba_p\lambda^{np} \bar{\eta}_{n,\ell-1}^p \Big(1+ \sum_{q=1}^{\ell} \bg_q\lambda^{nq}\bar{\eta}_{n,\ell-1}^q\Big)^p\bigg)^{r}  \Big(1+ \sum_{s=1}^{\ell} \bg_s \lambda^{ns}\bar{\eta}_{n,\ell-1}^s\Big)\label{eq au dessus} \\
&+O\big(n^{\ell+1}\lambda^{n(\ell+1)}\big). \nonumber
\end{align}
Indeed, it is sufficient to consider the $(\ell-1)$-expansion
$\bar{\eta}_{n,\ell-1}=1+\sum_{k=1}^{\ell-1} P_k(n)\lambda^{nk}$ of
$\bar\eta_n$ obtained previously to go from $\ell-1$ to $\ell$, as the
summations indices $p,q,s$ are all at least equal to one, and hence
each term $\bar\eta_n^*$ in the above expression is multiplied by a
factor $\lambda^{nk}$, with $k \geq 1$. Moreover, we can restrict
ourselves to indices $r,p,q,s \in \{0,\cdots,\ell\}$, since for
$r,p,q,s \geq \ell+1$, the associated terms are of order
$O\big(n^{\ell+1}\lambda^{n(\ell+1)}\big)$.

We claim that the degree of the polynomial in $n$ associated to the
factor $\lambda^{n\ell}$ is at most $\ell$. Indeed, the expansion of
the previous expression is a combination of powers of
$\bar{\eta}_{n,\ell-1}$ (which are themselves combinations of
polynomials in $n$ multiplied by powers of $\lambda^n$, where the
degree of the polynomial is at most equal to the exponent of
$\lambda^n$) multiplied by binomial coefficients $ \begin{pmatrix}
  n\\
  *
\end{pmatrix}$ and powers of $\ba_* \lambda^{n*}$ or
$\bg_* \lambda^{n*}$. Besides the degree of the polynomial in $n$
associated to binomial coefficients is always less than or equal to
the exponent of $\lambda^n$ (for each $r \geq 1$, the coefficient
$\begin{pmatrix}
  n\\
  r
\end{pmatrix}$ gives a polynomial of degree $r$ in $n$, while the
second factor $\Big(\sum_{p=1}^{\ell}\ba_p\lambda^{np}\dots\Big)^{r}$
in formula \eqref{eq au dessus} above already contributes to a power
of $\lambda^n$ with an exponent $rp \geq r$).

We conclude that the new expansion will be of the same form as before, i.e., for some polynomial $P_{\ell}$ of degree at most $\ell$, we have
\begin{equation*}
\bar\eta_n=1+\sum_{k=1}^{\ell} P_k(n)\lambda^{nk}+O \big(n^{\ell+1}\lambda^{n(\ell+1)}\big).
\end{equation*}

Let us now consider the expansion of $\Delta_n^{\pm n}$. 
We first remark that $\Delta(z)^{\pm}=\lambda^{\pm} + \sum_{k=1}^{+\infty} a_k^\pm z^k$, where for each $k \geq 1$, $a_k^+=a_k$, and
\begin{equation*}
a_k^-=-\lambda^{-2} a_k-\lambda^{-1}(a_1 a_{k-1}^-+\hdots+ a_{k-1} a_1^-).
\end{equation*}
Similarly, for   $i \geq 1$, let $\ba_i^\pm:=\lambda^{\mp} a_i^\pm \xi_\infty^{2i}=\lambda^{\mp} a_i^\pm $.
As a result, for each $k \geq 0$, it holds
\begin{equation}\label{forme a tilde}
\ba_k^-=-\ba_k-(\ba_1 \ba_{k-1}^-+\hdots+ \ba_{k-1} \ba_1^-).
\end{equation}
Thus, for any integer $\ell\geq 0$, we obtain
\begin{align*}
&\Delta_n^{\pm n}=(\Delta(\eta_n(1+ \gamma(\eta_n)))^{\pm 1})^{n}\\
&=\lambda^{\pm n}\Big(1+\sum_{p=1}^{\ell}\lambda^{\mp 1} a_p^\pm \eta_n^p (1+ \gamma(\eta_n))^p+O(\eta_n^{\ell+1})\Big)^{n}\\
&= \lambda^{\pm n}\bigg(1+\sum_{r=1}^{\ell} \begin{pmatrix}
n\\
r
\end{pmatrix}
  \Big(\sum_{p=1}^{\ell}\ba_p^\pm \lambda^{ np} \bar{\eta}_{n}^p \Big(1+ \sum_{q=1}^{\ell} \bg_q  \lambda^{nq}\bar{\eta}_{n}^q\Big)^p\Big)^{r}\bigg)+O(n^{\ell+1}\lambda^{n(\ell+1\pm 1)}).
\end{align*}
The form of the expansions of $\Delta_n^{\pm n}$ and
$1-n\Delta'_n\Delta_n^{-1}$ follows from the expression of
$\bar\eta_{n}$ obtained previously, since $\Delta_n=\Delta(\zeta_n)$
and $\Delta_n'=\Delta'(\zeta_n)\zeta_n$, with
$\zeta_n=\lambda^n \bar\eta_n(1+\gamma(\lambda^n \bar \eta_n))$.
\end{proof}
\begin{remark}\label{rem:balanced-series}
  On a formal level, we see that $\bar\eta_{n}$,
  $\lambda^{\mp n}\Delta_{n}^{\pm n}$ and
  $1-n\Delta_{n}'\Delta_{n}^{-1}$ can be expressed as (formal) series
  in $\lambda^{n}$ with coefficients in the ring of polynomials in
  $n$.  Moreover the coefficient of order $k$ is a polynomial of
  degree $k$.  Let us call \emph{balanced} those formal series with
  coefficients in the ring of polynomials in $n$ with the property
  that the coefficient of order $k$ is a polynomial of degree \emph{at
    most} $k$.  Observe that such series are closed under sum and
  product; moreover they are also closed under composition with an
  analytic function.  We conclude that the quantities $\I_n$,
  $\II_{n}$, $\III_{n}$ introduced in Lemma \ref{lemme exposant de
    lya} are also balanced series.  Let us also note that
  expansions of a similar type were studied earlier in the paper
  \cite{FY} for a different purpose.
\end{remark}

\begin{lemma}\label{rem:coefficients-homogeneous}
  For any integers $k \geq 1$, $j \in \{0,\cdots,k\}$, and
  $\ell\geq 1$, the coefficients
  $\mu_{j,k}^{(\ell)},\nu_{j,k}^\pm,\rho_{j,k}$ are ``homogeneous"
  expressions in the parameters $\{\ba_i\}_i$, $\{\bg_i\}_i$:
	\begin{align*}
      *_{j,k} &= *_{j,k}(\ba_1,\ba_2,\cdots,\ba_{k-j+1},\bg_1,\bg_2,\cdots,\bg_{k-j}),& *_{j,k}&=\mu_{j,k}^{(\ell)},\nu_{j,k}^\pm,\rho_{j,k}, 
	\end{align*}
where $*_{j,k}$ is a linear combination of terms of the form
\begin{equation}\label{detail coefficientss}
  \ba_{1}^{p_1}\ba_{2}^{p_2}\cdots \ba_{k-j+1}^{p_{k-j+1}} \bg_{1}^{q_1}\bg_2^{q_2}\cdots \bg_{k-j}^{q_{k-j}},
\end{equation}
with
$$
\mathrm{i)}\ \sum_{i=1}^{k-j} p_i\geq j,\qquad \mathrm{ii)}\ \sum_{i=1}^{k-j+1} ip_i +\sum_{i=1}^{k-j} iq_i=k.
$$
\end{lemma}

\begin{proof}
Let us  study how the coefficients $\{\mu_{j,k}\}_{j,k}$ in the expansion of $\bar \eta_n$  depend on the parameters $\{\ba_i\}_i$, $\{\bg_i\}_i$. The ``homogeneous" structure of the expansion of  the coefficients $\{\mu_{j,k}^{(\ell)}\}_{j,k}$,  $\{\nu_{j,k}^\pm\}_{j,k}$ and $\{\rho_{j,k}\}_{j,k}$ is shown in the same way as for the coefficients $\{\mu_{j,k}\}_{j,k}$.

For any integers $\ell\geq 1$ and $n \geq n_0$, recall the equation for $\bar\eta_n$ obtained in the proof of Lemma~\ref{prop structure du dev}:
\begin{equation*}
	\bar\eta_n=\sum_{r=0}^{\ell} \begin{pmatrix}
		n\\
		r
	\end{pmatrix}
	\bigg(\sum_{p=1}^{\ell}\ba_p\lambda^{np} \bar{\eta}_{n}^p \Big(1+ \sum_{q=1}^{\ell} \bg_q\lambda^{nq}\bar{\eta}_{n}^q\Big)^p\bigg)^{r}\\ 
	 \Big(1+ \sum_{s=1}^{\ell} \bg_s \lambda^{ns}\bar{\eta}_{n}^s\Big)+O\big(n^{\ell+1}\lambda^{n(\ell+2)}\big).
\end{equation*}
The  expansion of this expression is a   combination of terms as in~\eqref{detail coefficientss}.  In particular, for any integer $\ell \geq 1$,
we see that   $\ba_\ell$ first appears with the weight $n \lambda^{n\ell }$ (for $r=1$ and $p=\ell$ with the above notation), while $\bg_\ell$ first appears with the weight $\lambda^{n\ell }$ (for $r=0$ and $s=\ell$ with the  above notation).

More generally, the ``homogeneity" property $\mathrm{ii)}$ above  is essentially due to the fact that in the previous  expansion, $\ba_p$ always comes together with the weight $\lambda^{np}$, while $\bg_q$ always comes together with the weight $\lambda^{nq}$.
\end{proof}

\begin{remark}
	Note that increasing the exponent of $n$ in the expansion of
	$\bar\eta_n$ corresponds to taking derivatives of the function
	$\Delta$ in formula~\eqref{delta n eta n xi n}. In terms of the above
	expansion, those derivatives are associated to certain binomial
	coefficients, and each time we increase the exponent of $n$ by one, the exponent of $\lambda^n$ is increased by at least one too, depending on
	the weight of the coefficient $\ba_p$ associated to this
	derivative. Together with the previous remark on the first appearance
	of $\ba_\ell$, $\bg_\ell$, this explains the constraint in
	\eqref{detail coefficientss} and $\mathrm{ii)}
	$ on the coefficients
	which can enter the expression associated to a specific weight, and
	why they depend on the difference $k-j$ between the exponent $j$ of
	$n^j$ and the exponent $k$ of $\lambda^{nk}$ (the coefficients
	$\ba_{k-j+1}^{p_{k-j+1}}$ and $\bg_{k-j}^{q_{k-j}}$ are obtained when
	all the derivatives we take are associated to $\ba_1$).

	Besides, the reason why we have an inequality and not an equality   in point $\mathrm{i)}$  is because  the binomial coefficients $\begin{pmatrix}
	n\\
	r
	\end{pmatrix}$ are not homogeneous polynomials in $n$.
\end{remark}

\begin{remark}
  The reason why the respective weights of $\ba_\ell$ and $\bg_\ell$
  on their first appearance differ by a factor $n$ is due to the fact
  that any orbit under consideration spends much more time ($n$ steps)
  in the region where we have Birkhoff coordinates, while the gluing
  term associated to the coefficient $\bg_\ell$ accounts for a bounded
  number of steps in the orbit (i.e., the number of such steps does not depend on $n$).
\end{remark}

\begin{lemma}\label{lemme utile mochomoge}

With the notation introduced in Lemma~\ref{prop structure du dev}, for every $k \geq 1$, there exist constants  $c_{0,k},c_{j,k+j},c_{j,k+j}^\pm,c_{j,k+j}'\in \R$, $j =1,2$, with 
\begin{align*}
c_{0,k}&=c_{0,k}(\bg_1,\bg_2,\cdots,\bg_{k-1}),\\
*_{j,k+j}&=*_{j,k+j}(\ba_1,\ba_2,\cdots,\ba_{k} ,\bg_1,\bg_2,\cdots,\bg_{k-1}), & *=c,c^\pm,c', 
\end{align*}
such that
\begin{subequations}\label{eq:mu-nu-rho}
  \begin{align}\label{eq:mu-rels}
    &\left\{
    \begin{array}{rcl}
      \mu_{0,k}&=& \bg_{k}+c_{0,k},\\
      \mu_{1,k+1}&=&(k+3)\ba_1\cdot \bg_k +\ba_{k+1}+c_{1,k+1},\\
      \mu_{2,k+2}&=&\frac{(k+3)(k+5)}{2}\ba_1^2\cdot \bg_{k}+ (k+3)\ba_1\cdot \ba_{k+1}+c_{2,k+2};
    \end{array}\right.\\
    \label{eq:nu-rels}
    &\left\{
    \begin{array}{rcl}
      \nu_{0,k}^\pm&=&0,\\
      \nu_{1,k+1}^\pm&=&\pm(2\ba_1\cdot \bg_{k}+\ba_{k+1})+c_{1,k+1}^\pm,\\
      \nu_{2,k+2}^\pm&=&\pm (k+2\pm 1)\ba_1(2\ba_1\cdot \bg_{k}+\ba_{k+1})+c_{2,k+2}^\pm;
    \end{array}\right.\\
    \label{eq:rho-rels}
    &\left\{
    \begin{array}{rcl}
      \rho_{0,k}&=&0,\\
      \rho_{1,k+1}&=&-2 \ba_1 \cdot \bg_{k}-(k+1)\ba_{k+1}+c_{1,k+1}',\\
      \rho_{2,k+2}&=&-2 (k+2)\ba_1^2 \cdot \bg_{k}-((k+1)^2+1) \ba_1\cdot\ba_{k+1}+c_{2,k+2}'.
    \end{array}\right.
  \end{align}
\end{subequations}
\end{lemma}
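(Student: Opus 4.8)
The plan is to run a strong induction on $k$, at each level of which exactly two ``new'' coefficients appear for the first time: $\bg_k$, at weight $\lambda^{nk}$ with no power of $n$, and $\ba_{k+1}$, at weight $n\lambda^{n(k+1)}$. Everything else involves only $\ba_1,\dots,\ba_k$ and $\bg_1,\dots,\bg_{k-1}$ and gets absorbed into the constants $c_{\bullet,\bullet}$. The starting point is the implicit equation~\eqref{delta n eta n xi n} of Lemma~\ref{claim deuxis}, which after setting $\xi_\infty=1$ and writing $u:=\lambda^{n}$ becomes
\begin{equation*}
  \bar\eta_n=A_nB_n,\qquad A_n:=\Big(1+\sum_{p\geq 1}\ba_p u^p\bz_n^p\Big)^{n},\qquad B_n:=1+\sum_{s\geq 1}\bg_s u^s\bar\eta_n^s,
\end{equation*}
together with $\bz_n=\bar\eta_n B_n$. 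The key reduction is that the other two families in Lemma~\ref{prop structure du dev} are functions of $A_n$ alone: since $\Delta_n=\Delta(\zeta_n)=\lambda\big(1+\sum_p\ba_p u^p\bz_n^p\big)$, one has $\lambda^{-n}\Delta_n^{n}=A_n$, $\lambda^{n}\Delta_n^{-n}=A_n^{-1}$, and $n\Delta_n'\Delta_n^{-1}=\zeta_n\,\partial_{\zeta_n}\log A_n$. Hence $\nu^{\pm}$ and $\rho$ follow from the $\mu$'s (\ie from $\bar\eta_n$ and $B_n$) by algebraic operations on balanced series (Remark~\ref{rem:balanced-series}), and the whole lemma reduces to analyzing $\bar\eta_n$.

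First I would treat the $\mu$'s. For the $n^0$-slice $E_0(u):=\sum_k\mu_{0,k}u^k$, only the $r=0$ term of the binomial expansion of $A_n$ survives, so $E_0$ solves the self-consistent equation $E_0=1+\sum_{s\geq 1}\bg_s u^sE_0^s$; extracting the coefficient of $u^k$ gives $\mu_{0,k}=\bg_k+c_{0,k}$ with $c_{0,k}$ a polynomial in $\bg_1,\dots,\bg_{k-1}$, and shows in particular that no $\ba$ enters when $j=0$. For $\mu_{1,k+1}$ and $\mu_{2,k+2}$ I would substitute the inductive expansion $\bar\eta_n=1+\sum_{k'}P_{k'}(n)u^{k'}$ (and the induced one for $\bz_n=\bar\eta_nB_n$) into $A_nB_n$ and collect the monomials $n\lambda^{n(k+1)}$ and $n^2\lambda^{n(k+2)}$ that are linear in a single new coefficient. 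The coefficient $\ba_{k+1}$ enters $A_n$ only through $\binom{n}{1}\ba_{k+1}u^{k+1}\bz_n^{k+1}$, contributing $\ba_{k+1}$ to $\mu_{1,k+1}$; the coefficient $\bg_k$ enters both through $B_n$ directly and through $\bz_n$ inside $A_n$, and one checks (writing $\bar\eta_n=1+nu\,\ba_1+u^k\bg_k+\delta\,\ba_1\bg_k+\cdots$ and isolating $\delta$) that the three resulting contributions combine as $2+1+k=k+3$, yielding the announced $(k+3)\ba_1\bg_k$. Iterating the multiplication by the $n\ba_1u$-head of $A_n$ once more produces the factors $(k+3)\ba_1\ba_{k+1}$ and $\tfrac{(k+3)(k+5)}{2}\ba_1^2\bg_k$ in $\mu_{2,k+2}$, all remaining terms depending only on $\ba_1,\dots,\ba_k,\bg_1,\dots,\bg_{k-1}$ and being absorbed into $c_{1,k+1}$, $c_{2,k+2}$.

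For $\nu^{\pm}$ and $\rho$ I would transport the above through the reduction rather than redo the combinatorics. Since $A_n^{\pm1}=\big(1+\sum_p\ba_pu^p\bz_n^p\big)^{\pm n}$ has trivial $n^0$-slice (every nonconstant binomial term carries a factor $n$), we get $\nu_{0,k}^{\pm}=0$ at once; tracking the $n^1$- and $n^2$-slices of $\pm n\,\Phi_n$ then gives $\nu_{1,k+1}^{\pm}=\pm(2\ba_1\bg_k+\ba_{k+1})+c_{1,k+1}^{\pm}$ and the stated $\nu_{2,k+2}^{\pm}$. For $\rho$, logarithmic differentiation turns $\ba_{k+1}u^{k+1}$ into $(k+1)\ba_{k+1}u^{k+1}$ in $n\Delta_n'\Delta_n^{-1}=\zeta_n\partial_{\zeta_n}\log A_n$, which is precisely the source of the $-(k+1)\ba_{k+1}$ in $\rho_{1,k+1}$ and, after one further multiplication by the $\ba_1$-head, of the $-((k+1)^2+1)\ba_1\ba_{k+1}$ in $\rho_{2,k+2}$.

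The main obstacle is purely combinatorial bookkeeping: computing the exact coefficients $k+3$, $\tfrac{(k+3)(k+5)}{2}$, $k+2\pm1$, $k+1$, $(k+1)^2+1$ amounts to summing, over all ways a fixed new coefficient can be ``transported'' from its first appearance to the target weight $n^j\lambda^{n(k+j)}$, the product of a binomial multiplicity coming from $\binom{n}{r}$ and a combinatorial multiplicity coming from the powers $\bz_n^p$ and $\bar\eta_n^s$. The homogeneity constraints of Remark~\ref{rem:coefficients-homogeneous} guarantee that only finitely many routes contribute and that no coefficient of higher index than the advertised one can occur, so the bookkeeping terminates and the triangular structure is exact; verifying the closed forms for $j=0,1,2$ is then a finite (if tedious) computation, which I would organize by fixing the new coefficient and enumerating its admissible companions $\ba_1^{p}$.
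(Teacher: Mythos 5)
Your proposal is correct and follows essentially the same strategy as the paper: start from the implicit equation of Lemma~\ref{claim deuxis}, use the balanced/triangular structure to confine where $\bg_k$ and $\ba_{k+1}$ can first appear, and carry out finite bookkeeping on the $n^0,n^1,n^2$ slices; your identified sources of each constant (e.g.\ the $2+1+k$ decomposition of $k+3$, the $(k+1)$ from logarithmic differentiation in $\rho$) match the paper's accounting exactly. The only organizational difference is that the paper extracts the linear-in-$\ba_{k+1}$ and linear-in-$\bg_k$ parts by differentiating the implicit equation with respect to these coefficients and computing principal parts of the resulting $k$-triangular series, whereas you collect monomials directly and reduce $\nu^{\pm},\rho$ to functions of $A_n$ alone — a mild streamlining of the same computation.
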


\begin{remark}\label{remark partie technique}
  Before giving the details of the proof, let us explain how the
  computations are carried out. We first focus on $\bar{\eta}_n$ and
  study the coefficients of the expansion given by Lemma~\ref{prop
    structure du dev}; the expressions of
  $\lambda^{\mp n}\Delta_n^{\pm n}$ and $1-n\Delta'_n\Delta_n^{-1}$
  follow from that of $\bar{\eta}_n$, as they are obtained by
  evaluating the functions $\Delta,\Delta',\gamma,\dots$ at the point
  $\bar{\eta}_n$.
  Note that the equation in Lemma~\ref{claim deuxis} can be rewritten
  as
	\begin{equation}\label{new claim 5.6}
	\bar{\eta}_n=\bar{\Delta}(\lambda^n \bar{\eta}_n \bg(\lambda^n \bar{\eta}_n))^n \bg(\lambda^n\bar{\eta}_n),
	\end{equation}
	denoting
	$$
	\bar{\Delta}\colon z \mapsto 1+\sum_{j=1}^{+\infty} \ba_j z^j,\quad\text{and} \quad \bar \gamma\colon z \mapsto 1 +\gamma(z)=1+\sum_{j=1}^{+\infty} \bg_j z^j.
	$$
	Fix an integer $k\geq 1$. Based on the implicit
    equation~\eqref{new claim 5.6} satisfied by $\bar{\eta}_n$, we
    determine inductively the coefficients of $\ba_{k+1}$ and
    $\bg_{k}$ in the expression of $\bar{\eta}_n$. More precisely, in
    order to explicit the dependence of $\bar{\eta}_n$ on $\ba_{k+1}$,
    resp. $\bg_{k}$, we differentiate~\eqref{new claim 5.6} with
    respect to $\ba_{k+1}$, resp. $\bg_{k}$, and plug the expansion we
    already have in the right hand side. The presence of the extra
    factor $\lambda^n$ acts as a shift, i.e., it ``propagates" the
    information we have one step further.  Besides, in order to avoid
    seeing new unknown quantities $\ba_{k+2},\bg_{k+1},\dots$, we only
    consider the
    terms 
    in the expansion of $\bar{\eta}_n$ which are aligned along the
    line of slope $1$ based at the points where $\ba_{k+1}$ and
    $\bg_{k}$ first appear (see Fig.~\ref{image
      tableau}). 
	At each step, the expressions we obtain in the derivative
    of~\eqref{new claim 5.6} are combinations of terms of two kinds:
	\begin{itemize}
    \item terms of the form
      $n^A \ba_1^B \ba_{k+1} \lambda^{nC} \bar{\eta}_n^D$ or
      $n^A\ba_1^B \bg_{k} \lambda^{nC} \bar{\eta}_n^D$ with $C$ large;
      in this case, we use the expressions of the first coefficients
      of $\bar{\eta}_n^D$ given by~\eqref{puissances de mu};
    \item terms of the form $n^A\ba_1^B \lambda^{nC} \bar{\eta}_n^D$
      with $C$ small; in this case, based on the expansion computed
      previously, we identify the coefficients of $\ba_{k+1}$ and
      $\bg_{k}$ in the expression of $\bar{\eta}_n^D$ in order to go
      one step further in the expansion.
	\end{itemize}
\end{remark}

\begin{figure}[H]
	\begin{center}
      \includegraphics [width=14cm]{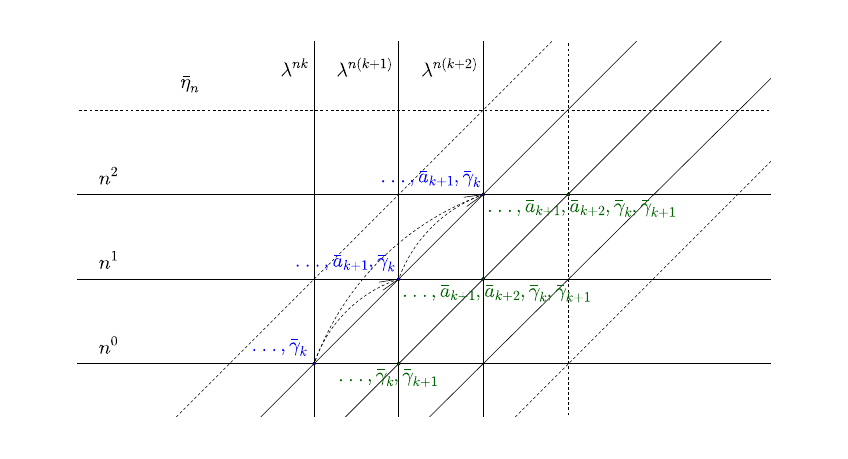}
		\caption{Coefficients in the series expansion of $\bar{\eta}_n$.} \label{image tableau}
	\end{center}
\end{figure}

\begin{defi}\label{triangulaire series}
	For any integer $k \geq 0$, a formal series $\mathcal{S}$ is called $k$-triangular if
	\begin{enumerate}
		\item it is of the form $\mathcal{S}=\sum_{p-q\geq k, q \geq 0} s_{q,p}n^q \lambda^{np}$;
		\item its principal part
		$\mathbb{P}(\mathcal{S}):=\sum_{p-q=k} s_{q,p}n^q \lambda^{np}$ is non-zero, i.e., $\mathbb{P}(\mathcal{S})\neq 0$.
	\end{enumerate}
	In other words, $\mathcal{S}$ is the product of $\lambda^{nk}$ and of a balanced series with non-zero principal part.
\end{defi}

Let us state some basic properties of triangular series which will be
useful in the following.
\begin{remark}\label{remark triangulaire} Let $\mathcal{S}_0$ be a
$k$-triangular series, for some integer $k \geq 0$.
\begin{itemize}
\item for any $k$-triangular series $\mathcal{S}_1$ such that
  $\mathbb{P}(\mathcal{S}_0)+\mathbb{P}(\mathcal{S}_1)\neq 0$, the
  series $\mathcal{S}_0+\mathcal{S}_1$ is $k$-triangular, and
  $\mathbb{P}(\mathcal{S}_0+ \mathcal{S}_1)=\mathbb{P}(\mathcal{S}_0)+
  \mathbb{P}(\mathcal{S}_1)$.
\item For any integer $\ell \geq 0$ and any $\ell$-triangular series
  $\mathcal{S}_2$ such that
  $\mathbb{P}(\mathcal{S}_0) \mathbb{P}(\mathcal{S}_2)\neq 0$, the
  series $\mathcal{S}_0 \mathcal{S}_2$ is $(k+\ell)$-triangular and it
  holds
	$$
	\mathbb{P}(\mathcal{S}_0 \mathcal{S}_2)=\mathbb{P}(\mathcal{S}_0)
    \mathbb{P}(\mathcal{S}_2).
	$$
  \end{itemize}
  In particular, if $\mathcal{S}$ is a balanced series and if
  $\omega\colon z \mapsto \sum_{j=0}^{+\infty} \omega_j z^j$ is an
  analytic function with $\omega_0\neq 0$, then
  $\omega(\lambda^n \mathcal{S})$ is a $0$-triangular series, and
  $\mathbb{P}(\omega(\mathcal{S}))=\omega_0$.
\end{remark}

\begin{proof}[Proof of Lemma~\ref{lemme utile mochomoge}] Let $k \geq 1$.

\noindent\textsc{Proof of~\eqref{eq:mu-rels}}: since $\partial_{\ba_{k+1}}\bar{\Delta}\colon z \mapsto z^{k+1}$, differentiating \eqref{new claim 5.6} with respect to $\ba_{k+1}$, we thus get
	\begin{align*}
		&\partial_{\ba_{k+1}} \bar{\eta}_n= n\bar{\Delta}(\lambda^n \bar{\eta}_n \bg(\lambda^n \bar{\eta}_n))^{n-1} \Big[ \lambda^{n(k+1)} \bar{\eta}_n^{k+1} (\bg(\lambda^n \bar{\eta}_n))^{k+2}+\lambda^n
		\partial_{\ba_{k+1}}\bar{\eta}_n[\bg(\lambda^n \bar{\eta}_n)+\\
		&+ \lambda^{n} \bar{\eta}_n\gamma'(\lambda^n\bar{\eta}_n)] \bar{\Delta}'(\lambda^n \bar{\eta}_n\bg(\lambda^n \bar{\eta}_n))\bg(\lambda^n \bar{\eta}_n)\Big]+\partial_{\ba_{k+1}}\bar{\eta}_n\cdot \bar{\Delta}(\lambda^n \bar{\eta}_n \bg(\lambda^n \bar{\eta}_n))^{n}\lambda^n \gamma'(\lambda^n\bar{\eta}_n).
	\end{align*}
	By the fact that $\bar\eta_n$ is a $0$-triangular series, it follows from this expression that $	\partial_{\ba_{k+1}} \bar{\eta}_n$ is a $k$-triangular series with leading term
	\begin{equation}\label{leading temr eta n}
	\partial_{\ba_{k+1}} \bar{\eta}_n=n \lambda^{n(k+1)}+\dots 
	\end{equation}
	Besides, the terms $\gamma(\lambda^n\bar{\eta}_n)$ and
    $\lambda^n \gamma'(\lambda^n \bar{\eta}_n)$ are $1$-triangular and
    are mutiplied by $k$-triangular terms in the previous expression,
    hence by Remark~\ref{remark triangulaire} they do not contribute
    to the principal part of $\partial_{\ba_{k+1}} \bar{\eta}_n$. For
    the same reason, the only term in the expansion of
    $ \bar{\Delta}'(\lambda^n \bar{\eta}_n \bg(\lambda^n
    \bar{\eta}_n))$ which contributes to the principal part of
    $\partial_{\ba_{k+1}} \bar{\eta}_n$ is the constant term
    $\ba_1$.  By~\eqref{puissances de mu} and~\eqref{leading temr eta
      n}, we thus get
	\begin{align*}
		\mathbb{P}(\partial_{\ba_{k+1}} \bar{\eta}_n)&=\mathbb{P}\Big( \bar{\Delta}(\lambda^n \bar{\eta}_n)^{n-1} \Big[n \lambda^{n(k+1)} \bar{\eta}_n^{k+1} +n\lambda^n
		\partial_{\ba_{k+1}}\bar{\eta}_n \ba_1
		\Big]\Big)\\
		&=(1+n\ba_1 \lambda^n )\Big[n \lambda^{n(k+1)}(1+(k+1) n \ba_1 \lambda^n )+n^2\ba_1 \lambda^{n(k+2)}\Big]+O(n^3 \lambda^{n(k+3)})\\
		&=n \lambda^{n(k+1)}+(k+3) n^2 \ba_1 \lambda^{n(k+2)}+ O(n^3 \lambda^{n(k+3)}).
	\end{align*}

	Similarly, $\partial_{\bg_{k}}\bar{\gamma}=\partial_{\bg_{k}}\gamma\colon z \mapsto z^{k}$, hence
	\begin{align*}
		&\partial_{\bg_{k}} \bar{\eta}_n= \bar{\Delta}(\lambda^n \bar{\eta}_n \bg(\lambda^n \bar{\eta}_n))^{n} \Big[\lambda^{nk} \bar{\eta}_n^k+\lambda^{n} \partial_{\bg_{k}} \bar{\eta}_n \gamma'(\lambda^n \bar{\eta}_n)\Big]+  n  \bar{\Delta}'(\lambda^n \bar{\eta}_n \bg(\lambda^n \bar{\eta}_n))\cdot \\
		&\bar{\Delta}(\lambda^n \bar{\eta}_n \bg(\lambda^n \bar{\eta}_n))^{n-1}\Big[\lambda^n \partial_{\bg_{k}}\bar{\eta}_n [\bg(\lambda^n \bar{\eta}_n)+\lambda^n \bar{\eta}_n\gamma'(\lambda^n \bar{\eta}_n)]+\lambda^{n(k+1)}\bar{\eta}_n^{k+1}\Big]\bg(\lambda^n \bar{\eta}_n).
	\end{align*}
	By the fact that $\bar\eta_n$ is a balanced series, it follows from this expression that $\partial_{\bg_{k}} \bar{\eta}_n$ is a $k$-triangular series with leading term
	$$
	\partial_{\bg_{k}} \bar{\eta}_n=\lambda^{nk}+\dots 
	$$
	As previously, the terms  $\gamma(\lambda^n\bar{\eta}_n)$ and $\lambda^n \gamma'(\lambda^n \bar{\eta}_n)$ do not contribute to the principal part of $\partial_{\bg_{k}} \bar{\eta}_n$, as they are $1$-triangular and are mutiplied by $k$-triangular terms in the previous expression, and $ \bar{\Delta}'(\lambda^n \bar{\eta}_n \bg(\lambda^n \bar{\eta}_n))$ can be replaced with  $\ba_1$.  Together with~\eqref{puissances de mu}, the previous expansion   thus yields
	\begin{align*}
		\mathbb{P}(\partial_{\bg_{k}} \bar{\eta}_n)&=\mathbb{P}\Big(\bar{\Delta}(\lambda^n \bar{\eta}_n )^{n} \lambda^{nk} \bar{\eta}_n^k+  n \ba_1\lambda^n
		\bar{\Delta}( \lambda^n \bar{\eta}_n )^{n-1}\Big[ \partial_{\bg_{k}}\bar{\eta}_n 
		+\lambda^{nk}\bar{\eta}_n^{k+1}\Big]\Big)\\
		&= (1+n\ba_1\lambda^n)\Big[\lambda^{nk}(1+kn \ba_1 \lambda^n)+n\ba_1 \lambda^n \cdot 2\lambda^{nk}\Big]+O(n^2 \lambda^{n(k+2)})\\
		&=\lambda^{nk}+(k+3) n \ba_1 \lambda^{n(k+1)}+ O(n^2 \lambda^{n(k+2)}).
	\end{align*}
	Plugging this back in the expression of $\mathbb{P}(\partial_{\bg_{k}} \bar{\eta}_n)$, and going one step further in the expansion of $\bar{\Delta}(\lambda^n \bar{\eta}_n )^{n}$, we can compute the third term in the principal part of $\partial_{\bg_{k}} \bar{\eta}_n$ (we do not give the details here as it will not be needed in the following). \\

\noindent\textsc{Proof of~\eqref{eq:nu-rels}}: the derivation of the
    coefficients $\{\nu_{j,k}^\pm\}_{j,k}$ is done in a similar
    way. Yet, unlike $\bar{\eta}_n$, now there is no implicit equation
    anymore, thus we can differentiate
    $\lambda^{\mp n} \Delta_n^{\pm n}$ directly and use the
    expressions of $\{\mu_{j,k}\}_{j,k}$ obtained above. We note that
	$$
	\lambda^{\mp n} \Delta_n^{\pm n}=\bar{\Delta}(\lambda^n \bar{\eta}_n \bg(\lambda^n \bar{\eta}_n))^{\pm n}.
	$$
	We will detail the calculations only  for $\pm=-$ which is the case we will need in the following. The case where $\pm=+$ is analogous.
	By the above formula, we thus get
	\begin{align*}
		\partial_{\ba_{k+1}}\lambda^{ n} \Delta_n^{- n}&=-n \bar{\Delta}(\lambda^n \bar{\eta}_n \bg(\lambda^n \bar{\eta}_n))^{-(n+1)}\Big[\lambda^{n(k+1)} \bar{\eta}_n^{k+1}(\bg(\lambda^n \bar{\eta}_n))^{k+1}+\\
		&+\lambda^n
		\partial_{\ba_{k+1}}\bar{\eta}_n[\bg(\lambda^n \bar{\eta}_n)+ \lambda^{n} \bar{\eta}_n\gamma'(\lambda^n\bar{\eta}_n)] \bar{\Delta}'(\lambda^n \bar{\eta}_n\bg(\lambda^n \bar{\eta}_n))\Big].
	\end{align*}
	We see that the associated series is also $k$-triangular. For the same reason as before, the terms $\gamma(\lambda^n\bar{\eta}_n)$ and $\lambda^n \gamma'(\lambda^n \bar{\eta}_n)$  need not be considered, and the only term of $\bar{\Delta}'(\lambda^n \bar{\eta}_n\bg(\lambda^n \bar{\eta}_n))$ which contributes to the principal part of $\partial_{\ba_{k+1}}\lambda^{ n} \Delta_n^{- n}$ is $\ba_1$.  Replacing $	\partial_{\ba_{k+1}}\bar{\eta}_n$ with the value computed previously, and by~\eqref{puissances de mu}, we thus obtain
	\begin{align*}
		&\mathbb{P}(\partial_{\ba_{k+1}}\lambda^{ n} \Delta_n^{- n})=-\mathbb{P}\Big( \bar{\Delta}(\lambda^n \bar{\eta}_n )^{-(n+1)}\Big[n\lambda^{n(k+1)} \bar{\eta}_n^{k+1}+n \ba_1 \lambda^n
		\partial_{\ba_{k+1}}\bar{\eta}_n\Big]\Big)\\
		&=-(1-n\ba_1 \lambda^n)\Big[n\lambda^{n(k+1)}(1+(k+1)n\ba_1 \lambda^n)+n^2 \ba_1 \lambda^{n(k+2)}\Big]+O(n^3 \lambda^{n(k+3)})\\
		&=-n \lambda^{n(k+1)}-(k+1)n\ba_1 \lambda^{n(k+2)}+O(n^3 \lambda^{n(k+3)}).
	\end{align*}

	Similarly,
	\begin{align*}
		\partial_{\bg_{k}}\lambda^{ n} \Delta_n^{- n}&=-n\bar{\Delta}'(\lambda^n \bar{\eta}_n\bg(\lambda^n \bar{\eta}_n)) \bar{\Delta}(\lambda^n \bar{\eta}_n \bg(\lambda^n \bar{\eta}_n))^{-(n+1)}\Big[\lambda^{n(k+1)} \bar{\eta}_n^{k+1}+\\
		&+\lambda^n \partial_{\bg_{k}}\bar{\eta}_n [\bg(\lambda^n \bar{\eta}_n)+\lambda^n \bar{\eta}_n\gamma'(\lambda^n \bar{\eta}_n)] \Big].
	\end{align*}
	Arguing as before, and replacing $	\partial_{\bg_{k}}\bar{\eta}_n$ with the value computed previously, we get
	\begin{align*}
		&\mathbb{P}(\partial_{\bg_{k}}\lambda^{ n} \Delta_n^{- n})=-\mathbb{P}\Big(n\ba_1 \bar{\Delta}(\lambda^n \bar{\eta}_n )^{-(n+1)}\Big[\lambda^{n(k+1)} \bar{\eta}_n^{k+1}+\lambda^n \partial_{\bg_{k}}\bar{\eta}_n \Big]\Big)\\
		&=-n \ba_1 \lambda^n (1-n\ba_1 \lambda^n)\Big[\lambda^{nk}(1+(k+1)n\ba_1 \lambda^n)+\lambda^{nk}+(k+3) n \ba_1 \lambda^{n(k+1)}\Big]+O(n^3 \lambda^{n(k+3)})\\
		&=-2n \lambda^{n(k+1)}-2(k+1)n^2\ba_1 \lambda^{n(k+2)}+O(n^3 \lambda^{n(k+3)}).
	\end{align*}

	\noindent\textsc{Proof of~\eqref{eq:rho-rels}}: let us now deal
    with the coefficients $\{\rho_{j,k}\}_{j,k}$. The computations are
    carried out in the same way as for the coefficients
    $\{\nu^-_{j,k}\}_{j,k}$. Set
    $\bar D\colon z \mapsto \sum_{j=1}^{+\infty} j \ba_j z^j$. It
    holds
	$$
	1-n\Delta'_n\Delta_n^{-1}=1-n \bar D(\lambda^n \bar{\eta}_n \bg(\lambda^n \bar{\eta}_n))\bar{\Delta}(\lambda^n \bar{\eta}_n \bg(\lambda^n \bar{\eta}_n))^{-1}.
	$$
	We have $\partial_{\ba_{k+1}} \bar D\colon z \mapsto (k+1)z^{k+1}$. It follows that
	\begin{align*}
		&\partial_{\ba_{k+1}}(1-n\Delta'_n\Delta_n^{-1})=-n\lambda^{n(k+1)} \bar{\eta}_n^{k+1} (\bg(\lambda^n \bar{\eta}_n))^{k+1}\bar{\Delta}(\lambda^n \bar{\eta}_n \bg(\lambda^n \bar{\eta}_n))^{-1}\Big[(k+1)-\\
		&-\Delta'_n\Delta_n^{-1}\Big]-n\lambda^n
		\partial_{\ba_{k+1}}\bar{\eta}_n[\bg(\lambda^n \bar{\eta}_n)+ \lambda^{n} \bar{\eta}_n\gamma'(\lambda^n\bar{\eta}_n)] \bar{\Delta}(\lambda^n \bar{\eta}_n\bg(\lambda^n \bar{\eta}_n))^{-1}\cdot \\
		&\cdot\Big[\bar D'(\lambda^n \bar{\eta}_n \bg(\lambda^n \bar{\eta}_n))-\bar \Delta'(\lambda^n \bar{\eta}_n \bg(\lambda^n \bar{\eta}_n)) \Delta'_n\Delta_n^{-1}\Big].
	\end{align*}
	In the previous expression, $[(k+1)- \Delta'_n\Delta_n^{-1}]$ is a $0$-triangular series whose principal part is reduced to $k+1$, as $\Delta'_n\Delta_n^{-1}$ is $1$-triangular.  Similarly, $\bar D'(\lambda^n \bar{\eta}_n \bg(\lambda^n \bar{\eta}_n))-\bar \Delta'(\lambda^n \bar{\eta}_n \bg(\lambda^n \bar{\eta}_n)) \Delta'_n\Delta_n^{-1}$  is $0$-triangular, and its principal part is equal to $\ba_1$. We see that $\partial_{\ba_{k+1}}(1-n\Delta'_n\Delta_n^{-1})$ is $k$-triangular, and as before, the terms $\gamma(\lambda^n\bar{\eta}_n)$ and $\lambda^n \gamma'(\lambda^n \bar{\eta}_n)$ do not contribute to the principal part of  $\partial_{\ba_{k+1}}(1-n\Delta'_n\Delta_n^{-1})$. We thus obtain
	\begin{align*}
		&\mathbb{P}\Big(\partial_{\ba_{k+1}}(1-n\Delta'_n\Delta_n^{-1})\Big)\\
		&=-\mathbb{P}\Big(  \bar{\Delta}(\lambda^n \bar{\eta}_n \bg(\lambda^n \bar{\eta}_n))^{-1}\Big[n\lambda^{n(k+1)}\bar{\eta}_n^{k+1} (k+1)+ n\ba_1\lambda^n
		\partial_{\ba_{k+1}}\bar\eta_n\Big]\Big)\\
		&=-n\lambda^{n(k+1)}(1+(k+1)n\ba_1 \lambda^n )(k+1)-n\ba_1\lambda^n \cdot n \lambda^{n(k+1)}+ O(n^3 \lambda^{n(k+3)})\\
		&=-(k+1)n\lambda^{n(k+1)}-[(k+1)^2+1]n^2\ba_1  \lambda^{n(k+2)}+ O(n^3 \lambda^{n(k+3)}).
	\end{align*}
	Finally, we have
	\begin{align*}
		&\partial_{\bg_{k}}(1-n\Delta'_n\Delta_n^{-1})=-n\Big[\lambda^n
		\partial_{\bg_{k}}\bar{\eta}_n[\bg(\lambda^n \bar{\eta}_n)+ \lambda^{n} \bar{\eta}_n\gamma'(\lambda^n\bar{\eta}_n)]+\lambda^{n(k+1)} \bar{\eta}_n^{k+1}\Big] \cdot\\
		&\cdot\bar{\Delta}(\lambda^n \bar{\eta}_n\bg(\lambda^n \bar{\eta}_n))^{-1}\cdot\Big[\bar D'(\lambda^n \bar{\eta}_n \bg(\lambda^n \bar{\eta}_n))-\bar \Delta'(\lambda^n \bar{\eta}_n \bg(\lambda^n \bar{\eta}_n)) \Delta'_n\Delta_n^{-1}\Big].
	\end{align*}
	Arguing as above, we deduce that
	\begin{align*}
		&\mathbb{P}\Big(\partial_{\bg_{k}}(1-n\Delta'_n\Delta_n^{-1})\Big)
		=-\mathbb{P}\Big( n\ba_1(\lambda^n
		\partial_{\bg_{k}}\bar{\eta}_n+\lambda^{n(k+1)} \bar{\eta}_n^{k+1})\Big)\\
		&=-n\ba_1 \lambda^{n(k+1)}-(k+3) n^2 \ba_1^2 \lambda^{n(k+2)}-n\ba_1 \lambda^{n(k+1)}-(k+1)n^2 \ba_1^2 \lambda^{n(k+2)}+ O(n^3 \lambda^{n(k+3)})\\
		&=-2n\ba_1 \lambda^{n(k+1)}-2(k+2)n^2 \ba_1^2 \lambda^{n(k+2)}+ O(n^3 \lambda^{n(k+3)}).
	\end{align*}
\end{proof}
We reported the above computations since they could be useful in some
further developments of this work.  In the current section we will in
fact only rely upon some specific combinations, which occur in the
term denoted with $\I_{n}$ and we collect in the following corollary
(recall that the quantities $\I_{n}$, $\II_{n}$, $\III_{n}$ below are
those which were introduced in Lemma \ref{lemme exposant de lya}).
\begin{corollary}\label{cor:tildeIn}
  The following holds:
  \begin{align*}
    \lambda^{n}\Delta_{n}^{-n}(1-n\Delta'_{n}\Delta_{n}^{-1}) &= 
    \sum_{p = 0}^{\infty}\sum_{q = 0}^{p}L_{q,p}^{*}n^{q}\lambda^{np},
  \end{align*}
  and we have:
  \begin{subequations}
    \begin{align}
      L^{*}_{0,p}   & = c^{*}_{0,p},   \label{eq:L0p-star}\\
      L^{*}_{1,p+1} & = -4\ba_{1}\bg_{p}-(p+2)\ba_{p+1} + c^{*}_{1,p+1}, \label{eq:L1p-star}\\
      L^{*}_{2,p+2} & = -2(2p+1)\ba^{2}_{1}\bg_{p} - (p+1)^{2}\ba_{1}\ba_{p+1} + c^{*}_{2,p+2}, \label{eq:L2p-star}
    \end{align}
  \end{subequations}
  where $c^{*}_{i,p+1}$ depend only on the coefficients
  $\{\bg_{\ell},\ba_{\ell+1}\}_{0\leq \ell < p}$.
  Moreover
  \begin{align}\label{eq:L00-star}
    L^{*}_{0,0} &= 1,&
    L^{*}_{1,1} &= -2\ba_{1}, &
    L^{*}_{2,2} &= -\frac12\ba_{1}^{2}.
  \end{align}
\end{corollary}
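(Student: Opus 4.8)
The plan is to recognize $\lambda^{n}\Delta_{n}^{-n}(1-n\Delta'_{n}\Delta_{n}^{-1})$ as the product of the two balanced series produced by Lemma~\ref{prop structure du dev}, namely $\lambda^{n}\Delta_{n}^{-n}=1+\sum_{k\geq1}Q_{k}^{-}(n)\lambda^{nk}$ with $Q_{k}^{-}(X)=\sum_{j=0}^{k}\nu_{j,k}^{-}X^{j}$, and $1-n\Delta'_{n}\Delta_{n}^{-1}=1+\sum_{k\geq1}R_{k}(n)\lambda^{nk}$ with $R_{k}(X)=\sum_{j=0}^{k}\rho_{j,k}X^{j}$. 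By Remark~\ref{rem:balanced-series} the product is again a balanced series, which is exactly the asserted form $\sum_{p\geq0}\sum_{q=0}^{p}L_{q,p}^{*}n^{q}\lambda^{np}$ (the range $q\le p$ being the balanced property). The coefficients are then extracted from the Cauchy product: writing $Q_{0}^{-}=R_{0}=1$,
\[
L_{q,p}^{*}=\sum_{k=0}^{p}\big[\text{coefficient of }n^{q}\text{ in }Q_{k}^{-}(n)R_{p-k}(n)\big]=\sum_{k=0}^{p}\sum_{a+b=q}\nu_{a,k}^{-}\rho_{b,p-k}.
\]

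The decisive simplification is the vanishing of the order-zero coefficients, $\nu_{0,k}^{-}=\delta_{k,0}$ and $\rho_{0,k}=\delta_{k,0}$, which follow from \eqref{puissances de mu}, \eqref{eq:nu-rels} and \eqref{eq:rho-rels}. For $q=0$ the convolution collapses to $L_{0,p}^{*}=\sum_{k}\nu_{0,k}^{-}\rho_{0,p-k}=\delta_{p,0}$, giving $L_{0,0}^{*}=1$ and $L_{0,p}^{*}=0$ for $p\geq1$, which is \eqref{eq:L0p-star} with vanishing constant. For $q=1$ only the two boundary terms survive and $L_{1,p+1}^{*}=\rho_{1,p+1}+\nu_{1,p+1}^{-}$; substituting the expressions from \eqref{eq:rho-rels}--\eqref{eq:nu-rels} for $p\geq1$ yields $-4\ba_{1}\bg_{p}-(p+2)\ba_{p+1}$ together with the remainder $c_{1,p+1}'+c_{1,p+1}^{-}$, which by Lemma~\ref{lemme utile mochomoge} depends only on coefficients of index strictly below $p+1$; this gives \eqref{eq:L1p-star}. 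The base cases \eqref{eq:L00-star} come from the same Cauchy product evaluated using the initial data \eqref{puissances de mu}; for instance $L_{1,1}^{*}=\rho_{1,1}+\nu_{1,1}^{-}=-2\ba_{1}$.

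The only genuinely combinatorial step is \eqref{eq:L2p-star}. After again discarding the vanishing order-zero coefficients, the $q=2$ convolution reads
\[
L_{2,p+2}^{*}=\rho_{2,p+2}+\nu_{2,p+2}^{-}+\sum_{k=1}^{p+1}\nu_{1,k}^{-}\rho_{1,p+2-k}.
\]
The point is that in the remaining sum the new unknowns $\ba_{p+1},\bg_{p}$ enter only through the two extreme summands $\nu_{1,p+1}^{-}\rho_{1,1}$ and $\nu_{1,1}^{-}\rho_{1,p+1}$ (with $\rho_{1,1}=\nu_{1,1}^{-}=-\ba_{1}$), whereas every interior term $2\le k\le p$ involves only coefficients of index $\le p$ and is absorbed into $c_{2,p+2}^{*}$. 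Inserting the Lemma~\ref{lemme utile mochomoge} formulas into $\rho_{2,p+2}$, $\nu_{2,p+2}^{-}$ and the two extreme summands, the three contributions proportional to $\ba_{1}^{2}\bg_{p}$ have prefactors $-2(p+2)$, $-2(p+1)$ and $+4$, summing to $-2(2p+1)$, while the three proportional to $\ba_{1}\ba_{p+1}$ have prefactors $-((p+1)^{2}+1)$, $-(p+1)$ and $+(p+2)$, summing to $-(p+1)^{2}$; this is precisely \eqref{eq:L2p-star}, and $L_{2,2}^{*}=\rho_{2,2}+\nu_{1,1}^{-}\rho_{1,1}+\nu_{2,2}^{-}=-\ba_{1}^{2}+\ba_{1}^{2}-\tfrac12\ba_{1}^{2}=-\tfrac12\ba_{1}^{2}$. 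I expect the main obstacle to be purely bookkeeping: correctly isolating, within the degree-two convolution, the two boundary summands that carry the new coefficients from the interior summands that must be relegated to $c_{2,p+2}^{*}$, and then checking that the three arithmetic contributions collapse to the clean prefactors $-2(2p+1)$ and $-(p+1)^{2}$.
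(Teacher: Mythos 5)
Your proposal is correct and follows essentially the same route as the paper: expand $\lambda^{n}\Delta_{n}^{-n}(1-n\Delta'_{n}\Delta_{n}^{-1})$ as the Cauchy product of the two balanced series from Lemma~\ref{prop structure du dev}, use $\nu^{-}_{0,k}=\rho_{0,k}=\delta_{k,0}$ to collapse the convolutions, invoke Remark~\ref{rem:coefficients-homogeneous} to absorb the interior summands into the constants $c^{*}$, and substitute the formulas of Lemma~\ref{lemme utile mochomoge} into the surviving boundary terms; your arithmetic for the prefactors $-2(2p+1)$ and $-(p+1)^{2}$ matches the paper's. The only (harmless) difference is that you additionally observe $L^{*}_{0,p}=0$ for $p\geq 1$, which is a slight sharpening of \eqref{eq:L0p-star}.
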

\begin{proof}
  By definition of $L^{*}_{q,p}$, we gather that for any $p\ge 0$ and
  $0\le q \le p$:
  \begin{align*}
    L^{*}_{q,p} &= \sum_{\substack{p'+p'' = p\\q'+q'' = q}}\nu_{q',p'}^{-}\rho_{q'',p''}.
  \end{align*}
  Observe that the contribution of terms for which both $p'-q' < p$
  and $p''-q'' < p$ can be absorbed in the terms $c^*$, since they do
  not depend on either $\ba_{p+1}$ nor $\bg_{p}$ by
  Lemma~\ref{rem:coefficients-homogeneous}.

 In particular, if $q = 0$, then necessarily
  $q' = q'' = 0$, hence:
  \begin{align*}
    L_{0,p}^{*} &= \nu_{0,p}^{-}\rho_{0,0}+\nu_{0,0}^{-}\rho_{0,p}+ c_{0,p};
  \end{align*}
  and using~\eqref{puissances de mu} and Lemma~\ref{lemme utile
    mochomoge} we obtain~\eqref{eq:L0p-star}.
  If $q = 1$ then either $q' = 1$ and $q'' = 0$ or $q' = 0$ and $q'' =
  1$.  Observe that by Lemma~\ref{lemme utile mochomoge}, the
  coefficients $\nu^{-}_{0,k}$ and $\rho_{0,k}$ are $0$ unless $k =
  0$; we conclude that:
  \begin{align*}
    L_{1,p_+1}^{*} &= \nu_{1,p+1}^{-}\rho_{0,0}+\nu_{0,0}^{-}\rho_{1,p+1}+c_{1,p+1},
  \end{align*}
  which yields~\eqref{eq:L1p-star}.
  Finally, we consider the case $q = 2$; in this case one could have
  $q' = 0, 1, 2$ and correspondingly $q'' = 2-q'$.  This leads to:
  \begin{align*}
    L^{*}_{2,p+2} &=
    \nu^{-}_{2,p+2}\rho_{0,0}+\nu^{-}_{1,p+1}\rho_{1,1}+\nu^{-}_{1,1}\rho_{1,p+1}+\nu^{-}_{0,0}\rho_{2,p+2},
  \end{align*}
  which yields~\eqref{eq:L2p-star}.  Equations~\eqref{eq:L00-star}
  then follow from similar arguments, or directly from Lemma~\ref{corollar dofe}.
\end{proof}
\subsection{Determination of the scaled coefficients $\{\bgg_{\ell},\bg_{\ell},\ba_{\ell}\}_{\ell\ge 0}$}\label{detertmin coeffci}

In this part, we keep the same notation and show how the above
estimates can be employed to show that the scaled coefficients $\{\bgg_{\ell},\bg_{\ell},\ba_{\ell}\}_{\ell\ge 0}$ introduced in~\eqref{scaled coefff} are \mlsinv{s}.

\begin{lemma}\label{lemma exp lya detailll}
There exists a sequence of real numbers
\begin{align*}
  (L_{q,p})_{\substack{p=0,\cdots,+\infty\\ q=0,\cdots,p}}
\end{align*}
such that for any integer $n \geq n_0$, we have the following
expansion:
\begin{align}\label{eq:exp-lyap-series}
  2\lambda^{n} \cosh(2(n+1)\mathrm{LE}(h_n)) 
  &= \sum_{p=0}^{+\infty}\sum_{q=0}^{p}L_{q,p} n^{q}\lambda^{np}.
\end{align}
Moreover, for any $p\ge 1$, the following linear relation holds:
\begin{align}\label{eq:linear-relation-WVC}
  W_p = A_p V_{p} +C_p,
\end{align}
where $V_p,W_p, C_p\in \R^3$ are defined as:
\begin{align}\label{e_VWCdefs}
  V_p&:=\threevec{\bgg_{p}}{\bg_{p}}{\ba_{p+1}},&
  W_p&:=\threevec{L_{0,p}}{L_{1,p+1}}{L_{2,p+2}},&
  C_p&:=\threevec{C_{0,p}}{C_{1,p+1}}{C_{2,p+2}},
\end{align}
$A_p \in \mathfrak{M}_3(\R)$ is given by:
\begin{align*}
  A_p:=\begin{pmatrix}
    1 & 0 & 0 \\
    (p-2)\ba_1 & -4  \ba_1 g_0 & -(p+2)g_0\\
    \frac{p^2- 2p-1}{2}\ba_1^2 & -2(2p+1)\ba_1^2 g_0 & -(p+1)^2\ba_1 g_0\\
  \end{pmatrix},
\end{align*}
and for $i \in \{0,1,2\}$, the constants $C_{i,p+i}\in \R$ only depend
on the coefficients\footnote{ Recall the notation introduced
  in~\eqref{scaled coefff}.}
$\{\bgg_{\ell},\bg_{\ell},\ba_{\ell+1}\}_{0\leq \ell < p}$.
\end{lemma}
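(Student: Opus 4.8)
The plan is to start from the trace formula of Lemma~\ref{lemme exposant de lya}, which after multiplication by $\lambda^{n}$ reads
\begin{align*}
  2\lambda^{n}\cosh(2(n+1)\mathrm{LE}(h_n)) = \mathrm{I}_n + \lambda^{n}\mathrm{II}_n + \lambda^{2n}\mathrm{III}_n.
\end{align*}
By Remark~\ref{rem:balanced-series} each of $\mathrm{I}_n,\mathrm{II}_n,\mathrm{III}_n$ is a balanced series in $\lambda^{n}$, so that the coefficient of order $k$ is a polynomial in $n$ of degree at most $k$; multiplying by $\lambda^{n}$, resp.\ $\lambda^{2n}$, merely shifts the order and keeps this property. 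Hence the right-hand side is again balanced, which produces an expansion $\sum_{p\geq0}\sum_{q=0}^{p}L_{q,p}n^{q}\lambda^{np}$ with $q\le p$. This is exactly~\eqref{eq:exp-lyap-series} and defines the coefficients $(L_{q,p})$.

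For the affine relation~\eqref{eq:linear-relation-WVC} the core point is a triangularity statement. The three ``new'' unknowns $\bgg_p,\bg_p,\ba_{p+1}$ collected in $V_p$ all first appear at \emph{weight} $p$ (the diagonal $\{p'-q=p\}$): $\bgg_p$ through the weight-$p$ coefficient of $g(\eta_n)$, while $\bg_p$ and $\ba_{p+1}$ through the entries $L^{*}_{1,p+1}$ and $L^{*}_{2,p+2}$ of Corollary~\ref{cor:tildeIn} in the factor $\lambda^{n}\Delta_n^{-n}(1-n\Delta_n'\Delta_n^{-1})$. Now $\mathrm{II}_n$ carries the overall factor $2n\Delta_n'\Delta_n^{-1}$, whose expansion starts at weight one, so $\mathrm{II}_n$ has no weight-zero term; and $\mathrm{III}_n$ is balanced. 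A weight count then shows that in $\lambda^{n}\mathrm{II}_n$ and $\lambda^{2n}\mathrm{III}_n$ the unknowns $\bgg_p,\bg_p,\ba_{p+1}$ are pushed to weights strictly larger than $p$ at the target positions $(0,p),(1,p+1),(2,p+2)$. Consequently, by Remark~\ref{remark triangulaire}, only $\mathrm{I}_n=\big(\sum_{q,p}L^{*}_{q,p}n^{q}\lambda^{np}\big)\,g(\eta_n)$ contributes terms linear in $V_p$ to $W_p$; everything else, together with all products $\bgg_i\bg_j,\dots$ (which sit at weight $\sim 2p$ and hence never reach weight $p$) and all coefficients of index $<p$, is absorbed into $C_p$, yielding the claimed affine form with $C_{i,p+i}$ depending only on $\{\bgg_{\ell},\bg_{\ell},\ba_{\ell+1}\}_{\ell<p}$.

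It then remains to read off $A_p$ from $\mathrm{I}_n$ alone. Writing $g(\eta_n)=\sum_{k\ge0}\bgg_k\lambda^{nk}\bar\eta_n^{\,k}$, so that the $\bgg_p$-part of $[g(\eta_n)]_{q'',p''}$ equals $\bgg_p\,\mu^{(p)}_{q'',p''-p}$, and combining the leading values $\mu^{(i)}_{0,0}=1,\ \mu^{(i)}_{1,1}=i\ba_1,\ \mu^{(i)}_{2,2}=\tfrac{i(i+2)}{2}\ba_1^{2}$ from~\eqref{puissances de mu} with $L^{*}_{0,0}=1,\ L^{*}_{1,1}=-2\ba_1,\ L^{*}_{2,2}=-\tfrac12\ba_1^{2}$ and the formulas for $L^{*}_{1,p+1},L^{*}_{2,p+2}$ of Corollary~\ref{cor:tildeIn}, one extracts the coefficient of each unknown at each target entry. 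The first row is immediate, $L_{0,p}=\bgg_p+C_{0,p}$. As a representative of the harder computation, the coefficient of $\bgg_p$ in $L_{2,p+2}$ receives exactly the three contributions
\begin{align*}
  L^{*}_{2,2}\mu^{(p)}_{0,0}+L^{*}_{1,1}\mu^{(p)}_{1,1}+L^{*}_{0,0}\mu^{(p)}_{2,2}
  = -\tfrac12\ba_1^{2}-2p\,\ba_1^{2}+\tfrac{p(p+2)}{2}\ba_1^{2}=\tfrac{p^{2}-2p-1}{2}\ba_1^{2},
\end{align*}
matching the entry $(3,1)$ of $A_p$; the coefficients of $\bg_p$ and $\ba_{p+1}$ in each row come out the same way from the explicit values of $L^{*}_{1,p+1},L^{*}_{2,p+2}$ (recall $g_0=\bgg_0$).

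The main obstacle is precisely this last bookkeeping: at the three diagonal positions one must show that \emph{every} contribution other than the listed linear terms either vanishes --- because $L^{*}_{q',p'}=0$ whenever $q'>p'$, or because the low-order coefficients $[g(\eta_n)]_{q'',p''}$ with $p''-q''<p$ carry only data of index $<p$ --- or else depends exclusively on $\{\bgg_{\ell},\bg_{\ell},\ba_{\ell+1}\}_{\ell<p}$ and hence belongs to $C_p$. This is exactly where the homogeneity structure of Remark~\ref{rem:coefficients-homogeneous} and the multiplicativity of principal parts (Remark~\ref{remark triangulaire}) are indispensable, as they guarantee that no spurious $\ba_2,\bg_1,\dots$ contaminate the linear coefficients constituting $A_p$.
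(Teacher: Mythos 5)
Your proposal is correct relative to the paper and follows its proof essentially step for step: the same decomposition $\I_n+\lambda^{n}\II_n+\lambda^{2n}\III_n$, the same balanced-series argument for \eqref{eq:exp-lyap-series}, the same reduction of the part linear in $V_p$ to $\I_n$ alone with everything else absorbed into $C_p$, and the same extraction of $A_p$ by combining Corollary~\ref{cor:tildeIn} with the coefficients $\mu^{(p)}_{r,r}$ (your computation of the $(3,1)$ entry is exactly the paper's $K_2^{p}=\sum_{r+s=2}\mu^{(p)}_{r,r}L^{*}_{s,s}$).

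One local inaccuracy in your write-up: $2n\Delta_n'\Delta_n^{-1}$ is a balanced ($0$-triangular) series, not one ``starting at weight one'' --- the explicit factor of $n$ raises the polynomial degree by one, so $\II_n$ does possess weight-zero terms (its principal part begins with $2n\ba_1(1-\bg_1 g_0)\lambda^{n}$). The paper does not invoke any triangularity of $\II_n$; it discards the contributions of $\lambda^{n}\II_n$ and $\lambda^{2n}\III_n$ at the positions $(0,p)$, $(1,p+1)$, $(2,p+2)$ by applying the homogeneity statement of Remark~\ref{rem:coefficients-homogeneous} to the coefficients $L^{\II}_{q,p+q-1}$ and $L^{\III}_{q,p+q-2}$, which are asserted to involve only data of index $<p$. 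You should substitute that argument for your weight count. In either formulation, the delicate point that deserves an explicit verification is the factor $\gamma'(\eta_n)$ appearing in $\II_n$ and $\III_n$: being a derivative, it carries $\bg_k$ one weight earlier than $\gamma(\eta_n)$ does, so one must check that this shift does not feed $\bg_p$ into $L_{1,p+1}$ and $L_{2,p+2}$ through the product $2n\Delta_n'\Delta_n^{-1}\,\gamma'(\eta_n)g(\eta_n)$ inside $\lambda^{n}\II_n$; your proposal, like the paper's proof, passes over this point silently.
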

\begin{proof}
  Let $n\geq n_0$. By Lemma~\ref{lemme exposant de lya}, we have
	\begin{align*}
      2\lambda^{n}\cosh(2(n+1)\mathrm{LE}(h_n)) &= 
      \mathrm{I}_n+
      \lambda^{n}\mathrm{II}_n+
      \lambda^{2n}\mathrm{III}_n.
	\end{align*}
    By Remark~\ref{rem:balanced-series},  $\mathrm{I}_n$,
    $\mathrm{II}_n$ and $\mathrm{III}_n$ are balanced series, i.e.:
    \begin{align*}
      \I_{n}&= \sum_{p=0}^{+\infty}\sum_{q=0}^{p}L^{\I}_{q,p} n^{q}\lambda^{np},&
      \II_{n}&= \sum_{p=0}^{+\infty}\sum_{q=0}^{p}L^{\II}_{q,p} n^{q}\lambda^{np},&
      \III_{n}&= \sum_{p=0}^{+\infty}\sum_{q=0}^{p}L^{\III}_{q,p} n^{q}\lambda^{np},
    \end{align*}
    and therefore also the left hand side
    of~\eqref{eq:exp-lyap-series} is a balanced series,
    i.e., \eqref{eq:exp-lyap-series} holds.  We thus need to
    show~\eqref{eq:linear-relation-WVC}.  Let us fix an integer
    $p\geq 1$.  Observe that
    \begin{align}\label{eq:Lpq123}
      L_{q,p} &= L_{q,p}^{\I}+L_{q,p-1}^{\II}+L_{q,p-2}^{\III};
    \end{align}
    moreover, by construction (see
    Lemma~\ref{rem:coefficients-homogeneous}), we can also conclude
    that $L_{q,p}^{*}$ only depend on
    $\{\ba_{i+1}, \bar\gamma_i, \bar g_i\}_{i = 0,\cdots, p-q}$ for
    $* = \I_n$, $\II_n$ and $\III_n$.  Hence, the contributions to
    $L_{0,p}$, (\resp $L_{1,p+1}$, $L_{2,p+2}$) of the last two terms
    in~\eqref{eq:Lpq123} do contain no $\bar g_{p}$, $\bar\gamma_{p}$
    or $\bar a_{p+1}$, and can thus be absorbed in $C_{0,p}$ (\resp
    $C_{1,p+1}$, $C_{2,p+2}$).  In order to
    show~\eqref{eq:linear-relation-WVC} it thus suffices to study the
    coefficients $L_{q,p}^{\I}$ of the balanced series:
    \begin{align*}
      \I_n &= \lambda^{n}\Delta_n^{-n}\big(1-n\Delta'_n\Delta_n^{-1}\big)g(\lambda^{n}\bar\eta_n).
    \end{align*}
    We begin to study the dependence of $\I_{n}$ on $\bgg_{p}$ (\ie
    the first column of $A_{p}$).  Thanks to Lemma \ref{prop structure du dev}, we can write:
    \begin{align*}
      g(\eta_n) &= \sum_{\ell= 0}^{+\infty}\bgg_{\ell} \lambda^{n\ell
               }\bar{\eta}_n^\ell = \sum_{\ell= 0}^{+\infty}
                  \bgg_{\ell} \lambda^{n\ell}\sum_{k = 0}^{\infty}\sum_{j
                  = 0}^{k} \mu_{j,k}^{(\ell)}n^{j}\lambda^{nk}.
    \end{align*}
    Observe that in the expansion of $g$, the coefficient
    $\bgg_{\ell}$ is multiplied by
    $\lambda^{n\ell}\bar{\eta}_n^{\ell}$. Therefore, using Corollary \ref{cor:tildeIn}, we obtain
    \begin{align*}
      \I_{n} &=  \left[ \sum_{p' = 0}^{\infty}\sum_{q' =
               0}^{p'}L_{q',p'}^{*}n^{q'}\lambda^{np'} \right] \cdot \left[
               \sum_{\ell= 0}^{+\infty}
               \bgg_{\ell} \lambda^{n\ell}\sum_{k = 0}^{\infty}\sum_{j
               = 0}^{k} \mu_{j,k}^{(\ell)}n^{j}\lambda^{nk}\right]\\
             &= \sum_{p = 0}^{\infty}\sum_{p'+p''+p''' = p}\sum_{q = 0}^{p'+p'''}
               \sum_{q'+q''' = q}
               L_{q',p'}^{*}\bgg_{p''}\mu_{q''',p'''}^{(p'')}n^{q} \lambda^{np},
    \end{align*}
    which yields:
    \begin{align*}
      L^{\I}_{q,p} &= \sum_{p'+p''+p''' = p} \sum_{\substack{q'+q''' =
                     q\\0\le q'\le p'\\0\le q'''\le p'''}} L_{q',p'}^{*}\bgg_{p''}\mu_{q''',p'''}^{(p'')}.
    \end{align*}
    In order to extract the contribution of $\bgg_{p}$ we thus need to
    set $p'' = p$; we conclude that, for $i \in\{0,1,2\}$, the
    coefficient $\bgg_{p}$ appears in $L_{i,p+i}$ multiplied by a
    factor $K_i^{p} = \sum_{r+s=i}\mu_{r,r}^{(p)}L^{*}_{s,s}$ and
    by~\eqref{puissances de mu} and Corollary~\ref{cor:tildeIn} we
    can at last conclude:
    \begin{align*}
      K_0^{p} &= 1, &K_1^{p}&=(p-2)\ba_1, &K_2^{p}&=\frac{p^2-2p-1}{2}\ba_1^2.
    \end{align*}
    We now proceed to study the second and third columns of $A_{p}$,
    which amounts to study the dependence on $\bg$ and $\ba$.  This,
    in principle, entails more work than the previous task, since the
    coefficients $\bg$ and $\ba$ show up in the expansions of each of
    the terms in $\I_{n}$, and not just the last term.  As a matter of
    fact, the last term does not contribute at all; in fact notice
    that, as before, we can write:
    \begin{align*}
      g(\eta_n) &=  \sum_{\ell= 0}^{+\infty}
                  \bgg_{\ell} \lambda^{n\ell }\sum_{k = 0}^{\infty}\sum_{j
                  = 0}^{k} \mu_{j,k}^{(\ell)}n^{j}\lambda^{nk},
    \end{align*}
    with the convention that $\mu_{j,k}^{(0)}=0$ for all
    $(j,k)\neq(0,0)$, and $\mu_{0,0}^{(0)}=1$.  As noted earlier
    (recall Lemma \ref{rem:coefficients-homogeneous}),
    the coefficients $\ba_{p+1}$ and $\bg_{p}$ would only occur in the
    expression for $\mu_{j,k}^{(\ell)}$ with $k-j\ge p$.  If we
    consider $L_{0,p}$ (\resp $L_{1,p+1}$, $L_{2,p+2}$), we thus must
    set $k = p$ (\resp $p+1$, $p+2$); in turn this implies that
    $\ell = 0$ (since $\ell+k = p+i$). But then $\mu^{(0)}_{j,k} = 0$
    for any $j,k$.  Thus it suffices to consider the expansion of
    \begin{align*}
      \tilde I_{n} &= g_0\cdot \lambda^{n}\Delta_n^{-n}\big(1-n\Delta'_n\Delta_n^{-1}\big)
    \end{align*}
    and the statement follows from Corollary~\ref{cor:tildeIn}.
\end{proof}
We have seen in Lemma~\ref{corollar dofe} that the values of $g_0$ and
$\ba_1$ are \mlsinv{s}; moreover by Remark~\ref{remark inv man},
$g_0 \neq 0$.
\begin{corollary}\label{coroll detm inv}
  Under the assumption that the first Birkhoff coefficient does not
  vanish, i.e., $\ba_1\neq 0$, then the coefficients
  $\{\bgg_{\ell},\bg_{\ell},\ba_{\ell}\}_{\ell\ge 0}$ are \mlsinv{s}.
\end{corollary}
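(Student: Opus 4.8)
The plan is to run an induction on $p\ge 1$, at each stage inverting the linear system~\eqref{eq:linear-relation-WVC} furnished by Lemma~\ref{lemma exp lya detailll} in order to extract the three new scaled coefficients $\bgg_p,\bg_p,\ba_{p+1}$ from quantities already known to be \mlsinv{s}.

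First I would record that $\lambda=\lambda(12)$ is itself a \mlsinv{}: by definition $\mathrm{LE}(12)=-\frac{1}{2}\log\lambda$, so $\lambda=\exp(-2\,\mathrm{LE}(12))$, and the Marked Lyapunov Spectrum is a \mlsinv{} by Theorem~\ref{corollaire lyapu}. Hence, for every $n\ge n_0$, the left-hand side of~\eqref{eq:exp-lyap-series} is computable from the Marked Length Spectrum, and since $\lambda\in(0,1)$ is known one can peel off the coefficients $L_{q,p}$ weight by weight: the functions $n\mapsto n^q\lambda^{np}$ are linearly independent, so a diagonal extraction (first isolate the constant attached to $\lambda^{0}$, then the polynomial in $n$ attached to $\lambda^{n}$, and so on, subtracting off the already-determined lower-weight terms at each step) recovers every $L_{q,p}$. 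In particular each vector $W_p$ appearing in~\eqref{e_VWCdefs} is a \mlsinv{}.

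The base of the induction is Lemma~\ref{corollar dofe}, which shows that $\bgg_0=g_0$ and $\ba_1$ are \mlsinv{s}, with $g_0\ne0$ by Remark~\ref{remark inv man}; together with the normalizations $\ba_0=\bg_0=1$ this fixes all data indexed by $\ell=0$. For the inductive step, fix $p\ge1$ and assume $\{\bgg_\ell,\bg_\ell,\ba_{\ell+1}\}_{0\le\ell<p}$ are \mlsinv{s}. By Lemma~\ref{lemma exp lya detailll} the vector $C_p$ depends only on these coefficients, hence is a \mlsinv{}, while the entries of $A_p$ involve only $\ba_1$ and $g_0$, already known. Solving~\eqref{eq:linear-relation-WVC} for $V_p=(\bgg_p,\bg_p,\ba_{p+1})$ then requires that $A_p$ be invertible. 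Expanding the determinant along the first row $(1,0,0)$ reduces it to the lower-right $2\times2$ minor, and a direct computation gives
\begin{align*}
  \det A_p = \ba_1^2 g_0^2\bigl(4(p+1)^2-2(p+2)(2p+1)\bigr) = -2p\,\ba_1^2 g_0^2 .
\end{align*}
Since $p\ge1$, $g_0\ne0$, and $\ba_1\ne0$ by hypothesis, this is nonzero; thus $V_p=A_p^{-1}(W_p-C_p)$ is a \mlsinv{}, which recovers $\bgg_p,\bg_p,\ba_{p+1}$ and closes the induction, yielding all of $\{\bgg_\ell,\bg_\ell,\ba_\ell\}_{\ell\ge 0}$.

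The crux of the argument is this determinant identity: it shows that the singular locus of the family $(A_p)_{p\ge1}$ is exactly $\{\ba_1=0\}$ (recall $g_0$ never vanishes), thereby pinning down the non-degeneracy of the first Birkhoff invariant as the precise twist condition that makes the reconstruction succeed at every weight simultaneously. The one point demanding care beforehand is the rigorous passage from the asymptotics of $2\lambda^n\cosh(2(n+1)\mathrm{LE}(h_n))$ to the individual invariants $L_{q,p}$: one must check that a balanced series with known base $\lambda$ is uniquely determined by its values at the integers $n\ge n_0$, so that the $L_{q,p}$ genuinely qualify as \mlsinv{s} and feed legitimately into the induction.
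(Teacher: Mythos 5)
Your proposal is correct and follows essentially the same route as the paper: extract the coefficients $L_{q,p}$ from the asymptotics of $2\lambda^{n}\cosh(2(n+1)\mathrm{LE}(h_n))$ using the distinct growth rates of $n^{q}\lambda^{np}$, take Lemma~\ref{corollar dofe} as the base case, and induct on $p$ by inverting the system $W_p = A_pV_p + C_p$ of Lemma~\ref{lemma exp lya detailll}, using $\det A_p = -2p\,\ba_1^2 g_0^2 \neq 0$. Your explicit expansion of the determinant along the first row and your closing remark on the unique determination of a balanced series from its values at integers $n \geq n_0$ are consistent with, and slightly more detailed than, the paper's argument.
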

\begin{proof}
  By Theorem~\ref{corollaire lyapu}, the Marked Lyapunov Spectrum is a
  \mlsinv{}; in particular, for each $n \geq n_0$, $\mathrm{LE}(h_n)$
  is a \mlsinv.  By Lemma~\ref{lemma exp lya detailll}, we also have
	\begin{align*}
		2\lambda^{n} \cosh(2(n+1)\mathrm{LE}(h_n)) &= 
        \sum_{p=0}^{+\infty}\sum_{q=0}^{p}L_{q,p} n^{q}\lambda^{np}.
	\end{align*}
    Notice that each term
    $\{n^{q}\lambda^{np}\}_{\substack{p=0,\cdots,+\infty,\\
        q=0,\cdots,p}}$ grows at a different rate as $n \to +\infty$;
    hence, their associated weights can be determined inductively,
    i.e., each coefficient $L_{q,p}$ is a \mlsinv{}.  It thus suffices
    to prove that the coefficients $L_{q,p}$ determine the
    coefficients $\{\bgg_{p},\bg_{p},\ba_{p+1}\}_{p\ge 0}$.

    As recalled above, $g_0$ and $\ba_1$ are \mlsinv{s}, and
    $g_0 \neq 0$, by transversality.  We proceed by induction on
    $p$; by Corollary~\ref{corollar dofe},
    $(\bgg_{0},\bg_{0},\ba_{1})=(g_{0},1,\ba_{1})$ is spectrally
    determined. Given $p \ge 1$, let us assume that the coefficients
    $\{\bgg_{\ell},\bg_{\ell},\ba_{\ell+1}\}_{0 \leq \ell < p}$ are
    known; we want to compute $V_{p}$ (recall~\eqref{e_VWCdefs}).  By
    Lemma~\ref{lemma exp lya detailll}, we have $W_p = A_p V_{p}+C_p$
    for some matrix $A_p\in \mathfrak{M}_3(\R)$, which only depends on
    $p$, $\ba_1$ and $g_{0}$ and hence it is spectrally determined;
    moreover:
    \begin{align*}
      \det A_p=-2p\ba_1^2 g_0^{2}.
    \end{align*}
    In particular, under the assumption that $\ba_1 \neq 0$, we have
    $\det A_p\neq 0$, since $g_0\neq 0$, and $p\geq 1$.  Therefore:
    \begin{align*}
      V_{p} &= A_{p}^{-1}(W_{p}-C_{p}).
    \end{align*}
    By Lemma~\ref{lemma exp lya detailll}, the vector $C_{p}$ is
    determined by inductive hypothesis; $W_p$ is obtained from the
    coefficients $L_{q,p}$, which are \mlsinv{s}; we conclude that the
    vector $V_{p}$ is a \mlsinv{}.
\end{proof}

\subsection{Change of Lyapunov exponents in the horseshoe}\label{change lyapu}

In this subsection, we consider the more general case of a $C^\infty$
billiard table $\mathcal{D}=\R^2 \setminus \cup_{i=1}^{m} \obs_i$ with
$m \geq 3$ obstacles that satisfies the non-eclipse condition. For any
periodic orbit encoded by an admissible word $\hat \sigma$ of length
$p \geq 2$, the construction explained above can be suitably adapted,,
by considering a sequence $(\hat h_n)_{n \geq 0}$ of periodic orbits
accumulating some orbit $\hat h_\infty$ homoclinic to $\hat
\sigma$.

Thus, an analogue of Lemma \ref{lemma exp lya detailll} tells us that
the local variation of the Lyapunov exponent in the horseshoe can be
expressed in terms of three sets of data, which we denote analogously
by $\{\hat g_{\ell},\hat \gamma_{\ell},\hat a_{\ell}\}_{\ell\ge 0}$
(associated to the jets of the corresponding functions).

Let us now consider the (degenerate) situation in which Lyapunov
exponents does not change at all, i.e.,
$\mathrm{LE}(\hat h_n)=\hat L=-\frac 1p \log(\hat \lambda)>0$ for all
$n \geq 1$, where $\hat \lambda\in(0,1)$ is the contracting eigenvalue
of $D\mathcal{F}^p$ at the points of $\hat \sigma$.\footnote{
  According to \cite[Proposition 7.13]{BD}, this condition is
  necessary for the measure of maximal entropy $\mu_*$ constructed in
  \cite{BD} and the SRB measure $\mu_{\text{SRB}}$ to coincide.} By
\eqref{eq intermedi}, we deduce that $\hat g_0\neq 0$ and
$\hat a_1 = 0$ (let us also recall that the parameter $\hat \gamma_0$
is equal to some homoclinic coefficient $\hat \xi_\infty\neq 0$ as
previously).  In particular, if the Lyapunov Spectrum is reduced to a
single point, then necessarily the first Birkhoff invariant of each
periodic orbit has to vanish (which is an infinite codimension
condition -- arguing as in Lemma \ref{non vanish} below), and the
coefficient $\hat g_0$ depends very little on the homoclinic orbit
$\hat h_\infty$, as it is prescribed 
by  the length of the word $\hat h_0$ 
(which is also non-generic, as $\hat g_0$ may be changed by perturbing the geometry of the points in $\hat h_\infty$ far from the orbit $\hat \sigma$ -- see Remark \ref{remark inv man}).
This is elaborated in
more detail as~\cite[Theorem 5.1]{DLVY}

Under the same assumption, let us now consider the analogue of
\eqref{eq:exp-lyap-series}. In this case, the LHS is the sum of a
constant and of a multiple of $\hat \lambda^{2n}$, which implies that
the coefficients $(\hat L_{q,p})_{q,p}$ on the RHS satisfy
$\hat L_{q,p}=0$ for all $p \geq 0$ and $0 \leq q \leq p$ but
$(q,p)=(0,0)$ and $(q,p)=(0,2)$. We cannot argue directly as in
Corollary \ref{coroll detm inv} to recover the value of
$\{\hat g_{\ell},\hat \gamma_{\ell},\hat a_{\ell}\}_{\ell\ge 0}$, as
the computations above were carried out under the assumption that
$\hat a_1 \neq 0$. Nevertheless, as explained in Remark \ref{remark
  partie technique}, by considering other directions in the array of
coefficients $(\hat L_{q,p})_{p,q}$, we may be able to derive many
more constraints that those coefficients have to satisfy in this
case. For instance, in the same way as the first order term in the
variation of the Lyapunov exponent is controlled by
$\hat g_0,\hat a_1$, the variation of $\hat a_1$ is controlled by
$\hat g_1,\hat \gamma_1,\hat a_2\dots$, and as $\hat a_1$ vanishes
identically, it is reasonable to expect that $\hat a_2=0$; by
induction, it may then be possible to show that the Birkhoff
invariants are all zero.  This statement is elaborated
as~\cite[Corollary 5.4]{DLVY}.

\section{Further estimates on the Marked Length Spectrum}\label{conseqeunces}

\subsection{Basic facts about twist maps and generating functions}\label{basic facts}

Recall that for an angle $\varphi \in [-\frac \pi 2,\frac \pi 2]$, we denote $r:=\sin \varphi$, so that the billiard map $\mathcal{F}$ takes the form $\mathcal{F}\colon(s,r)\mapsto (s',r')$.
For any point $(s,r)$ whose iterate $(s',r')=\mathcal{F}(s,r)$ is well defined, we let $(s,s'):=\Psi(s,r)$. The billiard map $\mathcal{F}$ is exact symplectic: for the one-form $\omega_1=\omega_1(s,r):=r ds$, it holds

\begin{equation}\label{func gen billiards}
  \mathcal{F}^*  \omega_1-\omega_1=dh(s,s'),
\end{equation}
for the generating function $h(s,s'):=\|\Upsilon(s)-\Upsilon(s')\|$,
letting $\Upsilon(\cdot)$ is the length-parametrization of
$\partial\mathcal D$.   Let
$T:=\mathcal{F}^2\colon (s,r)\mapsto (s'',r'')$ be the square of the
billiard map, and $h^{(2)}(s,s'')=h(s,s')+h(s',s'')$ be the generating
function for $T$, with $s'=s'(s,s'')$ being determined implicitely by
the condition $\partial_2 h(s,s')+\partial_1 h(s',s'')=0$.  We get
\begin{equation}\label{eq gene t}
  T^*\omega_1-\omega_1=dh^{(2)}(s,s'').
\end{equation}

In the neighborhood
$\mathscr{O}_\infty$ of $x_\infty(-1)$, we have $\mathcal{G}R=RT$,
while in a neighborhood of any
other points of the homoclinic orbit $h_\infty$, we have $NR=RT$.
 Let us set $\omega:=(R^{-1})^* \omega_1$ and $H:=\Psi^* h$. In either case, for $\Phi=N$ or $\mathcal{G}$, we obtain
\begin{equation*}
\Phi^* \omega-\omega=
d S,
\end{equation*}
for the generating function
\begin{align*}
  S:=(R^{-1})^*(\mathcal{F}^* H+H).
\end{align*}

\subsection{Estimates on the Marked Length Spectrum}\label{sub esti leng}
In this part, we go back to the length data and derive some asymptotic
expansion of the lengths of the periodic orbits $(h_n)_n$ as
$n\to +\infty$.  Recall the notation $(\xi_n,\eta_n):=R_-(x_n(1))$ and
$\Delta_n:=\Delta(\xi_n\eta_n)$.  Recall also that
$(\xi_\infty,0)=R_-(x_\infty(1))$.

As in Lemma~\ref{claim eta n delta n xi n} and in~\eqref{eq new parmaetere},
$\eta_n=\Delta_n^{n} \xi_n$, and for any $k \in
\{0,\cdots,n\}$, it holds
\begin{align}\label{eq new parmaetere2}
  R_{-}(x_n(2k+1)) &= (\xi_{n}(2k+1),\eta_{n}(2k+1))= \xi_n(\Delta_{n}^{k}, \Delta_{n}^{n-k}).
\end{align}
\begin{prop}\label{prop lourde un}
	For $n\to +\infty$, the following asymptotics hold:
\begin{align}\label{premier equiv}
  \mathcal{L}(h_n)-(n+1)\mathcal{L}(\sigma)-\mathcal{L}^\infty
  =-\xi_\infty^2	\mathrm{tr} (d^2 S_{(0,0)}) \frac{1}{\lambda^{-1}-\lambda} \lambda^{n}+o(\lambda^n),
\end{align}
with
$\mathrm{tr}(d^2S_{(0,0)}):=\partial_{11}S_{(0,0)}+\partial_{22}S_{(0,0)}$. 
\end{prop}
\begin{proof}
Let $\Sigma_n^1$, $\Sigma_n^2$ be as in~\eqref{ligne 1} and~\eqref{ligne 2}.
We will compute the value of the sum $\Sigma_n^1+\Sigma_n^2=\mathcal{L}(h_n)-(n+1)\mathcal{L}(\sigma)-\mathcal{L}^\infty$
in terms of the generating function $S$ for the
$(\xi,\eta)$-coordinates.

In the following, we only consider the case where $n$ is odd, i.e.,
$n=2m-1$ for some integer $m \geq m_0$; in particular, the period of
$h_n=h_{2m-1}$ equals $2n+2=4m$.
  By the palindromic
  symmetry of the orbit
  $h_n$, 
  $x_n(-k)=\mathcal{I}(x_n(k))$ for $k=0,\cdots,n+1$.  Besides, $H\circ \mathcal{I}=H$ and $S\circ \mathcal{I}_0=S$, with $\mathcal{I}_0\colon (\xi,\eta)\mapsto (\eta,\xi)$.  We have
  \begin{align}
  	\Sigma_n^1&=2\sum_{k=0}^{n}\Big(h(s_n(k),s_n(k+1))-h(s_\infty(k),s_\infty(k+1))\Big)\nonumber \\
  	&=-2 \big(h^{(n+1)} (s_\infty(0),s_\infty(n+1))-h^{(n+1)}(s_n(0),s_n(n+1))\big),\label{somme deux longueurs}
  \end{align}
letting $h^{(n+1)}=h^{(n+1)}(s_0,s_{n+1})$ be the generating  function for $\mathcal{F}^{n+1}$ near the point $(s_n(0),s_n(n+1))$; in other words, for $(s_0,s_{n+1})$ close to $(s_n(0),s_n(n+1))$, we set
$$
h^{(n+1)}(s_0,s_{n+1})=h(s_0,s_1)+h(s_1,s_2)+\dots+h(s_{n},s_{n+1}),
$$
the intermediate parameters $s_1=s_1(s_0,s_{n+1}),\dots,s_n=s_n(s_0,s_{n+1})$ being determined by the condition $\partial_2 h(s_{i-1},s_i)+\partial_1 h(s_i,s_{i+1})=0$, for $i\in \{1,\dots,n\}$.

Considering the Taylor expansion of \eqref{somme deux longueurs}, we obtain
\begin{align*}
	\Sigma_n^1=&-2  dh^{(n+1)}(s_n(0),s_n(n+1))\big[(s_\infty(0),s_\infty(n+1)-(s_n(0),s_n(n+1))\big]\\
	&-d^2h^{(n+1)}(s_n(0),s_n(n+1))\big[(s_\infty(0),s_\infty(n+1)-(s_n(0),s_n(n+1))\big]^2+H.O.T.
	\end{align*}
Since $h^{(n+1)}$ is the generating function for $\mathcal{F}^{n+1}$, as in \eqref{func gen billiards}, we have
$$
\mathcal{F}^{n+1} \omega_1-\omega_1=dh^{(n+1)}(s_0,s_{n+1}).
$$
Therefore, $dh^{(n+1)}(s_n(0),s_n(n+1))=0$, since
$x_{n}(0),x_n(n+1)\in \{r=0\}$ are associated to perpendicular
bounces, and $\omega_1=r ds$. In particular, we only have to care
about second order terms to obtain the leading term in the
expansion~\eqref{premier equiv}.

Let us now express the sum $\Sigma_n^1$ in $(\xi,\eta)$-coordinates:
\begin{align*}
	\Sigma_n^1
	=&2\sum_{k=0}^{m-1}\big(S(\xi_n(2k+1),\eta_n(2k+1))- S(\xi_\infty(2k+1),0)\big)\\
	=&-2\sum_{k=0}^{m-1}
	\big(S ( \lambda^k\xi_\infty,0)-S( \Delta_n^{k}\xi_n,\Delta_n^{n-k}\xi_n)\big).
\end{align*}
By the previous discussion, first order terms in the Taylor expansion vanish, thus
\begin{equation*}
	\Sigma_n^1
	=-\sum_{|\beta|=2}  \frac{2}{\beta!} \sum_{k=0}^{m-1} 
	\partial^\beta S_{\xi_n(\Delta_n^{k}, \Delta_n^{n-k})}\cdot\left(\lambda^k\xi_\infty-\Delta_n^{k}\xi_n,-\Delta_n^{n-k}\xi_n\right)^\beta+H.O.T.,
  \end{equation*}
  where in the above expression, the sum is taken over the
  multi-indices $\beta=(\beta_1,\beta_2)\in \{(2,0),(1,1),(0,2)\}$,
  denoting
  $\partial^\beta:=\partial^{\beta_1} \circ
  \partial^{\beta_2}$, $\beta!:=\beta_1! \beta_2!$, and
  $(v_1,v_2)^\beta:=v_1^{\beta_1}v_2^{\beta_2}$, for $v_1,v_2 \in
  \R$. By Lemma~\ref{corollar dofe} and Lemma \ref{petitt lemmmma}, we
  have
\begin{equation*}
	|\xi_n-\xi_\infty|=O(\lambda^{n}),\qquad |\Delta_n-\lambda|=O(\lambda^{n}),
\end{equation*}
thus for any $0 \leq k \leq m$,
$|\Delta_n^{k}\xi_n-\lambda^k\xi_\infty|=O(n\lambda^n)$, while
$ |\Delta_n^{n-k}\xi_n|
\simeq \lambda^{n-k} \xi_\infty $ (in particular,
$|\Delta_n^{n-k}\xi_n| \simeq \lambda^{\frac n2} \xi_\infty $ when $k$ is close
to $m$), hence the contribution of the second term overcomes that of
the first term in
$(\Delta_n^{k}\xi_n-\lambda^k\xi_\infty,\Delta_n^{n-k}\xi_n)$. For
$k \simeq m$, we have
$\|\xi_n(\Delta_n^{k},\Delta_n^{n-k})\|=O( \lambda^m)$ too, thus in
order to estimate the leading term in $\Sigma_n^1$ we may consider
partial derivatives $\partial^\beta_{(0,0)}$ instead of
$\partial^\beta_{\xi_n(\Delta_n^{k},\Delta_n^{n-k})}$; indeed,
\begin{equation*}
\partial_{22} S_{\xi_n(\Delta_n^{k}, \Delta_n^{n-k})}(\Delta_n^{n-k}\xi_n)^2\\
=\partial_{22} S_{(0,0)}\lambda^{2(n-k)}\xi_\infty^2+O(\lambda^{\frac{3n}{2}}).
\end{equation*}
Therefore, it holds
\begin{equation*}
	\Sigma_n^1
	=- \xi_\infty^2\partial_{22} S_{(0,0)}  \sum_{k=0}^{m-1}
	\lambda^{2(n-k)}+o(\lambda^n)=- \xi_\infty^2 \partial_{22} S_{(0,0)}\frac{1}{\lambda^{-1}-\lambda} \lambda^n+o(\lambda^n).
\end{equation*}
As we observed, $S\circ \mathcal{I}_0=S$, and $\xi,\eta$ play symmetric roles in the computations, hence $\partial_{22} S_{(0,0)}=\partial_{11} S_{(0,0)}=\frac 12 \mathrm{tr} (d^2 S_{(0,0)})$, and thus,
\begin{equation}\label{asymptotique sigma un n}
	\Sigma_n^1=- \frac{\xi_\infty^2}{2} \mathrm{tr} (d^2 S_{(0,0)})\frac{1}{\lambda^{-1}-\lambda} \lambda^n+o(\lambda^n).
\end{equation}

We argue similarly for $\Sigma_n^2$. Indeed,
\begin{equation*}
\Sigma_n^2:=2\sum_{k=n+1}^{+\infty}\Big(h(s(k),s(k+1))-h(s_\infty(k),s_\infty(k+1))\Big),
\end{equation*}
and $dh(s(k),s(k+1))=0$ for all $k \in \Z$, since
$x(k),x(k+1)\in \{r=0\}$ are associated to perpendicular bounces,
hence we only have to care about second order terms in the
expansion. Let us now express the sum $\Sigma_n^2$ in
$(\xi,\eta)$-coordinates:
\begin{align}
	\Sigma_n^2
	=&2\sum_{k=m}^{+\infty}\big(S(0,0)- S(\xi_\infty(2k+1),0)\big)
	=-2\sum_{k=m}^{+\infty}
	\big(S ( \lambda^k\xi_\infty,0)-S(0,0)\big) \nonumber \\
	=&-\xi_\infty^2\sum_{k=m}^{+\infty}
	\partial_{11} S_{(0,0)} \lambda^{2k}+o(\lambda^n)=-\frac{\xi_\infty^2}{2}	\mathrm{tr} (d^2 S_{(0,0)}) \frac{1}{\lambda^{-1}-\lambda} \lambda^{n}+o(\lambda^n). \label{asymptotique sigma deux n}
\end{align}
It follows from \eqref{asymptotique sigma un n}-\eqref{asymptotique sigma deux n} that
\begin{equation*}
	\Sigma_n^1+\Sigma_n^2
	=-\xi_\infty^2	\mathrm{tr} (d^2 S_{(0,0)}) \frac{1}{\lambda^{-1}-\lambda} \lambda^{n}+o(\lambda^n),
\end{equation*}
which concludes the proof.
\end{proof}

\color{black}
\begin{remark}
  The parameter $\xi_\infty$ in the present paper is different from --
  although related to -- the quantity identified by the same symbol
  in~\cite{BDSKL}.  Indeed, in~\cite{BDSKL}, we chose $R$ to be a
  linearization of the dynamics in a neighborhood of the $2$-periodic
  point with a suitable normalization (see the sentence
  above~\cite[(8)]{BDSKL}).  On the other hand, in this paper, the map
  $R$ is the conjugacy with the Birkhoff Normal Form determined in
  Corollary~\ref{conjug canonique}.  Of course at first order the two
  conjugacies coincide up to a normalization, but since we have chosen
  two different normalizations, the corresponding formulae involving
  $\xi_{\infty}$ (see \eg~\cite[(31)]{BDSKL}) therefore differ from
  the ones obtained in this paper (see \eg~the estimates in
  Proposition~\ref{prop lourde un}).
\end{remark}

\subsection{$\mathcal{MLS}$-determination of the Birkhoff
  data}\label{determ homco}

Let us now assume that the billiard table $\mathcal{D}$ presents
additional symmetries in the sense of Definition~\ref{defi sym}, i.e.,
$\mathcal{D} \in \billiards_{\mathrm{sym}}$. As a consequence of the
above estimates on the Marked Length Spectrum, we can conclude the
following result.
\begin{corollary}\label{premier cororrr}
  Let  $\mathcal{D} \in \billiards_{\mathrm{sym}}$. With the same notations as above, 
  the
  value of the parameter $\xi_\infty=\xi_\infty(\mathcal{D},1,2)$
  associated to the orbit
  $h_\infty=h_\infty(\mathcal{D},1,2)$\footnote{ See
  	the definitions of $h_\infty$ and $\xi_\infty$ in Section
  	\ref{section extension}.} homoclinic to $(12)$ is a \mlsinv{}.
\end{corollary}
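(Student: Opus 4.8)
The plan is to isolate, from the Marked Length Spectrum, the leading coefficient of the exponentially small quantity controlled in Proposition~\ref{prop lourde un}, and then to divide it by an explicitly computable, \mlsinv{} quadratic form in order to extract $\xi_\infty^2$. First, the quantity $\mathcal{A}_n:=\mathcal{L}(h_n)-(n+1)\mathcal{L}(\sigma)-\mathcal{L}^\infty$ is an \mlsinv{} for every $n$: indeed $\mathcal{L}(h_n)$ and $\mathcal{L}(\sigma)$ are values of $\mathcal{MLS}(\mathcal{D})$, and $\mathcal{L}^\infty=\lim_n(\mathcal{L}(h_n)-(n+1)\mathcal{L}(\sigma))$ is a limit of \mlsinv{s}, hence itself one. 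Moreover $\lambda$ is an \mlsinv{}, since $\mathrm{LE}(\sigma)=-\tfrac12\log\lambda$ and the Marked Lyapunov Spectrum is \mlsinv{} by Theorem~\ref{corollaire lyapu}. Proposition~\ref{prop lourde un} gives, for even $n$, $\mathcal{A}_n\sim c_1\lambda^n$ with $c_1=-\tfrac{\xi_\infty^2}{2}\,Q$ and $Q:=\tfrac{1+\lambda^2}{1-\lambda^2}\,\mathrm{tr}(d^2S_{(0,0)})-\mathrm{tr}(d^2S^1_{(0,0)})-2\partial_{12}S^1_{(0,0)}$; hence $c_1=\lim_{m\to\infty}\lambda^{-2m}\mathcal{A}_{2m}$ is a limit of \mlsinv{s}, and so an \mlsinv{}.

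The heart of the matter is to show that $Q$ is itself an \mlsinv{}, so that $\xi_\infty^2=-2c_1/Q$ can be read off. The key observation is that the generating functions $\bar S=\bar S_1+\bar S_1\circ\mathcal{F}$ and $\bar S_1$ have vanishing differential at the $2$-periodic point $x(1)$: since $\omega_1=-r\,ds$ vanishes on $\{r=0\}$ and both $x(1)$ and $x(0)=\mathcal{F}(x(1))$ lie on $\{r=0\}$, the relations $d\bar S=T^*\omega_1-\omega_1$ and $d\bar S_1=\mathcal{F}^*\omega_1-\omega_1$ evaluate to $0$ there. Consequently the Hessians of $\bar S,\bar S_1$ at $x(1)$ depend only on the linearizations $D_{x(1)}T$ and $D_{x(1)}\mathcal{F}$ (the universal form $\omega_1$ carrying no geometric information, and the quadratic part of $R^{-1}$ pairing with the vanishing gradient). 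Passing to Birkhoff coordinates through the canonical conjugacy $R_0$ — whose linear part $DR_0$ is pinned down by $D_{x(1)}T$ together with the normalization $\{r=0\}\mapsto\{\xi=\eta\}$ — one gets that $d^2S_{(0,0)}$ and $d^2S^1_{(0,0)}$ are determined by $D_{x(1)}T$, $D_{x(1)}\mathcal{F}$ and $DR_0$. By formula~\eqref{matrice sl deux} all of these are explicit functions of the radii of curvature $R_0,R_1$, the length $\mathcal{L}(\sigma)$ and $\lambda$, which are \mlsinv{s} by Corollary~\ref{main corr} and Theorem~\ref{corollaire lyapu}. Hence $Q$ is an \mlsinv{}.

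It remains to check $Q\neq 0$, which I expect to be the main technical point. Comparing the leading term of Proposition~\ref{prop lourde un} for the even orbit $\tau\sigma^{2n}$ with Theorem~\ref{prop tperiode deux} identifies $c_1=-C\,\mathcal{Q}\!\big(\tfrac{2R_0}{\mathcal{L}(\sigma)},\tfrac{2R_1}{\mathcal{L}(\sigma)}\big)$ with $C>0$. Using the trace relation $\lambda+\lambda^{-1}=\mathrm{tr}(D_{x(1)}\mathcal{F}^2)=\tfrac{\mathcal{L}(\sigma)^2}{R_0R_1}+\tfrac{2\mathcal{L}(\sigma)}{R_0}+\tfrac{2\mathcal{L}(\sigma)}{R_1}+2$ (read off from~\eqref{matrice sl deux}, with perpendicular bounces so that $\nu=\nu'=1$) to eliminate $\lambda$, a short computation — clearing denominators and applying AM--GM to reduce the top-degree part of the numerator to $2xy(x-y)^2\geq 0$, where $x=\tfrac{2R_0}{\mathcal{L}(\sigma)}$ and $y=\tfrac{2R_1}{\mathcal{L}(\sigma)}$ — shows $\mathcal{Q}(x,y)>0$ for all admissible geometries. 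Thus $c_1\neq 0$, which forces $Q\neq 0$, and therefore $\xi_\infty^2=-2c_1/Q$ is an \mlsinv{}. Fixing the sign convention built into the canonical choice of $R_0$ then makes $\xi_\infty$ itself an \mlsinv{}; the odd-index case (Remark~\ref{remark cas impaair}), with $S^2$ in place of $S^1$, yields the same conclusion and serves as a consistency check.
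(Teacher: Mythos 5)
Your proposal is correct, but the central step is handled quite differently from the paper. The paper never computes $\mathrm{tr}(d^2S^1_{(0,0)})$ or $\partial_{12}S^1_{(0,0)}$ individually: it adds the even-$n$ asymptotic of Proposition~\ref{prop lourde un} to the odd-$n$ one of Remark~\ref{remark cas impaair}, so that only the combination $S=S^1+S^2$ survives, and then argues that $\mathrm{tr}(d^2S_{(0,0)})$ and $\det(d^2S_{(0,0)})$ (hence $\partial_{12}S_{(0,0)}$, using the symmetric form $\left(\begin{smallmatrix}a&b\\ b&a\end{smallmatrix}\right)$) are conjugacy invariants computable from $d^2\bar S$ at $(s(2,1),0)$, i.e.\ from $\mathcal{L}(12)$ and $R_1,R_2$ via Lemma~5.3 of~\cite{BDSKL}. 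You instead stay with a single parity and determine $d^2S^1_{(0,0)}$ and $d^2S_{(0,0)}$ directly: since $d\bar S_1$ vanishes at both points of the $2$-periodic orbit (both lie on $\{r=0\}$ and $\omega_1=-r\,ds$ vanishes there), the Hessians transform purely linearly under $R_0$, and both $d^2\bar S_{1}$ and the linearization $D_{x(1)}R_0$ (pinned down by diagonalizing $D_{x(1)}T$ subject to symplecticity and the normalization of Corollary~\ref{conjug canonique}) are explicit in $R_0,R_1,\mathcal{L}(12),\lambda$, all \mlsinv{s}. This buys you two things the paper's proof does not make explicit: you avoid the delicate claim that the \emph{trace} of a Hessian is a symplectic conjugacy invariant (only the determinant is, in general — the paper's argument is rescued by the extra symmetry but is stated loosely), and, more importantly, you verify the nonvanishing of the prefactor $Q$ before dividing, via the identification with $-C\,\mathcal{Q}(2R_0/\mathcal{L}(\sigma),2R_1/\mathcal{L}(\sigma))$ from Theorem~\ref{prop tperiode deux} and the positivity of $\mathcal{Q}$ (which indeed holds: using the trace relation one gets $(1+\lambda)^2xy=4\lambda(1+x)(1+y)$, so $\mathcal{Q}=(1+\lambda^2)u^2-4\lambda uv+2\lambda v^2$ with $u=1+x$, $v=1+y$, a quadratic in $v$ with discriminant $-8\lambda u^2(1-\lambda)^2<0$). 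The paper divides by the analogous combination without checking it is nonzero, so your argument closes a small gap as well as taking a different route; both proofs determine only $\xi_\infty^2$ and then appeal to the sign convention, so you are on equal footing there.
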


\begin{proof}
  By the estimates obtained in Proposition~\ref{prop lourde un}, the
  stable eigenvalue $\lambda$ is a \mlsinv{}; indeed, since $\mathrm{tr}(d^2 S_{(0,0)})>0$ (see Lemma \ref{lem:quadratic-form} and \eqref{relation traces} below), it holds
  \begin{align}
    \lim_{n \to +\infty}\frac 1n \log \left|\mathcal{L}(h_n)-(n+1)\mathcal{L}(\sigma)-\mathcal{L}^\infty\right| = \log \lambda,
  \end{align}
  the left hand side being spectrally determined\footnote{
    Of course this is a particular case of
    Theorem~\ref{prp:palindromic_lyapunov}.}. Still by
  Proposition~\ref{prop lourde un}, we deduce that
  $\xi_\infty^2 \mathrm{tr} (d^2 S_{(0,0)})$ is also \mlsinv{}. We
  claim that in fact, each of the two quantities
  $\mathrm{tr}(d^2 S_{(0,0)})$ and $\xi_{\infty}$ is \mlsinv.

  Indeed, the trace $\mathrm{tr}(d^2 S_{(0,0)})$ of the Hessian of $S$
  at $(0,0)$ can also be computed in $(s,s'')$-coordinates:
  letting $h^{(2)}(s,s'')=h(s,s')+h(s',s'')$ be the generating
  function of $T=\mathcal{F}^2$ at a point $(s,s'')$, for
  $s'=s'(s,s'')$ chosen as in Subsection \ref{basic facts}, and noting
  that the point $(0,0)$ is critical for $h^{(2)}$, it holds
  \begin{equation}\label{relation traces}
  \mathrm{tr}(d^2 S_{(0,0)})=\mathrm{tr}(P^T A P),
  \end{equation}
  where $A$ is the matrix of the Hessian $d^2 h^{(2)}_{(0,0)}$ at $(0,0)$, and $P=D\psi_{(0,0)}$ is the differential of the change of coordinates $\psi \colon (\xi,\eta)\mapsto (s,s'')$. By \eqref{eq gene t}, we have 
  \begin{align}
  \partial_1 h^{(2)}(s,s'')&=\partial_1 h(s,s'(s,s''))=-\sin\varphi,\label{premier ligne partial un h}\\
  \partial_2 h^{(2)}(s,s'')&=\partial_2 h(s'(s,s''),s'')=\sin\varphi'',\nonumber
  \end{align}
  letting $\varphi=\varphi(s,s'')$,
  resp. $-\varphi''=-\varphi''(s,s'')$ be the angle between the normal
  at $s$, resp. $s''$, with the segment connecting $s$ to $s'$,
  resp. $s''$ to $s'$.

  Recall the formula of the differential
  $(\delta s,\delta \varphi)\mapsto (\delta s',\delta \varphi')$ of
  the billiard map in $(s,\varphi)$-coordinates (see Chernov-Markarian
  \cite[p. 35]{CM}): at a point $(s,\varphi)$, it holds
\begin{equation*}
\begin{pmatrix}
\delta s'\\
\delta \varphi'
\end{pmatrix}=-\frac{1}{\cos \varphi'} \begin{pmatrix}
\mathscr{L}\mathcal{K}+\cos\varphi & \mathscr{L}\\
\mathscr{L}\mathcal{K}\mathcal{K}'+\mathcal{K}\cos\varphi'+\mathcal{K}'\cos  \varphi & \mathscr{L}\mathcal{K}'+\cos\varphi'
\end{pmatrix}\begin{pmatrix}
\delta s\\
\delta \varphi
\end{pmatrix},
\end{equation*}
letting $(s',\varphi')$ be the image of $(s,\varphi)$ by the billiard map, $\mathscr{L}:=h(s,s')$, and $\mathcal{K},\mathcal{K}'$ be the respective curvatures at the points $s,s'$. In particular, the differential of the square of the billiard map is equal to
\begin{equation}\label{eq diff bill map  square s phi}
\begin{pmatrix}
\delta s\\
\delta \varphi
\end{pmatrix}\mapsto \begin{pmatrix}
\delta s''\\
\delta \varphi''
\end{pmatrix}=\frac{1}{\cos \varphi'\cos \varphi''} \begin{pmatrix}
a & b\\
* & *
\end{pmatrix}\begin{pmatrix}
\delta s\\
\delta \varphi
\end{pmatrix},
\end{equation}
with
\begin{align*}
	a&:=2 \mathscr{L}\mathscr{L}' \mathcal{K}\mathcal{K}'+ (\mathscr{L}+\mathscr{L}')\mathcal{K}\cos \varphi'+2\mathscr{L}'\mathcal{K}'\cos \varphi+\cos \varphi\cos \varphi',\\
	b&:=2 \mathscr{L}\mathscr{L}' \mathcal{K}'+(\mathscr{L}+\mathscr{L}')\cos \varphi',
\end{align*}
letting $(s'',\varphi'')$ be the second iterate of $(s,\varphi)$ under the billiard map, $\mathscr{L}':=h(s',s'')$, and $\mathcal{K}''$ be the curvature at the point $s''$.

In \eqref{premier ligne partial un h}, the angle $\varphi=\varphi(s,s'')$ is seen as a function of $(s,s'')$. In order to compute $\partial_1\varphi(s,s'')$, we set $\delta s''=0$ in \eqref{eq diff bill map  square s phi}, which gives
$
a\delta s + b\delta \varphi=0
$, with $\delta \varphi=\partial_1\varphi(s,s'') \delta s$, so that
\begin{equation}\label{parital phi s}
\partial_1\varphi(s,s'')=-\frac{a}{b}=-\mathcal{K}-\frac{2\mathscr{L}'\mathcal{K}'\cos \varphi+\cos \varphi\cos \varphi'}{2 \mathscr{L}\mathscr{L}' \mathcal{K}'+(\mathscr{L}+\mathscr{L}')\cos \varphi'}.
\end{equation}
Similarly, to compute $\partial_2\varphi(s,s'')$, we set $\delta s=0$ in \eqref{eq diff bill map  square s phi}, which gives
$
b\delta \varphi=\cos \varphi' \cos \varphi'' \delta s''
$, with $\delta \varphi=\partial_2\varphi(s,s'') \delta s''$, so that
\begin{equation}\label{parital phi s seconde}
\partial_2\varphi(s,s'')=\frac{\cos \varphi' \cos \varphi''}{b}=\frac{\cos \varphi' \cos \varphi''}{2 \mathscr{L}\mathscr{L}' \mathcal{K}'+(\mathscr{L}+\mathscr{L}')\cos \varphi'}.
\end{equation}

Differentiating \eqref{premier ligne partial un h} with respect to $s$, and thanks to \eqref{parital phi s}, we deduce that
\begin{equation*}
\partial_{11}h^{(2)}(s,s'')=-\cos \varphi \cdot \partial_1 \varphi(s,s'')=\mathcal{K}\cos \varphi+\frac{2\mathscr{L}'\mathcal{K}'\cos^2 \varphi+\cos^2 \varphi\cos \varphi'}{2 \mathscr{L}\mathscr{L}' \mathcal{K}'+(\mathscr{L}+\mathscr{L}')\cos \varphi'}.
\end{equation*}
Similarly, differentiating \eqref{premier ligne partial un h} with respect to $s''$, and by \eqref{parital phi s seconde}, we get
\begin{equation*}
	\partial_{21}h^{(2)}(s,s'')=-\cos \varphi \cdot \partial_2 \varphi(s,s'')=-\frac{\cos \varphi\cos \varphi' \cos \varphi''}{2 \mathscr{L}\mathscr{L}' \mathcal{K}'+(\mathscr{L}+\mathscr{L}')\cos \varphi'}.
\end{equation*}
Moreover, by the time reversal symmetry $(s,\varphi)\mapsto (s,-\varphi)$, we have $\partial_{22} h^{(2)}(s,s'')=\partial_{11} h^{(2)}(s'',s)$, with $s'(s'',s)=s'(s,s'')$, and the orbit segment $(s,\varphi)\mapsto (s',\varphi')\mapsto (s'',\varphi'')$ corresponds to the orbit segment $(s'',-\varphi'')\mapsto (s',-\varphi')\mapsto (s,-\varphi)$, so that
\begin{equation*}
	\partial_{22}h^{(2)}(s,s'')=\mathcal{K''}\cos \varphi''+\frac{2\mathscr{L}\mathcal{K}'\cos^2 \varphi''+\cos^2 \varphi''\cos \varphi'}{2 \mathscr{L}\mathscr{L}' \mathcal{K}'+(\mathscr{L}+\mathscr{L}')\cos \varphi'}.
\end{equation*}
At the point $(s,s'')=(0,0)$, we have $\varphi = \varphi' = \varphi''=0$, $\mathscr{L}=\mathscr{L}'=\frac{\mathcal{L}(12)}{2}$ is the length of the two-periodic orbit $(12)$,  and $\mathcal{K}=\mathcal{K}'=\mathcal{K}''$ is the common curvature at the bouncing points, hence the matrix $A$ of the Hessian $d^2 h^{(2)}_{(0,0)}$ is  equal to
\begin{equation}\label{expr matrix a}
A=\begin{pmatrix}
\alpha & \beta\\
\beta & \alpha
\end{pmatrix}:=\begin{pmatrix}
\frac{2(\mathscr{L}\mathcal{K}+1)^2-1}{2\mathscr{L}(\mathscr{L}\mathcal{K}+1)} & -\frac{1}{2\mathscr{L}(\mathscr{L}\mathcal{K}+1)}\\
-\frac{1}{2\mathscr{L}(\mathscr{L}\mathcal{K}+1)} & \frac{2(\mathscr{L}\mathcal{K}+1)^2-1}{2\mathscr{L}(\mathscr{L}\mathcal{K}+1)}
\end{pmatrix}.
\end{equation}
Besides, the change of coordinates $\psi\colon (\xi,\eta)\mapsto (s,s'')$ is the composition $\psi=Q^{-1}\circ R^{-1}$, where $R\colon (s,r)=(s,\sin \varphi) \mapsto (\xi,\eta)$ is the map to go to Birkhoff coordinates, and $Q\colon (s,s'') \mapsto (s,\sin \varphi(s,s''))$. By \eqref{parital phi s}-\eqref{parital phi s seconde}, the matrix of $DQ^{-1}_{(0,0)}$ is
\begin{equation}\label{expr matrix qo}
DQ^{-1}_{(0,0)}=-\frac{1}{\beta}\begin{pmatrix}
-\beta & 0\\
\alpha & 1
\end{pmatrix}.
\end{equation}
Moreover, $DR^{-1}_{(0,0)}$ maps $(1,0)$ to a vector $v_s=(v_1,-v_2)$ in the stable space of $D\mathcal{F}^2_{(0,0)}$ at $(0,0)$, and maps $(0,1)$ to a vector $v_u$ in the unstable space of $D\mathcal{F}^2_{(0,0)}$; since $R$ maps $\{\varphi=0\}$ to $\{\xi=\eta\}$, we also have $v_u=(v_1,v_2)$, and $2v_1v_2=\det (v_s,v_u)=1$, as the change of coordinates $R$ is symplectic. Moreover, due to the symmetry at the two-periodic orbit $(12)$, the matrix of $D\mathcal{F}^2_{(0,0)}$ is equal to $B^2$, with
\begin{align*}
B:=\begin{pmatrix}
\mathscr{L} \mathcal{K}+1 & \mathscr{L}\\
\mathscr{L}\mathcal{K}^2+2 \mathcal{K} &  \mathscr{L} \mathcal{K}+1
\end{pmatrix}.
\end{align*}
After computations, the matrix of $DR^{-1}_{(0,0)}$ is
\begin{equation}\label{expr matrix ro}
DR^{-1}_{(0,0)}=\frac{1}{\sqrt{2}} \begin{pmatrix}
\theta^{-1} & \theta^{-1}\\
-\theta & \theta
\end{pmatrix},\qquad \theta^2 :=\frac{\sqrt{(\mathscr{L}\mathcal{K}+1)^2-1}}{\mathscr{L}}.
\end{equation}
By \eqref{relation traces}, and as $P=D\psi_{(0,0)}=DQ^{-1}_{(0,0)}\circ DR^{-1}_{(0,0)}$, with the expressions of $A$, $DQ^{-1}_{(0,0)}$, $DR^{-1}_{(0,0)}$ obtained in \eqref{expr matrix a}-\eqref{expr matrix qo}-\eqref{expr matrix ro}, we deduce that
\begin{align*}
	\mathrm{tr}(d^2 S_{(0,0)})&=\mathrm{tr}(P^T A P)=\frac{1}{\beta^2}\mathrm{tr}\left(\begin{pmatrix}
		\alpha(\alpha^2-\beta^2) & \alpha^2-\beta^2\\
		\alpha^2-\beta^2 & \alpha
	\end{pmatrix}\begin{pmatrix}
	\theta^{-2} & 0\\
	0 & \theta^2
\end{pmatrix}
\right)\\
&=\frac{\alpha(\alpha^2-\beta^2)\theta^{-2}+\alpha \theta^2}{\beta^2}=4\tau(2\tau^2-1)\sqrt{\tau^2-1},
	\end{align*}
with $\tau:=\frac 12 \mathrm{tr}(B)=\mathscr{L}\mathcal{K}+1$.
    By definition, $\mathscr{L}=\frac{\mathcal{L}(12)}{2}$ is
    \mlsinv{}; moreover, by Theorem \ref{rayon courbure 12}, the
    curvature $\mathcal{K}$ at the bouncing points of the orbit $(12)$
    is also \mlsinv{}, hence, 
    by the above calculation,
    $\mathrm{tr}(d^2 S_{(0,0)})$ is \mlsinv{}. Since we observed at
    the beginning of the proof that the quantity
    $\xi_\infty^2\mathrm{tr} (d^2 S_{(0,0)})$ is also \mlsinv{}, we
    conclude that $\xi_\infty$ is \mlsinv{}.
\end{proof}
\color{black}

\begin{remark}
	Note that the parameter  $\xi_\infty$  can be interpreted in terms of some area in parameter space.  
	Indeed, let us consider the area $\mathcal{A}_n$ of the quadrilateral of the $(s,r)$-plane bounded by the stable/unstable manifolds of the fixed point $(s(2,1),0)$ and the stable/unstable manifolds of the point $(s_{n}(n+1),0)$. Since the change of coordinates $R$ is symplectic, this area is equal to
	\begin{equation}\label{equation aire qu}
	\mathcal{A}_n=\frac 12\tan\left(  \frac\theta 2 \right)s_n(n+1)^2+ o(\lambda^n)=\lambda^n \xi_\infty^2+o(\lambda^n),
	\end{equation}
	where $\theta\in (0,\pi)$ is the angle between the stable/unstable subspaces at $(s(2,1),0)$.

	Actually, this remark gives another way to see that $\xi_\infty$
    is a \mlsinv{};
    indeed, 
	as in the work of Otal (see \eg~the proof of \cite[Th\'eor\`eme
    2]{O}), it can be shown that the area $\mathcal{A}_n$ is a
    \mlsinv{}, and since $\lambda$ is also a \mlsinv{}, so is
    $\xi_\infty$, by \eqref{equation aire qu}. Moreover (although we
    will not need it for the present paper, where the analysis is done
    for the periodic orbit $(12)$), the same construction can be done
    to show that generically, the Birkhoff Normal Form of any periodic
    point is a \mlsinv{}: as for the orbit $(12)$, we consider a
    sequence of periodic orbits $(\hat h_n)_{n \geq 0}$ in the
    horseshoe generated by some homoclinic intersection between the
    invariant manifolds of the periodic point (see Subsection
    \ref{change lyapu} for more details). Then, the analogue of
    Corollary \ref{coroll detm inv} shows that if the first Birkhoff
    coefficient does not vanish, then in fact, all the Birkhoff
    coefficients are \mlsinv{s}, up to a homoclinic parameter
    $\hat \xi_\infty$; moreover, by the previous remark,
    $\hat \xi_\infty$ can be expressed as an area which can also be
    computed from $\mathcal{MLS}$.
\end{remark}

The above result allows us to conclude:
\begin{corollary}\label{corollary retrouver les invariants}
 Let $\mathcal{D} \in \billiards_{\mathrm{sym}}$, and let
$\mathcal{F}=\mathcal{F}(\mathcal{D})$ be the associated billiard map.
We consider the $2$-periodic orbit $(12)$.  Let
$N=N(\mathcal{D},1,2)\colon (\xi,\eta)\mapsto
(\Delta(\xi\eta)\xi,\Delta(\xi\eta)^{-1}\eta)$ be the Birkhoff Normal
Form of $\mathcal{F}^2$ associated to the orbit $(12)$, with
$\Delta=\Delta(\mathcal{D},1,2)\colon z\mapsto
\lambda+\sum_{\ell=1}^{+\infty} a_\ell z^\ell$. If $a_1 \neq 0$, then
\begin{itemize}
	\item the Birkhoff Normal Form $N$ is a \mlsinv{};
	\item the differential of the gluing map $\mathcal{G}$ at
	any point $(\xi,\eta)\in \Gamma_\infty$ is also a
	\mlsinv{}, where $\mathcal{G}=\mathcal{G}(\mathcal{D},1,2)$ and $\Gamma_\infty=\Gamma_\infty(\mathcal{D},1,2)$ are taken as in Subsection~\ref{subs prliemr}.
\end{itemize}
\end{corollary}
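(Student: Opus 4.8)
The plan is to assemble the two main determination results proved in the preceding subsections: Corollary~\ref{coroll detm inv}, which recovers the \emph{scaled} coefficients $\{\bgg_\ell,\bg_\ell,\ba_\ell\}_{\ell\ge 0}$ from the Marked Lyapunov Spectrum, and Corollary~\ref{premier cororrr}, which recovers the homoclinic scaling parameter $\xi_\infty$ from the Marked Length Spectrum. As in Corollary~\ref{premier cororrr}, we may assume without loss of generality that $(jk)=(12)$, so that all the objects $\lambda,\xi_\infty,\gamma,g,\Delta,N,\mathcal{G},\Gamma_\infty$ are those attached to this orbit in Sections~\ref{birkkkhhfo}--\ref{conseqeunces}. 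First I would observe that the hypothesis $a_1\neq 0$ is equivalent to $\ba_1\neq 0$: indeed $\ba_1=\lambda^{-1}a_1\xi_\infty^2$ by~\eqref{scaled coefff}, while $\lambda>0$ and $\xi_\infty\neq 0$ (see Section~\ref{section extension}). Thus the non-degeneracy assumption of Corollary~\ref{coroll detm inv} is met, and the scaled coefficients are $\MLS$-invariants.

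Next I would recover the \emph{unscaled} coefficients by inverting the relations~\eqref{scaled coefff}. The eigenvalue $\lambda=\lambda(12)$ is itself a $\MLS$-invariant, since $\mathrm{LE}(12)=-\tfrac12\log\lambda$ and the Marked Lyapunov Spectrum is determined by the Marked Length Spectrum (Theorem~\ref{corollaire lyapu}). Combining this with the $\MLS$-determination of $\xi_\infty$ (Corollary~\ref{premier cororrr}) and of the scaled coefficients (Corollary~\ref{coroll detm inv}), and using $\xi_\infty\neq 0$, the identities
\begin{align*}
  a_\ell &= \lambda\,\ba_\ell\,\xi_\infty^{-2\ell}, &
  \gamma_\ell &= \bg_\ell\,\xi_\infty^{1-\ell}, &
  g_\ell &= \bgg_\ell\,\xi_\infty^{-\ell}
\end{align*}
show that every coefficient $a_\ell$, $\gamma_\ell$, $g_\ell$ is a $\MLS$-invariant. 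In particular the function $\Delta\colon z\mapsto \lambda+\sum_{\ell\ge 1}a_\ell z^\ell$, and hence the Birkhoff Normal Form $N=N_\Delta$, is entirely determined by $\mathcal{MLS}(\mathcal{D})$, which proves the first bullet.

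For the second bullet I would invoke the explicit expression~\eqref{expre differentielllle} for the differential of the gluing map along $\Gamma_\infty$: for $|\eta|$ small,
\begin{align*}
  D_{(\eta,\xi_\infty+\gamma(\eta))}\mathcal{G}
  =\begin{pmatrix}
    \gamma'(\eta)(2-\gamma'(\eta)g(\eta)) & \gamma'(\eta)g(\eta)-1\\
    1-\gamma'(\eta)g(\eta) & g(\eta)
  \end{pmatrix}.
\end{align*}
Since the analytic functions $\gamma$ and $g$ (hence $\gamma'$) have just been shown to be $\MLS$-invariants, each entry of this matrix is a $\MLS$-invariant function of $\eta$; as the points $(\eta,\xi_\infty+\gamma(\eta))$ range over $\Gamma_\infty$, this yields the $\MLS$-determination of $D\mathcal{G}$ at every point of $\Gamma_\infty$, proving the second bullet.

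There is essentially no genuine obstacle at this stage: the corollary is a bookkeeping consequence of the two determination results, and the real work lies in the inductive extraction of the scaled coefficients (Corollary~\ref{coroll detm inv}) and in the length-spectrum estimate pinning down $\xi_\infty$ (Corollary~\ref{premier cororrr}). The only points requiring care are the equivalence $a_1\neq 0\Leftrightarrow\ba_1\neq 0$, the independent $\MLS$-invariance of $\lambda$, and the non-vanishing $\xi_\infty\neq 0$ needed to invert the scaling~\eqref{scaled coefff}; the role of the twist hypothesis $a_1\neq 0$ is precisely to guarantee the invertibility ($\det A_p=-2p\ba_1^2 g_0^2\neq 0$) of the linear systems underlying Corollary~\ref{coroll detm inv}.
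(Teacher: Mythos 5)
Your proposal is correct and follows essentially the same route as the paper's own proof: invoke Corollary~\ref{coroll detm inv} for the scaled coefficients $\{\ba_\ell,\bg_\ell,\bgg_\ell\}$, Corollary~\ref{premier cororrr} for $\xi_\infty$, undo the scaling~\eqref{scaled coefff} to recover $\{a_\ell,\gamma_\ell,g_\ell\}$ and hence $N$, and read off $D\mathcal{G}|_{\Gamma_\infty}$ from~\eqref{expre differentielllle}. The extra care you take in noting the equivalence $a_1\neq 0\Leftrightarrow\ba_1\neq 0$ and the $\MLS$-invariance of $\lambda$ is a welcome (if implicit in the paper) addition.
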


\begin{proof}
   Recall that by Corollary~\ref{coroll detm inv}, the parameters
  $\{\ba_{\ell},\bg_{\ell},\bgg_{\ell}\}_{\ell\ge0}$ are \mlsinv{s},
  provided that $\ba_{1}\ne0$; by the above corollary
  (recall~\eqref{scaled coefff}) we thus conclude that
  $\{a_{\ell},\gamma_{\ell},g_\ell\}_{\ell \geq 0}$ are \mlsinv{s}, as
  well as the expressions of
  $\Delta\colon z\mapsto \lambda+\sum_{\ell=1}^{+\infty} a_\ell
  z^\ell$ and of the Birkhoff Normal Form
  $N=R_0\mathcal{F}^{2} R_0^{-1}$ in a neighborhood of $(s(2,1),0)$,
  but also of $\gamma$ and $g=\partial_2 G^-(\cdot,\gamma(\cdot))$.
  By~\eqref{expre differentielllle}, we deduce that the differential
  $D\mathcal{G}_{(\xi,\eta)}$ of the gluing map
  $\mathcal{G}=R \circ \mathcal{F}^{2} \circ R^{-1}|_{\Omega_\infty}$
  at any point $(\xi,\eta)\in \Gamma_\infty$ is also a \mlsinv{}, where
  $\mathcal{G},\Omega_\infty$ and $\Gamma_\infty$ are taken as in
  Subsection~\ref{subs prliemr}.
\end{proof}

\section{Reconstructing the geometry from the Marked Length
  Spectrum}\label{sec recov geom}

In this section, we assume that the billiard table $\mathcal{D}$ has additional symmetries, i.e., $\mathcal{D} \in \billiards_{\mathrm{sym}}$.
It follows from the previous part that
 if
$T:=\mathcal{F}^2\colon (s,r)\mapsto (s'',r'')$ denotes
the square of the billiard map
$\mathcal{F}=\mathcal{F}(\mathcal{D}) \colon (s,r) \mapsto
(s',r')$, then
under some twist condition, the Birkhoff Normal
Form $N=N(\mathcal{D},1,2)=RTR^{-1}$ of $T$ in a neighborhood of
$(s(2,1),0)$ is completely determined by the Marked Length Spectrum
$\mathcal{MLS}(\mathcal{D})$, assuming that $s(2,1)$ is  the arc-length
parameter of the point of $\mathcal{M}_2$ in the $2$-periodic orbit
$(12)$.

In this part, our goal is to see which information on the geometry of the billiard table $\mathcal{D}$ can be reconstructed,
and conclude the proof of our Main Theorem:
\begin{theorem}\label{prop symmetr rr}
  For an open and dense set of billiard tables
  $\mathcal{D} \in \billiards_{\mathrm{sym}}$, the Marked Length
  Spectrum $\mathcal{MLS}(\mathcal{D})$ determines completely the
  geometry of $\mathcal{D}$.
\end{theorem}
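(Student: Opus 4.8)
The plan is to assemble the spectral invariants established in the previous sections and then carry out the geometric reconstruction, using the two symmetries built into $\billiards_{\mathrm{sym}}$. First I would take the open and dense set to be $\billiards^{*}_{\mathrm{sym}}:=\{\mathcal{D}\in\billiards_{\mathrm{sym}}:\ a_1(\mathcal{D},1,2)\ne 0\}$, where $a_1$ is the first Birkhoff invariant of the orbit $(12)$; this is the twist condition $(\star)$ of Lemma~\ref{non vanish}, which by Remark~\ref{remarque non deg} is open and dense, and membership in which is itself a \mlsinv. For such $\mathcal{D}$, Corollary~\ref{corollary retrouver les invariants} (applicable since $a_1\ne0$) shows that the Birkhoff Normal Form $N=N(\mathcal{D},1,2)$ of $T=\mathcal{F}^2$ near $(s(2,1),0)$, the function $\gamma$, and the differential $\mathcal{DG}|_{T\Gamma_\infty}$ of the gluing map are \mlsinv{s}, while Corollary~\ref{premier cororrr} gives the homoclinic scale $\xi_\infty$. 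Hence the whole dynamical package $(N,\xi_\infty,\gamma,g)$ is $\MLS$-determined, and only the geometric reconstruction remains.

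Next I would reconstruct $\obs_1,\obs_2$ from $N$. The difficulty, flagged in Remark~\ref{rem:symmetries}, is that $N$ carries two reflection symmetries whereas the billiard has only the time-reversal symmetry $\mathcal{I}$, so $N$ by itself records merely averaged information about $\{\obs_1,\obs_2\}$. The hypothesis defining $\billiards_{\mathrm{sym}}$ is precisely what repairs this: because the jets of $\mathcal{K}$ at the two endpoints of $(12)$ coincide and are even, the pair $\{\obs_1,\obs_2\}$ is invariant under the reflection $\rho$ across the perpendicular bisector of the segment $(12)$. I would fold the table along $\rho$, turning it into a flat wall, so that $(12)$ gives rise to a fixed point of the square $T^{*}=(\mathcal{F}^{*})^{2}$ of the auxiliary billiard map $\mathcal{F}^{*}$. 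Following Colin de Verdi\`ere~\cite{CdV}, there is a bijection between the jet of $T^{*}$ at this fixed point and the jet of the graph of $\obs_1$ at its bouncing point; and by the normal-form construction of~\cite{Mos,Ste} the jets of $N^{*}$ and $T^{*}$ are related by an invertible triangular system, the invertibility being forced by the reflection symmetry. Extracting $N^{*}$ from $N$ and inverting these correspondences recovers the full jet of $\mathcal{K}$ at $(s(2,1),0)$; by analyticity this determines $\obs_1$, and then $\obs_2=\rho(\obs_1)$, up to isometry.

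With $\obs_1,\obs_2$ in hand, I would recover $\obs_3$ from the gluing map. Since $T=\mathcal{F}^2$ near $(12)$ is now explicit, so are the canonical conjugacy $R_0$ of Corollary~\ref{conjug canonique} and its dynamical extension $R$ along the separatrices built in Section~\ref{section extension}. The gluing map $\mathcal{G}=R\circ\mathcal{F}^2\circ R^{-1}|_{\Omega_\infty}$ is defined through the single collision of the homoclinic orbit $h_\infty$ with $\obs_3$; knowing $\xi_\infty$, the arc $\Gamma_\infty=R(\mathscr{A}_\infty)$, and the differential $D\mathcal{G}|_{\Gamma_\infty}$ together with the now-explicit coordinate change $R$, I would read off the local geometry of $\partial\obs_3$ near the homoclinic point $x_\infty(0)$. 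Inverting this relation yields the curvature jet of $\obs_3$ at $x_\infty(0)$, which by analyticity determines $\obs_3$ completely. Assembling the two stages gives the full geometry of $\mathcal{D}=\R^2\setminus(\obs_1\cup\obs_2\cup\obs_3)$ modulo isometries, as claimed.

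The main obstacle I anticipate is Step~1: the delicate point is to verify that, under the imposed evenness symmetry, the triangular system relating the jets of $N^{*}$ and $T^{*}$ — and ultimately the curvature jet of $\obs_1$ — is genuinely invertible, so that no information is lost in passing from $\mathcal{F}$ to its square and then to the normal form. A secondary technical issue is to confirm that the gluing map, whose differential mixes data from all three obstacles, does isolate $\obs_3$ once $\obs_1,\obs_2$ are known, i.e.\ that its averaged dependence on $\{\obs_1,\obs_2\}$ can be divided out using the already-reconstructed conjugacy $R$.
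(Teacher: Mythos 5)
Your proposal follows the same architecture as the paper's proof: the twist condition $(\star)$ carves out the open and dense set, the folding construction produces the auxiliary table $\mathcal{D}^*$ whose normal form $N^*$ is matched against the jet of $T^*$ via Moser's triangular system and then against the even coefficients $(\beta_{2k})_k$ of the graph of $\obs_1$ via Colin de Verdi\`ere's lemma (Corollary~\ref{cororororo}), and $\obs_3$ is then recovered from $\gamma$, $\xi_\infty$ and $\mathcal{DG}|_{T\Gamma_\infty}$ once $R_\pm$ are known to depend only on the already-reconstructed $\obs_1,\obs_2$ (Corollary~\ref{cororororobibis}). The one point where you genuinely diverge is the determination of $N^*$: the paper does \emph{not} ``extract $N^*$ from $N$'' but re-runs the entire Lyapunov-expansion machinery on the folded palindromic orbits $h_n^*$, using Lemma~\ref{lemma symmm} ($\mathcal{L}(h_n^*)=\mathcal{L}(h_n)$ and $\mathrm{LE}(h_n^*)=\mathrm{LE}(h_n)$) to see that these data are \mlsinv{s} of the original table (Proposition~\ref{corollary symmetric table}). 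Your shortcut is legitimate but needs a justification you did not supply: under the identification $\mathcal{M}_{1^*}\simeq\mathcal{M}_1$ one has $(T^*)^2=T$ near the fixed point, so by uniqueness of the normal-form function $\Delta=(\Delta^*)^2$, and since the eigenvalues of $DT^*$ are positive one may take $\Delta^*=+\sqrt{\Delta}$; this also shows that your non-degeneracy condition $a_1(N)\neq 0$ is equivalent to the paper's condition $(\star)$ on the first invariant of $N^*$ (indeed $a_1=2\lambda^* a_1^*$), so the two definitions of $\billiards^*_{\mathrm{sym}}$ coincide even though Lemma~\ref{non vanish} establishes openness and density for the latter formulation. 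With that justification added, your route buys a small economy (no second pass through the asymptotic expansion), at the cost of making the argument depend on the prior $\MLS$-determination of $N$ via Corollary~\ref{corollary retrouver les invariants} rather than only on the Lyapunov data of the folded orbits.
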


Fix a billiard table
$\mathcal{D} = \R^{2}\setminus \bigcup^{3}_{i = 1}\obs_{i} \in
\billiards_{\mathrm{sym}}$, and let
$\mathcal{F}:=\mathcal{F}(\mathcal{D})$ be the associated billiard
map.  After possibly applying some isometry, we assume that in the
plane with $(\bar x,\bar y)$-coordinates, the trace of the point in
the $2$-periodic orbit $(12)$ which is on the first obstacle,
resp. second obstacle, has coordinates $(-\frac 12 \ell, 0)$,
resp. $(\frac 12 \ell, 0)$, where
$\ell=\ell(\mathcal{D}):=\frac 12 \mathcal{L}(12)$ is the half-length
of the orbit $(12)$. In particular, the axis of symmetry is the
vertical axis
$\{\bar{x}=0\}$.

\subsection{A construction for symmetric billiard tables}

We consider the following construction. If $1,2,3$ are the labels of
the three obstacles of $\mathcal{D}$, we define a new billiard table
$\mathcal{D}^*=\mathcal{D}^*(\mathcal{D})$ formed by three obstacles
$1^*,2^*,3^*$ (see Figure~\ref{image z2 deux})

\begin{figure}[H]
	\begin{center}
		\includegraphics[trim = 1cm 1cm 2.5cm 1cm, width=14.5cm]{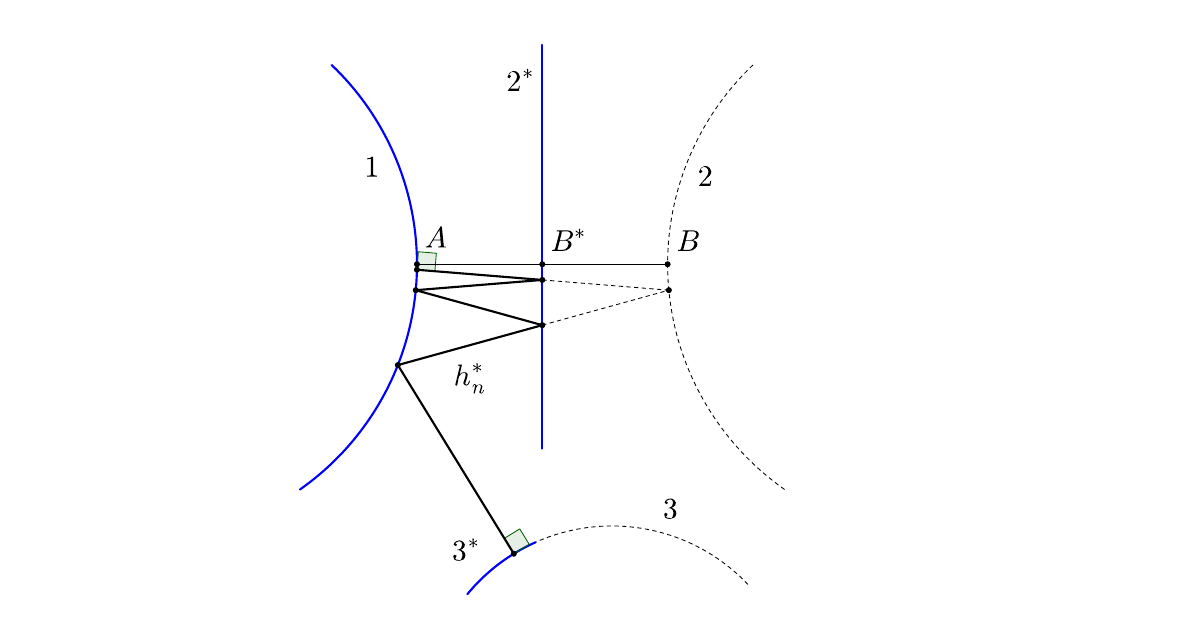}
		\caption{Defining a new table using  the $\Z_2$-symmetry of the pair $\{\obs_1,\obs_2\}$.}\label{image z2 deux}
	\end{center}
\end{figure}

\begin{itemize}
\item we consider a half-plane $\{\bar x \leq 0\}$ or
  $\{\bar x \geq 0\}$ such that it contains the trace of at least one
  of the two points $x_\infty^{(j)}(0)\in \{r=0\}$, $j \in\{1,2\}$, in
  the respective homoclinic orbits
  $h_\infty^{(1)}=(\dots212131212\dots)$ and
  $h_\infty^{(2)}=(\dots121232121\dots)$; in the following we assume
  that this point is $x_\infty(0)=x_\infty^{(1)}(0)$ and that its
  trace is in the half-plane $\{\bar x \leq 0\}$;
\item in this case, we let the obstacle with label $1^*$ in $\mathcal{D}^*$ be the same as the one of $\mathcal{D}$ with label $1$;
\item we define the obstacle $2^*$ as the vertical line segment $\{0\} \times [-\ell^* ,\ell^*]$ for some $\ell^*>0$ such that $\{0\} \times [-\ell^* ,\ell^*]$ does not cross the third obstacle, and the intersection of $\{0\} \times (-\ell^* ,\ell^*)$ and of the line segment between the points of parameters $x_\infty(1)= \mathcal{F}(x_\infty(0))$ and $x_\infty(2)= \mathcal{F}^2(x_\infty(0))$ is non-empty; we parametrize this line segment in arc-length in such a way that the image of the point in the $2$-periodic orbit $(1^*2^*)$ is associated to the parameter $0$;
\item 
we let the obstacle $3^*$ be   some small arc in the obstacle $3$ such that it is in the half-plane $\{\bar{x}\leq0\}$ and contains a neighborhood of the point with coordinates $x_\infty(0)$.
\end{itemize}

By construction, the billiard table $\mathcal{D}^*$ satisfies the non-eclipse condition and is of the same type as $\mathcal{D}$, except that the obstacle $2^*$ is now flat.
Moreover, by Proposition~\ref{prop bdskl}, for any sufficiently large integer $n$,
each palindromic orbit $h_n=h_n^{(1)}=(31\underbrace{2121\dots21}_{2n})$ as above in $\mathcal{D}$ shadows   either $h_\infty=h_\infty^{(1)}$ or its image under $\mathcal{I}\colon(s,r)\mapsto (s,-r)$, and can thus be associated to a periodic orbit in $\mathcal{D}^*$, denoted by $h_n^*$, which is defined as follows:
\begin{itemize}
\item we start at the image of the point $x_n(0)\in \{r=0\}$ of $h_n$;  it is close to the image of $x_\infty(0)$ so it is indeed on the obstacle $3^*$;
\item the trace of the first orbit's  segment  is the same as for $h_n$;
\item the trace of the second orbit's segment is the first part of the trace of the second segment of $h_n$ which is contained in  the half-plane $\{\bar{x}\leq 0\}$;
\item the trace of the third orbit's segment is the second part of the second segment of $h_n$, which is contained in  $\{\bar{x}\geq 0\}$; it is also the continuation of the second segment of $h_n^*$ under the billiard flow of $\mathcal{D}^*$, after it gets reflected on $2^*$ according to the usual law of reflection of angles;
\item we repeat this folding procedure along the trajectory each time  we hit the axis $\{\bar{x}=0\}$ until we reach the image of the point $x_n(2n+2)$, so that the trace of the orbit of $h_n^*$ is contained in $\{\bar{x}\geq 0\}$.
\end{itemize}

The points of
$h_n^*$ which are on the boundary
$\partial\mathcal{D}^*$ of the new table still define an orbit under
the dynamics of the associated billiard map
$\mathcal{F}^*=\mathcal{F}(\mathcal{D}^*)\colon(s,r)\mapsto(s',r')$,
with the same length
$\mathcal{L}(h_n^*)=\mathcal{L}(h_n)$ as the original orbit
$h_n$.  The symbolic coding of $h_n^*$ is
\begin{align*}
h_n^*=(3^*1^*\underbrace{(2^*1^*)(2^*1^*)(2^*1^*)(2^*1^*)\dots
  (2^*1^*)(2^*1^*)}_{2 \times 2n=4n}),
\end{align*}
where each word $(2^*1^*)$ with even index replaces a $1$ and each word $(2^*1^*)$ with odd index replaces a $2$ in the previous coding. Formally, we obtain

\begin{lemma}\label{lemma symmm}
The maps $\billiards_{\mathrm{sym}}\ni \mathcal{D}\mapsto \mathcal{D}^*(\mathcal{D})$ and $h_n \mapsto h_n^*$ satisfy the following properties:
\begin{itemize}
\item there exists an integer $m_0\geq 0$ such that the subset of palindromic orbits $(h_{2m-1})_{m \geq m_0}$ of $\mathcal{F}$ embeds into the set of palindromic orbits of $\mathcal{F}^*$ by the map $h_n \mapsto h_n^*$ defined above;
\item for each $n=2m-1$, $m \geq m_0$, we have $\mathcal{L}(h_n)=\mathcal{L}(h_n^*)$;
\item for each $n=2m-1$, $m \geq m_0$, we have $\mathrm{LE}(h_n)=\mathrm{LE}(h_n^*)$.
\end{itemize}
\end{lemma}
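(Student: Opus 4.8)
The plan is to realize the passage $h_n\mapsto h_n^*$ as an explicit piecewise isometry---a ``folding'' across the axis of symmetry---and to check that this folding preserves all the relevant quantities. Write $\rho\colon(\bar x,\bar y)\mapsto(-\bar x,\bar y)$ for the reflection across the axis $\{\bar x=0\}$; by the $\Z_2$-symmetry built into Definition~\ref{defi sym} one has $\rho(\obs_1)=\obs_2$, and $\rho$ is an isometry fixing the axis pointwise. First I would fix, for $n=2m-1$, the vertices $x_n(0),\dots,x_n(2n+1)$ of $h_n$, with $x_n(0)$ the perpendicular bounce on $\obs_3$ and $x_n(n+1)=x_n(2m)$ the perpendicular bounce on $\obs_2$ (the index being even since $n$ is odd). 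Every segment joining an $\obs_1$-point to an $\obs_2$-point crosses the axis exactly once, while the two segments adjacent to $x_n(0)$ stay in $\{\bar x\le0\}$ by convexity of the half-plane; this produces exactly $2n$ transversal crossings. I would then define $h_n^*$ by keeping the pieces of $h_n$ lying in $\{\bar x\le0\}$, replacing each piece lying in $\{\bar x\ge0\}$ by its $\rho$-image, and inserting a reflection point on the wall $2^*$ at each crossing. The resulting broken line stays in $\{\bar x\le0\}$ and visits $\obs_3^*,\obs_1^*$ and $2^*$ in the pattern $3^*1^*(2^*1^*)^{2n}$.

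The first task is to verify that $h_n^*$ is a genuine billiard trajectory of $\mathcal{D}^*$. At the bounces off $\obs_1^*$ and $\obs_3^*$ this is immediate from $\rho$-equivariance: since $\rho$ is an isometry with $\rho(\obs_2)=\obs_1$, a reflection off $\obs_2$ at $x_n(k)$ is carried to a genuine reflection off $\obs_1^*$ at $\rho(x_n(k))$, while reflections off $\obs_1,\obs_3$ are carried unchanged to reflections off $\obs_1^*,\obs_3^*$. At a crossing point $p$ on the axis the outgoing velocity $(v_x,v_y)$ is sent by $\rho$ to $(-v_x,v_y)$, which is exactly the velocity produced by elastic reflection off the vertical wall $2^*$; hence the folded trajectory does reflect off $2^*$ according to the reflection law. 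The only step that genuinely uses the hypothesis $m\ge m_0$ is the verification that the folded trajectory meets \emph{only} $1^*,2^*,3^*$ and that each crossing lands inside the finite segment $\{0\}\times(-\ell^*,\ell^*)$: this follows from Proposition~\ref{prop bdskl}, which for $n$ large confines $h_n$ (away from $x_n(0)$) to a neighborhood of the orbit $(12)$ and of the homoclinic orbit $h_\infty^{(1)}$, so that the crossings cluster near the midpoint of the axis and $x_n(0)$ lies on the chosen arc $3^*$. Palindromicity of $h_n^*$ is then inherited from that of $h_n$: the time-reversal involution fixes $x_n(0)$ and $x_n(n+1)$, and folding sends these to the perpendicular bounces $\tilde x_n(0)$ on $3^*$ and $\tilde x_n(n+1)=\rho(x_n(n+1))$ on $\obs_1^*$ (perpendicularity being $\rho$-invariant), which sit precisely at the two palindrome centers, positions $0$ and $2n+1$, of the word $3^*1^*(2^*1^*)^{2n}$. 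Finally the map $h_{2m-1}\mapsto h_{2m-1}^*$ is injective because the image orbits have pairwise distinct periods $8m-2$, giving the embedding in the first item.

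For the second item, each segment of $h_n^*$ is, by construction, either a sub-segment of a segment of $h_n$ or the $\rho$-image of such a sub-segment; since $\rho$ is an isometry and the two sub-segments produced at a crossing have lengths summing to the length of the original segment, summation over all segments yields $\mathcal{L}(h_n^*)=\mathcal{L}(h_n)$. (It is the injectivity of the folding for $n\ge m_0$ that rules out the length collapse occurring for the degenerate $2$-periodic orbit, which lies on the axis and folds two-to-one.) For the third item the key observation is that the folding conjugates the two linearized dynamics while the wall $2^*$, being flat, contributes trivially. Indeed the differential of the folding map equals, at each point, either $D\rho$ or the identity, both orthogonal, so the products of one-step differentials along $h_n$ and along $h_n^*$ are conjugate through orthogonal factors \emph{except} at the inserted wall bounces; and at such a bounce the boundary curvature is $\mathcal{K}=0$, so by~\eqref{matrice sl deux} the associated propagation reduces to free flight across the crossing and its insertion leaves the linearized return map unchanged. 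Consequently the full first-return maps around $h_n$ and $h_n^*$ are conjugate in $\mathrm{SL}_2(\R)$ and share the same expansion factor, $\lambda(h_n)=\lambda(h_n^*)$; in the sense of Lemma~\ref{lemme exposant de lya}, whose trace identity expresses $2\cosh$ of the total expansion as the trace of this monodromy, this is exactly the Lyapunov equality of the third item. I expect the genuine difficulty to lie in the first item, namely the quantitative control (via Proposition~\ref{prop bdskl}) ensuring that for $n\ge m_0$ the folded curve avoids the truncated parts of the obstacles and strikes the finite wall $2^*$, together with the symbolic bookkeeping fixing the palindrome centers; the length and eigenvalue identities are then formal consequences of the isometric and flat-wall structure of the folding.
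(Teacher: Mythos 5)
Your proposal is correct and follows essentially the same route as the paper's proof: realize $h_n\mapsto h_n^*$ as a folding across the symmetry axis (so that axis crossings become elastic reflections off the flat wall $2^*$, whence lengths are preserved), and observe via~\eqref{matrice sl deux} that the zero curvature of $2^*$ makes the inserted bounce act as free flight, so that $D_x(\mathcal{F}^*)^{4n+2}=D_x\mathcal{F}^{2n+2}$ up to conjugacy and the Lyapunov exponents agree. Your write-up is in fact somewhat more detailed than the paper's (which dispatches palindromicity with one sentence and only spells out the differential computation), but the key ideas coincide.
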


\begin{proof}
The fact that each $h_n^*$ is palindromic follows from the preservation of angles under reflections.
It remains to show the third point about Lyapunov exponents.  Indeed, let  $x=(s,r)\in \mathcal{M}$ be a point of the orbit $h_n^*$ associated to a bounce on the obstacle $1^*$,  and set $(s^*,r^*):=\mathcal{F}^*(s,r)$,  $(s',r'):=(\mathcal{F}^*)^2(s,r)$. We also denote by $\mathscr{L}^*_1:=h(s,s^*)$, $\mathscr{L}^*_2:=h(s^*,s')$ the respective distances between the points of collision, and let $\mathcal{K}:=\mathcal{K}(s)$, $\mathcal{K}':=\mathcal{K}(s')$ be the respective curvatures at $s,s'$.\footnote{ Note that the curvature at $s^*$ vanishes, as this bounce is on the flat piece $2^*$.} Set $\nu:=\sqrt{1-r^2}$,  $\nu^*:=\sqrt{1-(r^*)^2}$, and $\nu':=\sqrt{1-(r')^2}$.
It follows from \eqref{matrice sl deux} that
\begin{equation*}
D(\mathcal{F}^*)^2_x=\begin{pmatrix}
\frac{1}{ \nu'}((\mathscr{L}^*_1+\mathscr{L}^*_2) \mathcal{K}+\nu) & \frac{\mathscr{L}^*_1+\mathscr{L}^*_2}{\nu \nu'}\\
(\mathscr{L}^*_1+\mathscr{L}^*_2) \mathcal{K}\mathcal{K}'+\mathcal{K} \nu'+\mathcal{K}'\nu & \frac{1}{\nu}((\mathscr{L}^*_1+\mathscr{L}^*_2) \mathcal{K}'+\nu')
\end{pmatrix}.
\end{equation*}
Note that $\mathscr{L}:=\mathscr{L}^*_1+\mathscr{L}^*_2$ is equal to the distance between the associated bounces on the initial billiard table $\mathcal{D}$, and then, the  matrix of $D(\mathcal{F}^*)^2_x$ is equal to  the matrix of $-D\mathcal{F}_x$. Therefore, each new collision created by the introduction of the auxiliary obstacle $2^*$ at a shorter distance does not affect the differential, nor the Lyapunov exponent, as $D_x(\mathcal{F}^*)^{4n+2}=D_x \mathcal{F}^{2n+2}$.
\end{proof}

As a consequence of the previous observations, we obtain:
\begin{prop}\label{corollary symmetric table}
  We consider a billiard table
  $\mathcal{D} \in \billiards_{\mathrm{sym}}$, with Marked Length
  Spectrum $\mathcal{MLS}(\mathcal{D})$. We let
  $\mathcal{D}^*=\mathcal{D}^*(\mathcal{D})$ be the billiard table
  defined above and denote by
  $\mathcal{F}^*=\mathcal{F}^*(\mathcal{D}^*)$ the associated billard
  map. Let
  $N^*=N^*(\mathcal{D}^*,1,2)\colon (\xi,\eta) \mapsto
  (\Delta^*(\xi\eta)\xi,\Delta^*(\xi\eta)^{-1}\eta)$ be the Birkhoff
  Normal Form of $T^*:=(\mathcal{F}^*)^2\colon (s,r)\mapsto (s'',r'')$ in a neighborhood of the
  point $(0_{1^*},0)$ in the period two orbit $(1^*2^*)$ which is on
  the first obstacle $1^*$, with
  $\Delta^*\colon z \mapsto \sum_{j=0}^{+\infty}a_j z^j$. If
  $a_1 \neq 0$, then $\mathcal{MLS}(\mathcal{D})$ determines the
  Birkhoff Normal Form $N^*$.
\end{prop}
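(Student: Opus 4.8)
The plan is to observe that the auxiliary table $\mathcal{D}^*$, even though its obstacle $2^*$ is flat, is still a non-eclipse configuration of analytic arcs whose period-two orbit $(1^*2^*)$ is a hyperbolic saddle; consequently the entire machinery of Sections~\ref{section extension}--\ref{conseqeunces} applies verbatim to $\mathcal{D}^*$. The one point requiring care is that the spectral data feeding this machinery --- the Lyapunov exponents and lengths of the orbits $h_n^*$ --- must be shown to be encoded in $\mathcal{MLS}(\mathcal{D})$ rather than merely in $\mathcal{MLS}(\mathcal{D}^*)$, and this is exactly the content of Lemma~\ref{lemma symmm}.

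First I would check hyperbolicity of $(1^*2^*)$: applying~\eqref{matrice sl deux} with the curvature at the flat piece $2^*$ equal to zero, a direct computation gives $\mathrm{tr}\big(D(\mathcal{F}^*)^2|_{(1^*2^*)}\big)=2+4\ell\,\mathcal{K}_{1^*}>2$, so $(1^*2^*)$ is a hyperbolic saddle of $T^*=(\mathcal{F}^*)^2$ and the Moser--Sternberg normal form $N^*$ exists. Since none of the constructions of Sections~\ref{section extension}--\ref{conseqeunces} --- the canonical conjugacy $R_0$, the extension of Birkhoff coordinates along the separatrices, the definition of the gluing map, the asymptotic expansion of the Lyapunov exponent, and the length estimates --- uses strict convexity of the second obstacle (they rely only on hyperbolicity of the period-two orbit, the palindromic and time-reversal symmetries, and analyticity of the generating function $h$), they all carry over to $\mathcal{D}^*$ and yield Birkhoff invariants $(a_\ell)_\ell$ of $N^*$, gluing data $(\gamma^*,g^*)$, and a homoclinic parameter $\xi_\infty^*$. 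The palindromic orbits needed are the folded orbits $h_n^*$, which shadow the folded homoclinic orbit $h_\infty^*$ by Proposition~\ref{prop bdskl} transported through the folding.

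Next I would run the reconstruction of Subsection~\ref{detertmin coeffci} for $\mathcal{D}^*$. By the analogue of Lemma~\ref{lemma exp lya detailll}, the quantities $2(\lambda^*)^{\tilde n}\cosh\big(2(\tilde n+1)\mathrm{LE}(h_n^*)\big)$ admit a balanced expansion $\sum_{p,q}L^*_{q,p}\,\tilde n^{\,q}(\lambda^*)^{\tilde n p}$, where $\tilde n$ is the (combinatorially explicit) winding number of $h_n^*$ about $(1^*2^*)$ and $\lambda^*=a_0$. By Lemma~\ref{lemma symmm} we have $\mathrm{LE}(h_n^*)=\mathrm{LE}(h_n)$, which is determined by $\mathcal{MLS}(\mathcal{D})$ via Theorem~\ref{corollaire lyapu}; moreover $\lambda^*$ is itself an $\mathcal{MLS}(\mathcal{D})$-invariant, being fixed by $\ell=\tfrac12\mathcal{L}(12)$ and by the radius $R_{1^*}=R_1$, which is spectrally determined by Corollary~\ref{main corr}. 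As each monomial $\tilde n^{\,q}(\lambda^*)^{\tilde n p}$ grows at a distinct rate, the coefficients $L^*_{q,p}$ can be read off even along the subsequence $\{\tilde n(n)\}_n$, so they are all $\mathcal{MLS}(\mathcal{D})$-invariants. Under the twist hypothesis $a_1\neq 0$, the inductive argument of Corollary~\ref{coroll detm inv} then recovers the scaled coefficients $\{\bar g^*_\ell,\bar\gamma^*_\ell,\bar a^*_\ell\}_\ell$ from the $L^*_{q,p}$.

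Finally, to pass from the scaled coefficients to the genuine invariants $a_\ell$ --- hence to $N^*$ --- I must recover the scaling factor $\xi_\infty^*$. For this I would rerun Proposition~\ref{prop lourde un} and the argument of Corollary~\ref{premier cororrr} for $\mathcal{D}^*$: the lengths $\mathcal{L}(h_n^*)=\mathcal{L}(h_n)$ are $\mathcal{MLS}(\mathcal{D})$-invariants, and the symplectic invariants $\mathrm{tr}(d^2 S^*_{(0,0)})$, $\partial_{12}S^*_{(0,0)}$ depend only on $\mathcal{L}(1^*2^*)$ and the radii $R_{1^*}=R_1$, $R_{2^*}=\infty$, all spectrally determined; thus $\xi_\infty^*$ is an $\mathcal{MLS}(\mathcal{D})$-invariant. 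Combining this with $\lambda^*=a_0$ and the scaling relations~\eqref{scaled coefff}, one unscales to obtain every $a_\ell$, which determines $\Delta^*$ and therefore $N^*$. I expect the main obstacle to be bookkeeping rather than a conceptually new difficulty: one must verify that the flatness of $2^*$ (i.e.\ $R_{2^*}=\infty$) leaves every imported step intact --- in particular that the period-two orbit remains hyperbolic and that the length and Lyapunov data of $h_n^*$ match those of $h_n$ exactly, as guaranteed by Lemma~\ref{lemma symmm}.
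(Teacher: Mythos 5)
Your proposal is correct and follows essentially the same route as the paper: establish that $(1^*2^*)$ is a hyperbolic saddle of $T^*$, transfer the Lyapunov and length data of the folded palindromic orbits $h_n^*$ from $\mathcal{MLS}(\mathcal{D})$ via Lemma~\ref{lemma symmm}, and rerun the machinery of Lemmata~\ref{lemme exposant de lya}--\ref{lemma exp lya detailll}, Corollary~\ref{coroll detm inv} and Corollary~\ref{premier cororrr} to recover the invariants and the scale parameter. The only cosmetic differences are your explicit trace computation for hyperbolicity (where the constant should be $2+2\ell\,\mathcal{K}_{1^*}$ since the new period-two segment has length $\ell/2$, though the conclusion $\mathrm{tr}>2$ is unaffected) and your re-derivation of $\xi_\infty^*$ for $\mathcal{D}^*$, whereas the paper simply observes that the homoclinic parameter is preserved by the folding.
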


\begin{proof}
  There is a new $2$-periodic orbit $(1^*2^*)$ for the map
  $\mathcal{F}^*$ which bounces perpendicularly at the points with
  $(\bar{x},\bar{y})$-coordinates $(-\frac 12 \ell,0)$ and
  $(0,0)$. Moreover, the image $h_\infty^*$ in $\mathcal{D}^*$ of the
  homoclinic trajectory $h_\infty$ in $\mathcal{D}$ can be defined
  following the same "folding" procedure as above. It is also
  homoclinic to $(1^*2^*)$, and similarly, it is accumulated by the
  orbits $(h_n^*)_{n}$ defined above. The point $(0_{1^*},0)$ with
  trace $(-\frac 12 \ell,0)$ is a saddle fixed point for the dynamics
  of $T^*=(\mathcal{F}^*)^2$, hence we may consider the Birkhoff
  Normal Form
  $N^*\colon (\xi,\eta) \mapsto
  (\Delta^*(\xi\eta)\xi,\Delta^*(\xi\eta)^{-1}\eta)$ of $T^*$ in a
  neighborhood of this point. The orbits $(h_{n}^*)_{n}$ are still
  palindromic, hence the analogue of Lemma~\ref{claim eta n delta n xi
    n} and Lemma~\ref{claim deuxis} remains true is this case. We also
  note that the homoclinic parameter $\xi_\infty$ is preserved by the
  unfolding construction.  By Lemma~\ref{lemma symmm}, the Lyapunov
  exponent of each orbit $h_n^*$ is
  $\mathcal{MLS}(\mathcal{D})$-invariant. Therefore, if $a_1 \neq 0$,
  then by the same method as in Lemmata~\ref{lemme exposant de
    lya},~\ref{corollar dofe},~\ref{prop structure du dev},~\ref{lemme
    utile mochomoge},~\ref{lemma exp lya detailll},
  Corollary~\ref{coroll detm inv} and Corollary~\ref{premier cororrr},
  we can recover the Birkhoff invariants of $N^*$ by considering the
  series expansion of $\mathrm{LE}(h_n^*)$ with respect to $n$. As a
  result, the Birkhoff Normal Form $N^*$ is entirely determined by the
  Marked Length Spectrum $\mathcal{MLS}(\mathcal{D})$ of the initial
  table.
\end{proof}

Let  $(0_{2^*},0)$ be the $(s,r)$-coordinates of the point in the orbit $(1^*2^*)$ whose trace is on the second  obstacle $2^*$. The Birkhoff Normal Forms of $(\mathcal{F}^*)^2$ at the two points $(0_{1^*},0)$ and $(0_{2^*},0)$ in the orbit $(1^*2^*)$ coincide:
\begin{lemma}\label{lemma seconde conk}
Let $\mathcal{D}\in \billiards_{\mathrm{sym}}$ and let $\mathcal{F}^*=\mathcal{F}^*(\mathcal{D}^*)$.  The Birkhoff  Normal Form of $T^*=(\mathcal{F}^*)^2$ in a neighborhood of the point $\mathcal{F}^*(0_{1^*},0)=(0_{2^*},0)$ 
coincides with the map $N^*=N^*(\mathcal{D}^*,1,2)$ defined in Proposition~\ref{corollary symmetric table}.
\end{lemma}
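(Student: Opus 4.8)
The plan is to exploit the fact that the two endpoints of the orbit $(1^*2^*)$ are exchanged by the billiard map $\mathcal{F}^*$, which is itself an analytic symplectomorphism, and that the Birkhoff Normal Form of an analytic symplectic saddle is a conjugacy invariant. Write $p_1:=(0_{1^*},0)$ and $p_2:=(0_{2^*},0)=\mathcal{F}^*(p_1)$ for the two points of the orbit, so that $\mathcal{F}^*(p_2)=p_1$ and both are saddle fixed points of $T^*=(\mathcal{F}^*)^2$. By formula~\eqref{matrice sl deux} (which applies verbatim at the flat bounce on $2^*$, where the curvature simply vanishes), $\mathcal{F}^*$ is a local analytic symplectomorphism in a neighborhood of each point of the orbit; in particular $D_{p_2}T^*=D\mathcal{F}^*\cdot D_{p_1}T^*\cdot (D\mathcal{F}^*)^{-1}$ shares the eigenvalues $\lambda<1<\lambda^{-1}$ of $D_{p_1}T^*$.

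First I would record the commutation identity. Since $T^*=(\mathcal{F}^*)^2$ trivially commutes with $\mathcal{F}^*$, the germ of $\mathcal{F}^*$ at $p_1$ intertwines the germ of $T^*$ at $p_1$ with the germ of $T^*$ at $p_2$:
\begin{equation*}
  \mathcal{F}^* \circ T^*|_{p_1} = T^*|_{p_2} \circ \mathcal{F}^*.
\end{equation*}
Thus $\mathcal{F}^*$ maps a neighborhood of $p_1$ symplectically onto a neighborhood of $p_2$ and conjugates $T^*|_{p_1}$ to $T^*|_{p_2}$; note also that $\mathcal{F}^*$ sends the local stable (\resp unstable) manifold of $p_1$ to that of $p_2$, so the contraction rate $\lambda$ is preserved and no swap of eigenvalues occurs.

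Next I would transport the conjugacy furnished by Proposition~\ref{corollary symmetric table}. Let $R^*$ be the analytic symplectic change of coordinates with $R^*\circ T^*\circ (R^*)^{-1}=N^*$ near $p_1$. Setting $\widetilde R:=R^*\circ(\mathcal{F}^*)^{-1}$, again an analytic symplectomorphism, now defined near $p_2$, the commutation identity gives
\begin{equation*}
  \widetilde R\circ T^*|_{p_2}\circ \widetilde R^{-1}
  = R^*\circ \big((\mathcal{F}^*)^{-1} T^*|_{p_2}\,\mathcal{F}^*\big)\circ (R^*)^{-1}
  = R^* \circ T^*|_{p_1}\circ (R^*)^{-1} = N^*,
\end{equation*}
where the middle equality uses $(\mathcal{F}^*)^{-1}T^*|_{p_2}\mathcal{F}^*=T^*|_{p_1}$. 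Hence $\widetilde R$ conjugates $T^*$ near $p_2$ to the very same normal form $N^*$.

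Finally, by the uniqueness part of the Moser--Sternberg normalization (see~\cite{Mos,Ste}), the function $\Delta^*$ in the Birkhoff Normal Form of a saddle depends only on the analytic symplectic conjugacy class of the germ; since $T^*$ at $p_2$ is analytically symplectically conjugate to $N^*=N_{\Delta^*}$ via $\widetilde R$, its Birkhoff Normal Form is exactly $N^*$, as claimed. I do not expect a genuine obstacle here, as the argument is a soft conjugacy argument that does not even require the symmetry hypothesis; the only points deserving care are that the reflection on the flat wall $2^*$ still produces a symplectic map to which~\eqref{matrice sl deux} applies, and that the conjugating map $\mathcal{F}^*$ respects the stable and unstable directions, so that the normal form is recovered as $N^*$ rather than its inverse $(N^*)^{-1}$ — both facts being immediate from the commutation identity above.
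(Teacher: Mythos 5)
Your proposal is correct and follows essentially the same route as the paper: both define the transported conjugacy $\widetilde R := R^*\circ(\mathcal F^*)^{-1}$, verify via the commutation of $T^*=(\mathcal F^*)^2$ with $\mathcal F^*$ that it conjugates $T^*$ near $(0_{2^*},0)$ to the same $N^*$, and conclude by uniqueness of the Birkhoff Normal Form. Your additional remarks on the symplecticity of the flat-wall reflection and the preservation (rather than swap) of the stable/unstable directions are sensible safeguards but do not change the argument.
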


\begin{proof}
Let $\mathcal{U}^*\subset \R^2$ be an open neighborhood of $(0_{1^*},0)$, and let $R^* \colon \mathcal{U}^*\to \R^2$ be a conjugacy map such that $R^*T^*|_{\mathcal{U}^*}=N^*R^*|_{\mathcal{U}^*}$. Then $\mathcal{F}^*(\mathcal{U}^*)$ is an open neighborhood of $(0_{2^*},0)$, since $\mathcal{F}^*(0_{1^*},0)=(0_{2^*},0)$. The map $\widetilde{R}^*:=R^* \circ (\mathcal{F}^*)^{-1}$ is symplectic, and for any $y=\mathcal{F}^*(x)$ with $x \in \mathcal{U}^*$, it holds
$$
\widetilde{R}^*\circ T^*(y)= R^*\circ (\mathcal{F}^*)^{-1} \circ  (\mathcal{F}^*)^{2}(\mathcal{F}^*(x))=R^* \circ T^*(x)=N^* \circ R^*(x)=N^* \circ \widetilde{R}^*(y),
$$
which concludes, by uniqueness of the Birkhoff Normal Form.
\end{proof}

\subsection{Recovering the geometry of a symmetric billiard table}

In the following, we use the same notation as in the last part.
Given $\mathcal{D}\in \billiards_{\mathrm{sym}}$, then by definition, after rotation by an angle of $-\frac \pi 2$, near the point $(0,\frac \ell 2)$, the first obstacle $1$ (which is the same as the obstacle $1^*$) can be represented as a graph
$$
\mathscr{C}=\left\{\Big(t,\frac \ell 2+\beta_2 t^2+\beta_4 t^4+\dots\Big): t \in I\right\},
$$
for some open interval $I \ni 0$. Indeed, this follows from our assumption that the obstacles $\obs_1,\obs_2$ have some axial symmetry with respect to the trace of the $2$-periodic orbit $(12)$, and then, there are only even coefficients in the above expansion.

\begin{figure}[H]
\begin{center}
   \includegraphics [width=13.3cm]{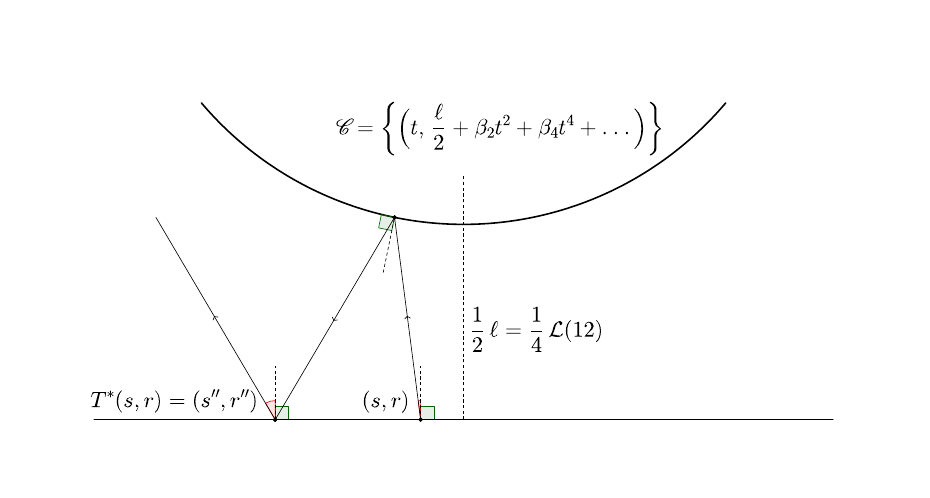}
   \caption{The map $T^*$ near  the point $(0_{2^*},0)$.}\label{image z2}
\end{center}
\end{figure}

Let us recall the following result in the paper~\cite{CdV3} of Colin de
Verdi\`ere:

\begin{lemma}[{\cite[Lemma 1]{CdV3}}]\label{leme de verdeire}
  The jet of 
  $T^*\colon (s,r)\mapsto(s'',r'')$ at $(0_{2^*},0)$ is in one-to-one
  correspondence with the coefficients $(\beta_{2 k})_{k \geq 1}$ of
  the graph $\mathscr{C}$ defined above.  Besides, the linear part of
  $D T^*_{(0_{2^*},0)}$ is associated to the hyperbolic
  matrix
  \begin{align*}
\begin{pmatrix}
A-1 & -A\\
2-A & A-1
\end{pmatrix}\in \mathrm{SL}(2,\R),\quad A:=2(2\beta_2+1)>2.
  \end{align*}
  Indeed, by the strong convexity of $\obs_1$, we have
  $\beta_2>0$. More precisely, for $k \geq 1$, and for some vector
  $v_0\in \R^2 \setminus \{(0,0)\}$, it holds
\begin{align*}
  T^*_{(2k+1)}(s,r)=T^{*,(0)}_{(2k+1)}(s,r)+(s-r)^{2k+1} \beta_{2(k+1)}v_0+O(|s|+|r|)^{2k+2},
\end{align*}
where $T_{(2k+1)}^{*,(0)}$ denotes the jet of $T^*$ of order $2k+1$ at
$(0_{2^*},0)$ for $\beta_{2(k+1)}=0$.
\end{lemma}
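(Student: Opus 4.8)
The plan is to regard $T^*=(\mathcal{F}^*)^2$ as the composition of the two billiard transitions of the orbit $(1^*2^*)$ --- from the flat wall $2^*$ to the curve $1^*=\mathscr{C}$ and back --- and to read off its jet from the reflection law off the graph $\mathscr{C}$. I would place the flat wall on the horizontal axis and write $\mathscr{C}=\{(t,\phi(t))\}$ with $\phi(t)=\tfrac{\ell}{2}+\sum_{k\geq 1}\beta_{2k}t^{2k}$, where only even powers occur by the assumed axial symmetry of $\obs_1$; the two free legs of $(1^*2^*)$ then have common length $\mathscr{L}=\tfrac\ell2$. First I would record the linear part, and then analyse the higher jets by an induction that exploits this parity.

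For the linear part I would apply the Jacobian formula~\eqref{matrice sl deux} of~\cite{CM} on each leg and multiply. At the perpendicular orbit $r=0$, so $\nu=\nu'=1$, while $\mathcal{K}=0$ on the flat wall and $\mathcal{K}'=\phi''(0)=2\beta_2$ on $\mathscr{C}$. Normalising $\mathscr{L}=1$ as in~\cite{CdV}, the two factors are
\begin{align*}
  D_{(0_{2^*},0)}\mathcal{F}^*&=-\begin{pmatrix}1 & 1\\ 2\beta_2 & 2\beta_2+1\end{pmatrix},&
  D_{(0_{1^*},0)}\mathcal{F}^*&=-\begin{pmatrix}2\beta_2+1 & 1\\ 2\beta_2 & 1\end{pmatrix},
\end{align*}
whose product is $\left(\begin{smallmatrix}A-1 & A\\ A-2 & A-1\end{smallmatrix}\right)$ with $A=2(2\beta_2+1)$. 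Since $\beta_2>0$ by strict convexity, $A>2$ and the matrix is hyperbolic; the sign convention of~\cite{CdV} for $r$ conjugates it by $\mathrm{diag}(1,-1)$ into the stated form $\left(\begin{smallmatrix}A-1 & -A\\ 2-A & A-1\end{smallmatrix}\right)$.

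For the higher jets I would parametrise the trajectory by its hitting point $(t,\phi(t))$ on $\mathscr{C}$: a ray issued from $(s,0)$ meets $\mathscr{C}$ at parameter $t=(s-r)+O((|s|+|r|)^2)$ (in the normalisation of~\cite{CdV}), and the reflection law there (equality of the tangential components of the incoming and outgoing directions) yields analytic implicit equations from which $(s'',r'')$ and $t$ are recovered as analytic functions of $(s,r)$ by the implicit function theorem. The key structural point is triangularity: assuming inductively that the jet of $T^*$ up to order $2k$ depends only on $\beta_2,\dots,\beta_{2k}$ (the base case being the linear computation above), the coefficient $\beta_{2(k+1)}$ enters $\phi$ solely through $\beta_{2(k+1)}t^{2(k+1)}$, hence enters the reflection equations only at order $2k+1$ in $t$; substituting $t=(s-r)+\cdots$ and collecting shows that $\beta_{2(k+1)}$ contributes to the degree-$(2k+1)$ part of $T^*$ exactly through a term $C(s-r)^{2k+1}\beta_{2(k+1)}$, with $C\neq 0$ a universal constant, all remaining degree-$(2k+1)$ terms being fixed by $\beta_2,\dots,\beta_{2k}$. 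The reflection $(s,r,t)\mapsto(-s,-r,-t)$, a symmetry of the configuration because $\phi$ is even, then forces every homogeneous component of $T^*$ to be odd and pins the new contribution to the pure monomial $(s-r)^{2k+1}$.

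This would exhibit an invertible triangular correspondence between $(\beta_{2k})_{k\geq1}$ and the jet of $T^*$ at $(0_{2^*},0)$, which is exactly the assertion. The delicate point --- and the main obstacle --- is the bookkeeping of this induction: verifying that $\beta_{2(k+1)}$ appears for the first time precisely at order $2k+1$, that its coefficient $C$ is nonzero, and that after simplification it attaches to $(s-r)^{2k+1}$; the parity argument together with the implicit expansion handles this, but since the statement is precisely \cite[Lemma~1]{CdV}, one may alternatively invoke that reference directly.
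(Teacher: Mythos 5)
The paper does not actually prove this lemma: it is imported verbatim as \cite[Lemma 1]{CdV}, so there is no in-paper argument to compare yours against. Your reconstruction is the natural one and, as far as it goes, correct. The linear part checks out exactly against the paper's formula~\eqref{matrice sl deux}: with $\nu=\nu'=1$, $\mathcal{K}=0$ on the flat wall, $\mathcal{K}'=2\beta_2$ on $\mathscr{C}$ and $\mathscr{L}$ normalised to $1$, the product of the two one-step Jacobians is $\left(\begin{smallmatrix}A-1 & A\\ A-2 & A-1\end{smallmatrix}\right)$ with $A=2(2\beta_2+1)$, which is $\mathrm{diag}(1,-1)$-conjugate to the matrix in the statement; your handling of that sign discrepancy is fine, and hyperbolicity from $\beta_2>0$ is immediate. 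For the higher jets, your triangularity-plus-parity scheme is the right mechanism: $\beta_{2(k+1)}$ enters the reflection law only through $\phi'(t)\ni 2(k+1)\beta_{2(k+1)}t^{2k+1}$, and substituting $t=(s-r)+O((|s|+|r|)^2)$ makes its first appearance in the jet of $T^*$ occur at order $2k+1$ with coefficient proportional to $(s-r)^{2k+1}$. The genuinely unverified points are exactly the ones you flag --- that the universal constant $C$ is nonzero and that the evenness of $\phi$ forces the new contribution onto the pure monomial $(s-r)^{2k+1}$ rather than a general odd form of degree $2k+1$ --- and these require the explicit bookkeeping carried out in \cite{CdV}. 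Since the authors themselves discharge the lemma by citation, deferring those computations to \cite{CdV} as you do in your last sentence is entirely consistent with the paper; just be aware that what you have written is a proof outline with one fully verified step (the linear part), not a complete independent proof.
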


\begin{lemma}\label{non vanish}
  For any billiard table $\mathcal{D}\in \billiards_{\mathrm{sym}}$,
  the first Birkhoff invariant $a_1=a_1(\mathcal{D})$ of the Birkhoff
  Normal Form $N^*=N^*(\mathcal{D}^*,1,2)$ satisfies
	\begin{equation}\label{eq definition c etoile f etoile}
	a_1=c^* \mathcal{K}''+f^*(\ell,\mathcal{K}),
	\end{equation}
	for some constant $c^*\neq 0$ and some continuous function
    $f^*\colon \R^2 \to \R$, where $\mathcal{K}$, $\mathcal{K}''$
    respectively denote the curvature and its second derivative at the
    bouncing points of the $2$-periodic orbit $(12)$.

	In particular, for any $r>0$, and for an open and dense set of
    billiard tables $\mathcal{D}\in \billiards_{\mathrm{sym}}(3,r)$,
    $\mathcal{D}$ satisfies the non-degeneracy condition
	\begin{equation}\tag{$\star$}
	a_1(\mathcal{D})\neq 0.
	\end{equation}
\end{lemma}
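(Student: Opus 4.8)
The plan is to reduce $a_1$ through a chain of correspondences to a single coefficient of the boundary curve, and then to identify that coefficient with $\mathcal{K}''$. First I would recall from the construction of the Birkhoff Normal Form in~\cite{Mos} that, once the linear part of $T^*=(\mathcal{F}^*)^2$ at the periodic point has been diagonalized to $(\xi,\eta)\mapsto(\lambda\xi,\lambda^{-1}\eta)$, the invariant $a_1$ is precisely the coefficient of the resonant monomial $\xi^2\eta$ in the first component of the normalized map. Since a hyperbolic point with $\lambda\neq\pm1$ carries no second-order resonance, the quadratic terms can be eliminated, and this elimination contributes to the cubic coefficients only through expressions that are quadratic in the second-order jet; the genuine third-order coefficients enter $a_1$ \emph{linearly}. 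Thus $a_1$ is an affine function of the pure third-order part of the jet of $T^*$, the affine data depending only on $\lambda$ and on the lower-order jet. By Lemma~\ref{lemma seconde conk} I may run this computation at the point $(0_{2^*},0)$ on the flat obstacle, where Lemma~\ref{leme de verdeire} applies.

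Next I would invoke Colin de Verdière's Lemma~\ref{leme de verdeire}: the jet of $T^*$ at $(0_{2^*},0)$ is governed by the even coefficients $(\beta_{2k})_k$ of the graph $\mathscr{C}$ of $\obs_1$, and $\beta_4$ enters the third-order jet only through the single homogeneous term $\beta_4\,C\,(s-r)^3$ with $C\neq0$. A routine computation for a graph $t\mapsto\tfrac{\ell}{2}+\beta_2 t^2+\beta_4 t^4+\cdots$ with no odd terms gives $\mathcal{K}=2\beta_2$ and, in arc length, $\mathcal{K}''=24(\beta_4-\beta_2^3)$, so that $\beta_4=\tfrac1{24}\mathcal{K}''+\tfrac18\mathcal{K}^3$ is affine in $\mathcal{K}''$ with nonzero leading coefficient. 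Combining the two reductions, $a_1$ is affine in $\beta_4$, hence affine in $\mathcal{K}''$; collecting all remaining dependence (on $\lambda$, which is a function of $A=2(2\beta_2+1)$ and hence of $\mathcal{K}$, on $\beta_2$, and on the distance $\ell$) into a single continuous function $f^*(\ell,\mathcal{K})$, I obtain $a_1=c^*\mathcal{K}''+f^*(\ell,\mathcal{K})$.

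The hard part will be to show $c^*\neq0$, i.e.\ that the pure cubic perturbation $C\,(s-r)^3$ feeds the resonant monomial $\xi^2\eta$ rather than lying entirely in the (removable) non-resonant directions. I would settle this by a finite computation with the explicit linear part $L=\begin{pmatrix}A-1&-A\\2-A&A-1\end{pmatrix}$, $A>2$, from Lemma~\ref{leme de verdeire}. Its eigenvector for $\mu\in\{\lambda,\lambda^{-1}\}$ is proportional to $(A,A-1-\mu)$, on which the form $(s,r)\mapsto s-r$ evaluates to $1+\mu\neq0$ (the eigenvalues are positive, since $\mathrm{tr}\,L>2$ and $\det L=1$, whence $\mu\neq-1$); hence in diagonalizing coordinates $s-r=(1+\lambda)\xi+(1+\lambda^{-1})\eta$, whose cube has $\xi^2\eta$-coefficient $3(1+\lambda)^2(1+\lambda^{-1})\neq0$. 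Since $C\neq0$ and, by symplecticity of the family, its direction is transverse to both eigendirections (again because $\mu\neq-1$), the resonant projection of $C\,(s-r)^3$ does not vanish, and this yields a nonzero contribution to the coefficient of $\xi^2\eta$; therefore $c^*\neq0$. I expect this resonance/transversality verification to be the only genuinely delicate point.

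Finally, for the non-degeneracy statement I would argue that $\mathcal{D}\mapsto a_1(\mathcal{D})$ is continuous for the topology of Definition~\ref{topology}, so $\{a_1\neq0\}$ is open in $\billiards_{\mathrm{sym}}$. For density, given any $\mathcal{D}$ I would perturb $\partial\obs_1$ and, symmetrically, $\partial\obs_2$ in a small neighborhood of the bouncing point of $(12)$, changing the coefficient $\beta_4$ (equivalently $\mathcal{K}''$) while keeping $\beta_2=\mathcal{K}/2$ and the length $\ell$ unchanged; such a perturbation preserves strict convexity, the non-eclipse condition and the $\Z_2\times\Z_2$ symmetry, hence stays in $\billiards_{\mathrm{sym}}$, and since $c^*\neq0$ it moves $a_1$ affinely and can be chosen so that $a_1\neq0$. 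This establishes that $(\star)$ holds on an open and dense subset of $\billiards_{\mathrm{sym}}$.
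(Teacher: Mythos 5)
Your proposal follows essentially the same route as the paper: reduce $a_1$ to an affine function of the third-order jet of $T^*$ via Moser's normal form construction, feed in Lemma~\ref{leme de verdeire} to see that only $\beta_4$ enters that jet at top order (through the term $C(s-r)^3$), convert $(\beta_2,\beta_4)$ into $(\mathcal{K},\mathcal{K}'')$ by the same formulas $\mathcal{K}=2\beta_2$, $\mathcal{K}''=24(\beta_4-\beta_2^3)$, and obtain density by perturbing the boundary so as to move $\mathcal{K}''$ while freezing $\ell$ and $\mathcal{K}$. The one place where you genuinely diverge is the justification of $c^*\neq 0$: the paper simply invokes the nondegeneracy built into Moser's triangular system and Colin de Verdi\`ere's lemma, whereas you attempt an explicit resonance computation; your evaluation of $s-r$ on the eigenvectors of $L$ and the resulting nonzero $\xi^2\eta$-coefficient $3(1+\lambda)^2(1+\lambda^{-1})$ of $(s-r)^3$ is correct, but the remaining claim that the direction of the vector $C$ has nonzero component along the $\partial_\xi$-eigendirection (``transverse to both eigendirections by symplecticity'') is asserted rather than proved, so your added argument is not yet complete at exactly the point it was meant to settle. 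Two minor further remarks: in the analytic category you cannot perturb $\partial\obs_1$ ``in a small neighborhood'' of the bouncing point only, so the density step should be implemented as in the paper by adjusting finitely many Fourier coefficients of the radial function $\varrho$ (which changes $\mathcal{K}''$ globally but analytically, preserving $\ell$, $\mathcal{K}$, convexity, the symmetries and the non-eclipse condition for small perturbations); and openness indeed follows from continuity of $\mathcal{D}\mapsto a_1(\mathcal{D})$, as you say.
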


\begin{proof}
 Fix $r>0$. For each integer $j \geq 0$, we define a map $a_j \colon \billiards_{\mathrm{sym}}(3,r)\to \R$ in such a way that for all $\mathcal{D}\in \billiards_{\mathrm{sym}}(3,r)$, the $j^{\mathrm{th}}$ Birkhoff invariant of   $N^*=N^*(\mathcal{D}^*,1,2)$ is equal to $a_j(\mathcal{D})$, i.e., $N^*\colon (\xi,\eta) \mapsto
(\Delta^*(\xi\eta)\xi,\Delta^*(\xi\eta)^{-1}\eta)$, with $\Delta^*\colon z \mapsto  \sum_{j=0}^{+\infty}a_j(\mathcal{D}) z^j$.
In particular, with the topology introduced in Definition~\ref{topology}, and by Cauchy's
integral formula, the map $a_j$ is continuous. \color{black}

As for the Birkhoff Normal Form $N(\mathcal{D},1,2)$, the coefficient
$\lambda:=a_0(\mathcal{D})$  is related to the Lyapunov exponent
$\mathrm{LE}(1^*2^*)$ and  only depends on $\mathcal{L}(12)$ and on
the curvature $\mathcal{K}$ at the bouncing points of the $2$-periodic orbit $(12)$.

Besides, by the construction of the Birkhoff Normal Form (see \eg~\cite{Bi,Ste,Mos}),
the first coefficient $a_1:=a_1(\mathcal{D})$ of $N^*$ is determined  by the jet of
order three of  $T^*$. Together with Lemma~\ref{leme de verdeire}, we thus have
$$
a_1=c_0^* \beta_4+f_0^*(\ell,\beta_2),
$$
for some constant $c_0^*\neq 0$ and some continuous function $f_0^*\colon \R^2 \to \R$.
Equivalently, $\beta_2,\beta_4$ can  be interpreted in terms of the  curvature $\mathcal{K}
$ and its second derivative $\mathcal{K}''$  at the bouncing points of the $2$-periodic orbit
$(12)$,\footnote{ The  first derivative  of the curvature vanishes due to the symmetries of the table.} as
$$
\mathcal{K}=2\beta_2,\qquad \mathcal{K}''=24(\beta_4-\beta_2^3),
$$
which gives~\eqref{eq definition c etoile f etoile}.

According to ~\eqref{eq definition c etoile f etoile}, it is therefore possible to make the first Birkhoff invariant $a_1$ non-zero by modifying the shape of the obstacles $\obs_1,\obs_2$ so as to change the value of $\mathcal{K}''$, but keeping $\ell$, $\mathcal{K}$ fixed. Let us now reformulate it in terms of the topology on $\billiards_{\mathrm{sym}}(3,r)$ introduced in Definition~\ref{topology}. After possibly applying some isometry, we have $\obs_1=\obs(f)$, with $f \in C_r^\omega(\T,\R^2)$, $\theta \mapsto \varrho(\theta)(\cos(\theta),\sin(\theta))$, for some  even\footnote{ Due to  the $\Z_2$-symmetry of $\obs_1$.} function $\varrho\in C_r^\omega(\T,\R)$,    $\theta \mapsto \sum_{j=0}^{+\infty}\hat \varrho_j e^{\mathrm{i}j \theta}$. The curvature $\mathcal{K}=\mathcal{K}(0)$ and its second derivative $\mathcal{K}''=\mathcal{K}''(0)$ satisfy
\begin{align*}
	\mathcal{K}&=\frac{1}{\varrho(0)}-\frac{\varrho''(0)}{\varrho^2(0)},\\
	\mathcal{K}''&=-\frac{\varrho''(0)}{\varrho^2(0)}+\frac{3(\varrho''(0))^2}{\varrho^3(0)}+\frac{3(\varrho''(0))^3}{\varrho^4(0)}-\frac{\varrho''''(0)}{\varrho^2(0)},
\end{align*}
with $\varrho(0)=\sum_{j=0}^{+\infty} \hat \varrho_j$,
$\varrho''(0)=-\sum_{j=0}^{+\infty} j^2\hat \varrho_j$, and
$\varrho''''(0)=\sum_{j=0}^{+\infty} j^4\hat \varrho_j$. For any table
$\mathcal{D}$ such that $a_1(\mathcal{D})$ vanishes, it is then
sufficient to perturb the first Fourier coefficients of $\varrho$ to
get a new function $\tilde \varrho \in C_r^\omega(\T,\R)$ such that
the associated table $\widetilde{\mathcal{D}}$ satisfies
$a_1(\widetilde{\mathcal{D}}) \neq 0$.  In this way, we see that for
the topology introduced in Definition~\ref{topology}, the condition
$a_1 \neq 0$ holds for a dense subset of
$\billiards_{\mathrm{sym}}(3,r)$, and clearly, this condition is also
open, as the map $a_1 \colon \billiards_{\mathrm{sym}}(3,r) \to \R$ is
continuous.
\end{proof}

By Lemma~\ref{non vanish}, in order to prove Theorem~\ref{prop symmetr
  rr}, it is sufficient to show that the Marked Length Spectrum
determines the geometry for the set of billiard tables in
$\billiards_{\mathrm{sym}}$ such that the first invariant $a_1$ is non-zero. In the
following, we fix a table
$\mathcal{D}\in \billiards_{\mathrm{sym}}$ satisfying the non-degeneracy condition ($\star$) and %
show that the geometry of $\mathcal{D}$ is determined by the Marked Length Spectrum
$\mathcal{MLS}:=\mathcal{MLS}(\mathcal{D})$.

\begin{corollary}\label{cororororo}
The coefficients $(\beta_{2k})_{k \geq 1}$ of the graph $\mathscr{C}$ are \mlsinv{s}. Therefore, by analyticity, the geometry of $\obs_1,\obs_2$ can be reconstructed from  $\mathcal{MLS}$.
\end{corollary}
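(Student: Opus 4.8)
The plan is to chain together the three bridges already in place — Proposition~\ref{corollary symmetric table}, Lemma~\ref{lemma seconde conk}, and Lemma~\ref{leme de verdeire} — so as to transport the spectral data all the way down to the graph coefficients $(\beta_{2k})_{k\geq1}$. By Lemma~\ref{non vanish} it suffices to treat tables satisfying the non-degeneracy condition $(\star)$, i.e.\ $a_1\neq 0$, which is the standing assumption here; I fix such a $\mathcal{D}\in\billiards_{\mathrm{sym}}$. First, Proposition~\ref{corollary symmetric table} shows that $\MLS(\mathcal{D})$ determines the Birkhoff Normal Form $N^*=N^*(\mathcal{D}^*,1,2)$, hence all the Birkhoff invariants $(a_j)_{j\geq 0}$ of $T^*=(\mathcal{F}^*)^2$ at the saddle point $(0_{1^*},0)$. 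By Lemma~\ref{lemma seconde conk}, this same $N^*$ is also the Birkhoff Normal Form of $T^*$ at the point $(0_{2^*},0)$ lying on the flat obstacle $2^*$, which is precisely the point fed into Colin de Verdi\`ere's lemma.

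The heart of the argument — and the step I expect to be the main obstacle — is to upgrade the conjugacy-invariant data $(a_j)_j$ into the \emph{full jet} of $T^*$ at $(0_{2^*},0)$. In general the Birkhoff Normal Form retains only the formal symplectic conjugacy class, so $N^*$ alone cannot pin down the jet of $T^*$: the conjugacy $R^*$ carries extra, non-invariant information, reflecting exactly the centralizer freedom analyzed in Lemma~\ref{generique centralisateur}. The decisive point is that, because $2^*$ is a straight segment and $\{\obs_1,\obs_2\}$ is $\Z_2$-symmetric, the germ $T^*$ is reversible with respect to the time-reversal involution $\mathcal{I}$ \emph{and} carries a second involutive symmetry induced by reflection in the flat wall; these two symmetries correspond to the two symmetry axes of $N^*$, namely the lines $\{\xi=\eta\}$ and $\{\xi=-\eta\}$ in Birkhoff coordinates. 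I would then run Moser's normalization~\cite{Mos} order by order while requiring the normalizing map to intertwine both symmetries. At each order the homological equation has its resonant obstruction given by the next Birkhoff invariant, and the two symmetry constraints kill the remaining homological freedom; this is what makes the germ $T^*$ as rigid as a product of two involutions and yields an invertible triangular system between the jet of $T^*$ and the jet of $N^*$, exactly as observed by Colin de Verdi\`ere~\cite{CdV}. Consequently the full jet of $T^*$ at $(0_{2^*},0)$ is an \mlsinv{}.

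Finally, Lemma~\ref{leme de verdeire} provides a one-to-one correspondence — in fact triangular, with nonvanishing leading coefficients $A=2(2\beta_2+1)$ and $C$ — between the jet of $T^*$ at $(0_{2^*},0)$ and the coefficients $(\beta_{2k})_{k\geq1}$; inverting it shows that every $\beta_{2k}$ is an \mlsinv{}. To close the geometric reconstruction, I note that the offset $\tfrac{\ell}{2}=\tfrac14\mathcal{L}(12)$ is read off directly from $\MLS$, so the coefficients $(\beta_{2k})_k$ together with this offset determine the analytic graph $\mathscr{C}$ near $t=0$. Since $\partial\obs_1$ is real-analytic, it is entirely determined by this jet through analytic continuation, and the axial symmetry of Definition~\ref{defi sym} then fixes $\obs_2$ as the reflection of $\obs_1$ across the bisector of the orbit $(12)$. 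Thus the geometry of $\obs_1,\obs_2$ is reconstructed (up to isometry) from $\MLS(\mathcal{D})$, which is the assertion of the corollary.
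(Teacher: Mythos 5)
Your proposal is correct and follows essentially the same route as the paper: Proposition~\ref{corollary symmetric table} and Lemma~\ref{lemma seconde conk} to get $N^*$ at $(0_{2^*},0)$ from $\mathcal{MLS}$, then the invertible triangular correspondence between the jets of $N^*$ and $T^*$ coming from Moser's construction together with the symmetries, and finally Lemma~\ref{leme de verdeire} to pass to the coefficients $(\beta_{2k})_{k\geq 1}$. Your expanded discussion of why the two involutive symmetries let one invert the normalization and recover the full jet of $T^*$ (the step the paper compresses into a citation of Moser's equations and the phrase ``invertible triangular system'') is consistent with the paper's Remark~\ref{rem:symmetries} and is a faithful elaboration rather than a different argument.
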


\begin{proof}
By ($\star$), Proposition~\ref{corollary symmetric table} and Lemma~\ref{lemma seconde conk}, the Birkhoff Normal Form $N^*$ of the map $T^*$ in a neighborhood of the point $(0_{2^*},0)$ is determined by the Marked Length Spectrum $\mathcal{MLS}$. By the construction of the Normal Form  given by Moser~\cite{Mos}   (see in particular the equations $(3.2)$, $(3.3)$ and $(3.4)$ on pp. 680--681), the Birkhoff invariants  are determined inductively by the jet of $T^*$. More precisely, for each $k_1 \geq 1$, the coefficients $(a_k(\mathcal{D}))_{1 \leq k \leq k_1}$ of $N^*$ are related to the $(2k_1+1)^{\text{th}}$ jet of $T^*$  by some invertible triangular system, and thus,  there is a one-to-one correspondence between the jets of $T^*$ and $N^*$ at the point $(0,0)$.  By the previous discussion, we deduce that the jet of $T^*$ at the point $(0_{2^*},0)$ is determined by $\mathcal{MLS}$. Now, by   Lemma~\ref{leme de verdeire}, the jet of $T^*$ at the point $(0_{2^*},0)$ is also in one-to-one correspondence with the   coefficients $(\beta_{2k})_{k \geq 1}$. Therefore, the coefficients $(\beta_{2k})_{k \geq 1}$ can be recovered from the Marked Length Spectrum, which concludes the proof.
\end{proof}

To conclude the proof of  Theorem~\ref{prop symmetr rr},  it remains to show that the geometry of the third scatterer can also be recovered. While the auxiliary table $\mathcal{D}^*$ and the associated Birkhoff Normal Form $N^*$ were useful to determine the geometry of $\obs_1,\obs_2$, now, we focus again on the initial billiard table  $\mathcal{D}$. We denote by $\mathcal{F}=\mathcal{F}(\mathcal{D})$ and $T:=\mathcal{F}^2$ the billiard map of $\mathcal{D}$ and its square, let $N=N(\mathcal{D},1,2)$ be the Birkhoff Normal Form of $\mathcal{F}^2$ associated to the $2$-periodic orbit $(12)$, and assume that the first Birkhoff invariant of $N$ is non-zero.

\begin{corollary}\label{cororororobibis}
	The geometry of $\obs_3$ can be reconstructed from  $\mathcal{MLS}$.
\end{corollary}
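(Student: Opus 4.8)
The plan is to exploit the $\mathcal{MLS}$-invariance of the differential of the gluing map together with the already-reconstructed geometry of $\obs_1,\obs_2$. Indeed, by Corollary~\ref{cororororo} the geometry of $\obs_1$ and $\obs_2$ is a \mlsinv{}, so every piece of the dynamics that does not involve $\obs_3$ is determined by $\mathcal{MLS}(\mathcal{D})$. In particular the Birkhoff Normal Form $N=N(\mathcal{D},1,2)$, the canonical conjugacy $R_0$ (unique by Corollary~\ref{conjug canonique}), and the extensions $R_\pm$ — which are obtained by propagating $R_0$ through orbit segments bouncing only between $\obs_1$ and $\obs_2$ — are all \mlsinv{s}, along with their inverses and differentials. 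The guiding idea is therefore to recover $D_x\mathcal{F}^2$ along the arc $\mathscr{A}_\infty$ of points whose forward image is a perpendicular collision on $\obs_3$, and to read the local geometry of $\obs_3$ off from it.

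First I would make the arc $\mathscr{A}_\infty$ explicit in $(s,r)$-coordinates. By Corollary~\ref{coroll detm inv} the scaled coefficients are \mlsinv{s}, hence by Corollary~\ref{premier cororrr} and~\eqref{scaled coefff} so are the function $\gamma$ and the homoclinic parameter $\xi_\infty$; consequently the arc $\Gamma_\infty$, being the graph of $\xi_\infty+\gamma$ in Birkhoff coordinates, is known, and applying the recovered map $R_+^{-1}$ yields $\mathscr{A}_\infty=R_+^{-1}(\Gamma_\infty)$. In this way, for each $x=(s,r)\in\mathscr{A}_\infty$ I would know the position $\Upsilon_2(s)$ on $\obs_2$, the outgoing direction $v=v(x)$, and the curvature $\mathcal{K}(s)$ of $\obs_2$ at $x$. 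Since $\mathcal{G}=R\circ\mathcal{F}^2\circ R^{-1}|_{\Omega_\infty}$ and, by Corollary~\ref{corollary retrouver les invariants}, the differential $D\mathcal{G}$ along $\Gamma_\infty$ is a \mlsinv{} given explicitly by~\eqref{expre differentielllle}, the chain rule $D_x\mathcal{F}^2=(DR_-)^{-1}\,D\mathcal{G}\,DR_+$ (understood at the appropriate base points) and the knowledge of $R_\pm$ would produce the full differential $D_x\mathcal{F}^2$ for every $x\in\mathscr{A}_\infty$.

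Next I would extract $\obs_3$ from these differentials. For $x\in\mathscr{A}_\infty$ the collision $\mathcal{F}(x)$ is perpendicular to $\obs_3$, so the trajectory retraces itself, $\mathcal{F}^2(x)=\mathcal{I}(x)$ by Lemma~\ref{claim petitt}, and $\mathcal{F}(x)=\Upsilon_2(s)+\mathscr{L}(x)\,v$ with $\mathscr{L}(x):=h(s,s_{\mathcal{F}(x)})$ the (common) length of the two segments of the excursion. Factoring $D_x\mathcal{F}^2=D_{\mathcal{F}(x)}\mathcal{F}\cdot D_x\mathcal{F}$ and inserting~\eqref{matrice sl deux} with $r'=0$ (so $\nu'=1$) at $\mathcal{F}(x)$, and with the curvature at both $x$ and $\mathcal{F}^2(x)=\mathcal{I}(x)$ equal to the known $\mathcal{K}(s)$, the only unknowns entering $D_x\mathcal{F}^2$ are the distance $\mathscr{L}(x)$ and the curvature $\mathcal{K}_3(x):=\mathcal{K}(\mathcal{F}(x))$ of $\obs_3$ at the perpendicular bounce. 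Solving this over-determined but consistent algebraic system would give $\mathscr{L}(x)$ and $\mathcal{K}_3(x)$; the point $\mathcal{F}(x)=\Upsilon_2(s)+\mathscr{L}(x)\,v$ is then pinned down exactly in the fixed frame, and as $x$ ranges over $\mathscr{A}_\infty$ the points $\mathcal{F}(x)$ sweep an arc of $\partial\obs_3$. Since this arc is determined as a subset of $\R^2$ (with the curvature along it providing a consistency check), analyticity of $\partial\obs_3$ would then determine $\obs_3$ entirely, which, together with Corollary~\ref{cororororo}, proves Theorem~\ref{prop symmetr rr}.

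The hard part will be the second step — recovering $D_x\mathcal{F}^2$ from $D\mathcal{G}$. This hinges on the assertion that $R_0$ and its extensions $R_\pm$ are genuinely functions of $\obs_1,\obs_2$ alone (so that they become known once $\obs_1,\obs_2$ are reconstructed), which must be argued carefully from the fact that the separatrix segments used to propagate $R_0$ bounce only between the first two obstacles. A secondary point is to verify that the assignment $(\mathscr{L},\mathcal{K}_3)\mapsto D_x\mathcal{F}^2$ is locally invertible, so that $\mathscr{L}(x)$ and $\mathcal{K}_3(x)$ are unambiguously determined by the recovered matrix.
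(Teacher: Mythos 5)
Your proposal is correct and follows essentially the same route as the paper: recover $\gamma$, $\xi_\infty$ and $D\mathcal{G}|_{\Gamma_\infty}$ from Corollaries~\ref{premier cororrr} and~\ref{corollary retrouver les invariants}, note that $R_\pm$ (being built from $N$, $R_0$ and iterates of $T$ that bounce only between $\obs_1$ and $\obs_2$) are determined once those two obstacles are known, pull back to get $\mathscr{A}_\infty$ and $D_x\mathcal{F}^2$, and then solve for $\mathscr{L}$ and the curvature of $\obs_3$ at the perpendicular bounce before invoking analyticity. The two points you flag as needing verification are exactly the ones the paper addresses, and the invertibility of $(\mathscr{L},\mathcal{K}_3)\mapsto D_x\mathcal{F}^2$ is checked there explicitly by reading off $aa'$ and $a'\mathscr{L}$ from the first row of the product matrix, with $a=\mathscr{L}\mathcal{K}(s)+\nu$ and $a'=\mathscr{L}\mathcal{K}(s')+1$.
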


\begin{proof}
  In the following, we use the notation introduced in
  Subsections~\ref{subs prliemr}-\ref{subsection lyapunov exonent
    exp}.  By Corollary~\ref{corollary retrouver les invariants}, the
  Marked Length Spectrum $\mathcal{MLS}$ determines the function
  $\gamma$ and the differential $D\mathcal{G}_{(\xi,\eta)}$ of the
  gluing map $\mathcal{G}=R_-\circ T \circ R_+^{-1}|_{\Omega_\infty}$,
  at any point $(\xi,\eta)\in \Gamma_\infty$. Restricted to
  $\mathscr{O}_\infty:=R_+^{-1} (\Omega_\infty)$,
  resp. $T(\mathscr{O}_\infty)$, we have $R_+=R_{m_0}$,
  resp. $R_-=R_{-m_0}$, for some integer $m_0 \geq 1$, where
  $R_{\pm m_0}:=N^{\pm m_0} R_0 T^{\mp m_0}$, and $R_0$ is the
  canonical conjugacy map given by Lemma~\ref{conjug canonique}. By
  Corollary~\ref{corollary retrouver les invariants}, the map $N$ is a
  \mlsinv. Similarly, the map $R_0$ is also a \mlsinv, as it only
  depends on the obstacles $\obs_1,\obs_2$ whose geometry is also
  known, by Corollary~\ref{cororororo}. Besides, in the definition of
  $R_{\pm m_0}$, we take iterates of $T$ between the first two
  obstacles, so their expression is also known. In other words,
  restricted to $\mathscr{O}_\infty$, resp. $T(\mathscr{O}_\infty)$,
  the map $R_+=R_{m_0}$, resp. $R_-=R_{-m_0}$ is determined by
  $\mathcal{MLS}$, as it only depends on the obstacles $\obs_1,\obs_2$
  whose geometry is a \mlsinv{}. Since $\gamma$ and $R_+$ are
  \mlsinv{s}, then the arc
  $\mathscr{A}_\infty:=R_+^{-1}(\Gamma_\infty)=\{R_+^{-1}(\eta,\xi_\infty+\gamma(\eta)):|\eta|\text{
    small\,}\}$ can be recovered. Moreover, we know the differential
  $D\mathcal{G}_{(\xi,\eta)}$ at any point
  $(\xi,\eta)\in \Gamma_\infty$, with
  $\mathcal{G}=R_-\circ T \circ R_+^{-1}|_{\Omega_\infty}$, hence we
  can determine $D\mathcal{F}^2_x$, for any
  $x=(s,r)=R^{-1}(\eta,\xi_\infty+\gamma(\eta)) \in
  \mathscr{A}_\infty$, with $|\eta|$ small. By definition, given any
  such point $x$, we have $\mathcal{F}(x)=x'=(s',0)$ for some
  parameter $s' \in \R$, and $\mathcal{F}^2(x)=(s,-r)$, by
  Lemma~\ref{claim petitt}. Let us denote by
  $\mathcal{K}(s),\mathcal{K}(s')$ the respective curvatures at the
  points $x=(s,r)$, $x'=(s',0)$, by $\mathscr{L}:=h(s,s')$ the length
  of the line segment between their traces on the table, and set
  $\nu:=\sqrt{1-r^2}$.  By~\eqref{matrice sl deux}, we have
	\begin{align*}
		D\mathcal{F}^2_x=\begin{pmatrix}
			\frac{2aa'}{\nu}-1 & \frac{2a'\mathscr{L}}{\nu^2}\\
			\frac{2a}{\mathscr{L}}(aa'-\nu) & \frac{2aa'}{\nu}-1
		\end{pmatrix},
		\end{align*}
	where
    \begin{align*}
      a:=\mathscr{L} \mathcal{K}(s)+\nu,\qquad a':=\mathscr{L} \mathcal{K}(s')+1.
    \end{align*}
	The values of $\mathcal{K}(s)$ and $\nu$ are already known, thus
    by considering the first line in the expression of this
    differential, we deduce the value of $aa'$ and $a'\mathscr{L}$,
    and then, of $\mathscr{L}$ and $\mathcal{K}(s')$.  In particular,
    the geometry of $\obs_3$ is completely determined: for any such
    $x \in \mathscr{A}_\infty$, we draw a line segment of length
    $\mathscr{L}$ starting from the associated point in the table,
    such that the angle of this segment with the normal to
    $\partial \obs_2$ at this point is equal to
    $\varphi=\arcsin(r)$. Then, the endpoint belongs to
    $\partial \obs_3$, and the curvature at this point is equal to
    $\mathcal{K}(s')$. In particular, we recover the geometry of an
    arc in $\obs_3$, hence the third obstacle $\obs_3$ is entirely
    determined by $\mathcal{MLS}$, by analyticity.

\begin{figure}[H]
	\begin{center}
		\includegraphics [trim = 1cm 2cm 1cm 1cm, width=20cm]{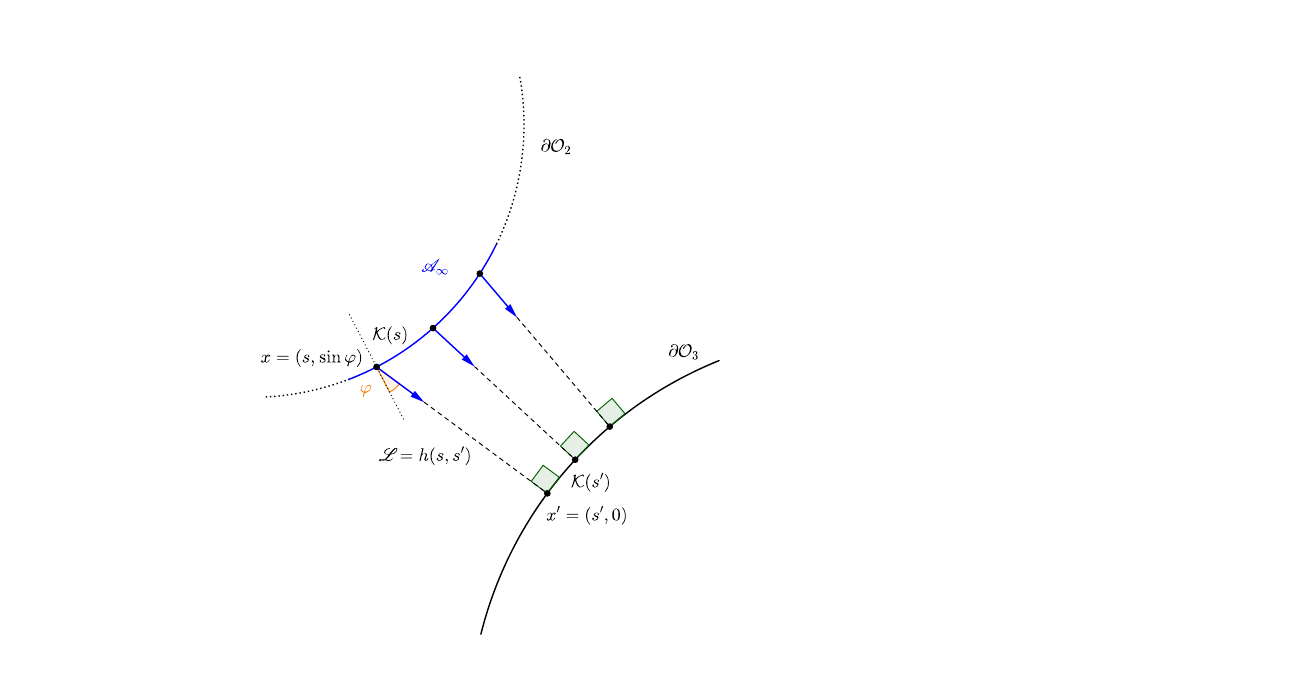}
		\caption{Reconstructing the geometry of the third obstacle $\mathcal{O}_3$.}\label{image fig fin}
	\end{center}
\end{figure}
 \end{proof}

 \begin{remark}
   We can also provide an alternative proof of the above result.
   Since $N$ and $\gamma$ are \mlsinv{s}, then for any sufficiently
   large integer $n=2m-1\geq 0$, we can compute the coordinates
   $(\xi_n,\eta_n)=R_-(x_n(1))$ of the associated point in the orbit
   $h_n$. The conjugacy map $R_-$ is entirely determined by the
   obstacles $\obs_1,\obs_2$, whose geometry is known, thus we can
   recover the coordinates of $x_n(1)=(s_n(1),r_n(1))$, and also of
   the other points in this orbit which are on
   $\partial\obs_1,\partial\obs_2$, i.e., $x_n(k)$, for
   $k=2,\cdots,2n+1$. Let $\mathscr{L}_n$ be the length of the line
   segment connecting the points of parameters $x_n(0)$ and
   $x_n(1)$. Then $\mathscr{L}_n$ is determined by $\mathcal{MLS}$:
   indeed, we know the total length $\mathcal{L}(h_n)$, as well as the
   length of the other orbit segments in $h_n$, as they are associated
   to the points $x_n(k)$, for $k=1,\cdots,2n+1$.  Therefore, starting
   from the trace of the point $x_n(1)$, then the endpoint of the
   outward line segment of length $\mathscr{L}_n$ based at this point
   and making an angle $-\varphi_n(1)=-\arcsin(r_n(1))$ with the
   normal to $\partial \obs_2$ gives a point on $\partial\obs_3$,
   associated to the parameter $x_n(0)$. For different values of $n$,
   the corresponding points are pairwise distinct (they have different
   periods and all start perpendicularly to $\partial \obs_3$), and
   since they accumulate to the trace of the homoclinic point
   $x_\infty(0)$ as $n\to +\infty$, this completely determines the
   geometry of $\obs_3$, again by analyticity.

 \end{remark}

\section{Conclusions and further questions}
In this article we showed that for a generic class of chaotic
billiards obtained by removing from the plane $m \geq 3$ convex
analytic scatterers with some symmetries, the Marked Length Spectrum
determines the
geometry of the billiard.\\

Our result leads to a number of natural questions:
\begin{question}
  Is it possible to remove the assumption about the mirror symmetry
  between $\mathcal{O}_{1}$ and $\mathcal{O}_{2}$?
\end{question}
\begin{question}
  Is it possible to remove the assumption about each of the scatterers
  $\mathcal{O}_1$ and $\mathcal{O}_{2}$ being symmetric around the
  period-two collision?
\end{question}
This would be a major step (similar to the one leading from~\cite{Z1}
to~\cite{Z2,Z3}), and the techniques involving classical Birkhoff
normal forms seem inadequate to this task.
\begin{question}
  Is it possible to obtain similar results for the unmarked Length
  Spectrum?
\end{question}
As in~\cite{Z2,Z3}, one could ask if simply marking the length of the
$2$-periodic orbit, of the associated Lyapunov exponent and possibly
requiring some non-degeneracy of the Spectrum for those values, would
suffice to recover the coefficients that describe the dynamics in the
Birkhoff coordinates.  It is unclear to us if such an approach could
be carried out successfully, since it is not easy to distinguish
between the many homoclinic orbits that accumulate on the periodic
orbit, and our strategy hinges on a very fine asymptotic analysis of a
specific family of approximating homoclinic orbits.  We ultimately
believe that such an approach should be possible, but for sure our
strategy would have to be modified to deal with all such homoclinic
orbits at the same time.

Let us also note that several quantities are length-spectral
invariants. For instance, any periodic orbit of period at least three
bounces on the three scatterers; as $2$-periodic orbits correspond to
minimizers of the distance between two scatterers, the two smallest
elements in the Length Spectrum are the lengths of two $2$-periodic
orbits.

Recall that the topological entropy of the subshift of finite type
described in the first part is equal to $\log(m-1)$, where $m \geq 3$
is the number of obstacles. By \cite{Mor2}, this number is
also 
a length-spectral invariant.

\color{black}
\appendix
\section{The Marked Lyapunov Spectrum}\label{sec:bdkl-symmetric-case}
In this appendix we collect some rather technical results that are
needed for this paper.  Stronger versions of such results have been
stated in the paper~\cite{BDSKL}, but their proofs appear to be
incorrect.  In this work we only need results in the (much simpler)
situation in which the scatterers are symmetric: we therefore state
and prove such results here, in order for the arguments in this paper
to be complete.

\begin{theorem}\label{prp:palindromic_lyapunov}
  The Lyapunov exponent of any palindromic periodic orbit in any
  billiard table in $\billiards(m)$ is a \mlsinv{}.
\end{theorem}
\begin{lemma}\label{lem:quadratic-form}
  Let $(\bar s_{0},\cdots,\bar s_{p-1})$ denote the collision points
  of a palindromic periodic orbit of (least) period $p = 2q$, so that
  $\bar s_{i} = \bar s_{p-i}$ for any $0 < i < p$.  For $n \geq 1$,
  define
  $L_n(s_{0},s_{1},\cdots,s_{n}) = \sum_{j =
    0}^{n-1}h(s_{j},s_{j+1})$.  Then
  \begin{align*}
    L_{q}(s_{0},\cdots,s_{q}) - 
    L_{q}(\bar s_{0},\cdots,\bar s_{q}) &= 
    Q_{q}(s_{0}-\bar s_{0},\cdots, s_{q}-\bar s_{q}) + R_{q},
  \end{align*}
  where $Q_{q}$ is a positive definite quadratic form and $R_{q}$ is a
  remainder term that satisfies the estimate:
  \begin{align*}
    R_{q} = O(\|(s_{0}-\bar s_{0},\cdots, s_{q}-\bar s_{q})\|^{3}).
  \end{align*}
\end{lemma}
\begin{proof}
  Since $(\bar s_0,\cdots,\bar s_{p-1})$ is palindromic,
  $L_{q}(s_{0},\cdots,s_{q})$ has a critical point (in fact, a
  minimum) at $(\bar s_{0},\cdots,\bar s_{q})$; recall in fact that
  the periodic orbit has necessarily orthogonal collisions at the
  points $s_{0}$ and $s_{q}$.  This amounts to say
  that $\partial_{j}L_{q}(\bar s_{0},\cdots,\bar s_{q}) = 0$ for any
  $j = 0,\cdots,q$.  Hence the lemma follows from Taylor's formula,
  provided that we show that the Hessian of $L_{q}$ at
  $(\bar s_{0},\cdots,\bar s_{q})$ is positive definite.\\

  It is immediate from the definition of $L_{q}$ that
  $\partial_{ij}L_{q} = 0$ if $|i-j| > 1$; in other terms, the Hessian
  of $L_{q}$ is a tridiagonal (symmetric) matrix.  For notational
  convenience, let $h^{ij} = h(\bar{s}_{i},\bar s_{j})$; let
  $\bar\varphi_{j}\in[-\frac \pi 2,\frac \pi 2]$ denote the angle formed by the outgoing
  trajectory at the $j$-th collision point and the unit normal vector
  to the domain at $\bar s_{j}$; finally, let $\mathcal{K}_{j}$ denote
  reciprocal of the radius of curvature at the point $\bar s_{j}$.
  Recall that by convention $\mathcal{K}_{j} < 0$ for any $j$.

  A direct computation shows that the diagonal terms are given by:
\begin{align*}
  \partial_{00} L_{q} &= \frac1{h^{01}}\cos^{2}\bar\varphi_{0}-\mathcal{K}_{0}\cos\bar\varphi_{0}\\
\partial_{jj} L_{q} &= \left[\frac1{h^{j-1 j}}+\frac1{h^{j
                      j+1}}\right]\cos^{2}\bar\varphi_{j}-2\mathcal{K}_{j}\cos\bar\varphi_{j}\text{
                      for } 0 < j < q\\
\partial_{qq} L_{q} &= \frac1{h^{q-1q}}\cos^{2}\bar\varphi_{q}-\mathcal{K}_{q}\cos\bar\varphi_{q},
\intertext{while the off-diagonal terms are given by:}
  \partial_{jj+1}L_{q} &= \frac1{h^{jj+1}}\cos\bar\varphi_{j}\cos\bar\varphi_{j+1}.
\end{align*}
Using the above expressions it is simple to prove the following lemma.
\begin{lemma}\label{lem:positive-definite}
  For $0 \leq n \leq q$, let $f_{n}$ denote the determinant of the
  $(n+1)\times (n+1)$ top-left minor of $\partial_{ij}L_q$.  Then
  $f_{n} > 0$ for all $0 \leq n\leq q$.
\end{lemma}
The above lemma implies that all eigenvalues of $\partial_{ij}L_{q}$
are positive, which completes the proof of our result.
\end{proof}
\begin{proof}[Proof of Lemma~\ref{lem:positive-definite}]
  We will in fact prove a slightly stronger statement which holds for
  $0 \leq n < q$, namely:
  \begin{align}\label{eq:f_n-f_n-1frac}
    f_{n} > f_{n-1}\frac{\cos^{2}\bar\varphi_{n}}{h^{n n+1}};
  \end{align}
  The proof will follow by induction.  Since the matrix is
  tridiagonal, it is known that the determinants $f_{n}$ can be
  expressed by the following recursive relation:
  \begin{align*}
    f_{n} = \partial_{nn}L_{q}f_{n-1}-(\partial_{n-1n}L_{q})^{2}f_{n-2},
  \end{align*}
  with the convention $f_{-1} = 1$ and $f_{-2} = 0$.\\
  The base case is $f_{0} = \partial_{00}L_{q}$, for
  which~\eqref{eq:f_n-f_n-1frac} immediately holds.  Assuming by
  inductive hypothesis that~\eqref{eq:f_n-f_n-1frac} holds for $n-1$,
  let us prove it for $n$.  The recursive relation yields, for
  $n < q$:
  \begin{align*}
    f_{n}-f_{n-1}\frac {\cos^{2}\bar\varphi_{n}}{h^{nn+1}} &= f_{n-1}\left(\partial_{nn}L_{q}-\frac{\cos^{2}\bar\varphi_{n}}{h^{nn+1}}\right) -(\partial_{n-1n}L_{q})^{2}f_{n-2} = \\
                                                       &=
                                                         f_{n-1}\left(\frac{\cos^{2}\bar\varphi_{n}}{h^{n-1n}}-2\mathcal{K}_{n}\cos\bar\varphi_{n}\right)
                                                         -\left(\frac{\cos\bar\varphi_{n-1}\cos\bar\varphi_{n}}{h^{n-1
                                                         n}}\right)^{2}f_{n-2} >  \\
                                                       &>
                                                         f_{n-2}\frac{\cos^{2}\bar\varphi_{n-1}}{h^{n-1n}}\frac{\cos^{2}\bar\varphi_{n}}{h^{n-1n}}
                                                         -2f_{n-1}\mathcal{K}_{n}\cos\bar\varphi_{n}-\left(\frac{\cos\bar\varphi_{n-1}\cos\bar\varphi_{n}}{h^{n-1n}}\right)^{2}f_{n-2} > \\
&> -2f_{n-1}\mathcal{K}_{n}\cos{\bar\varphi_{n}} > 0.
  \end{align*}
This shows the statement for up to $n = q-1$; an analogous computation then
shows that also $f_{q} > 0$, which concludes the proof of our lemma.
\end{proof}

\begin{lemma}\label{lem:length-asymptotics}
  Let $\sigma = (\sigma_{0}\cdots\sigma_{q-1})$ with
  $\sigma_{i} = \sigma_{q-i}$ encode a palindromic periodic orbit of
  period $q$ and Lyapunov exponent
  $\mathrm{LE}(\sigma)=-\frac{1}{q}\log \lambda$, $\lambda=\lambda(\sigma)$ being the contracting eigenvalue of $D\mathcal{F}^q$ at $\sigma$; %
  let $\tau$ (of length $p$) be so that both $\sigma\tau$ and
  $\tau\sigma$ are admissible.  %
  Let $h_{n}(\sigma,\tau)$ denote the periodic orbit encoded by
  $(\tau\sigma^{n})$ of period $p+nq$.  There exists
  $C_{0}(\sigma,\tau)$ and $C_{1}(\sigma,\tau)$ so that:
  \begin{align}\label{eq:length-asymptotics}
    \mathcal L(h_{n}(\sigma,\tau)) - n\mathcal L(\sigma)
    &= C_{0}(\sigma,\tau) +%
       C_{1}(\sigma,\tau)\lambda^{n} +%
       o(\lambda^{n}).
  \end{align}
\end{lemma}
\begin{proof}
  The existence of $C_{0}(\sigma,\tau)$ follows by the same arguments
  used in the definition of $\mathcal{L}^{\infty}$ (see also
  Proposition~\ref{prop bdskl}).  Obtaining the remaining terms
  follows step-by-step by the proof of Proposition~\ref{prop lourde
    un}, where it is proved in the special case $\sigma = (12)$ and
  $\tau = (32)$, and obtains an explicit expression for the coefficient
  $C_{1}((12),(32))$, which we do not need in this lemma.  It is
  omitted in the interest of keeping the length of this paper under
  control.
\end{proof}
\begin{proof}[Proof of Proposition~\ref{prp:palindromic_lyapunov}]
  Using Lemma~\ref{lem:length-asymptotics}, and taking the limit
  of~\eqref{eq:length-asymptotics} for $n\to\infty$ , we conclude that
  $C_{0}(\sigma,\tau)$ is a \mlsinv{}, since the left hand side is
  spectrally determined.  Hence, again
  by~\eqref{eq:length-asymptotics}, we gather:
  \begin{align*}
    \lambda= \lim_{n\to\infty}
    \frac1{n}\log |\mathcal L(h_{n}(\sigma,\tau))%
    - n\mathcal L(\sigma) - C_{0}(\sigma,\tau)|.
  \end{align*}
Since the right hand side is spectrally determined, we conclude that
$\lambda$ is \mlsinv{}.  Similar considerations show that
$C_{1}(\sigma,\tau)$ is also \mlsinv{}.
\end{proof}
Proposition~\ref{prp:palindromic_lyapunov} has the following immediate
corollary:
\begin{theorem} \label{rayon courbure 12} Let
  $\mathcal{D}\in\bsym(m)$; consider the $2$-periodic orbit encoded by
  the word $\sigma = (12)$ and let $R=\mathcal{K}^{-1}$ be the
  (common) curvature radius at the collision points of the orbit.
  Then $R$ is a \mlsinv{}.
\end{theorem}
\begin{proof}
  By Theorem~\ref{prp:palindromic_lyapunov}, the Lyapunov exponent
  $\mathrm{LE}(\sigma)$ of the $2$-periodic orbit is a \mlsinv{}.  Hence,
  $\mls(D)$ determines the eigenvalues of the linearization of the
  square of the billiard map $D\mathcal F^{2}$ at the collision points
  of the $2$-periodic orbit; by~\eqref{matrice sl deux} we have
  \begin{align*}
    D\mathcal{F}^{2}_{(0,0)} = \left(
    \begin{array}{cc}
      \mathscr L\mathcal K+1& \mathscr L\\ \mathscr L\mathcal K^{2} +
      2\mathcal K& \mathscr L \mathcal K+1
    \end{array}
    \right)^{2},
  \end{align*}
where $\mathscr L:=\frac 12 \mathcal L((12))$.
In particular, we have that $\lambda^{1/2}+\lambda^{-1/2} = 2(\mathscr
L\mathcal K+1)$; since it is of course
spectrally determined, we conclude that $R=\mathcal K^{-1}$ is \mlsinv{}.
\end{proof}

\begin{remark}
  As a matter of fact, Theorem~\ref{prp:palindromic_lyapunov} holds
  also for non-palindromic orbits.  It must be noted that in the
  general case there is no analog of
  Lemma~\ref{lem:positive-definite}, since there is no sub-orbit
  contained in a periodic orbit that is a length minimizer (otherwise
  we would have an orthogonal collision, which would force the orbit
  to be palindromic).  Because of this, one needs to show an
  additional cancellation for $\Sigma^{1}_{n}$ and $\Sigma^{2}_{n}$;
  the proofs are marginally more involved but, in the interest of
  keeping this paper short but self-contained, we have only stated the
  result in the palindromic case.
\end{remark}

\end{document}